\theoremstyle{plain}
\newtheorem{theorem}{Theorem}
\newtheorem{lemma}[theorem]{Lemma}
\newtheorem{proposition}[theorem]{Proposition}
\newtheorem{remark}[theorem]{Remark}
\numberwithin{equation}{section}
\numberwithin{theorem}{section}
\newcommand{\be}{\begin{equation}}
\newcommand{\bm}{\begin{multline}}
\newcommand{\ee}{\end{equation}}
\newcommand{\dis}{\displaystyle}
\newcommand{\C}{\mathbb{C}}
\newcommand{\R}{\mathbb{R}}
\newcommand{\Z}{\mathbb{Z}}
\renewcommand{\S}{\mathbb{S}}
\newcommand{\T}{\mathbb{T}}
\newcommand{\W}{\mathbb{W}}
\newcommand{\highG}{\Theta}
\newcommand{\highB}{\Lambda}
\newcommand{\FP}{\mathbf{P}}
\newcommand{\FI}{\mathbf{I}}
\newcommand{\CA}{\mathcal{A}}
\newcommand{\CD}{\mathcal {D}}
\newcommand{\CE}{\mathcal{E}}
\newcommand{\CF}{\mathcal{F}}
\newcommand{\CG}{\mathcal{G}}
\newcommand{\CL}{\mathcal{L}}
\newcommand{\CN}{\mathcal{N}}
\newcommand{\CR}{\mathcal{R}}
\newcommand{\CS}{\mathcal{S}}
\newcommand{\CM}{\mathcal{M}}
\newcommand{\bh}{\mathbbm{h}}
\newcommand{\na}{\nabla}
\newcommand{\al}{\alpha}
\newcommand{\ga}{\gamma}
\newcommand{\Om}{\Omega}
\newcommand{\la}{\lambda}
\newcommand{\de}{\delta}
\newcommand{\si}{\sigma}
\newcommand{\pa}{\partial}
\newcommand{\eps}{\epsilon}
\newcommand{\vth}{\vartheta}
\newcommand{\vps}{\varepsilon}
\newcommand{\Si}{\Sigma}
\newcommand{\Ga}{\Gamma}
\newcommand{\eqdef}{\overset{\mbox{\tiny{def}}}{=}}
\begin{document}
\title[Global mild solutions of  Landau and non-cutoff Boltzmann]{Global mild solutions of the Landau and non-cutoff Boltzmann equations}

\author[R.-J. Duan]{Renjun Duan}
%\thanks{}
\address[RJD]{Department of Mathematics, The Chinese University of Hong Kong,
Shatin, Hong Kong, P.R.~China}
\email{rjduan@math.cuhk.edu.hk}

\author[S.-Q. Liu]{Shuangqian Liu}
\address[SQL]{Department of Mathematics, Jinan University, Guangzhou 510632, P.R.~China}
\email{tsqliu@jnu.edu.cn}

\author[S. Sakamoto]{Shota Sakamoto}
\address[SS]{Mathematical Institute,
Tohoku University,
Sendai, 980-0856, Japan}
\email{shota.sakamoto.e1@tohoku.ac.jp}

\author[R. M. Strain]{Robert M. Strain}
\address[RMS]{Department of Mathematics, University of Pennsylvania,  David Rittenhouse Lab, 209 South 33rd Street, Philadelphia, PA 19104-6395, USA}
\email{strain@math.upenn.edu}
%\thanks{R.M.S. was partially supported by the NSF grants DMS-1500916 and DMS-1764177.}

%\date{\today}

\begin{abstract}
This paper proves the existence of small-amplitude global-in-time unique mild solutions to both the Landau equation including the Coulomb potential and the Boltzmann equation without angular cutoff.  Since the well-known works \cite{Guo-L} and \cite{AMUXY-2012-JFA,GS} on the construction of classical solutions in smooth Sobolev spaces which in particular are regular in the spatial variables, it still remains an open problem to obtain global solutions in an $L^\infty_{x,v}$ framework, similar to that in \cite{Guo-2010}, for the Boltzmann equation with cutoff in general bounded domains.  One main difficulty arises from the interaction between the transport operator and the velocity-diffusion-type collision operator in the non-cutoff Boltzmann and Landau equations; another major difficulty is the potential formation of singularities for solutions to the boundary value problem. In the present work we introduce a new function space with low regularity in the spatial variable to treat the problem in cases when the spatial domain is either a torus, or a finite channel with boundary. For the latter case, either the inflow boundary condition or the specular reflection boundary condition is considered. An important property of the function space is that the $L^\infty_T L^2_v$ norm, in velocity and time, of the distribution function is in the Wiener algebra  $A(\Omega)$ in the spatial variables.  Besides the construction of global solutions in these function spaces, we additionally study the large-time behavior of solutions for both hard and soft potentials, and we further justify the property of propagation of regularity of solutions in the spatial variables.
\end{abstract}
\maketitle

%\textbf{Mathematics Subject Classification (2010)}: 82D10 Plasmas
% [See also 35Q70, 35Q75, 35B45, 35A02]. 35Q70, PDE in connection with mechanics of particles and systems. 35A02 Uniqueness. \\
% update MSC if you put it in the article

    \textbf{Keywords}: Kinetic Theory, Mild regularity,  Landau equation, Boltzmann equation, Non-Cut-Off, Uniqueness, Global solutions.

\thispagestyle{empty}
\setcounter{tocdepth}{1}
\tableofcontents
%\newpage

\section{Introduction}
The Landau equation with Coulomb potential and the non-cutoff Boltzmann equation for the long range interaction potentials are two fundamental mathematical models in collisional kinetic theory which describe the dynamics of a non-equilibrium rarefied gas.  These two equations are connected in several ways; for example the Landau equation can be formally derived from the Boltzmann equation by taking the grazing limit.  Further the collision operators in both equations each feature velocity diffusion which can induce a gain of spatial regularity for spatially inhomogeneous solutions as a result of  the interplay with the free transport operator.  The existence theory for both equations has a long history.  A well-established framework in which to study global well-posedness is to look for solutions that are close to the global Maxwellian equilibria in different function spaces. However, for these two equations it is a big open problem to characterize the  optimal mathematical space of initial data with lower regularity in space and velocity variables such that unique solutions may exist globally in time. The main goal of this work is to prove the global existence of unique solutions in a new function space with mild regularity for both equations in the perturbative framework.

\subsection{Equation}
In this article we are concerned with the collisional kinetic equation of the form
\begin{equation}\label{Leq}
\pa_t F+v\cdot \na_x F=Q(F,F).
\end{equation}
Here the unknown $F=F(t,x,v)\geq 0$ stands for the density function of particles with position $x=(x_1,x_2,x_3)\in \Omega\subset \R^3$ and velocity $v=(v_1,v_2,v_3)\in \R^3$ at time $t>0$, and it is supplemented with initial data
\begin{equation*}%\label{idF}
F(0,x,v)=F_0(x,v).
%\ v=(v_1,\bar{v})\in\R^3.
\end{equation*}

\subsection{Collision term}
The collision term on the right-hand side of \eqref{Leq} that we will consider in this paper is described by either the Landau operator or the non-cutoff Boltzmann operator which are given as follows.

\subsubsection{Landau collision operator}

For the Landau collision operator,
$Q(\cdot,\cdot)$ is given by
\begin{equation}\label{bLop}
%\begin{split}
Q(G,F)(v)=\nabla_v\cdot\left\{\int_{{\R}^{3}}\psi(v-u)\left[G(u)\nabla_vF(v)-F(v)\nabla_uG(u)\right]du\right\}.
%\\[2mm]
%&=\sum\limits_{j,m=1}^{3}\partial_{v_j}\int_{{\R}^{3}}\psi^{jm}(v-u)\left[G(u)\partial_{v_m}H(v)-H(v)\partial_{u_m}G(u)\right]du.
%\end{split}
\end{equation}
Then \eqref{Leq} with \eqref{bLop} is called the Landau equation.
Equivalently the Landau collision operator can be written as
\begin{equation}\label{bLop.e}
%\begin{split}
Q(G,F)(v)
%&=\nabla_v\cdot\left\{\int_{{\R}^{3}}\psi(v-u)\left[G(u)\nabla_vH(v)-H(v)\nabla_uG(u)\right]du\right\}\\[2mm]
%&
=\sum\limits_{j,m=1}^{3}\partial_{v_j}\int_{{\R}^{3}}\psi^{jm}(v-u)\left[G(u)\partial_{v_m}F(v)-F(v)\partial_{u_m}G(u)\right]du.
%\end{split}
\end{equation}
The Landau collision kernel $\psi$ in \eqref{bLop} or \eqref{bLop.e} is a non-negative symmetric matrix-valued function defined for $0\neq z=(z_1, z_2, z_3)\in \R^3$ as
\begin{equation}\label{def.Lker}
\psi^{jm}(z)=\left\{\delta_{jm}-\frac{z_jz_m}{|z|^2}\right\}|z|^{\gamma+2}, \quad j,m=1,2,3,
\end{equation}
%for $0\neq z\in \R^3$ and $j,m=1,2,3$,
where $\de_{jm}$ is the Kronecker delta and $-3\leq \gamma\leq 1$ is a parameter determined by the interaction potential between particles. It is customary to use the term hard potentials when $0<\ga\leq 1$, Maxwell molecules when $\ga=0$, moderately soft potentials when $-2\leq \ga<0$, and very soft potentials when $-3 \le \gamma < -2$ cf.~\cite{Vil02}. The case $\ga=-3$ corresponds to the classical Coulomb potential, cf.~\cite{Guo-L,Vil96}.  However see the discussion below \eqref{q} for the terminology that we will use in the rest of this paper.

\subsubsection{Non-cutoff Boltzmann collision operator}
For the Boltzmann collision operator without angular cutoff, $Q(\cdot,\cdot)$ takes the form of
\begin{equation}\label{bBop}
Q(G,F)(v)=\int_{\R^3}\int_{\S^2}B(v-u,\si)
\left[G(u')F(v')-G(u)F(v)\right]\,d\si
d u,
\end{equation}
where the velocity pairs $(v,u)$ and
%before collisions and velocity pair
$(v',u')$  satisfy
%after collisions is defined by
\begin{equation*}
\left\{\begin{aligned}
v'=\frac{v+u}{2}+\frac{|v-u|}{2}\si,\\
u'=\frac{v+u}{2}-\frac{|v-u|}{2}\si,
\end{aligned}\right.\quad \si\in\S^2.
\end{equation*}
Then \eqref{Leq} with \eqref{bBop} is called the Boltzmann equation.
The Boltzmann collision kernel $B(v-u,\si)$ is a non-negative function, depending only on the relative velocity
$|v-u|$ and on the deviation angle $\theta$ given by
\begin{equation*}
%\label{ }
\cos \theta=\Big\langle\si,\frac{v-u}{|v-u|}\Big\rangle.
\end{equation*}
Without loss of generality we may assume
that $B(v-u,\si)$ is supported on $\cos\theta\geq 0$, i.e.~$0\leq \theta\leq \frac{\pi}{2}$. Otherwise we can
reduce to this situation with the following standard symmetrization, {cf.~\cite{Gla,Vil98}}:
\begin{equation*}
%\label{ }
B(v-u,\si) = \left[B(v-u, \si) + B(v-u,-\si)\right]{\bf 1}_{\cos\theta\geq 0}.
\end{equation*}
Here and in the rest of the paper, ${\bf 1}_A$ denotes the usual indicator function of the set $A$. Throughout this paper we further assume that  $B(v-u,\si)$ takes the product form as follows:
%Throughout this paper,
%the Boltzmann collision kernel is further assumed to satisfy the following assumption: $B(v-u,\si)$ takes the product form
\begin{equation*}
B(v-u,\si)=C_B |v-u|^{\ga}b(\cos\theta),
\end{equation*}
for a constant $C_B>0$, where $ |v-u|^{\ga}$ is called the kinetic part with $\gamma>-3$,
%is determined by the intermolecular interactive mechanism,
and $b(\cos\theta)$ is called the angular part satisfying that there are $C_b>0$, $0<s<1$ such that
\begin{equation*}
\frac{1}{C_b\theta^{1+2s}}\leq \sin\theta b(\cos\theta)\leq \frac{C_b}{\theta^{1
+2s}},\quad \forall\,\theta\in (0,\frac{\pi}{2}].
\end{equation*}
%holds for all $\theta$ in $(0,\frac{\pi}{2}]$.
For the angular non-cutoff Boltzmann operator, it is also customary to use the term hard potentials when $\ga \ge 0$, moderately soft potentials when  $0>\ga\geq -2s$, and very soft potentials when $-3<\ga<-2s$.  However see the discussion below \eqref{q} for the terminology that we will use in the rest of this paper.

One physical example is given when the collision kernel is derived from a spherical intermolecular repulsive potential of the inverse power law form $\phi(r)=r^{-(\ell-1)}$ with $2<\ell<\infty$ corresponding to which $B$ satisfies the assumptions above with $\ga=(\ell-5)/(\ell-1)$ and $s=1/(\ell-1)$, cf.~\cite{CIP}.

Through the paper, in the Boltzmann case we further require
\begin{equation*}
%\label{ }
\ga>\max\left\{-3,-\frac 32-2s\right\},
\end{equation*}
due to the mild regularity setting of the results in this article.
Indeed, the above condition is satisfied for the full range of the inverse power law model.

\subsection{Spatial domain and boundary condition} In this paper we focus on two kinds of specific bounded domains $\Omega\subset\R^3$.  We consider either a torus, or a finite channel with prescribed boundary conditions.

\subsubsection{Case of the Torus} In this case, we set
\begin{equation}
\label{d.pb}
\Omega=\T^3:=[0,2\pi]^{3}.
\end{equation}
Correspondingly, $F(t,x,v)$ is assumed to be spatially periodic in $x\in \T^3$.

\subsubsection{Case of the Finite channel} In this case we set
\begin{equation}
\label{d.fc}
\Omega=I\times \T^2=\left\{x=(x_1,\bar{x}),x_1\in I:=(-1,1),\bar{x}:=(x_2,x_3)\in \T^2=[0,2\pi]^2\right\}.
\end{equation}
Correspondingly, $F(t,x,v)$ is assumed to be spatially periodic for $\bar{x}\in \T^2$ and satisfy the following two kinds of boundary conditions at $x_1=\pm 1$:

\begin{itemize}
\item {\it Inflow} boundary condition:
\begin{equation}\label{bcF.i}
F(t,-1,\bar{x},v)|_{v_1>0}=G_-(t,\bar{x},v),\ \ F(t,1,\bar{x},v)|_{v_1<0}=G_+(t,\bar{x},v).
\end{equation}

\item {\it Specular reflection} boundary condition:
\begin{equation}\label{bcF.sr}
\begin{aligned}
F(t,-1,\bar{x},v_1,\bar{v})|_{v_1>0}&=F(t,-1,\bar{x},-v_1,\bar{v}),\\
F(t,1,\bar{x},v_1,\bar{v})|_{v_1<0}&=F(t,1,\bar{x},-v_1,\bar{v}),
\end{aligned}
\end{equation}
where $\bar{v}=(v_2,v_3)$.

\end{itemize}

\subsection{Reformulation}

Consider the following global Maxwellian equilibrium state:
\begin{equation*}
%\label{ }
\mu=\mu(v)=(2\pi)^{-3/2}e^{-|v|^2/2}.
\end{equation*}
Note that $\mu$ is a spatially homogeneous steady solution to  \eqref{Leq} for either the Landau case or the Boltzmann case. We look for a solution to \eqref{Leq} of the form
\begin{equation*}
%\label{ }
F(t,x,v)=\mu+{\mu}^{\frac{1}{2}}f(t,x,v).
\end{equation*}
Then, the new unknown $f=f(t,x,v)$ satisfies
%then we obtain the linearized equation for \eqref{Leq}:
\begin{equation}\label{LLeq}
\pa_tf+v\cdot\na_xf
%v_1\pa_xf+\bar{v}\cdot\na_yf
+Lf=\Ga(f,f),
%\ t>0,\ (x,y)\in\Om, \ v=(v_1,\bar{v})\in\R^3,
\end{equation}
with initial data
\begin{equation}\label{idf}
f(0,x,v)=f_0(x,v):=\mu^{-1/2}[F_0(x,v)-\mu].
%,\ (x,y)\in\Om, \ v\in\R^3.
\end{equation}
Here, the linearized collision operator $L$ and nonlinear collision operator $\Ga$ are given by
\begin{equation}\label{Ldef}
Lf=-\mu^{-\frac{1}{2}}\left\{Q(\mu,\mu^{\frac{1}{2}}f)+Q(\mu^{\frac{1}{2}}f,\mu)\right\},
%=-Af-Kf,
\end{equation}
and
\begin{equation}\label{nopdef}
\Gamma(f,f)=\mu^{-\frac{1}{2}}Q(\mu^{\frac{1}{2}}f,\mu^{\frac{1}{2}}f),
\end{equation}
respectively.  In the case of a finite channel, corresponding to \eqref{bcF.i} or \eqref{bcF.sr},
 the boundary conditions along $x_1$ are given by
\begin{equation}\label{ifb}
f(t,-1,\bar{x},v)|_{v_1>0}=g_-(t,\bar{x},v),\ \ f(t,1,\bar{x},v)|_{v_1<0}=g_+(t,\bar{x},v),
\end{equation}
for the inflow boundary, and
%\item Specular reflection boundary condition:
%\begin{gather}
\begin{equation}\label{srb}
\begin{split}
f(t,-1,\bar{x},v_1,\bar{v})|_{v_1>0}&=f(t,-1,\bar{x},-v_1,\bar{v}),
\\
f(t,1,\bar{x},v_1,\bar{v})|_{v_1<0}&=f(t,1,\bar{x},-v_1,\bar{v}),
%\notag
\end{split}
\end{equation}
%\end{gather}
for the specular reflection boundary.

\subsection{Problem} In this paper we will study the following two problems {\bf (PT)} and {\bf (PC)} as described below.  For each problem, both the Landau operator \eqref{bLop} and the Boltzmann operator \eqref{bBop} will be included in our study.

\begin{itemize}
  \item[{\bf (PT):}]  Consider \eqref{LLeq} and \eqref{idf} in the torus domain $\Omega$ as in \eqref{d.pb}. Due to the conservation laws, we may always assume that $f(t,x,v)$ satisfies
\begin{eqnarray}
&&\int_{\T^3}\int_{\R^3} \mu^{\frac{1}{2}}f(t,x,v)\,dvdx=0,\label{pt.id.cl.1}
\\
&&\int_{\T^3}\int_{\R^3} v_i\mu^{\frac{1}{2}}f(t,x,v)\,dvdx=0,\quad i=1,2,3,\label{pt.id.cl.2}
\\
&&\int_{\T^3}\int_{\R^3}|v|^2 \mu^{\frac{1}{2}}f(t,x,v)\,dvdx=0,\label{pt.id.cl.3}
\end{eqnarray}
for any $t\geq 0$. In other words if these are satisfied initially then the equation shows that they will continue to be satisfied for any $t\geq 0$.

  \item[{\bf (PC):} ] Consider \eqref{LLeq}, \eqref{idf}, and \eqref{ifb} or \eqref{srb} in a finite channel domain $\Omega$ as in \eqref{d.fc}.

  For the specular reflection boundary condition \eqref{srb}, we assume further that
\begin{eqnarray}
&&\int_{-1}^1\int_{\T^2}\int_{\R^3} \mu^{\frac{1}{2}}f(t,x_1,\bar{x},v)\,dvd\bar{x} dx_1=0,\label{PC.cl1}
\\
&&\int_{-1}^1\int_{\T^2}\int_{\R^3} v_i\mu^{\frac{1}{2}}f(t,x_1,\bar{x},v)\,dvd\bar{x} dx_1=0,
\quad i=1,2,3,
\label{PC.cl2}
\\
&&\int_{-1}^1\int_{\T^2}\int_{\R^3}|v|^2 \mu^{\frac{1}{2}}f(t,x_1,\bar{x},v)\,dvd\bar{x} dx_1=0,
\label{PC.cl3}
\end{eqnarray}
for any $t\geq 0$. Indeed, \eqref{PC.cl1}, \eqref{PC.cl2} with $i=2,3$,  and \eqref{PC.cl3} are the direct consequence of the  conservation of mass, momentum and energy, respectively. To obtain \eqref{PC.cl2} for $i=1$, we further assume that the initial data satisfy the following symmetry assumption:
\begin{equation}
\label{ass.sym}
F_0(x_1,\bar{x},v_1,\bar{v})=F_0(-x_1,\bar{x},-v_1,\bar{v}),
\end{equation}
and hence the solution will also satisfy
\begin{equation*}
%\label{ }
F(t,x_1,\bar{x},v_1,\bar{v})=F(t,-x_1,\bar{x},-v_1,\bar{v}),
\end{equation*}
which directly gives \eqref{PC.cl2} when $i=1$.  We only assume the symmetry condition \eqref{ass.sym} for the specular reflection boundary condition \eqref{srb}.

  %\item
\end{itemize}

\subsection{Function space}

To study the well-posedness of the problems {\bf (PT)} and {\bf (PC)}, we first introduce a function space $X_T$ with $0<T\leq \infty$, which is a key point in  this work. We will present the motivation for this space later below. For the problem {\bf (PT)} in a torus $\T^3$, we define
 \begin{equation*}
%\label{ }
X_T:=L^1_kL^\infty_TL^2_v
\end{equation*}
with norm
\begin{equation}
\label{def.XT}
\|f\|_{X_T}:=\int_{\Z^3}\sup_{0< t< T} \|\hat{f}(t,k,\cdot)\|_{L^2_v}\,d\Sigma (k)<\infty,
\end{equation}
where the Fourier transformation of $f(t,x,v)$ with respect to $x\in \T^3$ is denoted
\begin{equation*}
%\label{ }
\hat{f}(t,k,v)=\CF_x f(t,k,v)=\int_{\T^3} e^{-  i k\cdot x}f(t,x,v)\,dx,\quad k\in \Z^3,
\end{equation*}
In this paper, for convenience we would use $d\Sigma(k)$ to denote the discrete measure in $\Z^3$, namely,
\begin{equation*}
%\label{ }
\int_{\Z^3}g(k)\,d\Sigma(k)=\sum_{k\in \Z^3}g(k).
\end{equation*}

\begin{remark}
The main reason for using the integral form $\int_{\Z^3} d\Sigma(k)$ is to make it clear that when the torus domain $\T^3_x$ changes to the whole space domain $\R^3_x$, one may directly change $d\Si (k)$ over $k\in \Z^3$ into $dk$ over $k\in \R_k^3$ and the similar results in this paper also hold true in $\R^3_x$ after taking into account the appropriate modifications to the time-decay of solutions.
\end{remark}

%\begin{equation}
%X_T:=\left\{\begin{aligned}
%\label{ }
%&L^1_kL^\infty_TL^2_v\quad  \text{ for }  T>0,\\
%&L^1_kL^2_v\quad \text{ for }T=0,
%%&\|f\|_X:=\sum_{k\in \Z^3}\sup_{0\leq t\leq T} \|\CF_{x}f(t,k,\cdot)\|_{L^2_v},
%%\\
%%&\|f\|_{X_w}:=\sum_{k\in \Z^3}\sup_{0\leq t\leq T} \|w\hat{f}(t,k,\cdot)\|_{L^2_v},
%\end{aligned}\right.
%\end{equation}
Now, similar to the above, for the problem {\bf (PC)} in a finite channel, we define
\begin{equation*}
%\label{ }
X_T:=L^1_{\bar{k}}L^\infty_TL^2_{x_1,v}
\end{equation*}
with norm
\begin{equation*}
%\label{ }
\|f\|_{X_T}:=\int_{\Z^2_{\bar{k}}}\sup_{0< t< T} \|\hat{f}(t,\bar{k},\cdot)\|_{L^2_{x_1,v}}\,d\Sigma(\bar{k})<\infty,
\end{equation*}
with $\bar{k} = (\bar{k}_2, \bar{k}_3)\in \Z^2$ and $\bar{x}=(x_2,x_3)\in \T^2$ we take the Fourier transform as
\begin{equation*}
%\label{ }
\hat{f}(t,x_1,\bar{k},v)=\CF_{\bar{x}} f(t,x_1,\bar{k},v)=\int_{\T^2} e^{-  i \bar{k}\cdot \bar{x}}f(t,x_1,\bar{x},v)\,d\bar{x},\quad \bar{k}\in \Z^2.
\end{equation*}
Notice that Fourier transform is taken only with respect to $\bar{x}=(x_2,x_3)$.
% \begin{align}
%\label{ }
%&X:=L^1_{\bar{k}}L^\infty_TL^2_{x_1,v},\\
%&\|f\|_X:=\sum_{\bar{k}\in \Z^2}\sup_{0\leq t\leq T} \|\CF_{\bar{x}}f(t,\bar{k},\cdot)\|_{L^2_{x_1,v}}.
%\end{align}

We also introduce the velocity weighted norm
\begin{align}
\|f\|_{X^w_T}:=&\int_{\Z^3}\sup_{0\leq t\leq T} \|w\hat{f}(t,k,\cdot)\|_{L^2_v}\,d\Sigma(k)\notag\\
&\text{ or }\int_{\Z^2}\sup_{0\leq t\leq T} \|w\hat{f}(t,\bar{k},\cdot)\|_{L^2_{x_1,v}}\,d\Sigma(\bar{k}),\label{def.xtw}
\end{align}
for {\bf (PT)} or {\bf (PC)}, respectively. Here $w=w_{q,\vth}(v)$ is a velocity weight function defined as
\begin{equation}
\label{def.w}
w_{q,\vth}(v)=e^{\frac{q\langle v\rangle^\vth}{4}},\quad \langle v\rangle=\sqrt{1+|v|^2},
\end{equation}
with two parameters $q$ and $\vth$.
Note that whenever there is no confusion,  we would omit the dependence of  $w_{q,\vth}(v)$ on $q$ and $\vth$, and write $w_{q,\vth}(v)=w(v)$ for brevity. In the situation when $q=0$, we have $w_{q,\vth}(v)\equiv 1$ and hence the function space $X^{w}_T$ with velocity weight is reduced to $X_T$ without weight. Throughout this paper we require that $(q,\vth)$ satisfies the following hypothesis in terms of $\ga$ and $s$:
\begin{eqnarray}\label{q}
\text{\bf (H)}\left\{\begin{array}{rl}
&\text{Landau case: if $-2\leq \ga\leq 1$ then $q=0$;}\\
&\qquad\qquad\qquad\,\text{if $-3\leq \ga<-2$ then $q> 0$ and $0<\vth\leq 2$ with}\\
&\qquad\qquad\qquad\,\text{the restriction that $0< q<1$ if $\vth=2$. % and $q\geq 0$ if $0<\vth<2$.
}
%0\leq q<1,
%\ \textrm{if}\ \vth=2;\ q\geq0,\ \rm{if}\ 0<\vth<2.
%\ \textrm{for\ the\ Landau\ case};
\\[2mm]
&\text{Boltzmann case: if $\ga+2s\geq 0$ then $q=0$;} \\
&\qquad\qquad\qquad\quad\ \ \text{if $-3<\ga<-2s$ then $q> 0$ and $\vth=1$.}
%\vth=1,\ q\geq0.
%\ \textrm{for\ the\ Boltzmann\ case}.
\end{array}\right.
\end{eqnarray}
For convenience of terminology in relation to \eqref{q}, in the rest of this paper we will call them {\it soft} potentials if either $\ga+2<0$ in Landau case or $\ga+2s<0$ in Boltzmann case, and we will call them the {\it hard} potentials otherwise.

In general we define the $L^p_kL^q_TL^r_v$ mixed Lebesgue space norms as follows.
\begin{equation*}
%\label{ }
\|f\|_{L^p_kL^q_TL^r_v}
:=
\left( \int_{\Z^3_{k}}
\left(
\int_0^T
\left(
\int_{\R^3}
\left| \hat{f}(t,k,v)\right|^r
dv
\right)^{q/r} dt
\right)^{p/q}
d\Sigma(k)
\right)^{1/p},
\end{equation*}
where $1\le p,q,r < \infty$ and we use the standard modification of the above when any of $p$, $q$ or $r$ is equal to  $\infty$.  Analogously for the case of a finite channel in problem {\bf (PC)} we will define standard $L^p_{\bar{k}}L^q_TL^r_{x_1,v}$ mixed Lebesgue space norms as
\begin{multline*}
%\label{ }
\|f\|_{L^p_{\bar{k}}L^q_TL^r_{x_1,v}}
:=
\\
\left( \int_{\Z^2_{\bar{k}}}
\left(
\int_0^T
\left(
\int_{-1}^1
\int_{\R^3}w
\left| \hat{f}(t,x_1,{\bar{k}},v)\right|^r
dv dx_1
\right)^{q/r} dt
\right)^{p/q}
d\Sigma(\bar{k})
\right)^{1/p},
\end{multline*}
where again $1\le p,q,r < \infty$ and we use the same standard modification when any of $p$, $q$ or $r$ is equal to  $\infty$.  In this paper we typically use $p=1$, $q=2$ or $q=\infty$, and $r=2$ in the above definitions.

We note further that for ease of the notation we will use the general norm notation $\| \cdot \|$ in different sections to denote different norms, the specific norm that we are using is identified within the section before it is used each time.

\subsection{Goal}

Using the above preliminary notations, for the problems {\bf (PT)} and {\bf (PC)} our main goal of this paper  is to study the following three issues:

\medskip
%\begin{itemize}
%\item
\noindent$\bullet$ {\it Global-in-time existence of small amplitude solutions in $X_T$.}
% $(T> 0)$ with low regularity.}
%With the notations above,
For the problem {\bf (PT)} we are able to show that if
\begin{equation*}
%\label{ }
\|wf_0\|_{L^1_k L^2_v}:=\int_{\Z^3} \|w\hat{f_0}(k,\cdot)\|_{L^2_v}\,d\Sigma(k)
\end{equation*}
is suitably small and $F_0(x,v)=\mu+\mu^{\frac{1}{2}}f_0(x,v)\geq 0$, then {\bf (PT)} admits a unique global-in-time mild solution $f(t,x,v)$ such that $F(t,x,v)=\mu+{\mu}^{\frac{1}{2}} f(t,x,v)\geq 0$, and $\|f\|_{X^w_T}$  is uniformly bounded for all $T>0$.  We prove similar results for the problem {\bf (PC)} with physical boundaries when the solution space is replaced by
$$
\|f\|_{X^w_T}+{\|\na_xf\|_{X^w_T}}.
$$
We will give the precise statements for all of these results in the next section.

%\item
\medskip
\noindent$\bullet$ {\it Large-time behavior of solutions.}  To obtain the rate of convergence, under the hypothesis {\bf (H)}, associated with the velocity weight function $w=w_{q,\vth}(v)$ and $\ga$, we define a parameter $\kappa$ in the Landau case as
\begin{equation}
\label{def.kap1}
\kappa=\left\{
\begin{array}{ll}
 1&\text{ for $q= 0$, $-2\leq \ga\leq 1$},\\[3mm]
 \dis \frac{\vth}{\vth+|\ga+2|} &\text{ for $q>0$, $-3\leq \ga<-2$},
\end{array}\right.
\end{equation}
and in the non-cutoff Boltzmann case we define
\begin{equation}
\label{def.kap2}
\kappa=\left\{
\begin{array}{ll}
 1&\text{ for $q= 0$, $ \ga+2s\geq 0$},\\[3mm]
 \dis \frac{\vth}{\vth+|\ga+2s|} &\text{ for $q>0$, $-3<\ga<-2s$}, ~\vth =1.
\end{array}\right.
\end{equation}
Note that $\kappa\in (0,1]$, and $0< \vth \le 2$ in \eqref{q}.  Additionally we use $\vth =1$ only in the Boltzmann case as in \eqref{q} and \eqref{def.kap2} because of Lemma \ref{bnp.es}  from \cite{DLYZ-VMB}.

We then are able to show that the obtained solutions decay in time as follows:
\begin{equation*}
%\label{ }
\|f(t)\|_{L^1_kL^2_v}\lesssim e^{-\la t^\kappa},
\end{equation*}
 for the problem {\bf (PT)}, and
 \begin{equation*}
%\label{ }
\|f(t)\|_{L^1_{\bar{k}}L^2_{x_1,v}}+\|\na_x f(t)\|_{L^1_{\bar{k}}L^2_{x_1,v}}\lesssim e^{-\la t^\kappa}
\end{equation*}
for the problem {\bf (PC)},  where $\la>0$ is a suitably small uniform constant that is independent of time.\\

%\item
\noindent$\bullet$  {\it Propagation of spatial regularity.} For the problem {\bf (PT)}, we will establish the propagation of spatial regularity of solutions. Specifically, we are able to show that if the initial data $f_0$ additionally satisfies that $\|\langle \na_x\rangle^m f_0\|_{L^1_kL^2_v}$ is small enough
with $m\geq 0$, then the solution $f(t,x,v)$ satisfies that  $\|\langle \na_x\rangle^m f\|_{L^1_kL^\infty_TL^2_v}$ is bounded uniformly for all $T>0$, where $\langle \na_x\rangle^m f=\CF_x^{-1} (\langle k\rangle^m \hat{f})$. For the problem {\bf (PC)}, similar results also hold true but instead they regard the propagation of $\bar{x}$-regularity, namely, along the tangential direction of $x_1\in (-1,1)$, and we will give the precise statement of these results in the next section.

%\newpage
\subsection{Motivation of the current work}

The global-in-time existence theory in the perturbation framework for the Landau equation and the Boltzmann equation without angular cutoff has been well established  for suitably small initial data in smooth Sobolev spaces. Particularly, for the Landau equation in a periodic box with $\ga\geq -3$ including the physical Coulomb potential, Guo \cite{Guo-L} gave the first result on the construction of global classical solutions near Maxwellian with $f(t,x,v)\in L^\infty (0,\infty; H^8_{x,v})$. As pointed out in \cite[page 393]{Guo-L}, such a high Sobolev norm is a standard choice due to the nonlinearity of the Landau operator since the $L^\infty$ norm in $x$ is easily controlled by the Sobolev embedding and then a high Sobolev norm of a product is bounded by a product of the same norms. For the Boltzmann equation without angular cutoff in a periodic box, Gressman-Strain \cite{Gra-Pr} first constructed the global small-amplitude solution $f(t,x,v)$ in $L^\infty (0,\infty; L^2_vH^2_x)$ for hard potential case $\ga+2s\geq 0$ (including $\ga+2s\geq -\frac{3}{2}$) and in $L^\infty (0,\infty; H^4_{x,v})$ for the general soft potential case $\ga+2s<0$. The similar result was also independently obtained by AMUXY  in their series of works (cf.~\cite{AMUXY-2011-CMP,AMUXY-2012-JFA}, for instance) in case of the whole space. A key point in those well-known works is to characterize the dissipation property in the $L^2$ norm in $v$ for the linearized self-adjoint operator (cf.~\cite{ADVW,MS}) and further carry out the energy estimates by controlling the trilinear term in an appropriate  way.

Regarding the solution space, a natural question is to ask whether or not it is possible to construct the global solutions $f(t,x,v)$ in some function spaces with much lower regularity in space and velocity.  In fact, we remark that the local existence with mild regularity for the Boltzmann equation with or without cutoff was discussed in AMUXY \cite{AMUXY13KRM} in the function space $L^\infty (0,T_\ast; L^2_vH^s_x)$ with $s>3/2$ and some $T_\ast>0$ for even large initial data. Note that if one uses the Sobolev space $L^2_vH^s_x$ it seems necessary to require $s>3/2$ in order to obtain the $L^\infty$ bound in $x$ by the imedding. Motivated by \cite{AMUXY13KRM} as well as \cite{SS}, for the Boltzmann equation with angular cutoff in the whole space, Duan-Liu-Xu \cite{DLX-2016} found a Chemin-Lerner-type space $\widetilde{L}_T^\infty B^{3/2}_x L^2_v$ (cf.~\cite{CL} and \cite{BCD}) such that the solution of small-amplitude can be bounded in this function space uniformly in time $T>0$, where $B_x^s=B^s_{2,1} (\R^3_x)$ for  $s\geq 0$  denotes the usual Besov space in $x$, and $f(t,x,v)\in \widetilde{L}_T^\infty B^{s}_x L^2_v$ means
\begin{equation*}
%\label{ }
\left\|\left(2^{qs} \sup_{0\leq t\leq T} \|\Delta_qf(t,\cdot,\cdot)\|_{L^2_{x,v}}\right)_{q\geq -1}\right\|_{\ell_q^1}=\sum_{q\geq -1}2^{qs} \sup_{0\leq t\leq T} \|\Delta_qf(t,\cdot,\cdot)\|_{L^2_{x,v}}<\infty.
\end{equation*}
Above the $\Delta_q$ are the standard frequency projection operators in Fourier space.
A key observation is that the $L^\infty$ norm in $x$ still can be controlled by the embedding $B_x^{3/2}\subset L^\infty_x$ in three dimensions. One feature of the Chemin-Lerner-type space is that the supremum over $0< t< T$ is taken before the $\ell_q^1$ norm, and this feature plays a role in controlling the quadratically nonlinear term in terms of the chosen energy functional and the dissipation rate functional. Later, Morimoto-Sakamoto \cite{MS-2016-JDE} carried over the same idea to treat the Boltzmann equation without angular cutoff for the hard potentials $\ga+2s\geq 0$, and Duan-Sakamoto \cite{DSa} further studied the case of soft potentials $\ga+2s<0$. We remark that it seems still true that those results in \cite{DSa,MS-2016-JDE}   can be also obtained for the Landau equation with $\ga\geq -3$ including the Coulomb potential.

On the other hand, we notice that the space $L^\infty_{x,v}$ with suitable velocity weight seems the simplest one with mild regularity in space and velocity variables for directly treating the nonlinearity to obtain the global existence; see an $L^2\cap L^\infty$ approach by Ukai-Yang \cite{UY} based on spectral theory and the decomposition $L=-\nu+K$ (cf.~\cite{Gra,Gra-Pr}) for the Boltzmann equation with cutoff hard potentials in the whole space. In fact, for the Boltzmann equation with cutoff hard potentials in the  torus or even in a general bounded domain, Guo \cite{Guo-2010} developed a mathematical theory of global existence of small-amplitude $L^\infty_{x,v}$ solutions by an $L^2$-$L^\infty$ interplay method. Since then, there have been extensive generalizations and further developments of the Boltzmann global existence theory in the angular cutoff setting, for instance, \cite{BG-JDE,Cao2019,DHWY,DW-2019,GKTT-Invention,GL-2017,Kim-2011-CMP,KL-2018-CPAM,LY-2016, Ni}. Particularly, motivated by \cite{Guo-2010} and \cite{DHWY}, Nishimura \cite{Ni} provided an interesting result on global existence of the angular cutoff Boltzmann equation in the space $L^p_vL^\infty_{t,x}$ with suitable velocity weight for $1<p\leq \infty$; this work extends to the case of finite $p<\infty$. The case of $p=2$ is of its own interest because this means that it may no longer be necessary to study the interplay between $L^2_v$ and $L^p_v$ if $p\neq 2$.

%Notice that those mentioned works on global solutions with $L^\infty$ norm in $x$ are concerned with the Boltzmann equation with angular cutoff. It has remained a challenging problem to treat the $L^\infty_{x,v}$ global theory for the angular non-cutoff Boltzmann or Landau equation, since the collision operator in both cases exposes the velocity diffusion property such that the $L^\infty$ norm in $x$ of solutions is too hard to obtain on the basis of the characteristic approach as in \cite{Guo-2010}.

However, so far there are not many results on the  global  existence theory in $L^\infty_{x,v}$  for the angular non-cutoff Boltzmann equation. The main reason is that the collision operator in the non-cutoff case exposes the fractional velocity diffusion property so that it is very difficult to apply the characteristic approach as in \cite{Guo-2010} to obtain the $L^\infty$ bounds in $x$ for the solutions. The same situation occurs to the Landau equation with velocity diffusion. Recently Kim-Guo-Hwang \cite{KGH} developed an $L^2$ to $L^\infty$ approach to the Landau equation in the torus domain, where initial data  are required to be small in $L^\infty_{x,v}$ but additionally belong to $H^1_{x,v}$. Since the Sobolev embedding can no longer be used, a key point of the proof in \cite{KGH} is to control the $L^\infty$ bound by the $L^2$ estimates via De Giorgi's method \cite{GIMV}, and also to control the velocity derivatives to ensure uniqueness by the H{\"o}lder estimates again via De Giorgi's method \cite{GIMV}.   The De Giorgi's method applied to the Landau equation has been recently developed in Golse-Imbert-Mouhot-Vasseur \cite{GIMV}. Notice that for the Boltzmann equation without cutoff, the regularity issue has also been  studied in many recent works, for instance, Silvestre \cite{Sil}, Imbert-Silvestre \cite{IS}, Imbert-Mouhot-Silvestre \cite{IMS}, and Chen-Hu-Li-Zhan \cite{CHLZ}. See also a recent survey by Mouhot \cite{Mou} and references therein.

%It seems still unclear whether or not it is possible to treat by applying such regularity theories the global existence of solutions with mild regularity for the non-cutoff Boltzmann equation as in the way of \cite{KGH} mentioned before.

Moreover, we observe that there recently have been several research works to apply the Wiener algebra $A(\Omega)$, or in the notation of this paper $L^1_k$, to study the global-in-time existence and gain of analyticity for solutions to some evolution equations with diffusions, for instance, Constantin-C\'ordoba-Gancedo-Strain \cite{CCGS-13}, Constantin - C{\'o}rdoba - Gancedo - Rodr\'{i}guez-Piazza - Strain \cite{CCGRS-16}, Patel-Strain \cite{PS-17}, Gancedo - Garc\'{i}a-Ju\'{a}rez - Patel - Strain \cite{GGPS}, Liu-Strain \cite{LS}, Granero-Belinch{\'o}n - Magliocca \cite{1804.09645}, and Ambrose \cite{Am}. See also a recent work by Lei-Lin \cite{LL} for the global well-posedness of mild solutions to the three-dimensional,
incompressible Navier-Stokes equations if the initial data satisfy $\na_x f_0\in L^1_k$. Here a function $f(x)$ with $x\in \T^3$ is in $L^1_k$ if its Fourier transform $\hat{f}(k)$ is summable in $k\in \Z^3$, namely,
\begin{equation*}
%\label{ }
\int_{\Z^3_k} |\hat{f}(k)|\,d\Si (k)=\sum_{k\in \Z^3_k} |\hat{f}(k)|<\infty.
\end{equation*}
It is obvious to see that for two functions $f(x)$ and $g(x)$ with $x\in \T^3$,
\begin{equation*}
%\label{ }
\|fg\|_{L^1_k}=\int_{\Z^3}d\Si (k)\, |\widehat{fg}(k)| =\int_{\Z^3_k}d\Si (k)\,\int_{\Z^3_l}d\Si (l)\, |\hat{f}(k-l)\hat{g} (l)|\leq  \|f\|_{L^1_k}\|g\|_{L^1_k},
\end{equation*}
%\begin{eqnarray}
%\|fg\|_{L^1_k} & = & \int_{\Z^3}d\Si (k)\, |\widehat{fg}(k)| \\
% & = & \int_{\Z^3}d\Si (k)\, |\hat{f}\ast_k\hat{g} (k)|\\
% &=&\int_{\Z^3_k}d\Si (k)\,\int_{\Z^3_l}d\Si (l)\, |\hat{f}(k-l)\hat{g} (l)|\\
% &\leq & \|f\|_{L^1_k}\|g\|_{L^1_k},
%\end{eqnarray}
where we have used Fubini's Theorem in the last inequality. Thus using this Banach algebra property one may expect that the $L^1_k$ norm can play a similar role to the $L^\infty_x$ norm when studying the Landau equation or the non-cutoff Boltzmann equation. Note that, as pointed out in \cite{Am}, the space $L^1_kL^\infty_T$ is also a Banach algebra, where $f=f(t,x)$ with $t\geq 0$ and $x\in \T^3$ is in $L^1_kL^\infty_T$ if
\begin{equation*}
%\label{ }
\int_{\Z^3} \sup_{0\le t\le T} |\hat{f}(t,k)|\,d\Si (k)<\infty.
\end{equation*}
Those observations together with the aforementioned works \cite{DLX-2016,DSa, MS-2016-JDE} motivated us to introduce  the function space $X_T=L^1_kL^\infty_TL^2_v$ with the norm $\|\cdot\|_{X_T}$  as defined previously in \eqref{def.XT}.

%\newpage
\subsection{Related literature}

In what follows we recall some known results on the Landau and Boltzmann equations with a focus on the topics under consideration in this paper, particularly on global existence and large-time behavior of solutions to the spatially inhomogeneous equations in the perturbation framework. For global solutions to the renormalized equation with large initial data, we mention the classical works by DiPerna-Lions \cite{DiPLi89,DiPLi88}, Lions \cite{Li94}, Villani \cite{Vil98,Vil96}, Desvillettes-Villani \cite{DV}, and Alexandre-Villani \cite{AVil}.

When the spatial domain is either the whole space or a torus, we mention Ukai \cite{Uk,Ukai-86}, Caflisch \cite{Caf1,Ca-1980}, Guo \cite{Guo-03,GuoIU04}, Liu-Yang-Yu \cite{LYY}, and Strain-Guo \cite{SG-CPDE,SG-08-ARMA} for the Boltzmann equation with cutoff. In the non-cutoff case, besides those works \cite{AMUXY-2011-CMP,AMUXY-2012-JFA,GS}  mentioned before, we also mention the recent works by Alonso-Morimoto-Sun-Yang \cite{AMSY}, He-Jiang \cite{HeJi}, and H\'erau-Tonon-Tristani \cite{HTT} for global existence and large-time behavior of solutions with the polynomial velocity weight for the Boltzmann equation in the torus, as well as Carrapatoso-Mischler \cite{CM} in the case of the Landau equation. Here, \cite{AMSY,CM,HTT}  are motivated by the recent work Gualdani-Mischler-Mouhot \cite{GMM} in the case of the cutoff Boltzmann equation in the torus, while \cite{HeJi} is based on a new energy-entropy approach in terms of quasi-linear equations.

Moreover, for the large-time behavior of solutions in soft potential cases, particularly for the optimal time-decay rates, we mention Caflisch \cite{Ca-1980}, Strain-Guo \cite{SG-CPDE,SG-08-ARMA}, and Strain \cite{Str12}, see also Sohinger-Strain \cite{SS}, in different settings.  We also mention the recent numerical study on the possible sharp $2/3$ rate of  large time decay for the Landau equation with Coulomb interactions in Bobylev-Gamba-Zhang \cite{MR3670754};  we obtain this decay rate in \eqref{Torus.Time.Decay.Est}, \eqref{ifdec} and \eqref{srdec} with \eqref{def.kap1} when
$\gamma = -3$ and $\vth = 2$.

Since one part of this paper is concerned with the boundary value problem on the Landau and Boltzmann equations, we would also make some comments on the related known results in perturbation framework. We mainly focus on the case of the Boltzmann equation, as it seems there are few results in case of the Landau equation with boundaries. In fact,  the mathematical study for the the boundary value problems of the Boltzmann equation can date back to 1960s. Cercignani \cite{Cer-67,Cer-68} proved the existence and uniqueness for the inflow boundary value problem of the discrete time linearized Boltzmann equation with angular cutoff in two parallel plates, which was immediately extended to the nonlinear case by Pao \cite{Pao-67}. With the aid of these approaches as well as the work by Caflish \cite{Ca-1980}, Esposito-Lebowitz-Marra \cite{ELM-94,ELM-95} studied the hydrodynamic limits of the stationary Boltzmann equation with diffuse reflection boundary condition in a slab. For the general bounded domains, Shizuta-Asano \cite{SA-77} in 1977 announced that the global existence and time decay rates to the equilibrium for the Boltzmann equation with the specular reflection boundary condition could be established by using the  Vidav's multi-iteration method \cite{Vi}. Almost ten years later, Ukai \cite{Ukai-86} in 1986 constructed the famous trace theorem
%(called Ukai's trace theorem now)
for the Boltzmann equation, which was then used by Hamdache \cite{Ha-92} to construct the normalized weak solutions for the Boltzmann equation with Maxwell boundary condition. Later on, Ukai's trace theorem was improved by Cercignani \cite{Cer-92} so that Hamdache's result
could be eventually extended to more general cases. As mentioned before, with the foundational work by Guo \cite{Guo-2010} on  the $L^2$-$L^\infty$ method, a great many of achievements have been made over the past decades in the study of the initial boundary value problems for the kinetic equations, particularly for the Boltzmann equation. For instance, Esposito-Guo-Kim-Marra \cite{EGKM-13} constructed a genuine non-equilibrium stationary solution. Briant-Guo \cite{BG-JDE} partially proved the stability of the Boltzmann equation with Maxwell boundary condition. Kim-Lee \cite{KL-2018-CPAM} removed the analytical boundary condition which was required in \cite{Guo-2010} for the study of the sole specular reflection boundary value problem by iterating the Duhamel's formula three times. Liu-Yang \cite{LY-2016} extended the results  in \cite{Guo-2010} to the cutoff soft potential case. Duan-Wang \cite{DW-2019} showed the global well-posedness for a class of large-amplitude initial data. Guo-Liu \cite{GL-2017} constructed the global existence for the Boltzmann equation with specular reflection boundary condition around the polynomial initial data differing from a global equilibrium Maxwellian. Esposito-Guo-Kim-Marra \cite{EGKM-18-AP} and  Duan-Liu \cite{DL-2018} studied the hydrodynamic limits of the Boltzmann equation in bounded domains based on an improved $L^2$-$L^\infty$ method.

The main difficulty to study the initial boundary value problems for the Boltzmann equation is that there should be a singularity at the boundary \cite{GKTT-Invention}, and actually this singularity may propagate to the interior of the domains \cite{Kim-2011-CMP}. In this sense, unlike the Cauchy problem, it is quite hard to establish the global existence in Sobolev spaces for the Boltzmann equation with physical boundary. Thus, although the well-posedness for the Cauchy problem of the Landau equation \cite{Guo-L, SG-08-ARMA} and non-cutoff Boltzmann equation \cite{AMUXY-2012-JFA,GS} have already been established several years ago, the boundary value problem for either the Landau equation \cite{Guo-L} or the non-cutoff Boltzmann equation is left as a big open problem. Our results in this paper seem to be the first ones concerning this issue.

 \subsection{Organization of the paper} The rest of this paper is organized as follows. In Section \ref{sec2}, we give the precise statements of the main results of the paper regarding the global existence, large-time behavior and propagation of spatial regularity of solutions to the problems {\bf (PT)} and {\bf (PC)}. In Section \ref{sec3}, we present our strategy of  proof by showing the uniform a priori estimates without any velocity weight for both the non-cutoff Boltzmann equation and the Landau equation in the torus. Motivated by our  strategy of proof, Section \ref{sec4} is concerned with the trilinear estimates with velocity weights in the corresponding function spaces. Section \ref{sec5} is then devoted to establishing the macroscopic estimates both in the torus case and in the finite channel case. In Sections \ref{sec6} and \ref{sec7}, we will give the proof of the main results in the case of a torus and a finite channel, respectively. In Section \ref{sec8}, we explain the local-in-time existence of mild solutions for completeness. Then at the end of the paper, Appendix \ref{sec.app} includes some basic lemmas that are used in the previous sections.

%\newpage
\section{Main results}\label{sec2}

\subsection{Notations}

To state the main results in this section, we will now introduce  some more notation. Recall that to characterize the energy functional for the problem {\bf (PT)} or {\bf (PC)}, we have introduced  the function space
\begin{equation*}
%\label{ }
X_T=L^1_k L^\infty_T L^2_v\text{ or }L^1_{\bar{k}} L^\infty_T L^2_{x_1,v},
\end{equation*}
respectively, as well as the velocity weighted space $X_T^w$ as in \eqref{def.xtw}, where the velocity weight $w=w_{q,\vth}(v)$ is defined in \eqref{def.w} under the assumption {\bf (H)} as in \eqref{q}. In what follows, we further define the corresponding energy dissipation rate functionals.  In this section, and in the rest of the paper we will use $f$, $g$ and $h$ as generic smooth real valued functions in our estimates, when $f$ is not being used as the solution to an equation such as \eqref{LLeq}.  Then since we are taking the Fourier transform we will also use the standard complex conjugate as $\bar{f}$.
%We denote the (weighted) $L^2$ norms over $\Omega\times \R^3$ as
%\begin{equation}
%\label{ }
%\left\|g\right\|^{2}=\int_{\Omega}\int_{\R^3}|g|^2dvdx,\quad \left\|w_{q,\vth}g\right\|^{2}=\int_{\Omega}\int_{\R^3}w^{2}_{q,\vth}|g|^2dvdx.
%\end{equation}
%\begin{equation*}
%\begin{split}
%&\left|g\right|_{2}^{2}=\int_{{\R}^{3}}g\overline{g}dv,\ \
%\left\|g\right\|^{2}=\int_{I\times {\R}^{3}}g \overline{g}dxdv,\\
%&\left|w_{q,\vth}g\right|_{2}^{2}=\int_{{\R}^{3}}w^{2}_{q,\vth}g\overline{g}dv,\ \
%\left\|w_{q,\vth}g\right\|^{2}=\int_{I\times {\R}^{3}}w^{2}_{q,\vth}g \overline{g}dxdv,
%\end{split}
%\end{equation*}
%here $\overline{g}$ is complex conjugate of $g$ with respect to the frequency of the variable $y$.
Now, for the Landau equation, we recall the Landau kernel in \eqref{def.Lker}.  Then we define
$$
\sigma^{jm}=\sigma^{jm}(v)=\int_{{\R}^{3}}\psi^{jm}(v-u)\mu(u)du.
$$
It is convenient to define the following velocity weighted $D$-norm: %to characterize the dissipation rate for the linearized Landau equation
$$
\left|w_{q,\vth}f\right|_{D}^{2}=\sum\limits_{j,m=1}^3
\int_{{\R}^{3}}w^{2}_{q,\vth}\left\{\sigma^{jm}\partial_{v_j}f\partial_{v_m}\bar{f}
+\frac{1}{4}\sigma^{jm}v_{j}v_{m}f\bar{f}\right\}dv.
$$
In the case of the finite channel, we also define
\begin{equation}
\label{def.fc.disDL}
\left\|w_{q,\vth}f\right\|_{D}^{2}=\sum\limits_{j,m=1}^3
\int_{I} \int_{\R^3}w^{2}_{q,\vth}\left\{\sigma^{jm}\partial_{v_j}f\partial_{v_m}\bar{f}
+\frac{1}{4}\sigma^{jm}v_{j}v_{m}f\bar{f}\right\}\,dvdx_1,
\end{equation}
by including an extra integration in $x_1\in I=(-1,1)$.
For the case of the non-cutoff Boltzmann equation, we define accordingly
\begin{multline}
|w_{q,\vth}f|_{D}^2=
\label{def.fc.nospace.disDB}
\\
\int_{\R_v^3}\int_{\R_u^3}\int_{\S^2}B(v-u,\sigma)w^{2}_{q,\vth}(v)\mu(u)(f(v')-f(v))\overline{(f(v')-f(v))} \,d\si du dv
\\
+
\int_{\R_v^3}\int_{\R_u^3}\int_{\S^2}B(v-u,\sigma)w^{2}_{q,\vth}(v)f(u)\overline{f(u)}\left(\mu^{\frac{1}{2}}(v')-\mu^{\frac{1}{2}}(v)\right)^2 \,d\si du dv,
\end{multline}
and in the presence of the spatial variable in the finite channel we have
%\begin{align}
%\begin{split}
\begin{multline}
\|w_{q,\vth}g\|_{D}^2=
\\
%\int_{I\times{\R}^6\times {
%\S}^2}B(v-u,\sigma)w^{2}_{q,\vth}(v)\mu(u)(g'-g)\overline{(g'-g)}du dvdx d\sigma\\&+
%\int_{I\times{\R}^6\times {\S}^2}B(v-u,\sigma)w^{2}_{q,\vth}(v)g(u)\overline{g}(u)\left(\mu^{\frac{1}{2}}(u')-\mu^{\frac{1}{2}}(u)}\right)^2du
%dv dxd\sigma.
\int_I\int_{\R_v^3}\int_{\R_u^3}\int_{\S^2}B(v-u,\sigma)w^{2}_{q,\vth}(v)\mu(u)(f(v')-f(v))\overline{(f(v')-f(v))}  \,d\si du dvdx_1%\notag
\\
+
\int_I\int_{\R_v^3}\int_{\R_u^3}\int_{\S^2}B(v-u,\sigma)w^{2}_{q,\vth}(v)f(u)\overline{f(u)}\left(\mu^{\frac{1}{2}}(v')-\mu^{\frac{1}{2}}(v)\right)^2 \,d\si du dv dx_1.\label{def.fc.disDB}
\end{multline}
%\end{split}
%\end{align}
Then, corresponding to the energy functional $X_T^w$, we define the weighted dissipation rate functionals:
\begin{equation*}
\left\|w_{q,\vth}f\right\|_{L^1_{{k}}L^2_TL^2_{v,D}}
=\int_{\Z_{k}^3}\left(\int_0^T\left|w_{q,\vth}\CF_xf(t,{k})\right|_D^2dt\right)^{1/2}d\Si({k}),
\end{equation*}
and
\begin{equation*}
\left\|w_{q,\vth}f\right\|_{L^1_{\bar{k}}L^2_TL^2_{x_1,v,D}}
=\int_{\Z_{\bar{k}}^2}\left(\int_0^T\left\|w_{q,\vth}\CF_{\bar{x}}f(t,\bar{k})\right\|_D^2dt\right)^{1/2}d\Si(\bar{k}),
\end{equation*}
for the torus and finite channel domains, respectively. In the case of a finite channel, we need to include an extra first-order derivative in $x$, and thus we define the total energy functional and energy dissipation rate functional respectively as
\begin{equation}
%\mathcal {E}_T(f)\sim\sum\limits_{|\al|\leq1}
%\|\pa^{\al}f\|_{L^1_{\bar{k}}L^\infty_TL^2_{x_1,v}},\
\mathcal {E}_{T,w}(f)=
\sum\limits_{|\al|\leq1}\|w_{q,\vth}\pa^{\al}f\|_{L^1_{\bar{k}}L^\infty_TL^2_{x_1,v}},
\label{def.et}
\end{equation}
and
\begin{equation}
%\mathcal {D}_{T}(f)&=\sum\limits_{|\al|\leq1}\|\pa^{\al}(a,b,c)\|_{L^1_{\bar{k}}L^2_TL^2_{x_1}}+
%\sum\limits_{|\al|\leq1}\|\{\FI-\FP\}\pa^{\al}f\|_{L^1_{\bar{k}}L^2_TL^2_{x_1}L^2_{v,D}},\\
\mathcal{D}_{T,w}(f)
=\sum\limits_{|\al|\leq1}\|\pa^{\al}[a,b,c]\|_{L^1_{\bar{k}}L^2_TL^2_{x_1}}+
\sum\limits_{|\al|\leq1}\|w_{q,\vth}\{\FI-\FP\}\pa^{\al}f\|_{L^1_{\bar{k}}L^2_TL^2_{x_1,v,D}},
\label{def.dt}
\end{equation}
where $\pa^{\al}=\pa^{\al}_{x}=\pa_{x_1}^{\al_1}\pa_{x_2}^{\al_2}\pa_{x_3}^{\al_3}$ with $\al=(\al_1,\al_2,\al_3)$ a standard multi-index.  Further the macroscopic part $(a,b,c)$ is defined by \eqref{abcdef} below, and the norm $\|\cdot\|_{L^1_{\bar{k}}L^2_TL^2_{x_1}}$ in only in the $t$ and $x$ variables for a function $g=g(t,x)$  is understood in the same integration order as the norm $\|\cdot\|_{L^1_{\bar{k}}L^2_TL^2_{x_1,v,D}}$:
\begin{equation*}
%\label{ }
\|g\|_{L^1_{\bar{k}}L^2_TL^2_{x_1}}
=
\int_{\Z_{\bar{k}}^2}\left(\int_0^T \int_{-1}^{1} |\CF_{\bar{x}}g(t,\bar{k})|^2 dx_1 dt\right)^{1/2}d\Si(\bar{k}).
\end{equation*}
When $w\equiv 1$, we will use $\CE_T(f)$ and $\CD_T(f)$ for brevity to denote the norms \eqref{def.et} and \eqref{def.dt} without the weight \eqref{def.w}.

For any given $t\geq 0$, we define the following norms in $x$ and $v$:
\begin{equation*}
%\label{ }
\|w_{q,\vth}f(t)\|_{L^1_{{k}}L^2_{v}}=\int_{\Z_{k}^3}\|w_{q,\vth}\CF_{{x}}f(t,{k})\|_{L^2_v}\,d\Si({k}),
\end{equation*}
and
\begin{equation*}
%\label{ }
\|w_{q,\vth}f(t)\|_{L^1_{\bar{k}}L^2_{x_1,v}}=\int_{\Z_{\bar{k}}^2}\|w_{q,\vth}\CF_{\bar{x}}f(t,\bar{k})\|_{L^2_{x_1,v}}\,d\Si(\bar{k}).
\end{equation*}
The corresponding high-order norms are defined by
\begin{equation*}
%\label{ }
\|w_{q,\vth}f(t)\|_{L^1_{{k,m}}L^2_{v}}=\int_{\Z_{k}^3}\langle k\rangle^m\|w_{q,\vth}\CF_{{x}}f(t,{k})\|_{L^2_v}\,d\Si({k}),
\end{equation*}
and
\begin{equation*}
%\label{ }
\|w_{q,\vth}f(t)\|_{L^1_{\bar{k},m}L^2_{x_1,v}}=\int_{\Z_{\bar{k}}^2}\langle \bar{k}\rangle^m\|w_{q,\vth}\CF_{\bar{x}}f(t,\bar{k})\|_{L^2_{x_1,v}}\,d\Si(\bar{k}),
\end{equation*}
where $m$ is an integer.

For the inflow boundary value problem in the finite channel case, we also define the following norms to capture the boundary effect of the given functions $g_{\pm}$:
\begin{multline}
E_{\bar{k}}(\widehat{g_{\pm}})
=
\sum\limits_{\pm}\int_0^T\int_{\pm v_1<0}|v_1|^{-1}|\widehat{\pa_t g_\pm}|^2dvdt
+\sum\limits_{\pm}\int_0^T\int_{\pm v_1<0}|v_1|^{-1}|\bar{k}\cdot\bar{v}|^2|\widehat{g_\pm}|^2dvdt
\\
+\sum\limits_{\pm}\int_0^T\int_{\pm v_1<0}|v_1|^{-1}|L\widehat{g_\pm}|^2dvdt
+\sum\limits_{\pm}\int_0^T\int_{\pm v_1<0}|v_1|^{-1}|\Ga(\widehat{g_\pm},\widehat{g_\pm})|^2dvdt
\\
+\sum\limits_{\pm}\int_0^T\int_{\pm v_1<0}|v_1|(1+|\bar{k}|^2)|\widehat{g_\pm}|^2dvdt,
\label{Enormk}
\end{multline}
and
\begin{equation}\label{def.Enorm}
\begin{split}
E(\widehat{g_{\pm}})=\int_{\Z^2_{\bar{k}}}\sqrt{E_{\bar{k}}(\widehat{g_{\pm}})}d\Si(\bar{k}).
\end{split}
\end{equation}
%where $\prod_+=\{v\in \R^3,v_1<0\}$ and $\prod_-=\{v\in \R^3,v_1>0\}.$\\

\noindent{\it Other notations.}
Throughout this paper,  $C$ denotes a generic positive (generally large) uniform constant and $\la$ denotes
a generic positive (generally small) uniform constant, where both $C$ and $\la$
may take different values in different places. $A\lesssim B$ means that  there is a generic constant $C>0$
such that $A\leqslant CB$. $A\sim B$ means $A\lesssim B$ and $B\lesssim A$.  We also typically use $\eta>0$ to denote a constant that can be made arbitrarily small.

% We use $\|\cdot\|$ to
%denote norm of the usual Hilbert spaces $L^2=L^2_{x_1,v}$ or $L^2_{x_1}$ over the complex numbers, i.e. $\|f\|^2=\int_{I\times\R^3}f\bar{f}dvd{x_1}$ or
%$\|f\|^2=\int_{I}f\bar{f}d{x_1}$, and use $|\cdot|_2$ to stand for the norm of $L_v^2.$
%We also use $\lag \cdot,\cdot\rag$ to denote the inner product of $L^2_v$ and use $(\cdot,\cdot)$
%to represent the one of $L^2_{x_1,v}$ or $L^2_{x_1}$. Note that all the inner products in this paper are over the complex numbers.
%We use $H^{m}_{x_1}$ to denote the usual Sobolev space.

\subsection{Case of the Torus}

To state the main results, we are first concerned with the problem {\bf (PT)} for the Landau equation or the non-cutoff Boltzmann equation in torus.

\begin{theorem}[Existence and large-time behavior]\label{Torus Existence}
Let $\Omega=\T^3$. Assume that $f_0(x,v)$ satisfies \eqref{pt.id.cl.1}, \eqref{pt.id.cl.2} and \eqref{pt.id.cl.3}. Let $w_{q,\vartheta}$ be chosen under the assumption {\bf (H)} in \eqref{q}.  There is $\epsilon_0>0$  such that if $F_0(x,v)=\mu+\mu^{\frac{1}{2}}f_0(x,v)\ge 0$ and
\begin{align*}
\Vert w_{q,\vartheta} f_0\Vert_{L^1_k L^2_v} \le \epsilon_0,
\end{align*}
then there exists a unique global mild solution $f(t,x,v)$, $t>0$, $x\in \mathbb{T}^3$, $v\in \mathbb{R}^3$ to the problem {\bf (PT)} \eqref{LLeq} and \eqref{idf} for the Landau equation or the non-cutoff Boltzmann equation, satisfying that  $F(t,x,v)=\mu+\mu^{\frac{1}{2}}f(t,x,v)\ge 0$ and
\begin{align}
\|w_{q,\vartheta}f\|_{L^1_kL^\infty_TL^2_v}+\|w_{q,\vartheta}f\|_{L^1_kL^2_TL^2_{v,D}}\lesssim
\Vert w_{q,\vartheta} f_0\Vert_{L^1_k L^2_v},\label{Torus Existence.uet}
\end{align}
for any $T>0$.
Moreover, let $\kappa\in (0,1]$ be defined in \eqref{def.kap1} or \eqref{def.kap2} for the Landau case or the non-cutoff Boltzmann case, respectively, then there is $\lambda>0$ such that the solution also enjoys the time decay estimate
\begin{align}\label{Torus.Time.Decay.Est}
\Vert f(t)\Vert_{L^1_k L^2_v} \lesssim e^{-\lambda t^\kappa} \Vert w_{q,\vartheta} f_0\Vert_{L^1_k L^2_v},
\end{align}
for any $t\geq 0$.
%where $p=\frac{\vth}{\vth-(\ga+2)}$ for the Landau case and $p=\frac{\vth}{\vth-(\ga+2s)}$
%for the non-cutoff Boltzmann case.
\end{theorem}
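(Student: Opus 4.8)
The plan is to prove Theorem \ref{Torus Existence} by a contraction-mapping/continuity argument carried out entirely after taking the spatial Fourier transform, so that the Banach-algebra property of $L^1_k$ replaces the usual Sobolev embedding $H^s_x \subset L^\infty_x$. Writing $\hat f(t,k,v) = \CF_x f(t,k,v)$, equation \eqref{LLeq} becomes, for each frozen $k\in\Z^3$, the linear-in-$v$ evolution
\begin{equation*}
\pa_t \hat f(t,k) + i (k\cdot v)\hat f(t,k) + L\hat f(t,k) = \CF_x\Ga(f,f)(t,k),
\end{equation*}
and the mild (Duhamel) formulation is the fixed-point problem $f = \Phi(f)$ with the solution operator of the linear part applied to the nonlinearity. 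The first step is to record the linear a priori estimate: testing the Fourier-transformed equation against $w_{q,\vth}^2\hat f$, summing the coercivity of $L$ in the $|\cdot|_D$ norm (from the references \cite{Guo-L,GS} and the modified-weight computations of Section \ref{sec4}), and using that the transport term $i(k\cdot v)$ is skew-adjoint in $L^2_v$ (so it contributes nothing to the real part), gives for each $k$
\begin{equation*}
\sup_{0<t<T}\|w\hat f(t,k)\|_{L^2_v}^2 + \la \int_0^T |w\hat f(t,k)|_D^2\,dt \lesssim \|w\hat f_0(k)\|_{L^2_v}^2 + \text{(nonlinear contribution at frequency } k).
\end{equation*}
After taking square roots and integrating $d\Si(k)$ over $\Z^3$, the left side becomes exactly the norm on the left of \eqref{Torus Existence.uet}.

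The second and central step is the trilinear estimate for the nonlinear term in these mixed norms, which I expect to be the main obstacle. One needs something of the schematic form
\begin{equation*}
\Big| \int_0^T \langle w^2 \CF_x\Ga(f,g)(k),\, \hat h(k)\rangle_{L^2_v}\,dt \Big|
\end{equation*}
controlled, after summing over $k$ via the convolution structure $\CF_x\Ga(f,g)(k) = \sum_{l}\Ga(\hat f(k-l),\hat g(l))$ and Fubini/Young in $k$, by a product like $\|wf\|_{L^1_kL^\infty_TL^2_v}\,\|wg\|_{L^1_kL^2_TL^2_{v,D}}\,\|wh\|_{L^1_kL^2_TL^2_{v,D}}$ (or a permutation thereof), so that the quadratic nonlinearity is absorbed by $\CE_{T,w}(f)\cdot\CD_{T,w}(f)$. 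This relies on the pointwise-in-$k$ weighted trilinear estimates for the Landau and non-cutoff Boltzmann operators — these are the content of Section \ref{sec4}, and in the soft-potential regime they are where the weight $w_{q,\vth}$ with $\vth\le 2$ (resp. $\vth=1$) and the restriction \eqref{q} genuinely enter, to compensate for the lack of a spectral gap and the velocity growth of $|v-u|^\gamma$. The cross-frequency interactions do not create any new difficulty because $\|\cdot\|_{L^1_k}$ of a discrete convolution is bounded by the product of the $L^1_k$ norms; the subtlety is purely in the velocity variable for each triple of frequencies.

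The third step is to close the argument: choosing $\epsilon_0$ small enough, the a priori bound plus the trilinear estimate shows that $\Phi$ maps a small ball of $X_T^w$ (with finite dissipation norm) into itself with a bound uniform in $T$, and a difference estimate of the same type — using bilinearity of $\Ga$ — shows $\Phi$ is a contraction, yielding existence and uniqueness of a global mild solution with \eqref{Torus Existence.uet}; the local existence needed to start is deferred to Section \ref{sec8}. Positivity $F = \mu + \mu^{1/2}f \ge 0$ is propagated from $F_0\ge 0$ along the mild formulation in the standard way (writing the gain/loss or diffusive splitting and using a nonnegativity-preserving iteration). Finally, for the time-decay \eqref{Torus.Time.Decay.Est}: on the zero mode $k=0$ the conservation laws \eqref{pt.id.cl.1}--\eqref{pt.id.cl.3} force $\P \hat f(t,0)=0$, so the full dissipation controls $\|\hat f(t,0)\|_{L^2_v}$; for $k\neq 0$ the transport term supplies the missing macroscopic control (a standard velocity-averaging / macro-micro decomposition at fixed $k$, as in Section \ref{sec5}). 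In the hard-potential case $q=0$ the dissipation norm dominates a fixed multiple of the $L^2_v$ norm and one gets exponential decay $e^{-\la t}$; in the soft-potential case one only has $|w f|_D^2 \gtrsim \|\langle v\rangle^{(\ga+2)/2} f\|^2$ (or the Boltzmann analogue), and interpolating this against the extra weighted regularity carried by $X_T^w$ — the classical Strain--Guo/Caflisch time-splitting argument \cite{SG-CPDE,Ca-1980} — produces the sub-exponential rate $e^{-\la t^\kappa}$ with $\kappa = \vth/(\vth+|\ga+2|)$ (resp. $\vth/(\vth+|\ga+2s|)$) exactly as in \eqref{def.kap1}--\eqref{def.kap2}; summing the resulting pointwise-in-$k$ decay in $\ell^1_k$ preserves the rate.
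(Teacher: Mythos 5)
Your overall route---Fourier transform in $x$, a per-mode energy estimate, taking square roots before integrating $d\Si(k)$, trilinear estimates exploiting the convolution and Banach-algebra structure of $L^1_k$, extra velocity-weighted estimates for soft potentials, and the Strain--Guo time-velocity splitting for the rate $e^{-\la t^\kappa}$---is essentially the paper's. However, there is a genuine gap in your Step 1 that propagates into Step 3. The coercivity of $L$ only yields $\mathscr{R}(L\hat f,\hat f)_{L^2_v}\geq \de_0|\{\FI-\FP\}\hat f|_D^2$ (Lemmas \ref{esLL} and \ref{esBL}), since $L$ annihilates its null space; hence the per-mode bound you state, with the \emph{full} dissipation $\int_0^T|w\hat f(t,k)|_D^2\,dt$ on the left, does not follow from testing the equation against $w^2\hat f$ alone. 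This matters because the trilinear estimate of your Step 2 (Lemma \ref{lem.tei}) controls the nonlinearity by $\|w f\|_{L^1_kL^\infty_TL^2_v}$ times the \emph{full} $L^1_kL^2_TL^2_{v,D}$ norm of the solution, not just its microscopic part; without the full dissipation on the left it cannot be absorbed, no matter how small $\eps_0$ is, and the ball-invariance/contraction in your Step 3 does not close. The missing ingredient is the macroscopic dissipation estimate, i.e.\ the bound on $\Vert[a,b,c]\Vert_{L^1_kL^2_T}$ in terms of the microscopic dissipation and the data (Theorem \ref{abcest.torus}, proved by the dual test-function argument on the fluid-type system \eqref{mac.law.torus} at each fixed $k$). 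In the paper this enters the \emph{existence} proof itself, not only the decay; you invoke macro--micro/velocity averaging only inside the decay paragraph, whereas it must be wired into the uniform-in-$T$ a priori estimate. Relatedly, in the weighted soft-potential case Lemma \ref{wgesL} gives $\mathscr{R}(L\hat f,w^2\hat f)_{L^2_v}\geq \de_q|w\hat f|_D^2-C|\hat f|^2_{L^2(B_R)}$, and the compact negative term has to be closed against the unweighted estimate (which again contains the macroscopic control), exactly as in the passage from \eqref{a priori torus} to \eqref{a priori torus weighted}.

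Two smaller points. The paper does not run a global Duhamel contraction; it combines the uniform a priori estimates with the local existence of Section \ref{sec8} and a continuity argument. Your fixed-point set-up could be made to work, but only after the macroscopic (hypocoercive) estimate is incorporated, since uniform-in-$T$ boundedness of the linear flow in $X^w_T$ with dissipation requires it for the same reason. Also, positivity cannot be obtained by a gain/loss splitting here, because there is no angular cutoff and the Landau operator has no such splitting; the paper gets $F\geq 0$ from the local existence construction, via the maximum principle in the Landau case and the argument of \cite{GS} in the non-cutoff Boltzmann case.
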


\begin{theorem}[Propagation of spatial regularity]\label{Torus Propagation}
Let all the conditions in Theorem \ref{Torus Existence} be satisfied, then for any integer $m\geq 0$, there is an $\eps_0>0$  such that if
\begin{align}\label{thm.tp.as}
\Vert w_{q,\vartheta} f_0\Vert_{L^1_{k,m} L^2_v} \le \epsilon_0,
\end{align}
then the solution $f(t,x,v)$ to \eqref{LLeq}-\eqref{idf} established in Theorem \ref{Torus Existence}  satisfies
\begin{align}
\int_{\Z^3} \langle k\rangle^m&\sup_{0\le t\le T} \Vert w_{q,\vartheta}\hat{f}(t,k)\Vert_{L^2_v} d\Sigma(k)\notag\\
&+\int_{\Z^3}\langle k\rangle^m \Big( \int^T_0 |  w_{q,\vartheta}\hat{f}(t,k)|_D^2 dt\Big)^{1/2}d\Sigma(k)
\lesssim
\Vert w_{q,\vartheta} f_0\Vert_{L^1_{k,m} L^2_v},\label{Torus.Propagation.Est}
%\lesssim \int_{\Z^3} \Vert w_{q,\vartheta} \langle k \rangle^m \hat{f}_0(k)\Vert_{L^2_v} d\Sigma(k).
\end{align}
for any $T>0$.
\end{theorem}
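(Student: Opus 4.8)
The plan is to run the same energy scheme that proves Theorem \ref{Torus Existence}, but now applied to the Fourier–multiplied quantity $\langle k\rangle^m\hat f(t,k,v)$, exploiting that the multiplier $\langle\na_x\rangle^m$ commutes with every linear operation in \eqref{LLeq}: the transport term (in Fourier just the fiberwise multiplication by $ik\cdot v$), the linearized operator $L$, and the velocity weight $w=w_{q,\vth}$. Note first that since $\langle k\rangle^m\ge 1$ one has $\|wf_0\|_{L^1_kL^2_v}\le\|wf_0\|_{L^1_{k,m}L^2_v}\le\epsilon_0$, so Theorem \ref{Torus Existence} applies and the solution $f$ together with the a priori bound \eqref{Torus Existence.uet} is at our disposal. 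Taking the Fourier transform in $x$, multiplying \eqref{LLeq} by $\langle k\rangle^m$ and pairing against $\langle k\rangle^m w^2\hat f$ fiberwise in $k$, one reproduces mode by mode exactly the linear estimates behind \eqref{Torus Existence.uet} (coercivity of $L$ in the $D$–norm and the macroscopic/spectral–gap estimates), since $\langle 0\rangle^m=1$ and $\langle k\rangle^m\ge1$ for all $k$. Writing
\begin{equation*}
\mathcal E_m(T):=\int_{\Z^3}\langle k\rangle^m\sup_{0\le t\le T}\|w\hat f(t,k)\|_{L^2_v}\,d\Si(k),\qquad
\mathcal D_m(T):=\int_{\Z^3}\langle k\rangle^m\Big(\int_0^T|w\hat f(t,k)|_D^2\,dt\Big)^{1/2}d\Si(k),
\end{equation*}
this step yields an inequality of the form $\mathcal E_m(T)+\mathcal D_m(T)\lesssim\|wf_0\|_{L^1_{k,m}L^2_v}+\mathcal N_m(T)$, where $\mathcal N_m(T)$ is the contribution of the nonlinear term $\Gamma(f,f)$ carrying the weight $\langle k\rangle^m$.

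The heart of the matter is to bound $\mathcal N_m(T)$. In Fourier variables the nonlinearity is the convolution $\widehat{\Gamma(f,f)}(t,k)=\int_{\Z^3}\Gamma(\hat f(t,k-l),\hat f(t,l))\,d\Si(l)$, and for $m\ge0$ one has the elementary inequality $\langle k\rangle^m\lesssim\langle k-l\rangle^m+\langle l\rangle^m$ (with implicit constant depending on $m$). Splitting accordingly, $\mathcal N_m(T)$ is controlled by two terms in each of which the weight $\langle\cdot\rangle^m$ sits on exactly one of the two copies of $f$; applying the weighted trilinear estimate of Section \ref{sec4} — with the unweighted slot estimated by $\|wf\|_{X^w_T}+\|wf\|_{L^1_kL^2_TL^2_{v,D}}$ and the weighted slot by $\mathcal E_m(T)+\mathcal D_m(T)$ — gives
\begin{equation*}
\mathcal N_m(T)\lesssim\big(\|wf\|_{X^w_T}+\|wf\|_{L^1_kL^2_TL^2_{v,D}}\big)\big(\mathcal E_m(T)+\mathcal D_m(T)\big).
\end{equation*}
By \eqref{Torus Existence.uet} the first factor is $\lesssim\|wf_0\|_{L^1_kL^2_v}\le\epsilon_0$, so for $\epsilon_0$ small the entire nonlinear contribution is absorbed into the left–hand side, producing the closed a priori estimate $\mathcal E_m(T)+\mathcal D_m(T)\lesssim\|wf_0\|_{L^1_{k,m}L^2_v}$, uniformly in $T>0$. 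Importantly the bound is linear in $\mathcal E_m+\mathcal D_m$, so no induction on $m$ is needed.

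Finally I would upgrade this a priori estimate to an actual bound on the solution of Theorem \ref{Torus Existence}: either run the fixed–point/iteration scheme of Section \ref{sec6} in the space weighted by $\langle k\rangle^m$ — the contraction is inherited because the weight enters only linearly and the nonlinear estimate above is bilinear with one small factor — obtaining a solution with finite $\mathcal E_m+\mathcal D_m$ which by uniqueness coincides with $f$; or use a continuity argument in $T$, starting from the local solution, which carries the $\langle k\rangle^m$ regularity by the local theory of Section \ref{sec8}, and propagating it with the uniform estimate just derived. Either route gives \eqref{Torus.Propagation.Est}. I expect the main obstacle to be the weighted trilinear estimate: one must verify that the estimates of Section \ref{sec4} are robust under inserting an extra Fourier weight on a single factor — in particular that the loss produced by $\langle k\rangle^m\lesssim\langle k-l\rangle^m+\langle l\rangle^m$ is genuinely harmless, and that the $D$–norm dissipation structure (and, in the Boltzmann case, the cancellation encoded in $\mu^{1/2}(v')-\mu^{1/2}(v)$) survives the split without creating uncontrolled high–velocity growth.
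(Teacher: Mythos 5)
Your proposal is correct and follows the same overall architecture as the paper's proof: a weighted energy identity obtained by pairing the Fourier-transformed equation against $\langle k\rangle^{2m}w_{q,\vth}^2\hat f$, combined with the $\langle k\rangle^m$-weighted version of the macroscopic estimate of Theorem \ref{abcest.torus} and a $\langle k\rangle^m$-weighted trilinear estimate, closed by smallness and a continuity argument. The one substantive point where you deviate is the treatment of the Fourier multiplier inside the convolution: the paper simply uses submultiplicativity $\langle k\rangle^m\le\langle k-l\rangle^m\langle l\rangle^m$, placing the weight on \emph{both} factors, which yields the quadratic bound $C_\eta\Vert w f\Vert_{L^1_{k,m}L^\infty_TL^2_v}\Vert wf\Vert_{L^1_{k,m}L^2_TL^2_{v,D}}$ in \eqref{propagation weighted} and therefore genuinely needs the smallness hypothesis \eqref{thm.tp.as} on the $m$-weighted data to absorb the nonlinearity. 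Your splitting $\langle k\rangle^m\lesssim\langle k-l\rangle^m+\langle l\rangle^m$ places the weight on only one slot, producing a bound that is linear in $\mathcal E_m+\mathcal D_m$ with a prefactor controlled by the \emph{unweighted} norms from \eqref{Torus Existence.uet}; this is a sharper closure, and it is essentially the improvement the authors allude to in Comment (c) of Section \ref{sec2} (where they note that smallness of the $m$-th order data should be removable) but choose not to pursue. Your version goes through: since $\langle k\rangle^m$ is independent of $v$, the split commutes with the velocity-weight structure of Lemma \ref{bnp.es} (including the asymmetric soft-potential Boltzmann estimate \eqref{bnpbl}), and the same split must also be applied inside the weighted macroscopic source term $(\langle k\rangle^m\hat H,\mu^{1/4})$, which you implicitly cover. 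The only caveat is that, as stated, the theorem's hypothesis already grants smallness of $\Vert wf_0\Vert_{L^1_{k,m}L^2_v}$, so either route suffices for \eqref{Torus.Propagation.Est}; your route simply uses less.
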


\subsection{Case of the Finite channel}

Next, we are concerned with the problem {\bf (PC)} for the Landau equation or the non-cutoff Boltzmann equation in the finite channel.

\begin{theorem}[Inflow boundary condition]\label{mthif}
Let $\Omega=I\times \T^2$. Let $w_{q,\vartheta}$ be chosen under the assumption {\bf (H)} in \eqref{q}. There are $\eps_0>0$ and $C>0$ such that if $F_0(x_1,\bar{x},v)=\mu+\mu^{\frac{1}{2}}f_0(x_1,\bar{x},v)\geq0$, $F(t,\pm1,\bar{x},v)=\mu+\mu^{\frac{1}{2}}g_\pm(t,\bar{x},v)\geq0$ for $v_1>0$ at $x_1=-1$ and $v_1<0$ at $x_1=1$, and
\begin{equation}\label{idcd.ifb}
\sum\limits_{|\al|\leq1}\|w_{q,\vth}\pa^\al f_{0}\|_{L_{\bar{k}}^1L^2_{x_1,v}}+E(w_{q,\vth}\widehat{g_\pm})\leq \eps_0,
\end{equation}
then there exists a unique mild solution $f(t,x_1,\bar{x},v)$ to the inflow boundary problem {\bf (PC)} \eqref{LLeq},
\eqref{idf} and \eqref{ifb} for the Landau equation or the non-cutoff Boltzmann equation,  satisfying that  $F(t,x_1,\bar{x},v)=\mu+\mu^{\frac{1}{2}}f(t,x_1,\bar{x},v)\geq0$, and
\begin{equation}\label{ifenges}
\CE_{T,w}(f)+\CD_{T,w}(f)\leq C \left\{\sum\limits_{|\al|\leq1}\|w_{q,\vth}\pa^{\al}f_0\|_{L_{\bar{k}}^1L^2_{x_1,v}}
+E(w_{q,\vth}\widehat{g_{\pm}})\right\},
\end{equation}
for any $T>0$, where $\CE_{T,w}(f)$, $\CD_{T,w}(f)$, and $E(w_{q,\vth}\widehat{g_{\pm}})$ are defined in \eqref{def.et}, \eqref{def.dt}, and \eqref{def.Enorm}, respectively.
%Moreover, let $p=\frac{\vth}{\vth-(\ga+2)}$ for the Landau case and $p=\frac{\vth}{\vth-(\ga+2s)}$
%for the non-cutoff Boltzmann case, if
Moreover, let $\kappa\in (0,1]$ be defined in \eqref{def.kap1} or \eqref{def.kap2} for the Landau case or the non-cutoff Boltzmann case, respectively, then there is $\la>0$ such that if
$$
E(w_{q,\vartheta}\widehat{g_\pm})
+\sup_{s> 0} E(e^{\la s^\kappa}\widehat{g_{\pm}})\leq\eps_0,
$$
for $\eps_0>0$ further small enough, then it holds that
\begin{multline}\label{ifdec}
\sum\limits_{|\al|\leq1}\|\pa^{\al}f(t)\|_{L^1_{\bar{k}}L^2_{x_1,v}}
\lesssim
e^{-\la t^{\kappa}}\sum\limits_{|\al|\leq1}\|w_{q,\vth}\pa^{\al}f_0\|_{L_{\bar{k}}^1L^2_{x_1,v}}
\\
+
e^{-\la t^{\kappa}}\left\{E(w_{q,\vartheta}\widehat{g_\pm})
+\sup_{s>0}E(e^{\la s^\kappa}\widehat{g_{\pm}})\right\},
\end{multline}
for any $t\geq0.$
\end{theorem}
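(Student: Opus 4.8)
The plan is to deduce \eqref{ifenges} from a $T$-uniform a priori estimate for $\CE_{T,w}(f)+\CD_{T,w}(f)$ together with the local-in-time existence of Section~\ref{sec8} and a continuation argument, and then to obtain \eqref{ifdec} by running the same scheme against a stretched-exponential time weight. For the a priori estimate I would Fourier transform \eqref{LLeq} in the tangential variable $\bar x\in\T^2$, so that for each $\bar k\in\Z^2$ the mode $\hat f=\hat f(t,x_1,\bar k,v)$ solves
\[\pa_t\hat f+v_1\pa_{x_1}\hat f+i(\bar k\cdot\bar v)\hat f+L\hat f=\widehat{\Ga(f,f)}(\bar k),\]
with $\hat f|_{x_1=-1,\,v_1>0}=\widehat{g_-}$ and $\hat f|_{x_1=1,\,v_1<0}=\widehat{g_+}$. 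Pairing this with $w_{q,\vth}^2\hat f$ in $L^2_{x_1,v}$, one finds that the multiplier $i(\bar k\cdot\bar v)$ is imaginary and drops out, the transport term produces the boundary flux $\frac{1}{2}\big[\int_{\R^3}v_1|w_{q,\vth}\hat f|^2\,dv\big]_{x_1=-1}^{x_1=1}$ whose outgoing part has a good sign and whose incoming part is exactly the last line of $E_{\bar k}(\widehat{g_\pm})$ in \eqref{Enormk}, and the collision term supplies, via coercivity of $L$ and Lemma~\ref{bnp.es}, the dissipation $|w_{q,\vth}\{\FI-\FP\}\hat f|_D^2$. The hydrodynamic dissipation $\|\pa^\al[a,b,c]\|_{L^1_{\bar k}L^2_TL^2_{x_1}}$ is then provided by the macroscopic estimates of Section~\ref{sec5} in the channel geometry (test-function and Poincaré-type arguments in $x_1\in(-1,1)$, with boundary corrections), while $\widehat{\Ga(f,f)}$ is controlled by the weighted trilinear estimates of Section~\ref{sec4}, which bound its contribution by a product of an energy norm and a dissipation norm so that the nonlinearity is quadratic. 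Taking square roots of the mode-wise inequality and integrating over $\bar k\in\Z^2$ against $d\Si(\bar k)$ — where the Banach-algebra property of $L^1_{\bar k}$ keeps the nonlinear term quadratic in $\CE_{T,w}(f)+\CD_{T,w}(f)$ — closes $\CE_{T,w}(f)+\CD_{T,w}(f)\lesssim(\text{data})+(\CE_{T,w}(f)+\CD_{T,w}(f))^2$; smallness of the data and a continuity argument then yield \eqref{ifenges}. Uniqueness follows by applying the same estimate to the difference of two solutions in the weaker norm $\|\cdot\|_{L^1_{\bar k}L^\infty_TL^2_{x_1,v}}$, and $F\ge0$ is inherited from the positivity-preserving approximation of Section~\ref{sec8} using $F_0\ge0$ and the nonnegativity of the inflow data $g_\pm$.

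The first-order part of $\CE_{T,w}$ and $\CD_{T,w}$ is the delicate point, since $\pa_{x_1}$ does not commute with the inflow condition. Rather than differentiating the boundary condition, I would estimate the \emph{tangential and temporal} derivatives $\pa_t\hat f$ and $(\bar k\cdot\bar v)\hat f$, which \emph{are} controlled on the boundary precisely by the $|v_1|^{-1}$-weighted terms $\widehat{\pa_tg_\pm}$, $\bar k\cdot\bar v\,\widehat{g_\pm}$, $L\widehat{g_\pm}$, $\Ga(\widehat{g_\pm},\widehat{g_\pm})$ appearing in \eqref{Enormk} (the weight $|v_1|^{-1}$ being the characteristic weight of an Ukai-type trace estimate), and then recover $v_1\pa_{x_1}\hat f=-\pa_t\hat f-i(\bar k\cdot\bar v)\hat f-L\hat f+\widehat{\Ga(f,f)}$ algebraically from the equation. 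This controls $\pa_{x_1}\hat f$ on $|v_1|\gtrsim\eta$, while the tangential-velocity region $|v_1|\lesssim\eta$ is absorbed by the velocity-diffusion dissipation $|\cdot|_D$ together with the surplus velocity weight, and the outgoing trace of $\pa_{x_1}\hat f$ near $v_1=0$ is dominated by the $|v_1|$-weighted last line of \eqref{Enormk}. I expect the coupling of the trace theory, the degeneracy of the transport operator $v_1\pa_{x_1}$ at $v_1=0$, and the velocity diffusion carried by $L$ and $\Ga$ to be the principal obstacle; no such difficulty is present in the torus case of Theorem~\ref{Torus Existence}.

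For the decay \eqref{ifdec}, with \eqref{ifenges} uniform in $T$ already established, I would rerun the energy scheme against the time weight $e^{2\la t^\kappa}$. For hard potentials $\kappa=1$ and $|\cdot|_D$ dominates the weighted energy, giving plain exponential decay. For soft potentials $q>0$, $\kappa<1$, one loses a factor $\langle v\rangle^{|\ga+2|}$ (resp.\ $\langle v\rangle^{|\ga+2s|}$ in the Boltzmann case) passing from the energy to the dissipation, a loss compensated by interpolating against the surplus weight in $w_{q,\vth}$ with $0<\vth\le2$ ($\vth=1$ in the Boltzmann case, as forced by Lemma~\ref{bnp.es}); this yields a differential inequality of the schematic form $\frac{d}{dt}\big(e^{\la t^\kappa}\CE(t)\big)+\la\kappa\,t^{\kappa-1}(\cdots)\le(\text{boundary data measured at rate }e^{\la t^\kappa})$, whose integration, using the hypothesis $E(w_{q,\vth}\widehat{g_\pm})+\sup_{s>0}E(e^{\la s^\kappa}\widehat{g_\pm})\le\eps_0$, produces \eqref{ifdec}. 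The algebraic prefactor $t^{\kappa-1}$ is precisely what forces the stretched-exponential rate $e^{-\la t^\kappa}$ rather than a pure exponential.
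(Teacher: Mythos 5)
Your proposal follows essentially the same route as the paper: tangential Fourier transform and $L^1_{\bar{k}}$ energy estimates with the outgoing flux on the good side of the inequality, the dual-argument macroscopic estimates of Section \ref{sec5} and the trilinear estimates of Section \ref{sec4} to close a quadratic inequality for $\CE_{T,w}(f)+\CD_{T,w}(f)$, recovery of the incoming trace of $\widehat{\pa_{x_1}f}$ from the equation itself (the paper's \eqref{paxifb}) so that the $|v_1|^{-1}$-weighted functional $E(w_{q,\vth}\widehat{g_\pm})$ absorbs the boundary singularity, and the $e^{\la t^{\kappa}}$ time weight with the Strain--Guo time-velocity splitting for the soft-potential decay. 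The one correction is that no $|v_1|\gtrless\eta$ splitting is needed (nor would the $D$-norm help there, since it carries no $x$-derivatives): the interior dissipation of $\pa_{x_1}f$ comes from the ordinary energy estimate applied to the $\pa_{x_1}$-differentiated equation, and the division by $v_1$ occurs only in evaluating the incoming boundary flux, where the $|v_1|$ weight in $\Upsilon_T^-$ cancels against $|v_1|^{-2}$ to leave exactly the $|v_1|^{-1}$-weighted integrals defining $E_{\bar{k}}$ in \eqref{Enormk}.
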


\begin{theorem}[Specular reflection boundary condition]\label{mthsr}
Let $\Omega=I\times \T^2$. Let $w_{q,\vartheta}$ be chosen under the assumption {\bf (H)} in \eqref{q}. Assume the symmetry condition \eqref{ass.sym} and let $f_0(x,v)$ satisfy \eqref{PC.cl1}, \eqref{PC.cl2}, and \eqref{PC.cl3}. There are $\eps_0>0$ and $C>0$ such that if $F_0(x_1,\bar{x},v)=\mu+\mu^{\frac{1}{2}}f_0(x_1,\bar{x},v)\geq0$
%with $f_0(x_1,\bar{x},v_1,\bar{v})=f_0(-x_1,\bar{x},-v_1,\bar{v})$
and
\begin{equation}\label{idcd.srb}
\sum\limits_{|\al|\leq1}\|w_{q,\vth}\pa^\al f_{0}\|_{L_{\bar{k}}^1L^2_{x_1,v}}\leq\eps_0,
\end{equation}
then there exists a unique mild solution $f(t,x_1,\bar{x},v)$ to the specular reflection boundary problem {\bf (PC)} \eqref{LLeq},
\eqref{idf} and \eqref{srb}  for the Landau equation or the non-cutoff Boltzmann equation,  satisfying that $F(t,x_1,\bar{x},v)=\mu+\mu^{\frac{1}{2}}f(t,x_1,\bar{x},v)\geq0$ with $f(t,-x_1,\bar{x},-v_1,\bar{v})=f(t,x_1,\bar{x},v_1,\bar{v})$
and
\begin{equation}\label{srenges}
\CE_{T,w}(f)+\CD_{T,w}(f)\leq C \sum\limits_{|\al|\leq1}\|w_{q,\vth}\pa^{\al}f_0\|_{L_{\bar{k}}^1L^2_{x_1,v}},
\end{equation}
for any $T>0$, where  $\CE_{T,w}(f)$ and $\CD_{T,w}(f)$ are defined in \eqref{def.et} and \eqref{def.dt}, respectively.
Moreover,
%let  $p=\frac{\vth}{\vth-(\ga+2)}$ for the Landau case and $p=\frac{\vth}{\vth-(\ga+2s)}$
%for the non-cutoff Boltzamnn case,
 let $\kappa\in (0,1]$ be defined in \eqref{def.kap1} or \eqref{def.kap2} for the Landau case or the non-cutoff Boltzmann case, respectively, then
there is $\la>0$ such that
\begin{equation}\label{srdec}
\sum\limits_{|\al|\leq1}\|\pa^{\al}f(t)\|_{L^1_{\bar{k}}L^2_{x_1,v}}
\lesssim e^{-\la t^{\kappa}}\sum\limits_{|\al|\leq1}\|w_{q,\vth}\pa^{\al}f_0\|_{L_{\bar{k}}^1L^2_{x_1,v}},
\end{equation}
for any $t\geq0.$
\end{theorem}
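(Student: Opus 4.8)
The plan is to reduce \eqref{LLeq}--\eqref{idf} with \eqref{srb} to a family of equations on the slab $x_1\in I$ by a tangential Fourier transform, close a uniform-in-$T$ estimate for $\CE_{T,w}(f)+\CD_{T,w}(f)$ by combining the weighted trilinear bounds of Section~\ref{sec4} with the finite-channel macroscopic estimates of Section~\ref{sec5}, upgrade this to global existence via the local theory of Section~\ref{sec8} together with a continuity argument, and finally extract the decay \eqref{srdec} through a time-weighted refinement.

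First I would take the Fourier transform in $\bar x=(x_2,x_3)\in\T^2$, turning \eqref{LLeq} into
\begin{equation*}
\pa_t\hat f+v_1\pa_{x_1}\hat f+i(\bar k\cdot\bar v)\hat f+L\hat f=\sum_{\bar l\in\Z^2}\Ga\big(\hat f(\bar k-\bar l),\hat f(\bar l)\big),\qquad x_1\in I,
\end{equation*}
for each $\bar k\in\Z^2$, where \eqref{srb} becomes $\hat f(t,\mp1,\bar k,v_1,\bar v)|_{\pm v_1>0}=\hat f(t,\mp1,\bar k,-v_1,\bar v)$ --- which is local in $\bar k$ --- and the symmetry \eqref{ass.sym} becomes $\hat f(t,x_1,\bar k,v_1,\bar v)=\hat f(t,-x_1,\bar k,-v_1,\bar v)$, so that $(a,b,c)$ and $\{\FI-\FP\}\hat f$ inherit the corresponding parity in $(x_1,v_1)$. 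Local-in-time mild solutions are built from the Duhamel formula along the specularly reflected characteristics of $v_1\pa_{x_1}$ as in Section~\ref{sec8}; the estimates below are first carried out on smooth approximations and passed to the limit, which also transfers the sign condition $F\ge0$.

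For the a priori estimate, fix $\bar k$ and $|\al|\le1$, apply $\pa^\al$ to the equation (which commutes with $L$ and with transport, and acts on $\Ga$ by the Leibniz rule), and test against $w_{q,\vth}^2\,\overline{\pa^\al\hat f}$ over $(x_1,v)\in I\times\R^3$. The transport term leaves the boundary flux $\frac12\int_{\R^3}\big[v_1w_{q,\vth}^2|\widehat{\pa^\al f}|^2\big]_{x_1=-1}^{x_1=1}dv$; for $\al=0$ this vanishes exactly by \eqref{srb}, while for $\al=(1,0,0)$ the normal-derivative trace satisfies the odd reflection rule $\widehat{\pa_{x_1}f}(t,\mp1,\bar k,v_1,\bar v)|_{\pm v_1>0}=-\widehat{\pa_{x_1}f}(t,\mp1,\bar k,-v_1,\bar v)$ (seen, e.g., by even-reflection unfolding of the channel), so the square again cancels the flux; for tangential derivatives the condition is simply preserved. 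The term $i(\bar k\cdot\bar v)|\hat f|^2 w_{q,\vth}^2$ is purely imaginary and drops out. The weighted coercivity of $L$ (Section~\ref{sec4}, together with Lemma~\ref{bnp.es}, which forces $\vth=1$ in the Boltzmann soft-potential range) produces the microscopic dissipation $\|w_{q,\vth}\{\FI-\FP\}\pa^\al\hat f\|_D^2$ up to a $v$-compactly-supported remainder, and the weighted trilinear estimates of Section~\ref{sec4} bound the nonlinear contribution so that, after $\int_{\Z^2}d\Si(\bar k)$ and the Banach-algebra property of $L^1_{\bar k}L^\infty_T$, it is dominated by $(\CE_{T,w}(f)+\CD_{T,w}(f))\,\CD_{T,w}(f)$. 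Taking $\sup_t$, integrating in $t$, using $\sqrt{a+b}\le\sqrt a+\sqrt b$ to restore the $L^1_{\bar k}$ structure, and invoking the channel macroscopic estimates of Section~\ref{sec5} --- whose boundary integrals are absorbed using \eqref{srb}, the symmetry \eqref{ass.sym} (needed precisely for the $b_1$-moment, by \eqref{PC.cl2} with $i=1$), and the conservation laws \eqref{PC.cl1}--\eqref{PC.cl3} --- yields
\begin{equation*}
\CE_{T,w}(f)+\CD_{T,w}(f)\lesssim\sum_{|\al|\le1}\|w_{q,\vth}\pa^\al f_0\|_{L^1_{\bar k}L^2_{x_1,v}}+\big(\CE_{T,w}(f)+\CD_{T,w}(f)\big)^2,
\end{equation*}
uniformly in $T$. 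With $\eps_0$ small in \eqref{idcd.srb}, a standard continuity argument combined with the local existence closes this to give \eqref{srenges} for all $T>0$, and the same estimate applied to the difference of two solutions plus Gr\"onwall gives uniqueness.

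For the decay \eqref{srdec} I would promote the a priori estimate to a time-weighted one. In the hard-potential case ($\kappa=1$), the bounded domain together with the vanishing hydrodynamic moments \eqref{PC.cl1}--\eqref{PC.cl3} gives a Poincar\'e-type bound $\CE_{T,w}(f)\lesssim\CD_{T,w}(f)$, hence exponential decay by Gr\"onwall. In the soft-potential case one only controls the energy by the $D$-norm after losing a factor $\langle v\rangle^{|\ga+2|}$ (Landau) or $\langle v\rangle^{|\ga+2s|}$ (Boltzmann); splitting this loss against a small portion of the weight $\langle v\rangle^\vth$ carried by $w_{q,\vth}$ and interpolating with a slightly heavier weighted norm, which is again controlled by the a priori bound, produces a differential inequality of the form $\frac{d}{dt}\mathcal Y+\la t^{\kappa-1}\mathcal Y\lesssim0$ with $\kappa=\vth/(\vth+|\ga+2|)$, resp.\ $\vth/(\vth+|\ga+2s|)$, which on integration and summation in $\bar k$ gives the $e^{-\la t^{\kappa}}$ rate. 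The main obstacle throughout is the channel macroscopic estimate under specular reflection: constructing admissible test functions for $(a,b,c)$, and especially for its normal derivative $\pa_{x_1}(a,b,c)$, that are compatible with the boundary traces, and showing that all the resulting boundary terms vanish or are absorbed via \eqref{ass.sym} and \eqref{PC.cl1}--\eqref{PC.cl3}, with constants independent of $T$.
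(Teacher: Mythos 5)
Your overall architecture --- tangential Fourier transform, weighted $L^1_{\bar k}L^\infty_T L^2_{x_1,v}$ energy estimates closed against the trilinear bounds of Section \ref{sec4} and the macroscopic estimates of Section \ref{sec5}, boundary flux cancellation via the symmetry \eqref{ass.sym}, and a continuity argument --- is the same as the paper's in Section \ref{sec7}. Two points, however, are not right as written. First, the local theory cannot be built ``from the Duhamel formula along the specularly reflected characteristics of $v_1\pa_{x_1}$'': for the Landau and non-cutoff Boltzmann operators there is no decomposition $L=\nu-K$ with a multiplicative damping to integrate along characteristics, and the collision operator acts as a (fractional) diffusion in $v$; this is precisely the obstruction that Section \ref{sec8} circumvents by a duality/Hahn--Banach construction on mollified data, followed by an induction on derivatives and a limit $\vps\to0^+$. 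Second, your justification of the odd reflection rule for the normal-derivative trace is incomplete: the boundary condition \eqref{srb} constrains only $f$, not $\pa_{x_1}f$, at $x_1=\pm1$, and the identity $\widehat{\pa_{x_1}f}(\pm1,v_1)=-\widehat{\pa_{x_1}f}(\pm1,-v_1)$ must be recovered by evaluating the equation itself on the boundary and combining it with \eqref{ass.sym} and \eqref{srb} (this is the computation \eqref{srb.sym}, which also needs the commutation property of $\Gamma$ with $v_1\mapsto-v_1$ from Remark \ref{property.remark}); the ``unfolding'' heuristic presupposes $C^1$ regularity of the reflected extension, which is part of what must be proved.

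On the decay, your route differs from the paper's. The paper sets $\hat h=e^{\la t^{p}}\hat f$ and redoes the full $X_T$-type estimate for $h$, disposing of the commutator term $\la p t^{p-1}\hat h$ by the time--velocity splitting $\{\langle v\rangle\le\rho t^{p'}\}\cup\{\langle v\rangle>\rho t^{p'}\}$ with $p=(\ga+2s)p'+1=p'\vth$; this keeps the whole argument inside the $L^1_{\bar k}$ framework, which matters because the nonlinearity is a convolution in $\bar k$ and a pointwise-in-time Gr\"onwall inequality for a single Fourier mode does not close by itself. Your claimed ``Poincar\'e-type bound $\CE_{T,w}(f)\lesssim\CD_{T,w}(f)$'' is not meaningful as stated, since $\CE_{T,w}$ is an $L^\infty_T$-type quantity and $\CD_{T,w}$ an $L^2_T$-type quantity; what is true pointwise in $t$ for hard potentials is that the instantaneous energy is controlled by the instantaneous dissipation together with the macroscopic part. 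Your soft-potential interpolation sketch does point at the correct exponent $\kappa=\vth/(\vth+|\ga+2|)$ (resp.\ $\vth/(\vth+|\ga+2s|)$), but to obtain the stated stretched-exponential rate uniformly after the $\bar k$-integration you would in effect have to reproduce the exponential-multiplier plus splitting argument anyway, so I would recommend adopting it directly.
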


\begin{theorem}[Propagation of spatial regularity in $\bar{x}$]\label{regp.th}
Let all of the conditions in Theorem  \ref{mthif} and Theorem \ref{mthsr} be satisfied, then for any integer $m\geq 0$, there are $\eps_0>0$ and $C>0$ such that if
%Under the conditions in , for any $m>0$, if we further assume
\begin{equation}
\label{thm.cp.as1}
\sum\limits_{|\al|\leq1}\|w_{q,\vth}\pa^\al f_{0}\|_{L_{\bar{k},m}^1L^2_{x_1,v}}+E(w_{q,\vth}\langle \bar{k}\rangle^m\widehat{g_\pm})\leq \eps_0,
\end{equation}
and
\begin{equation}
\label{thm.cp.as2}
\sum\limits_{|\al|\leq1}\|w_{q,\vth}\pa^\al f_{0}\|_{L_{\bar{k},m}^1L^2_{x_1,v}}\leq\eps_0,
\end{equation}
hold in the place of \eqref{idcd.ifb} and \eqref{idcd.srb}, respectively, then we obtain that
\begin{multline}\label{reg.ib}
%\begin{split}
\sum_{|\al|\leq 1}\int_{\Z^2}\langle\bar{k}\rangle^{m}\sup\limits_{0\le t \le T}\|w_{q,\vth}\widehat{\pa^{\al}f}(t,\bar{k})\|d\Si(\bar{k})
\\
+\sum_{|\al|\leq 1}\int_{\Z^2}\langle\bar{k}\rangle^{m}\left(\int_0^T\|w_{q,\vth}\widehat{\pa^{\al}f}\|^2_{D}dt\right)^{1/2}d\Si(\bar{k})
\\
\lesssim
\left\{
%\int_{\Z^2}\|w_{q,\vth}\langle\bar{k}\rangle^{m}\widehat{\pa^{\al}f}_0\|d\Si(\bar{k})
\sum_{|\al|\leq 1}\|w_{q,\vth}\pa^\al f_{0}\|_{L_{\bar{k},m}^1L^2_{x_1,v}}+E(w_{q,\vth}\langle\bar{k}\rangle^{m}\widehat{g_{\pm}})\right\},
%\end{split}
\end{multline}
for the inflow boundary condition,
and
\begin{multline}\label{reg.sr}
%\begin{split}
\sum_{|\al|\leq 1}\int_{\Z^2}\langle\bar{k}\rangle^{m}\sup\limits_{0\le t \le T}\|w_{q,\vth}\widehat{\pa^{\al}f}(t,\bar{k})\|d\Si(\bar{k})
\\
+\sum_{|\al|\leq 1}\int_{\Z^2}\langle\bar{k}\rangle^{m}\left(\int_0^T\|w_{q,\vth}\widehat{\pa^{\al}f}\|^2_{D}dt\right)^{1/2}d\Si(\bar{k})
\\
\lesssim  \sum_{|\al|\leq 1}\|w_{q,\vth}\pa^\al f_{0}\|_{L_{\bar{k},m}^1L^2_{x_1,v}},
%\int_{\Z^2}\|w_{q,\vth}\langle\bar{k}\rangle^{m}\widehat{\pa^{\al}f}_0\|d\Si(\bar{k}),
%\end{split}
\end{multline}
for the specular reflection boundary condition, respectively.
\end{theorem}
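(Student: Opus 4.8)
The plan is to re-run the energy scheme that proves Theorem \ref{mthif} and Theorem \ref{mthsr}, now applied to the solution modes $\hat{f}(t,x_1,\bar{k},v)$ weighted by $\langle\bar{k}\rangle^m$. The guiding observation is that, after the Fourier transform in $\bar{x}\in\T^2$, the equation \eqref{LLeq} decouples across $\bar{k}$ except through the nonlinearity: for each fixed $\bar{k}$ one has
\[
\pa_t\hat{f}+v_1\pa_{x_1}\hat{f}+i(\bar{k}\cdot\bar{v})\hat{f}+L\hat{f}=\int_{\Z^2_{\bar\ell}}\Ga\big(\hat{f}(\bar{k}-\bar\ell),\hat{f}(\bar\ell)\big)\,d\Si(\bar\ell),
\]
together with the boundary conditions \eqref{ifb} or \eqref{srb} in each mode. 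Since the transport operator, the linear operator $L$, the boundary relations, and the macroscopic projection $\FP$ all act within a single $\bar{k}$-mode, multiplication by the Fourier multiplier $\langle\bar{k}\rangle^m$ can be inserted directly inside $d\Si(\bar{k})$. Consequently, the linear and macroscopic estimates of Sections \ref{sec3}--\ref{sec5} --- in particular those producing the dissipation functional $\CD_{T,w}$ --- transfer verbatim to the weighted functionals $\CE_{T,w,m}(f)$ and $\CD_{T,w,m}(f)$ obtained by replacing $d\Si(\bar{k})$ with $\langle\bar{k}\rangle^m\,d\Si(\bar{k})$ in \eqref{def.et}--\eqref{def.dt}.

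The only genuinely new ingredient is a weighted trilinear estimate. First I would use $\langle\bar{k}\rangle^m\le C_m\big(\langle\bar{k}-\bar\ell\rangle^m+\langle\bar\ell\rangle^m\big)$ to split the $\bar{k}$-convolution and then apply the trilinear estimates of Section \ref{sec4} --- which already handle the $v$- and $x_1$-integrals with the velocity weight $w_{q,\vth}$ and output the $D$-norm on the right --- to each of the two resulting pieces. This bounds the contribution of $\Ga(f,f)$ to the weighted energy inequality by (schematically)
\[
\CE_{T,w,m}(f)\,\CD_{T,w}(f)+\CE_{T,w}(f)\,\CD_{T,w,m}(f),
\]
plus the analogous terms in which a single $\pa_{x_1}$ falls on one of the two factors (handled exactly as in the proof of Theorem \ref{mthif}). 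Because Theorem \ref{mthif} and Theorem \ref{mthsr} already furnish $\CE_{T,w}(f)+\CD_{T,w}(f)\lesssim\eps_0$, each term above is a small multiple of $\CE_{T,w,m}(f)+\CD_{T,w,m}(f)$ and can be absorbed into the left-hand side.

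Collecting the linear, macroscopic, and weighted trilinear estimates then yields, for $\eps_0$ small,
\[
\CE_{T,w,m}(f)+\CD_{T,w,m}(f)\lesssim \sum_{|\al|\le1}\|w_{q,\vth}\pa^\al f_0\|_{L^1_{\bar{k},m}L^2_{x_1,v}}+E(w_{q,\vth}\langle\bar{k}\rangle^m\widehat{g_\pm})
\]
for the inflow problem, and the same without the boundary term for the specular reflection problem --- in the latter case the boundary contributions in the energy identity cancel by the reflection symmetry, exactly as in the proof of Theorem \ref{mthsr}, which is why \eqref{thm.cp.as2} carries no boundary norm. Finally, since the solution is already known to exist and to be smooth on short time intervals by the local theory of Section \ref{sec8} (applied in the higher-order space so as to propagate finiteness of the $\langle\bar{k}\rangle^m$-weighted norm from the data), a standard continuation argument upgrades this a priori bound to the stated estimates \eqref{reg.ib} and \eqref{reg.sr}, uniformly for $T>0$. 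The main obstacle will be the bookkeeping in the weighted trilinear estimate: one must verify that the $\langle\bar{k}\rangle^m$ weight always lands on the factor measured by the energy norm while its companion is measured by the dissipation norm, so that the already-small base norm $\CE_{T,w}(f)+\CD_{T,w}(f)$ absorbs the nonlinear term; carrying this through amounts to revisiting the Section \ref{sec4} estimates with the extra weight tracked.
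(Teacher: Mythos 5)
Your proposal is correct and follows essentially the same route as the paper: one reruns the weighted energy identity mode by mode with the multiplier $\langle\bar k\rangle^{2m}$ inserted in the inner product, transfers the macroscopic and boundary estimates verbatim (the specular-reflection cancellation being unaffected by the multiplier), upgrades the trilinear estimates of Section \ref{sec4} to carry the extra weight, and closes by absorbing the nonlinearity under the smallness hypotheses. The only minor difference is that you distribute $\langle\bar k\rangle^{m}$ additively over the convolution so that only one factor carries the weight (yielding terms of the form $\CE_{T,w,m}\,\CD_{T,w}+\CE_{T,w}\,\CD_{T,w,m}$, absorbable using only the smallness of the base norms), whereas the paper's weighted trilinear bounds \eqref{trim.es1}--\eqref{trim.es2} place the weight on both factors and invoke the smallness of the $m$-weighted data assumed in \eqref{thm.cp.as1}--\eqref{thm.cp.as2}; both closures are valid under the stated hypotheses, and your variant is the one suggested in the paper's comment (c) as a route to relaxing the smallness assumption on the higher-order norms.
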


\subsection{Comments}

Here a few comments are in order on the results of this paper.

\begin{itemize}
\item[(a)]  As mentioned before, the global existence of classical solutions close to the global Maxwellians was established in \cite{Guo-L} for the Landau equation and in \cite{GS} for the Boltzmann equation without angular cutoff both in the torus. To the best of our knowledge, the current results seem to be the first ones to revisit these global existence theories in a new function space with mild regularity in both space and velocity variables.  The proof can be carried out in a unified way for the Landau and Boltzmann cases. The function space
\begin{equation*}
%\label{ }
L^1_kL^\infty_TL^2_v\cap L^1_kL^2_TL^2_{v,D}
\end{equation*}
appears to be very useful for treating the global dynamics of Landau and non-cutoff Boltzmann solutions in perturbation framework. \\

\item[(b)] In the case of a finite channel, to the best of our knowledge these results may be the first ones to provide an elementary understanding of the existence theories for the Landau or non-cutoff Boltzmann equations in the situation where the spatial domain has physical boundaries.

As mentioned before, the singularities can generally form for solutions to the boundary value problem.  When the boundary condition is specular reflection, we have made an essential use of the symmetric condition \eqref{ass.sym} on the initial data which then also remains true for the solution.   The singularity is killed by such a symmetric property of solutions.  It turns out that without this symmetric assumption the corresponding results are still unknown.  For the inflow boundary value problem, the singularity is transformed to the boundary data, see the boundary energy functional as in \eqref{Enormk} or \eqref{def.Enorm} which is induced by the estimates on the normal derivatives $\pa_{x_1}f$ in terms of the equations as in \eqref{paxifb} below.

Another issue is related to the case of the diffusive reflection boundary for which it seems too hard at the moment to treat estimates on the boundary terms in our settings. Of course with the developments in this work, we are still far away from dealing with the existence of solutions, even local in time, for the Landau or non-cutoff Boltzmann equation in general bounded domains. \\

\item[(c)] Regarding results on the propagation of spatial regularity, it should be pointed out that it may be unnecessary to require the smallness assumptions on the $m$-th order derivatives of the initial data as in \eqref{thm.tp.as}, \eqref{thm.cp.as1}, and \eqref{thm.cp.as2}.  Indeed, since the initial data is assumed to be sufficiently small in the function space without the Fourier mode $\langle k\rangle^m$, one could try to use an induction argument together with interpolation techniques and time-decay properties of solutions to obtain the similar results to those in Theorem \ref{Torus Propagation} and  Theorem \ref{regp.th} under the only assumption that the $m$-order derivatives of initial data have a finite norm in the space $L^1_kL^2_v$ with an appropriate velocity weight.  For brevity of the presentation we will not pursue such improved results in this paper.\\

\item[(d)] The question of the regularity of the obtained solutions in our situation of the global existence theory is an interesting issue to be further studied.  This is because either the Landau operator or the non-cutoff Boltzmann operator both enjoy the velocity diffusion property, and  the spatial regularity could also be gained through the hypoellipticity techniques of such collisional kinetic equations; see an aforementioned work \cite{CHLZ} and references therein.

\end{itemize}

%\newpage
\section{Strategy of the proof}\label{sec3}

In this section we shall present the strategy of the proof.  In particular we will explain how to obtain the uniform a priori estimates on the solutions in the function space $X_T=L^1_kL^\infty_TL^2_v$ for the problem {\bf (PT)} in the torus case $\Omega=\T^3$. For the problem {\bf (PC)} in the finite channel case $\Omega=(-1,1)\times \T^2$, we can carry out the same strategy to treat the periodic variable $\bar{x}=(x_2,x_3)\in \T^2$ with some extra consideation for the variable $x_1\in (-1,1)$; this will be clarified in Section \ref{sec7} later. In our presentation in this section we will not include any velocity weight in order to increase the claritiy of the current exposition. The local-in-time existence of solutions will be studied in Section \ref{sec8}.

\subsection{Non-cutoff Boltzmann case}
For convenience of the presentation we first start from the Boltzmann equation without angular cutoff. Let $f=f(t,x,v)$, $0\leq t\leq T$, $x\in \T^3$, $v\in \R^3$, be a smooth solution to
\begin{equation}
\label{str.be.eq}
\partial_t f +v\cdot\nabla_x f+Lf= \Gamma(f,f),
\end{equation}
with prescribed initial data $f(0,x,v)=f_0(x,v)$. Taking the Fourier transform in $x\in \T^3$, we obtain
\begin{align}\label{torus fourier transformed}
\partial_t \hat{f}(t,k,v) + iv\cdot k \hat{f}(t,k,v)+L\hat{f}(t,k,v)=\hat{\Gamma}(\hat{f},\hat{f})(t,k,v).
\end{align}
Here, for brevity, we have denoted the following operator
\begin{multline}\label{def.GaF}
\hat{\Gamma}(\hat{f},\hat{g})(k,v)
=
\\
\int_{\mathbb{R}^3}\int_{\mathbb{S}^{2}} B(v-u,\si) \mu^{1/2}(u)\left([\hat{f}(u')*\hat{g}(v')](k)-[\hat{f}(u)*\hat{g}(v)](k)\right)d\sigma du,
\end{multline}
where the convolutions are taken with respect to $k\in\Z^3$:
\begin{eqnarray*}
{[\hat{f}(u')*\hat{g}(v')]}(k) & := &\int_{\Z^3_l} \hat{f}(k-l,u')\hat{g}(l,v')\,d\Si(l),\\
{[\hat{f}(u)*\hat{g}(v)]}(k)  & := &\int_{\Z^3_l}  \hat{f}(k-l,u)\hat{g}(l,v)\,d\Si(l).
\end{eqnarray*}
Then for this system we have the uniform estimate in the following proposition.

\begin{proposition}\label{Torus Boltzmann Micro}
%There exists $C_0>0$ such that for any $\varepsilon>0$ and a solution $f$ to \eqref{torus fourier transformed}, the following a priori estimate holds.
%Let $0<s<2$ and $\gamma>\max\{-3,-3/2-s\}$, then
There is a universal constant $C>0$ such that
\begin{multline}
\int_{\mathbb{Z}^3} \sup_{0\le t\le T} \Vert \hat{f}(t,k,\cdot)\Vert_{L^2_v}\,d\Sigma(k) + \int_{\Z^3} \left(\int^T_0 | \{\mathbf{I}-\mathbf{P}\}\hat{f}(t,k,\cdot) |_D^2\,dt\right)^{1/2}d\Sigma(k)\\
\lesssim \Vert {f}_0\Vert_{L^1_kL^2_v}+ \left(\eta +\frac{\Vert{f}\Vert_{L^1_kL^\infty_TL^2_v}}{4\eta}\right) \int_{\Z^3} \left(\int^T_0 | \hat{f}(t,k,\cdot) |_D^2 \,dt\right)^{1/2}d\Sigma(k),
\label{est.fRHS}
\end{multline}
for any $T>0$, where the constant $\eta>0$ can be arbitrarily small.
\end{proposition}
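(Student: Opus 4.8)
The plan is to establish the estimate \eqref{est.fRHS} by a Fourier-mode-by-mode energy method, exploiting the crucial fact that the transport term becomes a pure imaginary multiplier $iv\cdot k$ after the Fourier transform in $x$, and hence contributes nothing to the real part of the $L^2_v$ energy balance.

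\medskip

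\noindent\textbf{Step 1: Mode-wise energy identity.} For each fixed $k\in\Z^3$, I would take the $L^2_v$ inner product of \eqref{torus fourier transformed} with $\overline{\hat f(t,k,v)}$ and take real parts. Since $\rmre\, (iv\cdot k\,\hat f, \hat f)_{L^2_v} = 0$, this gives
\begin{equation*}
\frac{1}{2}\frac{d}{dt}\|\hat f(t,k,\cdot)\|_{L^2_v}^2 + \rmre\,(L\hat f,\hat f)_{L^2_v} = \rmre\,(\hat\Gamma(\hat f,\hat f)(k),\hat f(k))_{L^2_v}.
\end{equation*}
For the linear term I would invoke the standard coercivity (spectral gap) estimate for the linearized operator $L$ — which is self-adjoint and nonnegative with kernel the hydrodynamic modes $\FP$ — in the form $\rmre\,(L\hat f,\hat f)_{L^2_v}\geq \lambda|\{\FI-\FP\}\hat f|_D^2$ for both the Landau and non-cutoff Boltzmann cases (this is the dissipation structure referenced in the introduction via \cite{ADVW,MS}). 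Integrating in $t$ from $0$ to $\tau$ for any $\tau\le T$ and taking the supremum over $\tau$ yields, for each $k$,
\begin{equation*}
\sup_{0\le t\le T}\|\hat f(t,k,\cdot)\|_{L^2_v}^2 + \lambda\int_0^T |\{\FI-\FP\}\hat f(t,k,\cdot)|_D^2\,dt \lesssim \|\hat f_0(k,\cdot)\|_{L^2_v}^2 + \int_0^T |(\hat\Gamma(\hat f,\hat f)(t,k),\hat f(t,k))_{L^2_v}|\,dt.
\end{equation*}

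\medskip

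\noindent\textbf{Step 2: Square-root and $L^1_k$ summation.} I would take square roots (using $\sqrt{a+b}\le\sqrt a+\sqrt b$) and then integrate $d\Si(k)$ over $\Z^3$. The left side produces exactly the two terms on the left of \eqref{est.fRHS}; the initial-data term becomes $\|f_0\|_{L^1_kL^2_v}$. The remaining task is to bound
\begin{equation*}
\int_{\Z^3}\left(\int_0^T |(\hat\Gamma(\hat f,\hat f)(t,k),\hat f(t,k))_{L^2_v}|\,dt\right)^{1/2}d\Si(k).
\end{equation*}

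\medskip

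\noindent\textbf{Step 3: The trilinear estimate — the main obstacle.} This is the heart of the matter. I expect the key nonlinear estimate to have the schematic form
\begin{equation*}
|(\hat\Gamma(\hat g,\hat h)(k),\hat f(k))_{L^2_v}| \lesssim \int_{\Z^3_l} |\hat g(k-l,\cdot)|_{?}\, |\hat h(l,\cdot)|_D\, |\hat f(k,\cdot)|_D\, d\Si(l),
\end{equation*}
i.e. after writing $\hat\Gamma$ as a $k$-convolution \eqref{def.GaF}, one transfers one full $D$-norm onto the second argument and one onto the test function, at the cost of only an $L^2_v$ (or low-order weighted $L^2_v$) norm on the convolved first argument. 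One would then apply Cauchy–Schwarz in $t$ on the product $|\hat h(l)|_D|\hat f(k)|_D$, and then use Young's convolution inequality together with the Banach-algebra property of $L^1_k$ (together with $\sup_t$ taken on the $g$-factor, which is why $L^\infty_T$ sits on that factor) to pull the $\sup_{0\le t\le T}\|\hat g\|_{L^2_v}$ factor out as $\|g\|_{L^1_kL^\infty_TL^2_v}$. The precise distribution of $D$-norms and weights is exactly the content of the trilinear estimates promised in Section \ref{sec4}; I would cite those. Applying this with $g=h=f$, splitting $\hat f = \FP\hat f + \{\FI-\FP\}\hat f$ where necessary (the hydrodynamic part being controlled by the $D$-norm up to lower-order terms on the torus with the conservation laws \eqref{pt.id.cl.1}–\eqref{pt.id.cl.3}), and using $ab\le \eta a^2 + b^2/(4\eta)$ to split off the small parameter, produces exactly the right-hand side of \eqref{est.fRHS}: the factor $\|f\|_{L^1_kL^\infty_TL^2_v}$ comes from the convolved argument, and the dissipation integral $\int_{\Z^3}(\int_0^T|\hat f|_D^2\,dt)^{1/2}d\Si(k)$ absorbs the two $D$-norm factors after Cauchy–Schwarz.

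\medskip

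The genuinely hard step is Step 3, and within it the non-cutoff Boltzmann trilinear estimate: one must control the singular angular kernel $b(\cos\theta)\sim\theta^{-2-2s}$ and the possibly singular kinetic factor $|v-u|^\gamma$ while keeping the three functions in three different spatial Fourier modes, and one must do so using only an $L^2_v$-type norm (no extra velocity derivatives beyond the $D$-norm, and no velocity weight in this weightless version) on the argument that gets summed in $k$. The cancellation $f(v')-f(v)$ inherent in $\Gamma$ must be exploited carefully — this is where the structure of the $|\cdot|_D$ norm as defined in \eqref{def.fc.nospace.disDB} is essential. Everything else — the energy identity, the coercivity of $L$, the square-root trick, and the $L^1_k$ convolution bookkeeping — is routine once the trilinear estimate is in hand.
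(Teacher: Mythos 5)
Your proposal follows essentially the same route as the paper: the mode-wise real-part energy identity (the transport multiplier $iv\cdot k$ dropping out), the coercivity of $L$ to produce the microscopic $D$-norm, the square-root trick followed by $L^1_k$ summation, and then the trilinear bound written as a $k$-convolution $\int_{\Z^3_l}\Vert\hat f(k-l)\Vert_{L^2_v}|\hat f(l)|_D|\hat f(k)|_D\,d\Si(l)$, handled by Cauchy--Schwarz in $t$, Young's inequality with $\eta$, and Minkowski/Fubini to extract $\Vert f\Vert_{L^1_kL^\infty_TL^2_v}$. The paper likewise reduces the hard velocity-space step to the known estimate $|(\Gamma(f,g),h)_{L^2_v}|\lesssim\Vert f\Vert_{L^2_v}|g|_D|h|_D$ from the cited literature, so your deferral of that step is exactly what the paper does; the only superfluous remark is the macro-micro splitting with the conservation laws, which is not needed for this proposition since its right-hand side already carries the full $|\hat f|_D$.
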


\begin{proof}
%[Proof of Theorem \ref{Torus Boltzmann Micro}]
Taking the product of  \eqref{torus fourier transformed} with the complex conjugate of $\hat{f}(t,k,v)$ and further taking the real part of the resulting equation, we have
\begin{align*}
\frac{1}{2}\frac{d}{dt}\vert \hat{f}(t,k,v)\vert^2+\mathscr{R}(L\hat{f}, \hat{f})=\mathscr{R}(\hat{\Gamma}(\hat{f},\hat{f}),\hat{f}),
\end{align*}
where $(\cdot,\cdot)$ denotes the complex inner product over the complex field, i.e., $(f,g)=f\cdot \bar{g}$, and $\mathscr{R}$ denotes the real part of a complex number.
Integrating the above identity with respect to $v$ and then $t$ we have
\begin{align}\label{pro.tpm.p1}
\frac{1}{2}\Vert \hat{f}(t,k,\cdot)\Vert_{L^2_v}^2+\int^t_0 \mathscr{R}(L\hat{f},\hat{f})_{L^2_v}d\tau=\frac{1}{2}\Vert \hat{f}_0(k,\cdot)\Vert_{L^2_v}^2+\int^t_0 \mathscr{R}(\hat{\Gamma}(\hat{f},\hat{f}),\hat{f})_{L^2_v}d\tau,
\end{align}
where correspondingly  $(\cdot,\cdot)_{L^2_v}$ denotes the complex inner product over $L^2_v$, i.e.,
$$
(f,g)_{L^2_v}=\int_{\R^3_v} f(v)\overline{g(v)}dv.
$$
Recall that by the coercivity estimate of $L$ (cf.~Lemma \ref{esBL}),
%in [AMUXY, JFA (2012), Theorem 1.1],
there is $\de_0>0$ such that
%it holds that
\begin{equation*}
%\label{coer}
\de_0| \{\mathbf{I}-\mathbf{P}\}g |_D^2\le \mathscr{R} (Lg,g)_{L^2_v}.
\end{equation*}
Thus it follows from \eqref{pro.tpm.p1} that
\begin{align*}
\frac{1}{2}\Vert \hat{f}(t,k,\cdot)\Vert_{L^2_v}^2 + \de_0 \int^t_0 | \{\mathbf{I}-\mathbf{P}\}\hat{f} |_D^2d\tau \le \frac{1}{2}\Vert \hat{f}_0(k,\cdot)\Vert_{L^2_v}^2+\int^t_0 \mathscr{R}(\hat{\Gamma}(\hat{f},\hat{f}),\hat{f})_{L^2_v}d\tau.
\end{align*}
Taking the square root on both sides and using the elementary inequalities
\begin{equation*}
%\label{ }
\frac{1}{\sqrt{2}} (A+B)\leq \sqrt{A^2+B^2}\leq A+B,
\end{equation*}
we further have
\begin{align*}
\frac{1}{\sqrt{2}}\Vert \hat{f}(t,k,\cdot)\Vert_{L^2_v} &+ \sqrt{\de_0} \left(\int^t_0 | \{\mathbf{I}-\mathbf{P}\}\hat{f}(\tau,k,\cdot) |_D^2d\tau\right)^{1/2}\\
& \le \Vert \hat{f}_0(k,\cdot)\Vert_{L^2_v}+\sqrt{2}\left(\int^t_0 \left|\mathscr{R}(\hat{\Gamma}(\hat{f},\hat{f}),\hat{f})_{L^2_v}\right| d\tau\right)^{1/2}.
\end{align*}
So, we have derived, for any $0\leq t\leq T$ and $k\in \Z^3$, that we have
\begin{align}
\Vert \hat{f}(t,k,\cdot)\Vert_{L^2_v} &+  \left(\int^t_0 | \{\mathbf{I}-\mathbf{P}\}\hat{f}(\tau,k,\cdot) |_D^2d\tau\right)^{1/2}\notag\\
&\le C_0\left\{ \Vert \hat{f}_0(k,\cdot)\Vert_{L^2_v} +\left(\int^t_0\left| \mathscr{R}(\hat{\Gamma}(\hat{f},\hat{f}),\hat{f})_{L^2_v}\right|d\tau\right)^{1/2}\right\},\label{v-int}
\end{align}
with
\begin{equation*}
%\label{ }
C_0=\frac{\sqrt{2}}{\min\{1/\sqrt{2},\sqrt{\de_0}\}}>0.
\end{equation*}
Moreover, taking $\sup_{0\le t\le T}$ on both sides of \eqref{v-int} and then integrating the resulting inequality with respect to $d\Sigma(k)$ over $\Z^3$, we have
\begin{align}
&\int_{\Z^3} \sup_{0\le t\le T} \Vert \hat{f}(t,k,\cdot)\Vert_{L^2_v}d\Sigma(k) + \int_{\Z^3} \left(\int^T_0 | \{\mathbf{I}-\mathbf{P}\}\hat{f}(t,k,\cdot) |_D^2\,dt\right)^{1/2}d\Sigma(k)\notag\\
&\le C_0\left\{ \Vert \hat{f}_0\Vert_{L^1_kL^2_v}+\int_{\Z^3} \left(\int^T_0 \left|\left(\hat{\Gamma}(\hat{f},\hat{f}),\hat{f}\right)_{L^2_v}\right|dt\right)^{1/2}d\Sigma(k)\right\}.\label{pro.tpm.p2}
\end{align}
In what follows we will estimate the last term on the right-hand side of \eqref{pro.tpm.p2}. Indeed, it can be bounded as
\begin{multline}
\int_{\Z^3} \left(\int^T_0 \left|\left(\hat{\Gamma}(\hat{f},\hat{f}),\hat{f}\right)_{L^2_v}\right|dt\right)^{1/2}d\Sigma(k)\\
\leq \int_{\Z^3}\left(\int^T_0 \int_{\Z^3}\Vert \hat{f}(k-l)\Vert_{L^2_v} | \hat{f}(l) |_D | \hat{f}(k) |_Dd\Sigma(l)dt\right)^{1/2}d\Sigma(k),
\label{pro.tpm.p3}
\end{multline}
%The last term is bounded by
%\begin{align*}
%\int_{\Z^3}\Big(\int^T_0 \int_{\Z^3}\Vert \hat{f}(k-\ell)\Vert_{L^2_v} |\!|\!| \hat{f}(\ell) |\!|\!| |\!|\!| \hat{f}(k) |\!|\!|d\Sigma(\ell)dt\Big)^{1/2}d\Sigma(k)
%\end{align*}
%by Lemma \ref{lem: Boltzmann nonlinear}.
%To prove this,
in terms of the  trilinear estimate in the following lemma.
% is necessary:

\begin{lemma}\label{lem: Boltzmann nonlinear}
%For $0<\nu<2$ and $\gamma>\max\{-d,-d/2-\nu\}$,
It holds that
\begin{align}\label{ineq: Boltzmann nonlinear}
\left| \left(\hat{\Gamma}(\hat{f},\hat{g})(k),\hat{h}(k)\right)_{L^2_v}\right|\le C\int_{\Z^3}\Vert \hat{f}(k-l)\Vert_{L^2_v}| \hat{g}(l)|_D  | \hat{h}(k)|_D\,d\Sigma(l).
\end{align}
\end{lemma}

\begin{proof}
By the definition \eqref{def.GaF} of $\hat{\Ga} (\cdot,\cdot)$ as well as Fubini's theorem, we obtain
\begin{align*}
&%\left\vert
(\hat{\Gamma}(\hat{f},\hat{g})(k),\hat{h}(k))_{L^2_v}%\right\vert
\notag\\
&=%\left\vert
\int_{\mathbb{R}^3_v} \int_{\mathbb{R}^3_{u}} \int_{\mathbb{S}^{2}} B(v-u,\si) \mu^{1/2}(u)
\\
&\qquad\qquad\qquad\times\left\{[\hat{f}(u')*\hat{g}(v')](k)-[\hat{f}(u)*\hat{g}(v)](k)\right\}\bar{\hat{h}}(v,k) d\sigma dudv%\right\vert
\notag\\
&=%\left\vert
\int_{\mathbb{R}^3_v} \int_{\mathbb{R}^3_{u}} \int_{\mathbb{S}^{2}} B(v-u,\si) \mu^{1/2}(u)
\\
&\qquad\qquad\qquad\times\int_{\Z^3} \left\{\hat{f}(k-l,u')\hat{g}(l,v')-\hat{f}(k-l,u)\hat{g}(l,v)\right\} \bar{\hat{h}}(v,k) d\Sigma(l) d\sigma du dv%\right\vert
\\
&=%\left\vert
\int_{\Z^3} \int_{\mathbb{R}^3_v} \Gamma(\hat{f}(k-l),\hat{g}(l))\bar{\hat{h}}(k)dv  d\Sigma(l).
%\right\vert.
%\\
%&\le \int_{\Z^3} \vert (\Gamma(\hat{f}(k-\ell),\hat{g}(\ell)),\hat{h})_{L^2_v}\vert d\Sigma(\ell).
\end{align*}
Therefore, it follows that
\begin{equation}
\label{lem.ptri.p1}
\left\vert (\hat{\Gamma}(\hat{f},\hat{g})(k),\hat{h}(k))_{L^2_v}\right\vert\le \int_{\Z^3} \left\vert (\Gamma(\hat{f}(k-l),\hat{g}(l)),\hat{h}(k))_{L^2_v}\right\vert d\Sigma(l).
\end{equation}
Recall either from \cite[Theorem 1.2]{AMUXY13KRM} or from \cite[Theorem 2.1, page 782]{GS}
%[AMUXY, KRM (2013), Theorem 1.2]
that one has
%: If $0<\nu<2$ (angular singularity) and $\gamma>\{-d,-d/2-\nu\}$, it holds
\begin{equation}\label{ad.vwte}
\vert (\Gamma(f,g),h)_{L^2_v}\vert \lesssim \Vert f\Vert_{L^2_v} | g |_D | h |_D.
\end{equation}
With the above inequality the desired estimate \eqref{ineq: Boltzmann nonlinear} follows from \eqref{lem.ptri.p1}.  This completes the proof of Lemma \ref{lem: Boltzmann nonlinear}.
\end{proof}

\begin{remark}\label{norm.remark}
We remark here, in regards to the previous proof and what follows, that the norm \eqref{def.fc.nospace.disDB} is equivalent to the norm $N^{s,\gamma}$ from \cite[Equation (1.8), page 774]{GS}; this can be shown directly for example by using the estimates in \cite{GS}.  See for example  \cite[Equation (2.13), page 784]{GS}.
\end{remark}

We continue to estimate the upper bound in \eqref{pro.tpm.p3}. Applying Cauchy-Schwarz's inequality with respect to $\int_0^T(\cdot)dt$ and further using Young's inequality with an arbitrary small constant $\eta>0$, we have
%Moreover, by Cauchy-Schwarz in $ds$ and Young's inequalities, for arbitrary small constant $\varepsilon>0$ this is further bounded by
\begin{align}
%&\int_{\Z^3}\Big(\int^T_0 \int_{\Z^3}\Vert \hat{f}(k-\ell)\Vert_{L^2_v} |\!|\!| \hat{f}(\ell) |\!|\!| |\!|\!| \hat{f}(k) |\!|\!|d\Sigma(\ell)dt\Big)^{1/2}d\Sigma(k)\\
&\int_{\Z^3_k}\left(\int^T_0 \int_{\Z^3_l}\Vert \hat{f}(t,k-l)\Vert_{L^2_v} | \hat{f}(t,l) |_D | \hat{f}(t,k) |_Dd\Sigma(l)dt\right)^{1/2}d\Sigma(k)
\notag
\\
&\leq \int_{\Z^3_k} \left(\int^T_0 \left(\int_{\Z^3_l} \Vert \hat{f}(t,k-l)\Vert_{L^2_v} | \hat{f}(t,l) |_Dd\Sigma(l)\right)^2dt\right)^{1/4}
\notag
\\
&\qquad\qquad\qquad\times \left(\int^T_0 | \hat{f}(t,k) |_D^2 dt\right)^{1/4} d\Sigma(k)
\notag
\\
&\leq \eta\int_{\Z^3_k}\left(\int^T_0 | \hat{f}(t,k) |_D^2 dt\right)^{1/2}d\Sigma(k)
\notag
\\
&\qquad+\frac{1}{4\eta} \int_{\Z^3_k}\left(\int^T_0 \left(\int_{\Z^3_l} \Vert \hat{f}(t,k-l)\Vert_{L^2_v} | \hat{f}(t,l) |_Dd\Sigma(l)\right)^2dt\right)^{1/2} d\Sigma(k).\label{pro.tpm.p4}
%\\
%&\int_{\Z^3} \left\{\frac{1}{4\eta} \Big(\int^T_0 \Big(\int_{\mathbb{R}^d_\ell} \Vert \hat{f}(k-\ell)\Vert_{L^2_v} |\!|\!| \hat{f}(\ell) |\!|\!|d\Sigma(\ell)\Big)^2dt\Big)^{1/2}+\varepsilon \Big(\int^T_0 |\!|\!| \hat{f}(k) |\!|\!|^2 dt\Big)^{1/2}\right\}d\Sigma(k).
\end{align}
%\begin{remark}
%The latter term can be absorbed into the left hand side (analysis with the macro-micro decomposition required, to be considered in the future. For instance, in case of the torus domain $\T^d$, we assume all physical conservation laws, then the macro dissipation can be included in the end although we only have \eqref{coer} at the present step.
%\end{remark}
To treat the second term on the right-hand side of \eqref{pro.tpm.p4}, we first use Minkowski's inequality to obtain 
\begin{equation}\label{minkowski.ineq}
\big\|\|\cdot\|_{L^1_l}\big\|_{L^2_t}\leq \big\|\|\cdot\|_{L^2_t}\big\|_{L^1_l}.
\end{equation}
Then we have
\begin{multline}\notag
\left(\int^T_0 \left(\int_{\Z^3_l} \Vert \hat{f}(t,k-l)\Vert_{L^2_v} | \hat{f}(t,l) |_Dd\Sigma(l)\right)^2dt\right)^{1/2}
\\
\leq
\int_{\Z^3_l} \left(\int^T_0 \Vert \hat{f}(t,k-l)\Vert_{L^2_v}^2 | \hat{f}(t,l) |_D^2 dt\right)^{1/2}d\Sigma(l),
\end{multline}
and hence it follows that
\begin{multline*}
\int_{\Z^3_k}\left(\int^T_0 \left(\int_{\Z^3_l} \Vert \hat{f}(t,k-l)\Vert_{L^2_v} | \hat{f}(t,l) |_Dd\Sigma(l)\right)^2dt\right)^{1/2} d\Sigma(k)\\
%&\le \int_{\Z^3_k} \int_{\Z^3_l} \left(\int^T_0 \Vert \hat{f}(t,k-l)\Vert_{L^2_v}^2 | \hat{f}(t,l) |_D^2 dt\right)^{1/2}d\Sigma(l) d\Sigma(k)\\
\leq \int_{\Z^3_k} \int_{\Z^3_l} \sup_{0\le t\le T} \Vert \hat{f}(t,k-l) \Vert_{L^2_v} \left(\int^T_0| \hat{f}(t,l) |_D^2 dt\right)^{1/2}d\Sigma(l) d\Sigma(k).
%\\
%&=\Vert\hat{f}\Vert_{L^1_kL^\infty_TL^2_v}\int_{\mathbb{R}^d_\ell} \Big(\int^T_0|\!|\!| \hat{f}(\ell) |\!|\!|^2 dt\Big)^{1/2}d\Sigma(\ell).
\end{multline*}
Further by Fubini's theorem and translation invariance, the upper bound in the above inequality can be computed as
\begin{align*}
& \int_{\Z^3_k} \int_{\Z^3_l} \sup_{0\le t\le T} \Vert \hat{f}(t,k-l) \Vert_{L^2_v} \left(\int^T_0| \hat{f}(t,l) |_D^2 dt\right)^{1/2}d\Sigma(l) d\Sigma(k)\\
&= \int_{\Z^3_l}  d\Sigma(l)\left(\int^T_0| \hat{f}(t,l) |_D^2 dt\right)^{1/2} \int_{\Z^3_k}d\Sigma(k) \sup_{0\le t\le T} \Vert \hat{f}(t,k-l) \Vert_{L^2_v} \\
%&= \int_{\Z^3_l}  d\Sigma(l)\left(\int^T_0| \hat{f}(t,l) |_D^2 dt\right)^{1/2} \int_{\Z^3_m}d\Sigma(m) \sup_{0\le t\le T} \Vert \hat{f}(t,m) \Vert_{L^2_v} \\
&=\Vert\hat{f}\Vert_{L^1_kL^\infty_TL^2_v} \int_{\Z^3_l} \left(\int^T_0| \hat{f}(t,l) |_D^2 dt\right)^{1/2} d\Sigma(l).
\end{align*}
Then, applying those estimates above to the second term on the right-hand side of \eqref{pro.tpm.p4} and further using \eqref{pro.tpm.p2} and \eqref{pro.tpm.p3} leads to  the desired estimate \eqref{est.fRHS}. This completes the proof of Proposition \ref{Torus Boltzmann Micro}.
%where the last equality comes from Fubini's theorem and the translation invariance.
%The desired estimate is now readily by recasting the inequality.
\end{proof}

We remark that we have the following identity
\begin{equation*}
%\label{ }
\left(\hat{\Gamma}(\hat{f},\hat{f}),\hat{f}\right)_{L^2_v}=\left(\hat{\Gamma}(\hat{f},\hat{f}),\{\FI-\FP\}\hat{f}\right)_{L^2_v}.
\end{equation*}
Then using this identity one can modify the proof above slightly to obtain
\begin{multline*}
\int_{\mathbb{Z}^3} \sup_{0\le t\le T} \Vert \hat{f}(t,k,\cdot)\Vert_{L^2_v}\,d\Sigma(k) + \int_{\Z^3} \left(\int^T_0 | \{\mathbf{I}-\mathbf{P}\}\hat{f}(t,k,\cdot) |_D^2\,dt\right)^{1/2}d\Sigma(k)
\\
\lesssim
 \Vert {f}_0\Vert_{L^1_kL^2_v}+ \Vert{f}\Vert_{L^1_kL^\infty_TL^2_v}\int_{\Z^3} \left(\int^T_0 | \hat{f}(t,k,\cdot) |_D^2 \,dt\right)^{1/2}d\Sigma(k).
%\label{}
\end{multline*}
This indicates that as long as one can further appropriately estimate (such as in Theorem \ref{abcest.torus} below) the macroscopic dissipation:
\begin{equation*}
%\label{ }
\int_{\Z^3} \left(\int^T_0 | \FP \hat{f}(t,k,\cdot) |_D^2\,dt\right)^{1/2}d\Sigma(k)\sim \int_{\Z^3} \left(\int^T_0 | (\widehat{a,b,c})(t,k) |^2\,dt\right)^{1/2}d\Sigma(k),
\end{equation*}
where $(a,b,c)$ is defined in \eqref{abcdef}, we can then obtain the uniform estimates under the smallness assumption on $\Vert{f}\Vert_{L^1_kL^\infty_TL^2_v}$ that can be closed provided that $f_0$ is suitably small in $L^1_kL^2_v$.

\subsection{Landau case}

When the non-cutoff Boltzmann operator is replaced by the Landau operator as in \eqref{bLop}, we are still able to carry out the same strategy. Indeed, the nonlinear Landau operator \eqref{nopdef} takes the following form, cf.~\cite[page 395]{Guo-L}:
%For the Landau case, we define
%\begin{align*}
%Q_L(F,G)&=\nabla_v \cdot \int_{\mathbb{R}^3} \phi(v-v')[ F(v')\nabla_v G(v)-G(v)\nabla_vF(v')]dv'\\
%&=\sum_{i,j=1}^3 \partial_{v_i} \int_{\mathbb{R}^3} \phi(v-v')[ F(v')\partial_{v_j} G(v)-G(v)\partial_{v_j}F(v')]dv',
%\end{align*}
%where $\phi(v)=(\phi^{ij}(v))_{1\le i,j\le 3}$ is a  $3\times 3$ non-negative matrix given by
%\begin{align*}
%\phi^{ij}(v)=\{ \delta_{ij}-\frac{v_iv_j}{\vert v \vert^2}\} \vert v\vert^{\gamma+2}
%\end{align*}
%for $\gamma\ge -3$.
%Since the Maxwellian $\mu(v)=e^{-\vert v\vert^2/2}$ is an equilibrium \Red{SS: Guo used $\mu(v)=e^{-\vert v\vert^2}$, which will cause some re-consideration on the coefficient?}, we substitute $F=\mu+\mu^{\frac{1}{2}}f$ and $G=\mu+\mu^{\frac{1}{2}}g$ to have
%\begin{align*}
%Q_L(F,G)=Q_L(\mu^{\frac{1}{2}}f,\mu)+Q_L(\mu,\mu^{\frac{1}{2}}g)+Q_L(\mu^{\frac{1}{2}}f,\mu^{\frac{1}{2}}g).
%\end{align*}
%Then we define $L_Lf=\mu^{-\frac{1}{2}}(Q_L(\mu^{\frac{1}{2}}f,\mu)+Q_L(\mu,\mu^{\frac{1}{2}}f))$ and $\Gamma_L(f,g)=\mu^{-\frac{1}{2}}Q_L(\mu^{\frac{1}{2}}f,\mu^{\frac{1}{2}}g)$.
%$\Gamma_L$ can be decomposed into the four parts, namely,
\begin{align}
\Gamma (f,g) = &\sum_{i,j=1}^3 \Big[\partial_{v_i}[\{\psi^{ij}*(\mu^{\frac{1}{2}}f)\}\partial_{v_j}g]-\{\psi^{ij}*(v_i\mu^{\frac{1}{2}}f)\}\partial_{v_j}g\notag\\
&\qquad\quad-\partial_{v_i}[\{\psi^{ij}*(\mu^{\frac{1}{2}}\partial_{v_j}f)\}g]+\{\psi^{ij}*(v_i\mu^{\frac{1}{2}}\partial_{v_j}f)\}g\Big].
\notag %\label{landau.collision.op}
\end{align}
We consider the linearized Landau equation of the same form as in \eqref{str.be.eq}. Taking the Fourier transform in $x\in \T^3$ gives the same form \eqref{torus fourier transformed} with the nonlinear part now taking the form:
%We introduce an anisotropic norm
%\begin{align*}
%\vert f\vert^2_\sigma =\sum_{i,j=1}^3\int_{\mathbb{R}^3} (\sigma^{ij} \partial_{v_i} f\partial_{v_j}f + \sigma^{ij}v_iv_j f^2)dv,
%\end{align*}
%where
%\begin{align*}
%\sigma^{ij}(v)=\phi^{ij}*\mu(v).
%\end{align*}
%\Red{SS: Should we move some of them to the preliminary part?}
%
%To consider the norms on the Fourier side, we define
%\begin{align*}
%\hat{Q}_L(\hat{F},\hat{G})(k,v)=\nabla_v \cdot \int_{\mathbb{R}^3} \phi(v-v')\Big[ (\hat{F}(v')*_k\nabla_v \hat{G}(v))(k)-(\hat{G}(v)*_k\nabla_v\hat{F}(v'))(k)\Big]dv'.
%\end{align*}
%By this definition and \eqref{eq: Decomposition of GammaL}, for $\hat{\Gamma}_L$, the nonliear part of $\hat{Q}_L$, we have
\begin{align}\label{landau.NL.FT}
\hat{\Gamma}(\hat{f},\hat{g}) = \sum_{i,j=1}^3 &\Big[ \partial_{v_i}[\{\psi^{ij}*_v(\mu^{\frac{1}{2}}\hat{f})\}*_k\partial_{v_j}\hat{g}]
%\label{eq: Decomposition of HAT GammaL1}\\
%&
-\{\psi^{ij}*_v(v_i\mu^{\frac{1}{2}}\hat{f})\}*_k\partial_{v_j}\hat{g}
%\label{}
\\
&\quad -\partial_{v_i}[\{\psi^{ij}*_v(\mu^{\frac{1}{2}}\partial_{v_j}\hat{f})\}*_k\hat{g}]
%\label{eq: Decomposition of HAT GammaL3}\\
%&
+\{\psi^{ij}*_v(v_i\mu^{\frac{1}{2}}\partial_{v_j}\hat{f})\}*_k\hat{g}\Big],
\notag
\end{align}
where $*_v$ denotes the convolution in $v\in \R^3$ and $*_k$ denotes the convolution in $k\in \Z^3$.  We now apply the estimate \cite[Theorem 3, page 406]{Guo-L} to directly obtain

\begin{lemma}\label{landau.nonlinear.est}
The following estimate holds uniformly
\begin{align*}
%\label{}
\vert ( \hat{\Gamma}(\hat{f},\hat{g})(k),\hat{h}(k))_{L^2_v}\vert
\lesssim
(\Vert \hat{f}\Vert_{L^2_v}*_k\vert \hat{g}\vert_D)(k)
| \hat{h}(k)|_D,
\end{align*}
\end{lemma}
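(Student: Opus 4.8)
The plan is to reduce Lemma~\ref{landau.nonlinear.est} to the pointwise-in-$k$ velocity estimate \cite[Theorem 3, page 406]{Guo-L} exactly as was done in the proof of Lemma~\ref{lem: Boltzmann nonlinear} for the Boltzmann case. First I would expand $\hat{\Gamma}(\hat f,\hat g)(k)$ using the explicit formula \eqref{landau.NL.FT}. Each of the four terms there has the schematic form $\partial_{v_i}[\{\psi^{ij}*_v(\mu^{1/2}\hat f)\}*_k \partial_{v_j}\hat g]$ (and its variants), where $*_k$ is convolution in $k\in\Z^3$. Writing the $k$-convolution as an integral over $\Z^3_l$ with respect to $d\Sigma(l)$, and using Fubini's theorem to pull $\int_{\Z^3_l}d\Sigma(l)$ outside the $v$-integration against $\bar{\hat h}(k)$, I obtain
\begin{equation*}
\left(\hat{\Gamma}(\hat f,\hat g)(k),\hat h(k)\right)_{L^2_v}
=\int_{\Z^3_l}\left(\Gamma(\hat f(k-l),\hat g(l)),\hat h(k)\right)_{L^2_v}\,d\Sigma(l),
\end{equation*}
where on the right $\Gamma(\cdot,\cdot)$ is the (spatially-independent) nonlinear Landau operator acting in $v$ only. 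This identity holds term-by-term because the $v$-derivatives and $v$-convolutions commute with the $k$-convolution, and the $\partial_{v_i}$ can be moved onto $\hat h$ by integration by parts just as in the unhatted estimate; no new cancellation is needed, only that $\mathcal{F}_x$ turns the $x$-independent trilinear form into a $k$-convolution of the same form.

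Next I would take absolute values inside the $l$-integral to get
\begin{equation*}
\left|\left(\hat{\Gamma}(\hat f,\hat g)(k),\hat h(k)\right)_{L^2_v}\right|
\le \int_{\Z^3_l}\left|\left(\Gamma(\hat f(k-l),\hat g(l)),\hat h(k)\right)_{L^2_v}\right|\,d\Sigma(l),
\end{equation*}
and then apply the Landau trilinear estimate from \cite[Theorem 3, page 406]{Guo-L}, which in the present notation reads
\begin{equation*}
\left|\left(\Gamma(f,g),h\right)_{L^2_v}\right|\lesssim \|f\|_{L^2_v}\,|g|_D\,|h|_D .
\end{equation*}
(One should check that the $D$-norm used in \cite{Guo-L} is equivalent to the $\sigma^{jm}$-weighted norm $|\cdot|_D$ defined in this paper; this is standard and was already flagged in Remark~\ref{norm.remark} for the Boltzmann analogue, and for Landau the equivalence of the anisotropic norm with Guo's is classical, see e.g.\ \cite{Guo-L,SG-08-ARMA}.) Substituting with $f=\hat f(k-l)$, $g=\hat g(l)$, $h=\hat h(k)$ gives
\begin{equation*}
\left|\left(\hat{\Gamma}(\hat f,\hat g)(k),\hat h(k)\right)_{L^2_v}\right|
\lesssim \int_{\Z^3_l}\|\hat f(k-l)\|_{L^2_v}\,|\hat g(l)|_D\,d\Sigma(l)\;|\hat h(k)|_D
=(\|\hat f\|_{L^2_v}*_k|\hat g|_D)(k)\,|\hat h(k)|_D,
\end{equation*}
which is the claimed inequality.

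I expect the only genuinely delicate point to be the first step: verifying that the Fourier transform in $x$ of the full nonlinear Landau operator — in particular the terms carrying an outer $\partial_{v_i}$ acting on a product of a $v$-convolution and $\partial_{v_j}\hat g$ — reproduces precisely the $k$-convolution structure claimed in \eqref{landau.NL.FT}, so that one can legitimately factor the $L^2_v$ pairing through the scalar operator $\Gamma$ applied to the Fourier modes $\hat f(k-l)$, $\hat g(l)$. This is bookkeeping rather than analysis: since $\Gamma(\cdot,\cdot)$ is bilinear and contains no $x$-dependence, $\mathcal{F}_x\Gamma(f,g)=\Gamma(\hat f,\hat g)$ in the convolution sense, and the integration by parts moving $\partial_{v_i}$ onto $\hat h$ is identical to the one underlying \cite[Theorem 3]{Guo-L}. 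Once this is in place, everything else is a direct quotation of Guo's velocity estimate together with Fubini, exactly parallel to Lemma~\ref{lem: Boltzmann nonlinear}. Consequently the proof is short, and I would present it in essentially the two displays above plus a sentence invoking \cite[Theorem 3, page 406]{Guo-L}.
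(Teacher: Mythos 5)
Your proposal is correct and follows essentially the same route as the paper: the authors likewise reduce the hatted estimate to the velocity-space trilinear bound of Guo (made precise in Strain--Zhu, \cite[Proposition 1, page 621]{MR3101794}) via the $k$-convolution structure of $\hat{\Gamma}$, and the Fubini computation you sketch is exactly the one they carry out for the Landau operator in the proof of Lemma \ref{lem.tei}. No gaps.
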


More precisely the estimate above can be found in \cite[Proposition 1, page 621]{MR3101794}, which is a simplification of the original estimate from \cite[Theorem 3, page 406]{Guo-L}.  We remark that the estimates in those papers involve weights and derivatives, however the above estimate in Lemma \ref{landau.nonlinear.est} only differs in the lack of weights and derivatives and the inclusion of the convolution $*_k$ in the terms.  And Lemma \ref{landau.nonlinear.est} follows directly from the exact same proofs.
Based on the above lemma, one can obtain the same estimate as stated in Proposition \ref{Torus Boltzmann Micro}. For brevity we omit the rest of the details in the Landau case since they are the same as in the Boltzmann case.

\section{Trilinear estimates}\label{sec4}

In this section we will treat trilinear estimates by three parts. The first part is concerned with the velocity weighted trilinear estimates as used in \eqref{ad.vwte}. Moreover, corresponding to estiamte the left-hand term of \eqref{pro.tpm.p3}, the second and third parts are devoted to considering the velocity weighted trilinear estimates on mixed variables in cases of the torus and the finite channel, respectively.

\subsection{Trilinear estimates in $L^2_v$ with velocity weight}

We first give some basic estimates on the velocity weighted trilinear terms in the following lemma whose proof can be found
in \cite[Lemma 10, pp.327]{SG-08-ARMA} and \cite[Lemma 2.3, pp.176; Lemma 2.4, pp.121]{DLYZ-VMB,FLLZ-2018}, respectively. Note that for {\it hard} potentials corresponding to either $-2\leq \ga\leq 1$ in Landau case or $\ga+2s\geq 0$ in Boltzmann case, we have $q=0$ and hence $w_{q,\vth}\equiv 1$, namely, it is not necessary to include any velocity weight in the {\it hard} potential cases. In fact, in all the cases under the hypothesis {\bf (H)}, there is no need to include any velocity weight for the purposes of establishing the existence theory.  However the velocity weight will be needed for deducing the sub-exponential time-decay in the {\it soft}  potential cases.

\begin{lemma}\label{bnp.es}
Let $(q,\vth)$ in the velocity weight function $w_{q,\vth}$ be chosen in terms of the hypothesis {\bf (H)} in \eqref{q}. Then, for the Landau operator,
%let $0\leq q<1$,
it holds that
\begin{equation}\label{bnpld}
\begin{split}
\left|\left(
\Gamma(f,g), w^2_{q,\vartheta}h\right)_{L^2_{v}}\right|
\lesssim\left(\|w_{q,\vth}f\|_{L^2_v}
\left|w_{q,\vth}g\right|_{D}
+\left|w_{q,\vth}f\right|_{D}
\|w_{q,\vth}g\|_{L^2_v}
\right)\left|w_{q,\vth}h\right|_{D}.
\end{split}
\end{equation}
Similarly, for the non-cutoff Boltzmann operator,
%let $0<s<1$, $-2s>\ga>\max\{-3,-3/2-2s\}$, $\vth=1$ and $q\geq0$,
it holds that
\begin{equation}\label{bnpbl.h}
\begin{split}
\left|\left(
\Gamma(f,g), w^2_{q,\vartheta}h\right)_{L^2_{v}}\right|
\lesssim\left(\|w_{q,\vth}f\|_{L^2_v}
\left|w_{q,\vth}g\right|_{D}
+\left|w_{q,\vth}f\right|_{D}
\|w_{q,\vth}g\|_{L^2_v}
\right)\left|w_{q,\vth}h\right|_{D},
\end{split}
\end{equation}
if $\ga+2s\geq 0$, and
\begin{multline}
\left|\left( \Gamma(f,g), w^2_{q,\vth} h\right)_{L^2_{v}}\right|
\lesssim
\\
\left\{\left\|\langle v\rangle^{\gamma/2+s}w_{q,\vth}f\right\|_{L^2_v}
\left|g\right|_D+\left\|\langle v\rangle^{\gamma/2+s} g\right\|_{L^2_v}
\left|w_{q,\vth}f\right|_D\right\}
\left|w_{q,\vth}h\right|_D
%\notag
\\
+\min\left\{\left\|w_{q,\vth}f\right\|_{L^2_v}
\left\|\langle v\rangle^{\gamma/2+s}g\right\|_{L^2_v},\left\|g\right\|_{L^2_v}
\left\|\langle v\rangle^{\gamma/2+s}w_{q,\vth}f\right\|_{L^2_v}\right\}
\left|w_{q,\vth}h\right|_D
%\notag
\\
+\left\|w_{q,\vth}g\right\|_{L^2_v}
\left\|\langle v\rangle^{\gamma/2+s}w_{q,\vth}f\right\|_{L^2_v}
\left|w_{q,\vth}h\right|_D,
\label{bnpbl}
\end{multline}
if $\ga+2s<0$.
\end{lemma}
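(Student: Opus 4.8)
In each of the three cases the strategy is the same: redistribute the exponential weight $w_{q,\vth}$ from \eqref{def.w} onto the three functions $f$, $g$, $h$ at the cost of at most polynomially growing factors, and then invoke the corresponding \emph{unweighted} trilinear estimate. In this way the whole statement reduces to the weighted trilinear estimates already established in \cite{SG-08-ARMA} for the Landau operator and in \cite{DLYZ-VMB,FLLZ-2018} for the non-cutoff Boltzmann operator; the only genuinely new point to check is that the $D$-norm normalizations used there are compatible with \eqref{def.fc.nospace.disDB}, which is the content of Remark \ref{norm.remark}.

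For the Landau operator I would start from the explicit expansion of $\Gamma(f,g)$ recorded just above Lemma \ref{landau.nonlinear.est}. Pairing it with $w^2_{q,\vth}h$ and integrating by parts in $v$, each term becomes an integral tested against $w^2_{q,\vth}\partial_{v_j}h$ or against $\partial_{v_i}(w^2_{q,\vth})\,h$; since $\partial_{v_i}(w^2_{q,\vth})=\tfrac{q\vth}{2}\langle v\rangle^{\vth-2}v_i\,w^2_{q,\vth}$, the weight derivative costs only a factor $\langle v\rangle^{\vth-1}$, which for $0<\vth\le 2$ is dominated by the growth already built into the $D$-norm through the $\sigma^{jm}v_jv_m$ term. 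One then bounds the weighted convolved coefficients $w_{q,\vth}^2(\psi^{ij}*(\mu^{1/2}f))$ and their $v$-derivatives pointwise by $\|w_{q,\vth}f\|_{L^2_v}$ using the Gaussian decay of $\mu^{1/2}$ — here the restriction $q<1$ when $\vth=2$ in \textbf{(H)} is used so that $\mu^{1/4}w_{q,\vth}$ remains integrable — distributes the remaining weight evenly, and closes with Cauchy--Schwarz. This is exactly the computation in \cite[Lemma~10, p.~327]{SG-08-ARMA} and gives \eqref{bnpld}.

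For the non-cutoff Boltzmann operator I would split on the sign of $\ga+2s$. When $\ga+2s\ge 0$ the kinetic factor $|v-u|^\ga$ is locally integrable and the only delicate point is the weight: using energy conservation $|v|^2=|v'|^2+|u'|^2-|u|^2$ together with $(a+b)^{\vth/2}\le a^{\vth/2}+b^{\vth/2}$ for $\vth\le 2$ one obtains $w_{q,\vth}^2(v)\lesssim w_{q,\vth}^2(v')w_{q,\vth}^2(u')$ on the gain term, while on the loss term the explicit Gaussian factor $\mu^{1/2}(u)$ absorbs any excess weight on the $f$-argument (again using $q<1$ when $\vth=2$). Combined with the unweighted estimate \eqref{ad.vwte} and the identification of the $D$-norm with the Gressman--Strain norm $N^{s,\ga}$ from Remark \ref{norm.remark}, this produces \eqref{bnpbl.h}; the details are \cite[Lemma~2.3]{DLYZ-VMB}.

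When $\ga+2s<0$ the factor $|v-u|^\ga$ is genuinely velocity-singular and can no longer be controlled by the weight alone, and this I expect to be the main obstacle. The plan is to split the $u$-integral into $|v-u|\le \langle v\rangle/2$ and its complement. On the far region one pays a factor $\langle v\rangle^{\gamma/2+s}$ on each of two of the three functions to recover the missing decay — this is precisely the source of the $\langle v\rangle^{\gamma/2+s}$-weighted $L^2_v$ norms in \eqref{bnpbl}, and the $\min\{\cdot,\cdot\}$ alternative reflects the freedom of placing this extra decay either on $f$ or on $g$; on the near region one uses the cancellation between gain and loss together with the fractional smoothing coming from the angular singularity $\theta^{-1-2s}$. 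Carrying the exponential weight through this decomposition exactly as in the $\ga+2s\ge 0$ case, and then invoking \cite[Lemma~2.4, p.~121]{DLYZ-VMB} essentially verbatim, yields \eqref{bnpbl}. In all three cases no new idea beyond the cited works is required.
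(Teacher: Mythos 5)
Your proposal is correct and matches the paper's treatment: the paper offers no proof of this lemma at all, simply attributing \eqref{bnpld} to \cite[Lemma 10, pp.~327]{SG-08-ARMA} and \eqref{bnpbl.h}--\eqref{bnpbl} to \cite[Lemma 2.3; Lemma 2.4]{DLYZ-VMB,FLLZ-2018}, which is exactly what you do. Your added sketch of how those references handle the exponential weight (the weight-derivative computation for Landau, energy conservation plus subadditivity of $x^{\vth/2}$ for Boltzmann, and the near/far splitting producing the $\langle v\rangle^{\gamma/2+s}$ factors in the soft case) is consistent with the cited arguments and goes beyond what the paper itself records.
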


\subsection{Trilinear estimates on mixed variables in isotropic case}

Recall that we have derived \eqref{est.fRHS} without any velocity weights.  Based on Lemma \ref{bnp.es}, we may employ the same idea to include the velocity weight  $w_{q,\vth}$ in our estimates.

\begin{lemma}\label{lem.tei}
Let $(q,\vth)$ in the velocity weight function $w_{q,\vth}$ be chosen in terms of the hypothesis {\bf (H)} in \eqref{q}. Then, for both the Landau and Boltzmann cases it holds that
\begin{align}
%\begin{split}
\int_{\Z^3_{{k}}}&\left(\int_0^T\left|\left(
\widehat{\Gamma(f,g)}, w^2_{q,\vartheta}\hat{h}\right)_{L^2_{v}}\right|dt\right)^{1/2}d\Si({k})\notag \\
\leq&C_\eta\bigg(\left\|w_{q,\vth} f\right\|_{L^1_{{k}}L^\infty_TL^2_{v}}
\left\|w_{q,\vth} g\right\|_{L^1_{{k}}L^2_TL^2_{v,D}}
\notag\\
&\qquad\ +\left\|w_{q,\vth}f\right\|_{L^1_{{k}}L^2_TL^2_{v,D}}
\left\|w_{q,\vth} g\right\|_{L^1_{{k}}L^\infty_TL^2_{v}}
\bigg)+\eta\left\|w_{q,\vth}h\right\|_{L^1_{{k}}L^2_TL^2_{v,D}},\label{lem.tei.1}
%\end{split}
\end{align}
where the Fourier transform $\hat{\,\cdot\,}$ is taken in ${x}=(x_1,x_2,x_3)\in \T^3$,  $\eta>0$ is an arbitrary small constant, and $C_\eta$ is a universal large constant depending only on $\eta$.
\end{lemma}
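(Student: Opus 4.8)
The plan is to combine the pointwise-in-$k$ trilinear estimate from Lemma \ref{bnp.es} with the convolution-in-$k$ structure, exactly as in the proof of Proposition \ref{Torus Boltzmann Micro}, but now carrying the velocity weight $w_{q,\vth}$ throughout. First I would reduce the frequency-variable to a convolution: since $\widehat{\Gamma(f,g)}(k)$ is (up to the velocity integrals in $\Gamma$) a convolution in $k$ of $\hat f$ and $\hat g$, the same computation that produced \eqref{lem.ptri.p1} (Fubini in $k$ together with the bilinearity of $\Gamma$) gives
\begin{equation*}
\left|\left(\widehat{\Gamma(f,g)}(k),w_{q,\vth}^2\hat h(k)\right)_{L^2_v}\right|
\le \int_{\Z^3_l}\left|\left(\Gamma(\hat f(k-l),\hat g(l)),w_{q,\vth}^2\hat h(k)\right)_{L^2_v}\right|d\Si(l).
\end{equation*}
Then I would apply Lemma \ref{bnp.es} to the integrand. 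In the hard-potential cases (where $w_{q,\vth}\equiv1$) and in the Landau/Boltzmann cases with $\ga+2s\ge0$ one directly gets the clean product bound
\begin{equation*}
\int_{\Z^3_l}\left(\|w_{q,\vth}\hat f(k-l)\|_{L^2_v}\,|w_{q,\vth}\hat g(l)|_D
+|w_{q,\vth}\hat f(k-l)|_D\,\|w_{q,\vth}\hat g(l)\|_{L^2_v}\right)|w_{q,\vth}\hat h(k)|_D\,d\Si(l).
\end{equation*}
In the genuinely soft-potential Boltzmann case one uses \eqref{bnpbl}; here the extra factors $\langle v\rangle^{\ga/2+s}$ are harmless because $\ga/2+s\le s\le$ the order of the $D$-norm, so $\|\langle v\rangle^{\ga/2+s}w_{q,\vth}\hat f\|_{L^2_v}\lesssim|w_{q,\vth}\hat f|_D$ and similarly for the un-weighted pieces, and the $\min\{\cdots\}$ term is dominated by either of the two product structures already appearing. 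Thus in all cases covered by hypothesis \textbf{(H)} the integrand is bounded, up to constants, by the same bilinear-in-$(f,g)$ / linear-in-$h$ expression as in the Boltzmann case.

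It then remains to integrate in $t$ and in $k$. Taking $\left(\int_0^T(\cdot)\,dt\right)^{1/2}$ and then $\int_{\Z^3_k}d\Si(k)$, I would follow verbatim the chain of inequalities in \eqref{pro.tpm.p4}–\eqref{minkowski.ineq}: Cauchy–Schwarz in $t$ to split off the factor $\left(\int_0^T|w_{q,\vth}\hat h(k)|_D^2\,dt\right)^{1/4}$, then Young's inequality with parameter $\eta$ to absorb that factor into $\eta\|w_{q,\vth}h\|_{L^1_kL^2_TL^2_{v,D}}$ at the cost of a constant $C_\eta=\tfrac1{4\eta}$ in front of the remaining square; Minkowski's inequality \eqref{minkowski.ineq} to pull the $L^1_l$ norm outside the $L^2_t$ norm; and finally Fubini and translation invariance in $k$ to factor the double $k$-integral as
\begin{equation*}
\int_{\Z^3_l}\sup_{0\le t\le T}\|w_{q,\vth}\hat f(t,l)\|_{L^2_v}\,d\Si(l)\cdot
\int_{\Z^3_k}\left(\int_0^T|w_{q,\vth}\hat g(t,k)|_D^2\,dt\right)^{1/2}d\Si(k)
=\|w_{q,\vth}f\|_{L^1_kL^\infty_TL^2_v}\,\|w_{q,\vth}g\|_{L^1_kL^2_TL^2_{v,D}},
\end{equation*}
together with the symmetric term in which the roles of $f$ and $g$ (and of the $L^\infty_T L^2_v$ and $L^2_TL^2_{v,D}$ norms) are exchanged. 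This is exactly the right-hand side of \eqref{lem.tei.1}. For the Landau case the only change is that by Lemma \ref{landau.nonlinear.est} (or directly from the weighted estimate \eqref{bnpld}) the frequency reduction already comes packaged as a $*_k$ convolution, so one skips straight to the $t$- and $k$-integration step.

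The main obstacle — really the only genuine point to check rather than transcribe — is verifying that the soft-potential Boltzmann estimate \eqref{bnpbl}, with its $\langle v\rangle^{\ga/2+s}$-weighted $L^2_v$ factors and the $\min\{\cdots\}$ term, can be subsumed into the same two-term product structure used for the hard case, so that the $k$- and $t$-integration argument applies unchanged. This needs the elementary bound $\|\langle v\rangle^{\ga/2+s}F\|_{L^2_v}\lesssim |F|_D$ (valid since the $D$-norm controls an $L^2_v$ norm with weight $\langle v\rangle^{\ga/2}$ and also an anisotropic fractional-derivative piece, and $\ga/2+s$ does not exceed the effective order of $|\cdot|_D$; this is the content of Lemma \ref{bnp.es}'s hypotheses and the restriction $\ga>\max\{-3,-3/2-2s\}$), applied to each of $w_{q,\vth}\hat f$, $\hat g$, $w_{q,\vth}\hat g$; once this is in hand the remaining manipulations are identical to those already carried out in Section \ref{sec3}, and one reads off \eqref{lem.tei.1} with $C_\eta=C/\eta$.
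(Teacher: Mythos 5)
Your proposal is correct and follows essentially the same route as the paper: reduce $\widehat{\Gamma(f,g)}(k)$ to a $k$-convolution via Fubini, apply the weighted pointwise trilinear bounds of Lemma \ref{bnp.es}, and then repeat the Cauchy--Schwarz/Young/Minkowski/translation-invariance chain from the proof of Proposition \ref{Torus Boltzmann Micro}. Your extra check that the soft-potential bound \eqref{bnpbl} collapses to the two-term product structure (using $\langle v\rangle^{\gamma/2+s}\le 1$, $w_{q,\vth}\ge 1$, and $\|\langle v\rangle^{(\gamma+2s)/2}F\|_{L^2_v}\lesssim |F|_D$) is exactly the step the paper leaves implicit, and your justification of it is sound.
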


\begin{proof}In the rest of this proof $\left( \cdot, \cdot \right)$ denotes the $L^2_{v}$ complex inner product.
We consider Proposition \ref{Torus Boltzmann Micro} and its proof.  In particular we recall \eqref{pro.tpm.p4} and the estimates below it.   Then to show \eqref{lem.tei.1} it suffices to verify  that
\begin{align}
%\begin{split}
&\left|\left(\widehat{\Gamma( f,g)}, w^2_{q,\vartheta}\hat{h}\right)\right|\notag\\
&\lesssim\int_{\Z^3_l}\left(\left\|w_{q,\vth}\hat{f}({k}-{l})\right\|_{L^2_v}
\left|w_{q,\vth}\hat{g}({l})\right|_{D}
+\left|w_{q,\vth}\hat{f}({k}-{l})\right|_{D}
\left\|w_{q,\vth}\hat{g}({l})\right\|_{L^2_v}
\right)
\notag
\\
& \quad \quad \quad \times \left|w_{q,\vth}\hat{h}(k)\right|_{D}~d\Si({l}).
\label{lem.tei.p1}
%\end{split}
\end{align}
Indeed, by \eqref{nopdef}, it holds that
\begin{equation}
\label{lem.tei.p2}
\left(\widehat{\Gamma( f,g)}({k}),w^2_{q,\vartheta}\hat{h}({k})\right)
=\int_{\R^3_v}w^2_{q,\vartheta}\mu^{-\frac{1}{2}}(v)
\CF_x{Q(\mu^{\frac{1}{2}} f,\mu^{\frac{1}{2}}g)} (k)\bar{\hat{h}}({k})dv.
\end{equation}
Then, it follows from \eqref{bLop.e} and  \eqref{lem.tei.p2} together with
Fubini's theorem that
\begin{equation}
\begin{aligned}
&\left|\left(\widehat{\Gamma(f,g)}(k),w^2_{q,\vartheta}\hat{h}(k)\right)\right|
%=\big|\int_{\R^3\times I}w^2_{q,\vartheta}\mu^{-\frac{1}{2}}(v)
%\widehat{Q(\mu^{\frac{1}{2}}\pa^{\al} f,\mu^{\frac{1}{2}}g)}\overline{\hat{h}}(\bar{k})dvd{x_1}\big|
\\
&= \bigg|\int_{\R^3_v}dv\, w^2_{q,\vartheta}\mu^{-\frac{1}{2}}(v)\int_{\Z^3_l}d\Si(l)\,
\\
& \quad \quad \quad \times
\pa_{v_j}
\left\{\left[\psi^{jm}\ast_v\left(\mu^{\frac{1}{2}}\hat{f}({k}-{l})\right)\right]
\mu^{\frac{1}{2}}(v)\left[\pa_{v_m}\hat{g}({l})-\frac{v_m}{2}\hat{g}({l})\right]\right\}\bar{\hat{h}}({k})
\\
&
\quad -\int_{\R^3_v}dv\,w^2_{q,\vartheta}\mu^{-\frac{1}{2}}(v)\int_{\Z^3_l}d\Si (l)\,
\\
& \quad \quad \quad \times
\pa_{v_j}\left\{\left[\psi^{jm}\ast_v
\left\{\left(\pa_{v_m}\hat{f}-\frac{v_m}{2}\hat{f}\right)({k}-{l})\mu^{\frac{1}{2}}\right\}\right]
\mu^{\frac{1}{2}}(v)\hat{g}({l})\right\}\bar{\hat{h}}({k})\bigg|
\\
&
= \left|
\int_{\Z^3_l}d\Si (l) \int_{\R^3_v}dv\, \Gamma(\hat{f}({k}-{l}),\hat{g}({l}))
w^2_{q,\vartheta}\bar{\hat{h}}({k})
\right|
\\
&\le \int_{\Z^3_l} \left| \left(\Gamma(\hat{f}({k}-{l}),\hat{g}({l})),w^2_{q,\vartheta}\hat{h}(k)\right)\right|d\Si({l}),
\notag
\end{aligned}
\end{equation}
for the Landau collision operator, and
\begin{equation}
\begin{aligned}
&\left| (\widehat{\Gamma(f,g)}({k}),\hat{h}({k}))\right|\\
&=\left| \int_{\R^3_v}dv \int_{\R^3_u}du
\int_{\mathbb{S}^{2}}d\si\, B \mu^{\frac{1}{2}}(u)w^2_{q,\vartheta} \left(\widehat{f(u')g(v')}({k})-
\widehat{f(u)g(v)}({k})\right)\bar{\hat{h}}({k})\right|\\
&=\left\vert \int_{\R^3_v}dv \int_{\R^3_u}du
\int_{\mathbb{S}^{2}}d\si\,  B\mu^{\frac{1}{2}}(u)w^2_{q,\vartheta}\right.\\
&\qquad\qquad\qquad \left.\times\int_{\Z^3_l}d\Si (l) \left(\widehat{f(u')}({k}-{l})
\widehat{g(v')}(l)-\widehat{f(u)}({k}-{l})\widehat{g(v)}(l)\right)\bar{\hat{h}}({k})\right\vert\\
&=\left\vert \int_{\Z^3_l}d\Si (l) \int_{\mathbb{R}^3_v}dv\,
\Gamma(\hat{f}({k}-{l}),\hat{g}({l}))w^2_{q,\vartheta}\bar{\hat{h}}({k})  \right\vert\\
&\le \int_{\Z^3_l} \left\vert \left(\Gamma(\hat{f}({k}-{l}),\hat{g}({l})),w^2_{q,\vartheta}\hat{h}(k)\right)\right\vert d\Si({l}),
\end{aligned}\notag
\end{equation}
for the Boltzmann collision operator. Hence, from the above two estimates, one can further derive \eqref{lem.tei.p1} with the help of \eqref{bnpld} for the Landau operator, and \eqref{bnpbl.h} and \eqref{bnpbl} for the Boltzmann operator. This completes the proof of Lemma \ref{lem.tei}.
\end{proof}

The following lemma will be useful for dealing with the nonlinear terms arising from the macroscopic estimates in Section \ref{sec5.1}. The proof of Lemma \ref{tril21} is omitted for brevity of presentation, since it is similar to and much easier than that of Lemma \ref{tril2} below in the case of the finite channel where the proof will be provided.  

\begin{lemma}\label{tril21}
Assume that $\zeta(v)$ depends only on $v$ and decays rapidly at infinity. Then, for $|\al|=0,1$, it holds that 
\begin{multline}
%\begin{split}
\int_{\Z^3_{k}}\left(\int_0^T\left|\left(
\CF_{\bar{x}}{\Gamma(f,g)},\zeta(v)\right)_{L_v^2}\right|^2dt\right)^{1/2}d\Si(k)\notag
\\
\leq C\bigg(\left\| f\right\|_{L^1_{k}L^\infty_TL^2_{v}}
\left\| g\right\|_{L^1_{k}L^2_TL^2_{v,D}}
+\left\|f\right\|_{L^1_{k}L^2_TL^2_{v,D}}
\left\| g\right\|_{L^1_{k}L^\infty_TL^2_{v}}
\bigg).
\notag
%\end{split}
\end{multline}
\end{lemma}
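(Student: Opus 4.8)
The plan is to follow the same route as in the proof of Proposition~\ref{Torus Boltzmann Micro}; the only genuinely new ingredient is a trilinear estimate at each fixed frequency whose third slot is occupied by the fixed, rapidly decaying function $\zeta(v)$ rather than by $\{\FI-\FP\}\hat f$. First I would record the convolution identity: since $\zeta$ depends on $v$ only, it acts as an inert multiplier after the Fourier transform in $x$, so that, exactly as in the proof of Lemma~\ref{lem.tei},
\begin{equation*}
\left(\CF_x\Gamma(f,g)(k),\zeta\right)_{L^2_v}=\int_{\Z^3_l}\left(\Gamma(\hat f(k-l),\hat g(l)),\zeta\right)_{L^2_v}\,d\Si(l),
\end{equation*}
and hence $\bigl|\left(\CF_x\Gamma(f,g)(k),\zeta\right)_{L^2_v}\bigr|\le\int_{\Z^3_l}\bigl|\left(\Gamma(\hat f(k-l),\hat g(l)),\zeta\right)_{L^2_v}\bigr|\,d\Si(l)$.

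Next I would bound the inner pairing by the unweighted, no-derivative form of the trilinear estimates already available: \eqref{ad.vwte} together with Lemma~\ref{landau.nonlinear.est} in the Boltzmann resp.\ Landau cases, or equivalently Lemma~\ref{bnp.es} with the weight $w_{q,\vth}$ replaced by $1$, applied with third entry $h=\zeta$. The key point is that every factor built from $\zeta$ that would appear on the right-hand side — $|\zeta|_D$, $\|\zeta\|_{L^2_v}$, and, in the soft Boltzmann case, $\|\langle v\rangle^{\gamma/2+s}\zeta\|_{L^2_v}$ — is a finite constant, since $\zeta$ is smooth and rapidly decaying while the kernels $\sigma^{jm}$ and $B$ grow only polynomially in $v$ (the angular singularity of the non-cutoff kernel being absorbed by the smoothness of $\zeta$). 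In the hard cases \eqref{bnpld}/\eqref{bnpbl.h} yield this at once; in the soft cases one absorbs the extra terms in \eqref{bnpbl} by using that $\langle v\rangle^{\gamma/2+s}\le1$ (resp.\ $\langle v\rangle^{\gamma/2+1}\le1$) and that the $D$-norm controls the weighted norm $\|\langle v\rangle^{\gamma/2+s}\cdot\|_{L^2_v}$ (resp.\ $\|\langle v\rangle^{\gamma/2+1}\cdot\|_{L^2_v}$; cf.\ Remark~\ref{norm.remark}). The outcome is the pointwise estimate
\begin{equation*}
\bigl|\left(\Gamma(\hat f(k-l),\hat g(l)),\zeta\right)_{L^2_v}\bigr|\lesssim\|\hat f(k-l)\|_{L^2_v}\,|\hat g(l)|_D+|\hat f(k-l)|_D\,\|\hat g(l)\|_{L^2_v}.
\end{equation*}
I expect this soft-potential bookkeeping — verifying that no loss occurs when the $D$-norm is at its weakest — to be the most delicate part.

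Given this pointwise bound, the proof is concluded by repeating the closing manipulations of the proof of Proposition~\ref{Torus Boltzmann Micro} almost verbatim: insert the bound into the left-hand side; use Minkowski's integral inequality \eqref{minkowski.ineq} to move $\int_{\Z^3_l}d\Si(l)$ outside the $L^2_t$ norm; for each fixed $l$, estimate $\bigl(\int_0^T\|\hat f(t,k-l)\|_{L^2_v}^2|\hat g(t,l)|_D^2\,dt\bigr)^{1/2}\le\sup_{0<t<T}\|\hat f(t,k-l)\|_{L^2_v}\,\bigl(\int_0^T|\hat g(t,l)|_D^2\,dt\bigr)^{1/2}$ and symmetrically for the other contribution; then invoke Fubini's theorem and the translation invariance $k\mapsto k+l$ to factor the $d\Si(k)\,d\Si(l)$ integral into $\|f\|_{L^1_kL^\infty_TL^2_v}\|g\|_{L^1_kL^2_TL^2_{v,D}}+\|f\|_{L^1_kL^2_TL^2_{v,D}}\|g\|_{L^1_kL^\infty_TL^2_v}$. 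The version carrying a tangential derivative $\pa^\al$ with $|\al|=1$ follows along identical lines after distributing the derivative by the Leibniz rule $\pa^\al\Gamma(f,g)=\Gamma(\pa^\al f,g)+\Gamma(f,\pa^\al g)$.
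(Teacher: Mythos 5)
Your proposal is correct and follows essentially the same route as the paper: the paper omits the proof of Lemma \ref{tril21}, referring to the proof of Lemma \ref{tril2}, which proceeds exactly as you do — expand the pairing by Fubini into a convolution over frequencies, apply Lemma \ref{bnp.es} with third slot $\zeta$ so that all $\zeta$-dependent factors collapse into a finite constant $C_\zeta$, then close with Minkowski's inequality, the $\sup$-in-time/$L^2_T$ factorization, Fubini and translation invariance (the torus case simply dispenses with the $H^1_{x_1}$ Sobolev step needed in the channel). Your soft-potential bookkeeping and the Leibniz treatment of the derivative case are consistent with this and introduce no gap.
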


%\begin{proof}
%The proof is similar and much more easier than that of Lemma \ref{tril2} in the next subsection, we omit the details here.
%\end{proof}

\subsection{Trilinear estimates on mixed variables in anisotropic case}

Similar to the proof of Lemma \ref{lem.tei}, the proof of the  following lemma is also based on Lemma \ref{bnp.es} and some significant properties of the $L^1_{\bar{k}}$ Wiener algebra space $A(\T^2)$.

\begin{lemma}\label{keynp.es} Let $(q,\vth)$ in the velocity weight function $w_{q,\vth}$ be chosen in terms of the hypothesis {\bf (H)} in \eqref{q}. Then, for both Landau and Boltzmann cases it holds that   for $|\al|=0,1$, we have
\begin{align}
%\begin{split}
\int_{\Z^2_{\bar{k}}}&\left(\int_0^T\left|\left(
\CF_{\bar{x}}{\Gamma(\pa^{\al} f,g)}, w^2_{q,\vartheta}\hat{h}\right)\right|dt\right)^{1/2}d\Si(\bar{k})
\notag \\
\leq&C_\eta\bigg(\left\|w_{q,\vth}\pa^{\al} f\right\|_{L^1_{\bar{k}}L^\infty_TL^2_{x_1,v}}
\left\|w_{q,\vth} g\right\|_{L^1_{\bar{k}}L^2_TH^1_{x_1}L^2_{v,D}}
\notag \\&\qquad+\left\|w_{q,\vth}\pa^{\al} f\right\|_{L^1_{\bar{k}}L^2_TL^2_{x_1}L^2_{v,D}}
\left\|w_{q,\vth} g\right\|_{L^1_{\bar{k}}L^\infty_TH^1_{x_1}L^2_{v}}
\bigg)+\eta\left\|w_{q,\vth}h\right\|_{L^1_{\bar{k}}L^2_TL^2_{x_1}L^2_{v,D}},\label{tri.es1}
%\notag
%\end{split}
\end{align}
and
\begin{align}
%\begin{split}
\int_{\Z^2_{\bar{k}}}&\left(\int_0^T\left|\left(
\CF_{\bar{x}}{\Gamma( f,\pa^{\al}g)}, w^2_{q,\vartheta}\hat{h}\right)\right|dt\right)^{1/2}d\Si(\bar{k})
\notag\\
\leq&C_\eta\bigg(\left\|w_{q,\vth}\pa^{\al} g\right\|_{L^1_{\bar{k}}L^\infty_TL^2_{x_1,v}}
\left\|w_{q,\vth} f\right\|_{L^1_{\bar{k}}L^2_TH^1_{x_1}L^2_{v,D}}
\notag\\&\qquad+\left\|w_{q,\vth}\pa^{\al} g\right\|_{L^1_{\bar{k}}L^2_TL^2_{x_1}L^2_{v,D}}
\left\|w_{q,\vth} f\right\|_{L^1_{\bar{k}}L^\infty_TH^1_{x_1}L^2_{v}}
\bigg)+\eta\left\|w_{q,\vth}h\right\|_{L^1_{\bar{k}}L^2_TL^2_{x_1}L^2_{v,D}},\label{tri.es2}
%\notag
%\end{split}
\end{align}
where the Fourier transform $\hat{\,\cdot\,}$ is taken in $\bar{x}=(x_2,x_3)\in \T^2$, the inner product $(\cdot,\cdot)$ is taken over $L^2_{x_1,v}$ where $x_1 \in I = (-1,1)$, $\eta>0$ is an arbitrary constant, and $C_\eta$ is a universal constant depending only on $\eta$.
\end{lemma}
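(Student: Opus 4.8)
The plan is to reduce Lemma \ref{keynp.es} to the velocity-only weighted trilinear estimates of Lemma \ref{bnp.es}, exactly as Lemma \ref{lem.tei} does in the torus case, but now carrying the extra variable $x_1\in I=(-1,1)$ through the argument and absorbing the loss of one $x_1$-derivative into the $H^1_{x_1}$ norms appearing on the right-hand side. First I would write out, for fixed $\bar k\in\Z^2$, the pointwise-in-$(x_1,v)$ representation of $\CF_{\bar x}\Gamma(\pa^\al f,g)(\bar k)$ as a convolution in $\bar\ell\in\Z^2$ of $\Gamma\bigl(\widehat{\pa^\al f}(\bar k-\bar\ell),\hat g(\bar\ell)\bigr)$, integrating by parts in $v$ (for Landau) or using the pre-post collisional change of variables (for Boltzmann) precisely as in the proof of Lemma \ref{lem.tei}; this gives
\[
\left|\left(\CF_{\bar x}\Gamma(\pa^\al f,g)(\bar k),w^2_{q,\vth}\hat h(\bar k)\right)_{L^2_{x_1,v}}\right|
\le \int_{\Z^2_{\bar\ell}}\int_I \left|\left(\Gamma\bigl(\widehat{\pa^\al f}(\bar k-\bar\ell,x_1),\hat g(\bar\ell,x_1)\bigr),w^2_{q,\vth}\hat h(\bar k,x_1)\right)_{L^2_v}\right|dx_1\,d\Si(\bar\ell).
\]
Then I apply Lemma \ref{bnp.es} (the bound \eqref{bnpld} for Landau, or \eqref{bnpbl.h}/\eqref{bnpbl} for Boltzmann) pointwise in $x_1$ to the inner $L^2_v$-inner product, which produces, up to the extra low-order velocity-weighted $L^2_v$ factors in the soft Boltzmann case, a sum of terms of the schematic form $\|w\widehat{\pa^\al f}(\bar k-\bar\ell,x_1)\|_{L^2_v}\,|w\hat g(\bar\ell,x_1)|_D\,|w\hat h(\bar k,x_1)|_D$ plus the symmetric term with the $D$-norm on $f$.

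Next I would integrate these pointwise bounds in $x_1\in I$, applying Cauchy–Schwarz in $x_1$ to peel off the $h$-factor: $\int_I |w\hat h(\bar k,x_1)|_D\,(\cdots)\,dx_1\le \|w\hat h(\bar k)\|_{L^2_{x_1,v,D}}\,\bigl(\int_I(\cdots)^2dx_1\bigr)^{1/2}$. For the remaining $x_1$-integral of the product $\|w\widehat{\pa^\al f}(\bar k-\bar\ell)\|_{L^2_v}\,|w\hat g(\bar\ell)|_D$ I cannot simply take $L^\infty_{x_1}$ on one factor, since that would require a half-derivative more than $H^1_{x_1}$ in one dimension only borderline; instead I would use the 1D Sobolev/Agmon inequality $\|u\|_{L^\infty_{x_1}(I)}\lesssim\|u\|_{H^1_{x_1}(I)}$ on the $f$-factor (which is why $H^1_{x_1}$ appears on $f$ in the "$\infty$" slot of \eqref{tri.es1}) and keep $|w\hat g(\bar\ell)|_D$ in $L^2_{x_1}$; symmetrically for \eqref{tri.es2} the roles of $f$ and $g$ are swapped. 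This yields, for each $\bar k$,
\[
\left|\left(\CF_{\bar x}\Gamma(\pa^\al f,g)(\bar k),w^2_{q,\vth}\hat h(\bar k)\right)\right|
\lesssim \|w\hat h(\bar k)\|_{L^2_{x_1,v,D}}\int_{\Z^2_{\bar\ell}}\Bigl(A(\bar k-\bar\ell)\,B(\bar\ell)+\tilde A(\bar k-\bar\ell)\,\tilde B(\bar\ell)\Bigr)d\Si(\bar\ell),
\]
where, e.g., $A(\bar k-\bar\ell)=\sup_{x_1}\|w\widehat{\pa^\al f}(\bar k-\bar\ell,x_1)\|_{L^2_v}$ controlled by the $H^1_{x_1}$-norm and $B(\bar\ell)=\|w\hat g(\bar\ell)\|_{L^2_{x_1,v,D}}$, and so on.

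From here the argument is a carbon copy of the end of the proof of Proposition \ref{Torus Boltzmann Micro}: after taking $\int_0^T(\cdot)\,dt$ and a square root, I use Cauchy–Schwarz in $t$ followed by Young's inequality with parameter $\eta$ to split off $\eta\,\|w h\|_{L^1_{\bar k}L^2_TL^2_{x_1}L^2_{v,D}}$; then I use Minkowski's inequality \eqref{minkowski.ineq} to exchange $\|\cdot\|_{L^2_t}$ and $\|\cdot\|_{L^1_{\bar\ell}}$, and finally Fubini together with translation invariance of the discrete convolution in $\bar k$ (i.e.\ $\|A * B\|_{L^1_{\bar k}}\le\|A\|_{L^1_{\bar k}}\|B\|_{L^1_{\bar k}}$, the Wiener-algebra property of $A(\T^2)$) to factor the double sum into a product of two $L^1_{\bar k}$-type norms, landing on exactly the right-hand side of \eqref{tri.es1}. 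The Boltzmann soft-potential case carries the additional $\langle v\rangle^{\gamma/2+s}$-weighted low-order terms from \eqref{bnpbl}, but these are dominated by the $D$-norm up to constants (cf.\ Remark \ref{norm.remark} and the discussion after Lemma \ref{lem: Boltzmann nonlinear}), so they are absorbed without new ideas. I expect the one genuinely delicate point to be the $x_1$-direction bookkeeping: making sure the one-dimensional Sobolev embedding $H^1_{x_1}(I)\hookrightarrow L^\infty_{x_1}(I)$ is applied to the single factor that sits in the $L^\infty_T$-slot (so that only $\pa^\al$ with $|\al|\le 1$ and a further single $x_1$-derivative are ever needed, matching $\mathcal E_{T,w}$ and $\mathcal D_{T,w}$), and that the boundary of $I$ produces no extra terms when integrating by parts in $v$ — which it does not, since the $v$-integration by parts in the Landau representation is in the velocity variable only and the $x_1$-integration is untouched. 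Everything else is the same Cauchy–Schwarz/Young/Minkowski/Fubini chain already executed in Section \ref{sec3}.
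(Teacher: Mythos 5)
Your overall strategy is the paper's: reduce to the pointwise-in-$x_1$ weighted trilinear bound of Lemma \ref{bnp.es} via the convolution structure in $\bar{k}$, apply the one-dimensional Sobolev embedding $H^1_{x_1}(I)\hookrightarrow L^\infty_{x_1}(I)$ to one factor, and then run the Cauchy--Schwarz/Young/Minkowski/Fubini chain from Proposition \ref{Torus Boltzmann Micro}. However, there is a concrete error in the step you yourself flag as the delicate one: you apply the Sobolev embedding to the $f$-factor (the one carrying $\pa^{\al}$), asserting that ``$H^1_{x_1}$ appears on $f$ in the $\infty$-slot of \eqref{tri.es1}.'' It does not: in \eqref{tri.es1} the $H^1_{x_1}$ sits on $g$ in \emph{both} terms, and $\pa^{\al}f$ carries only $L^2_{x_1,v}$ or $L^2_{x_1}L^2_{v,D}$. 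The paper's proof takes $\sup_{x_1}$ of the $\hat{g}(\bar{l})$-factor (the one \emph{without} $\pa^{\al}$), bounding it by $\sum_{\al'\leq 1}\|\pa^{\al'}_{x_1}\hat{g}(\bar{l})\|$ in the appropriate norm, and keeps $\widehat{\pa^{\al}f}(\bar{k}-\bar{l})$ and $\hat{h}(\bar{k})$ in $L^2_{x_1}$ via Cauchy--Schwarz; symmetrically, for \eqref{tri.es2} the embedding goes on $f$, which is the factor without $\pa^{\al}$ there.

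Your placement produces a true but different inequality, with $\|w_{q,\vth}\pa^{\al}f\|_{L^1_{\bar{k}}L^\infty_T H^1_{x_1}L^2_v}$ on the right-hand side, and this variant cannot serve the lemma's purpose: in the energy estimates of Section \ref{sec7} one takes $f=g$ equal to the solution and $|\al|=1$, so your bound would require control of $\pa_{x_1}\pa^{\al}f$, a second-order spatial derivative that is not contained in $\CE_{T,w}(f)$ or $\CD_{T,w}(f)$ (both of which stop at $|\al|\leq 1$). The fix is simply to put the embedding on the underived factor; the rest of your argument (the convolution identity, the absorption of the extra low-order velocity-weighted terms in the soft Boltzmann case \eqref{bnpbl}, and the concluding Minkowski/Fubini step) then goes through exactly as in the paper.
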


\begin{proof}
We only prove \eqref{tri.es1}, since the proof of \eqref{tri.es2} is very similar. For this purpose, we first show that
\begin{align}
%\begin{split}
&\left|\left(\CF_{\bar{x}}{\Gamma(\pa^{\al} f,g)}, w^2_{q,\vartheta}\hat{h}\right)_{L^2_{x_1,v}}\right|\notag\\
&\lesssim\int_{\Z^2_{\bar{l}}}\int_{I}\left(\left\|w_{q,\vth}\widehat{\pa^{\al} f}(\bar{k}-\bar{l})\right\|_{L^2_v}
\left|w_{q,\vth}\hat{g}(\bar{l})\right|_{D}\right.\notag\\
&\qquad\qquad\qquad\left.+\left|w_{q,\vth}\widehat{\pa^{\al} f}(\bar{k}-\bar{l})\right|_{D}
\left\|w_{q,\vth}\hat{g}(\bar{l})\right\|_{L^2_v}
\right)\left|w_{q,\vth}\hat{h}\right|_{D}d{x_1}~d\Si(\bar{l}).\label{bnp}
%\end{split}
\end{align}
To verify \eqref{bnp}, for the Landau operator or Boltzmann operator, similar to the proof of Lemma \ref{lem.tei}, we get from \eqref{nopdef} and
Fubini's theorem that
\begin{align}
%\begin{split}
&\left|\left(\CF_{\bar{x}}{\Gamma(\pa^{\al} f,g)}(\bar{k}),w^2_{q,\vartheta}\hat{h}(\bar{k})\right)_{L^2_{x_1,v}}\right|\notag \\
&=\left|\int_{I}dx_1\int_{\R^3_v}dv\,w^2_{q,\vartheta}\mu^{-\frac{1}{2}}(v)
\CF_{\bar{x}}{Q(\mu^{\frac{1}{2}}\pa^{\al} f,\mu^{\frac{1}{2}}g)}(\bar{k})\bar{\hat{h}}(\bar{k})\right|\notag \\
%&= \bigg|\int_{\R^3\times I}w^2_{q,\vartheta}\mu^{-\frac{1}{2}}(v)\int_{\Z^2}\pa_j
%\left\{\left[\psi^{jm}\ast_v\left(\mu^{\frac{1}{2}}\widehat{\pa^{\al} f}(\bar{k}-\bar{l})\right)\right]
%\mu^{\frac{1}{2}}(v)\left[\pa_{v_m}\hat{g}(\bar{l})-\frac{v_m}{2}\hat{g}(\bar{l})\right]\right\}d\Si(\bar{l})\overline{\widehat{ h}}(\bar{k})dvd{x_1}\\
%&-\int_{\R^3\times I}w^2_{q,\vartheta}\mu^{-\frac{1}{2}}(v)\int_{\Z^2}\pa_j\left\{\left[\psi^{jm}\ast_v
%\left\{\left(\pa_{v_m}\widehat{\pa^{\al} f}-\frac{v_m}{2}\widehat{\pa f}\right)(\bar{k}-\bar{l})\mu^{\frac{1}{2}}\right\}\right]
%\mu^{\frac{1}{2}}(v)\hat{g}(\bar{l})\right\}~d\Si(\bar{l})\overline{\widehat{ h}}(\bar{k})dvd{x_1}\bigg|\\
&= \left|\int_{I}dx_1\int_{\R^3_v}dv\int_{\Z^2_{\bar{l}}}d\Si (\bar{l})\,\Gamma(\widehat{\pa^{\al} f}(\bar{k}-\bar{l}),\hat{g}(\bar{l}))
w^2_{q,\vartheta}\bar{\hat{h}}(\bar{k})\right|  \notag \\
&= \left|\int_{\Z^2_{\bar{l}}}d\Si (\bar{l}) \int_{I}dx_1\int_{\R^3_v}dv\,\Gamma(\widehat{\pa^{\al} f}(\bar{k}-\bar{l}),\hat{g}(\bar{l}))
w^2_{q,\vartheta}\bar{\hat{h}}(\bar{k})\right|  \notag \\
&\le \int_{\Z^2_{\bar{l}}} \left| \left(\Gamma(\widehat{\pa^{\al} f}(\bar{k}-\bar{l}),\hat{g}(\bar{l})),w^2_{q,\vartheta}\hat{h}(\bar{k})\right)\right|d\Si(\bar{l}).\label{blnp}
%\end{split}
\end{align}
%for the Landau collision operator,
%%. Here we also used the fact that
%%$$
%%\widehat{fg}(\bar{k})=\int_{\Z^2}\hat{f}(\bar{k}-\bar{l})\hat{g}(\bar{l})d\Si(\bar{l}).
%%$$
%and
%\begin{equation}\label{bbnp}
%\begin{split}
%\big\vert (&\widehat{\Gamma(\pa^{\al} f,g)}(\bar{k}),\hat{h}(\bar{k}))\big\vert\\
%=&\left\vert \int_{\mathbb{R}^6} \int_{I}
%\int_{\mathbb{S}^{d-1}} B (\mu(v)\mu(u))^{1/2}w^2_{q,\vartheta} \left(\widehat{\pa^{\al}f(u')g(v')}(\bar{k})-
%\widehat{\pa^{\al}f(u)g(v)}(\bar{k})\right)d\sigma du\overline{\hat{h}}(\bar{k})dvd{x_1}\right\vert\\
%=&\left\vert \int_{\mathbb{R}^6} \int_{I}
%\int_{\mathbb{S}^{d-1}} B (\mu(v)\mu(u))^{1/2}w^2_{q,\vartheta}\int_{\Z^2} \left(\widehat{\pa^{\al}f(u')}(\bar{k}-\bar{l})
%\widehat{g(v')}(\bar{l})-\widehat{\pa^{\al}f(u)}(\bar{k}-\bar{l})\widehat{g(v)}(\bar{l})\right)d\Si(\bar{l})
% d\sigma du \overline{\hat{h}}(\bar{k})dvd{x_1}\right\vert\\
%=&\left\vert \int_{\Z^2} \int_{\mathbb{R}^3} \int_{I}
%\Gamma(\widehat{\pa^{\al}f}(\bar{k}-\bar{l}),\hat{g}(\bar{l}))w^2_{q,\vartheta}\overline{\hat{h}}(\bar{k})dvd{x_1}d\Si(\bar{l})  \right\vert\\
%\le& \int_{\Z^2} \left\vert (\Gamma(\widehat{\pa^{\al}f}(\bar{k}-\bar{l}),\hat{g}(\bar{l})),w^2_{q,\vartheta}\hat{h})\right\vert d\Si(\bar{l}),
%\end{split}
%\end{equation}
%for the Boltzmann collision operator,
Hence \eqref{bnp} follows from \eqref{blnp} with the help of Lemma \ref{bnp.es}. Next, we let $J$ denote the left-hand term of \eqref{tri.es1}:
\begin{equation}\notag
J := \int_{\Z^2_{\bar{k}}}\left(\int_0^T\left|\left(
\CF_{\bar{x}}{\Gamma(\pa^{\al} f,g)}, w^2_{q,\vartheta}\hat{h}\right)_{L^2_{x_1,v}}\right|dt\right)^{1/2}d\Si(\bar{k}).
\end{equation}
Then by applying \eqref{bnp} together with the functional Sobolev embedding inequality $\|g\|_{L^\infty(I)} \lesssim \|g\|_{H^1(I)}$, one has
\begin{multline}
J
\lesssim
%\\
\int_{\Z^2_{\bar{k}}}\left(\int_0^T\int_{\Z^2_{\bar{l}}}\sum\limits_{\al'\leq 1}\left\|w \widehat{\pa^{\al} f}(\bar{k}-\bar{l})\right\|
\left\|w \pa^{\al'}_{x_1}\hat{g}(\bar{l})\right\|_{D}d\Si(\bar{l})
\left\|w \hat{h}(\bar{k})\right\|_{D}dt\right)^{1/2}d\Si(\bar{k})
\\
+\int_{\Z^2_{\bar{k}}}\left(\int_0^T\int_{\Z^2_{\bar{l}}}\sum\limits_{\al'\leq 1}\left\|w \widehat{\pa^{\al} f}(\bar{k}-\bar{l})\right\|_{D}
\left\|w\pa^{\al'}_{x_1}\hat{g}(\bar{l})\right\| d\Si(\bar{l})\left\|w \hat{h}(\bar{k})\right\|_{D}dt\right)^{1/2}d\Si(\bar{k}).
\notag
\end{multline}
%\begin{align}
%J
%&\lesssim
%\int_{\Z^2_{\bar{k}}}\left(\int_0^T\int_{\Z^2_{\bar{l}}}\sum\limits_{\al'\leq 1}\left\|w_{q,\vth}\widehat{\pa^{\al} f}(\bar{k}-\bar{l})\right\|
%\left\|w_{q,\vth}\pa^{\al'}_{x_1}\hat{g}(\bar{l})\right\|_{D}d\Si(\bar{l})
%\left\|w_{q,\vth}\hat{h}(\bar{k})\right\|_{D}dt\right)^{1/2}d\Si(\bar{k})\notag\\
%&\qquad+\int_{\Z^2_{\bar{k}}}\left(\int_0^T\int_{\Z^2_{\bar{l}}}\sum\limits_{\al'\leq 1}\left\|w_{q,\vth}\widehat{\pa^{\al} f}(\bar{k}-\bar{l})\right\|_{D}
%\left\|w_{q,\vth}\pa^{\al'}_{x_1}\hat{g}(\bar{l})\right\| d\Si(\bar{l})\left\|w_{q,\vth}\hat{h}(\bar{k})\right\|_{D}dt\right)^{1/2}d\Si(\bar{k}).\notag
%\end{align}
Now above and in the rest of this proof the norm $\| \cdot \|$ is $L^2_{x_1, v}$ and the norm $\| \cdot \|_{D}$ is as defined in \eqref{def.fc.disDL}.  Also the above integrals both contain an implicit $d\Si(\bar{k})$, and the weight $w = w_{q,\vth}$ as usual.

It further follows from Cauchy-Schwarz's inequality in the time integral that
\begin{multline*}
J
\lesssim
\\
C_\eta\int_{\Z^2_{\bar{k}}}\left(\int_0^T\left[\int_{\Z^2_{\bar{l}}}\sum\limits_{\al'\leq 1}\left\|w_{q,\vth}\widehat{\pa^{\al} f}(\bar{k}-\bar{l})\right\|
\left\|w_{q,\vth}\pa^{\al'}_{x_1}\hat{g}(\bar{l})\right\|_{D}d\Si(\bar{l})\right]^2dt\right)^{1/2}d\Si(\bar{k})
%d\Si(\bar{k})
\\
\qquad+C_\eta\int_{\Z^2_{\bar{k}}}\left(\int_0^T\left[\int_{\Z^2_{\bar{l}}}\sum\limits_{\al'\leq 1}\left\|w_{q,\vth}\widehat{\pa^{\al} f}(\bar{k}-\bar{l})\right\|_{D}
\left\|w_{q,\vth}\pa^{\al'}_{x_1}\hat{g}(\bar{l})\right\| d\Si(\bar{l})\right]^2dt\right)^{1/2}d\Si(\bar{k})
%d\Si(\bar{k})
\\
\qquad+\eta\int_{\Z^2_{\bar{k}}}\left(\int_0^T\left\|w_{q,\vth}\hat{h}(\bar{k})\right\|^2_{D}dt\right)^{1/2}d\Si(\bar{k}),
%d\Si(\bar{k}),
\end{multline*}
%\begin{align*}
%J
%&\lesssim
%C_\eta\int_{\Z^2_{\bar{k}}}\left(\int_0^T\left[\int_{\Z^2_{\bar{l}}}\sum\limits_{\al'\leq 1}\left\|w_{q,\vth}\widehat{\pa^{\al} f}(\bar{k}-\bar{l})\right\|
%\left\|w_{q,\vth}\pa^{\al'}_{x_1}\hat{g}(\bar{l})\right\|_{D}d\Si(\bar{l})\right]^2dt\right)^{1/2}d\Si(\bar{k})\\
%&\qquad+C_\eta\int_{\Z^2_{\bar{k}}}\left(\int_0^T\left[\int_{\Z^2_{\bar{l}}}\sum\limits_{\al'\leq 1}\left\|w_{q,\vth}\widehat{\pa^{\al} f}(\bar{k}-\bar{l})\right\|_{D}
%\left\|w_{q,\vth}\pa^{\al'}_{x_1}\hat{g}(\bar{l})\right\| d\Si(\bar{l})\right]^2dt\right)^{1/2}d\Si(\bar{k})\\
%&\qquad+\eta\int_{\Z^2_{\bar{k}}}\left(\int_0^T\left\|w_{q,\vth}\hat{h}(\bar{k})\right\|^2_{D}dt\right)^{1/2}d\Si(\bar{k}),
%\end{align*}
for an arbitrary small constant $\eta>0$.  Now we use Minkowski's inequality as in \eqref{minkowski.ineq} to  deduce from the above estimate that
\begin{multline*}
J
\lesssim
\\
C_\eta\int_{\Z^2_{\bar{k}}}\sup\limits_{0\le t\le T}\left\|w_{q,\vth}\widehat{\pa^{\al} f}(\bar{k})\right\| d\Si(\bar{k})\int_{\Z^2_{\bar{l}}}\left(\int_0^T\sum\limits_{\al'\leq 1}
\left\|w_{q,\vth}\pa^{\al'}_{x_1}\hat{g}(\bar{l})\right\|^2_{D}dt\right)^{1/2}d\Si(\bar{l})
\\
+C_\eta\int_{\Z^2_{\bar{l}}}\sup\limits_{0\le t\le T}\sum\limits_{\al'\leq 1}\left\|w_{q,\vth}\pa^{\al'}_{x_1}\hat{g}(\bar{l})\right\| d\Si(\bar{l})
\int_{\Z^2_{\bar{k}}}\left(\int_0^T\left\|w_{q,\vth}\widehat{\pa^{\al} f}(\bar{k})\right\|^2_{D}
dt\right)^{1/2}d\Si(\bar{k})
\\
+\eta\int_{\Z^2_{\bar{k}}}\left(\int_0^T\left\|w_{q,\vth}\hat{h}(\bar{k})\right\|^2_{D}dt\right)^{1/2}d\Si(\bar{k}),
\end{multline*}
%\begin{align*}
%J
%&\lesssim
%C_\eta\int_{\Z^2_{\bar{k}}}\sup\limits_{0\leq t\leq T}\left\|w_{q,\vth}\widehat{\pa^{\al} f}(\bar{k})\right\|_2d\Si(\bar{k})\int_{\Z^2_{\bar{l}}}\left(\int_0^T\sum\limits_{\al'\leq 1}
%\left\|w_{q,\vth}\pa^{\al'}_{x_1}\hat{g}(\bar{l})\right\|^2_{D}dt\right)^{1/2}d\Si(\bar{l})\\
%&\qquad+C_\eta\int_{\Z^2_{\bar{l}}}\sup\limits_{0\leq t\leq T}\sum\limits_{\al'\leq 1}\left\|w_{q,\vth}\pa^{\al'}_{x_1}\hat{g}(\bar{l})\right\|_2d\Si(\bar{l})
%\int_{\Z^2_{\bar{k}}}\left(\int_0^T\left\|w_{q,\vth}\widehat{\pa^{\al} f}(\bar{k})\right\|^2_{D}
%dt\right)^{1/2}d\Si(\bar{k})\\
%&\qquad+\eta\int_{\Z^2_{\bar{k}}}\left(\int_0^T\left\|w_{q,\vth}\hat{h}(\bar{k})\right\|^2_{D}dt\right)^{1/2}d\Si(\bar{k}),
%\end{align*}
that is,
%Minkowski's inequlity and \eqref{bnp}, we have
\begin{equation*}%\label{tri.es3}
\begin{split}
J &\lesssim C_\eta\left\|w_{q,\vth}\pa^{\al} f\right\|_{L^1_{\bar{k}}L^\infty_TL^2_{x_1,v}}
\left\|w_{q,\vth} g\right\|_{L^1_{\bar{k}}L^2_TH^1_{x_1}L^2_{v,D}}
\\
&\qquad+C_\eta\left\|w_{q,\vth}\pa^{\al} f\right\|_{L^1_{\bar{k}}L^2_TL^2_{x_1}L^2_{v,D}}
\left\|w_{q,\vth} g\right\|_{L^1_{\bar{k}}L^\infty_TH^1_{x_1}L^2_{v}}
+\eta\left\|w_{q,\vth}\hat{h}\right\|_{L^1_{\bar{k}}L^2_TL^2_{x_1}L^2_{v,D}}.
\end{split}
\end{equation*}
This then proves \eqref{tri.es1}, and thus completes the proof of Lemma \ref{keynp.es}.
\end{proof}

Similar to Lemma \ref{tril21}, the following lemma will be used for treating the nonlinear term in the macroscopic estimates. We will provide a brief proof for completeness. 

\begin{lemma}\label{tril2}
Assume that $\zeta(v)$ depends only on $v$ and decays rapidly at infinity. Then, for $|\al|=0,1$,  it holds that 
\begin{multline}
%\begin{split}
\int_{\Z^2_{\bar{k}}}\left(\int_0^T\left\|\left(
\CF_{\bar{x}}{\Gamma(\pa^{\al} f,g)},\zeta(v)\right)_{L_v^2}\right\|_{L^2_{x_1}}^2dt\right)^{1/2}d\Si(\bar{k})
\\
\leq C\bigg(\left\|\pa^{\al} f\right\|_{L^1_{\bar{k}}L^\infty_TL^2_{x_1,v}}
\left\| g\right\|_{L^1_{\bar{k}}L^2_TH^1_{x_1}L^2_{v,D}}
%\notag 
\\
%\qquad
+\left\|\pa^{\al} f\right\|_{L^1_{\bar{k}}L^2_TL^2_{x_1}L^2_{v,D}}
\left\| g\right\|_{L^1_{\bar{k}}L^\infty_TH^1_{x_1}L^2_{v}}
\bigg),\label{tri.l21}
%\notag
%\end{split}
\end{multline}
and
\begin{multline}
%\begin{split}
\int_{\Z^2_{\bar{k}}}\left(\int_0^T\left\|\left(
\CF_{\bar{x}}{\Gamma(f,\pa^{\al}g)},\zeta(v)\right)_{L_v^2}\right\|_{L^2_{x_1}}^2dt\right)^{1/2}d\Si(\bar{k})
\\
\leq C\bigg(\left\|\pa^{\al} g\right\|_{L^1_{\bar{k}}L^\infty_TL^2_{x_1,v}}
\left\| f\right\|_{L^1_{\bar{k}}L^2_TH^1_{x_1}L^2_{v,D}}\\
+\left\|\pa^{\al} g\right\|_{L^1_{\bar{k}}L^2_TL^2_{x_1}L^2_{v,D}}
\left\| f\right\|_{L^1_{\bar{k}}L^\infty_TH^1_{x_1}L^2_{v}}
\bigg).\label{tri.l21c}
%\end{split}
\end{multline}
\end{lemma}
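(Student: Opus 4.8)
The plan is to reduce Lemma \ref{tril2} to a scalar-valued version of the trilinear estimate in Lemma \ref{keynp.es}, replacing the role of $w^2_{q,\vth}\hat h$ by the fixed rapidly-decaying weight $\zeta(v)$. The key structural point, already exploited in the proof of Lemma \ref{lem.tei} and Lemma \ref{keynp.es}, is that applying $\CF_{\bar x}$ to $\Gamma(\pa^\al f,g)$ and pairing against a test function turns into a convolution in $\bar k\in\Z^2$: by Fubini's theorem one has, for each fixed $x_1$,
\begin{equation*}
\CF_{\bar x}\Gamma(\pa^\al f,g)(\bar k) = \int_{\Z^2_{\bar l}} \Gamma(\widehat{\pa^\al f}(\bar k-\bar l),\hat g(\bar l))\, d\Si(\bar l),
\end{equation*}
so that
\begin{equation*}
\left|\left(\CF_{\bar x}\Gamma(\pa^\al f,g)(\bar k),\zeta\right)_{L^2_v}\right|
\le \int_{\Z^2_{\bar l}} \left|\left(\Gamma(\widehat{\pa^\al f}(\bar k-\bar l),\hat g(\bar l)),\zeta\right)_{L^2_v}\right| d\Si(\bar l).
\end{equation*}
The first step therefore is the pointwise (in $x_1$, $\bar k$) bound: since $\zeta$ decays rapidly, $|\zeta|_D \lesssim 1$ and $\|\zeta\|_{L^2_v}\lesssim 1$, hence by the basic trilinear estimate (the $q=0$, $\vth=0$, i.e. unweighted, case of Lemma \ref{bnp.es}, together with \eqref{ad.vwte} for Boltzmann and Lemma \ref{landau.nonlinear.est} / the estimate of \cite{Guo-L} for Landau),
\begin{equation*}
\left|\left(\Gamma(\widehat{\pa^\al f}(\bar k-\bar l),\hat g(\bar l)),\zeta\right)_{L^2_v}\right|
\lesssim \left(\|\widehat{\pa^\al f}(\bar k-\bar l)\|_{L^2_v}|\hat g(\bar l)|_D + |\widehat{\pa^\al f}(\bar k-\bar l)|_D\|\hat g(\bar l)\|_{L^2_v}\right).
\end{equation*}
(One can even use just $\|\Gamma(a,b)\|$ bounded by $\|a\|_{L^2_v}|b|_D$ symmetrically; the point is no $|\cdot|_D$ norm of $h$ appears on the right.)

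The second step is to insert this into the $L^2_{x_1}$ norm and then the $\bar k$-integral. Taking $\|\cdot\|_{L^2_{x_1}}$ of the right-hand side and using the Sobolev embedding $\|\cdot\|_{L^\infty(I)}\lesssim\|\cdot\|_{H^1(I)}$ exactly as in Lemma \ref{keynp.es} (to bound $\|\widehat{\pa^\al f}(\bar k-\bar l)(x_1)\,\|\hat g(\bar l)(x_1)\|_D\|_{L^2_{x_1}}$ by, say, $\sup_{x_1}\|\widehat{\pa^\al f}(\bar k-\bar l)(x_1)\|_{L^2_v}\cdot\|\,|\hat g(\bar l)|_D\|_{L^2_{x_1}}$ with the $H^1_{x_1}$ replacing the $\sup_{x_1}$, and symmetrically), one obtains a convolution-in-$\bar l$ structure; then taking $(\int_0^T(\cdot)^2dt)^{1/2}$, applying the Cauchy–Schwarz inequality in $t$, and Minkowski's inequality \eqref{minkowski.ineq} to move the $L^2_t$ inside the $\int d\Si(\bar l)$, and finally integrating $\int d\Si(\bar k)$ and using the Banach-algebra (Young's inequality) property of $L^1_{\bar k}$, splits the product into the stated two terms. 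The estimate \eqref{tri.l21c} follows in the identical way by putting the derivative on the second argument, i.e. using Lemma \ref{bnp.es} / the basic trilinear estimate in the form with $g$ carrying the $\pa^\al$ and $f$ the plain norm.

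I do not expect a genuine obstacle here: this is a direct transcription of the proof of Lemma \ref{keynp.es} with the weighted test function $w^2_{q,\vth}\hat h$ replaced by the harmless $\zeta(v)$, which removes the dissipation-norm term $\eta\|w_{q,\vth}h\|_{L^1_{\bar k}L^2_TL^2_{x_1}L^2_{v,D}}$ and the associated Young-type splitting of that factor. The only mild care needed is bookkeeping: keeping track of the $\al'\le 1$ sum generated by the $H^1(I)$ embedding (which is why $H^1_{x_1}$ appears on the right-hand side in the $g$, resp. $f$, factor that is controlled in $L^\infty_T$), and making sure that after Cauchy–Schwarz in $t$ and Minkowski one places the $L^\infty_T$ norm on the factor that occurs with a plain $L^2_v$ (not $L^2_{v,D}$) norm while the other factor carries the $L^2_T L^2_{v,D}$ norm — exactly as in \eqref{tri.es1}. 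For brevity the write-up will only do one of \eqref{tri.l21} and \eqref{tri.l21c} in detail and indicate that the other is analogous, as announced before the statement.
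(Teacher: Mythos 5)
Your proposal is correct and follows essentially the same route as the paper: the convolution-in-$\bar k$ structure via Fubini, the unweighted trilinear bound of Lemma \ref{bnp.es} with $\zeta$ playing the role of the test function (so $|\zeta|_D$ and $\|\zeta\|_{L^2_v}$ are absorbed into the constant), Minkowski's inequality $\|\|\cdot\|_{L^1_{\bar l}}\|_{L^2_{t,x_1}}\le\|\|\cdot\|_{L^2_{t,x_1}}\|_{L^1_{\bar l}}$, the Sobolev embedding $H^1(I)\subset L^\infty(I)$ producing the $H^1_{x_1}$ and the $\al'\le 1$ sum, and finally Fubini over $\bar k$. The only cosmetic difference is that no Cauchy--Schwarz in $t$ is actually needed here (the left-hand side is already an $L^2_t$ quantity, unlike in Lemma \ref{keynp.es}), but this does not affect the argument.
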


\begin{proof}
As in the proof for Lemma \ref{keynp.es}, we prove \eqref{tri.l21} only, since the proof of \eqref{tri.l21c} is similar. Firstly, to treat the left-hand term of \eqref{tri.l21c}, we write that
\begin{multline}
\left(\int_0^T\left\|\left(
\CF_{\bar{x}}{\Gamma(\pa^{\al} f,g)},\zeta(v)\right)_{L_v^2}\right\|_{L^2_{x_1}}^2dt\right)^{1/2}\\
= \left(\int_0^T\int_{I}\left(\int_{\R^3_v}\int_{\Z^2_{\bar{l}}}\,\Gamma(\widehat{\pa^{\al} f}(\bar{k}-\bar{l}),\hat{g}(\bar{l}))
\zeta(v)\,d\Si (\bar{l})dv\right)^2dt\,dx_1\right)^{1/2}.
\label{tri.l21.p1}
\end{multline} 
From Lemma \ref{bnp.es}, the above term is bounded by
\begin{multline*}
C_\zeta\left(\int_0^T\int_{I}\left(\int_{\Z^2_{\bar{l}}}\left\|\widehat{\pa^{\al} f}(\bar{k}-\bar{l})\right\|_{L^2_v}
\left|\hat{g}(\bar{l})\right|_{D}d\Si (\bar{l})\right)^2dt\,dx_1\right)^{1/2}  \\
+C_\zeta\left(\int_0^T\int_{I}\left(\int_{\Z^2_{\bar{l}}}\left\|\hat{g}(\bar{l})\right\|_{L^2_v}
\left|\widehat{\pa^{\al} f}(\bar{k}-\bar{l})\right|_{D}d\Si (\bar{l})\right)^2dt\,dx_1\right)^{1/2},
\end{multline*} 
where $C_\zeta>0$ is a universal constant depending only on $\zeta$. Further by Minkowski's inequality $\|\|\cdot\|_{L^1_{\bar{l}}}\|_{L^2_{t,x_1}}\leq \|\|\cdot\|_{L^2_{t,x_1}}\|_{L^1_{\bar{l}}}$, \eqref{tri.l21.p1} is then bounded by 
\begin{multline*}
%&\left(\int_0^T\left\|\left(
%\CF_{\bar{x}}{\Gamma(\pa^{\al} f,g)},\zeta(v)\right)_{L_v^2}\right\|_{L^2_{x_1}}^2dt\right)^{1/2}\notag \\
%%&=\left(\int_0^T\int_{I}\left(\int_{\R^3_v}\mu^{-\frac{1}{2}}(v)
%%\CF_{\bar{x}}{Q(\mu^{\frac{1}{2}}\pa^{\al} f,\mu^{\frac{1}{2}}g)}(\bar{k})\zeta(v)\,dv\right)^2dt\,dx_1\right)^{1/2}\notag \\
%&= \left(\int_0^T\int_{I}\left(\int_{\R^3_v}\int_{\Z^2_{\bar{l}}}\,\Gamma(\widehat{\pa^{\al} f}(\bar{k}-\bar{l}),\hat{g}(\bar{l}))
%\zeta(v)\,d\Si (\bar{l})dv\right)^2dt\,dx_1\right)^{1/2}  \notag \\
%&\leq C_\zeta\left(\int_0^T\int_{I}\left(\int_{\Z^2_{\bar{l}}}\left\|\widehat{\pa^{\al} f}(\bar{k}-\bar{l})\right\|_{L^2_v}
%\left|\hat{g}(\bar{l})\right|_{D}d\Si (\bar{l})\right)^2dt\,dx_1\right)^{1/2}  \notag \\
%&\qquad+C_\zeta\left(\int_0^T\int_{I}\left(\int_{\Z^2_{\bar{l}}}\left\|\hat{g}(\bar{l})\right\|_{L^2_v}
%\left|\widehat{\pa^{\al} f}(\bar{k}-\bar{l})\right|_{D}d\Si (\bar{l})\right)^2dt\,dx_1\right)^{1/2} 
% \notag \\
C_\zeta\int_{\Z^2_{\bar{l}}}\left(\int_0^T\int_{I}\left(\left\|\widehat{\pa^{\al} f}(\bar{k}-\bar{l})\right\|_{L^2_v}
\left|\hat{g}(\bar{l})\right|_{D}\right)^2dt\,dx_1\right)^{1/2}d\Si (\bar{l})   \\
+C_\zeta\int_{\Z^2_{\bar{l}}}\left(\int_0^T\int_{I}\left(\left\|\hat{g}(\bar{l})\right\|_{L^2_v}
\left|\widehat{\pa^{\al} f}(\bar{k}-\bar{l})\right|_{D}\right)^2dt\,dx_1\right)^{1/2}d\Si (\bar{l}). 
\end{multline*}
According to Sobolev's inequality $\|g\|_{L^\infty(I)} \lesssim \|g\|_{H^1(I)}$, the above term can be bounded by 
\begin{multline}\label{zeta1}
 C_\zeta\int_{\Z^2_{\bar{l}}}\sup\limits_{0\leq t\leq T}\left\|\widehat{\pa^{\al} f}(\bar{k}-\bar{l})\right\|_{L^2_{x_1,v}}
\sum\limits_{0\leq \al_1\leq1}\left(\int_0^T\left\|\widehat{\pa^{\al_1}_{x_1} g}(\bar{l})\right\|_{D}^2dt\right)^{1/2}d\Si (\bar{l})   \\
+C_\zeta\int_{\Z^2_{\bar{l}}}\sup\limits_{0\leq t\leq T}\sum\limits_{0\leq \al_1\leq1}\left\|\widehat{\pa^{\al_1}_{x_1} g}(\bar{l})\right\|_{L^2_{x_1,v}}
\left(\int_0^T\left\|\widehat{\pa^{\al} f}(\bar{k}-\bar{l})\right\|_{D}^2dt\right)^{1/2}d\Si (\bar{l}). 
\end{multline}
Consequently, \eqref{tri.l21} follows from taking the integration of \eqref{zeta1} in $\bar{k}\in \Z^2_{\bar{k}}$ 
with respect to $d\Si (\bar{k})$ and further applying Fubini's Theorem. This completes the proof of Lemma \ref{tril2}.
\end{proof}

\section{Macroscopic estimates}\label{sec5}

Throughout this section, we let $T>0$ be an arbitrary fixed constant. We further emphasize that the universal constant $C>0$ in all estimates below is independent of $T$.  Let us start from the  macro-micro decomposition of the solution $f$, i.e. we split $f$
as $f=\FP f+\{\FI-\FP\}f$, where
\begin{equation}\label{abcdef}
\FP f=\{a+b\cdot v+\frac{1}{2}(|v|^2-3)c\}{\mu}^{\frac{1}{2}}.
\end{equation}
This expression above defines $[a,b,c]$, where $[\cdot, \cdot ,\cdot]$ represents a vector.

\subsection{Isotropic case}\label{sec5.1}

In this section we will derive the uniform a priori estimates for the macroscopic part of a solution to the  equation
\begin{align}\label{leqh}
\partial_t f+v\cdot \nabla_x f + Lf =H,\ t>0,\  x\in \mathbb{T}^3, v\in \mathbb{R}^3,
\end{align}
with initial data given as in \eqref{idf}, where  generally the inhomogeneous source term $H=H(t,x,v)$ is assumed to be a functional of an arbitrary distribution $h(t,x,v)$ and  $H=H(h(x,v))\in \CN^\perp(L)$, where $\CN^\perp(L)$ is the perpendicular to the null space of $L$ for any $t$ and $x$. For the proof we will follow the same strategy as in \cite{EGKM-13} by the dual argument.

\begin{theorem}\label{abcest.torus}
Under the assumptions of Theorem \ref{Torus Existence}, it holds that
\begin{align}
\Vert [a,b,c]\Vert_{L^1_k L^2_T} &\lesssim\Vert \{ \mathbf{I}-\mathbf{P}\} f \Vert_{L^1_k L^2_T L^2_{v,D}} +\Vert f \Vert_{L^1_k L^\infty_T L^2_v} + \Vert f_0 \Vert_{L^1_k L^2_v} \notag\\
&\qquad+ \int_{\mathbb{Z}^3_k} \Big( \int^T_0 \vert  ( \hat{H}(t,k), \mu^{\frac{1}{4}}) \vert^2 dt \Big)^{1/2} d\Sigma (k),
\label{abcest.torus1}
\end{align}
where the inner product in the last term is taken over $L^2$ in $v$.
\end{theorem}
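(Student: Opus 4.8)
Here is how I would approach the proof of Theorem \ref{abcest.torus}.

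\medskip

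The plan is to establish the bound one Fourier mode at a time and then integrate $d\Sigma(k)$. Fix $k\in\Z^3$, take the Fourier transform in $x$ so that $\partial_t\hat f+iv\cdot k\,\hat f+L\hat f=\hat H$, and split $\hat f=\FP\hat f+\{\FI-\FP\}\hat f$ as in \eqref{abcdef}. For the zero mode, assumptions \eqref{pt.id.cl.1}--\eqref{pt.id.cl.3} say exactly that $(\hat f(t,0),\mu^{1/2})_{L^2_v}$, $(\hat f(t,0),v_i\mu^{1/2})_{L^2_v}$ and $(\hat f(t,0),|v|^2\mu^{1/2})_{L^2_v}$ vanish for all $t$, hence $(\hat a,\hat b,\hat c)(t,0)\equiv 0$ and $k=0$ contributes nothing. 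For $k\neq 0$ one has $|k|\ge 1$, so $|(\hat a,\hat b,\hat c)(t,k)|\le |k|\,|(\hat a,\hat b,\hat c)(t,k)|$, and it suffices to bound the ``gradient'' quantities $k\hat a,k\hat b,k\hat c$ in $L^2(0,T)$ uniformly in $k\neq 0$ by the $k$-th summand of the right-hand side of \eqref{abcest.torus1}, and then integrate $d\Sigma(k)$.

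\medskip

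Next I would write down the macroscopic moment identities via the classical thirteen-moment scheme. Testing the transformed equation in $L^2_v$ against the collision invariants $\mu^{1/2}$, $v_i\mu^{1/2}$, $\tfrac13(|v|^2-3)\mu^{1/2}$ gives the local conservation laws, schematically $\partial_t\hat a+ik\cdot\hat b=0$, $\partial_t\hat b+ik(\hat a+\hat c)+ik\cdot\Theta=0$, $\partial_t\hat c+\tfrac23\,ik\cdot\hat b+ik\cdot\Lambda=0$ with $\Theta,\Lambda$ fixed moments of $\{\FI-\FP\}\hat f$; crucially these carry no $L$- or $\hat H$-contribution because $\CN(L)$ is spanned by the collision invariants and $\hat H\in\CN^\perp(L)$. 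Testing against the remaining moments $(v_iv_j-\delta_{ij})\mu^{1/2}$ and $v_i(|v|^2-5)\mu^{1/2}$ yields identities of the form $\partial_t\mathcal A[\{\FI-\FP\}\hat f]+(\text{elliptic symbol in }k)\cdot(\hat a,\hat b,\hat c)+ik\cdot\mathcal B[\{\FI-\FP\}\hat f]+(\{\FI-\FP\}\hat f,Le)_{L^2_v}=(\hat H,e)_{L^2_v}$, where self-adjointness of $L$ was used to replace $(L\hat f,e)_{L^2_v}$ by $(\{\FI-\FP\}\hat f,Le)_{L^2_v}$. Two uniform bounds are used throughout: any moment of $\{\FI-\FP\}\hat f$ against a Schwartz weight is $\lesssim|\{\FI-\FP\}\hat f|_D$ (the $D$-norm dominating a suitably weighted $L^2_v$ norm by its very definition, uniformly in $\gamma,s$ under {\bf (H)}), and since each test function $e(v)$ obeys $|e(v)|\le C\mu^{1/4}(v)$, each moment $(\hat H,e)_{L^2_v}$ is of the type measured by the last term of \eqref{abcest.torus1}.

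\medskip

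The third step is to solve this linear system. Contracting the higher-moment identities with $k/|k|^2$ (the exact Fourier inversion of $-\Delta_x$ on $\T^3$, so that no power of $k$ is lost) and combining with the conservation laws expresses each of $\hat a,\hat b,\hat c$ in the form $\partial_t\mathcal Q(t,k)+\mathcal R(t,k)$, where $\mathcal R$ is bounded pointwise in $t$ by $|\{\FI-\FP\}\hat f(t,k)|_D+|(\hat H(t,k),e)_{L^2_v}|$ (possibly plus one of the other macroscopic quantities), and $\mathcal Q$ equals $|k|^{-2}(ik\cdot(\text{micro moment}))$ or $|k|^{-2}(ik\cdot\hat b)$, in both cases with $|\mathcal Q(t,k)|\le C\|\hat f(t,k)\|_{L^2_v}$. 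To bound $\int_0^T|(\hat a,\hat b,\hat c)(t,k)|^2\,dt$ one pairs these identities with $\overline{(\hat a,\hat b,\hat c)}$. The $\mathcal R$-contribution is handled by Cauchy--Schwarz and Young's inequality. The $\partial_t\mathcal Q$-contribution is integrated by parts in $t$ over $[0,T]$: the boundary terms at $t=0,T$ are controlled, using $|k|\ge 1$, by $\|\hat f_0(k)\|_{L^2_v}^2+\sup_{0\le t\le T}\|\hat f(t,k)\|_{L^2_v}^2$, while the remaining interior integral $\int_0^T\mathcal Q\,\overline{\partial_t(\hat a,\hat b,\hat c)}\,dt$ is treated by substituting the conservation laws for $\partial_t(\hat a,\hat b,\hat c)$, whereupon the factor $ik$ from the conservation law cancels the $|k|^{-2}$ in $\mathcal Q$, leaving terms $\lesssim |\{\FI-\FP\}\hat f|_D\,|(\hat a,\hat b,\hat c)|$, $|\{\FI-\FP\}\hat f|_D^2$ or $|\hat b|^2$. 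Collecting the $a$-, $b$- and $c$-estimates and absorbing the cross and self-improving macroscopic terms closes the per-mode estimate; this requires choosing the small parameter $\eta>0$ appropriately and performing the absorption in the right order (the $c$-estimate feeds a small multiple of $\|\hat b\|_{L^2_T}^2$, the $b$-estimate a small multiple of $\|\hat a\|_{L^2_T}^2+\|\hat c\|_{L^2_T}^2$, while the $a$-estimate feeds $O(1)$ multiples of the others, so the chain closes for $\eta$ small). Taking square roots, integrating $d\Sigma(k)$ over $\Z^3$, and using $\sup_{0\le t\le T}\|\hat f(t,k)\|_{L^2_v}\le\|\hat f(\cdot,k)\|_{L^\infty_TL^2_v}$ then yields \eqref{abcest.torus1}.

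\medskip

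The step I expect to be the main obstacle is the treatment of the unavoidable $\partial_t$-terms in the moment identities: one must extract a genuine $L^2(0,T)$ bound for $(\hat a,\hat b,\hat c)$ while losing neither a power of $k$ — which would drive the final $d\Sigma(k)$ integration out of the Wiener space $L^1_k$ — nor a power of $T$ — all constants must be $T$-independent. The exact $|k|^{-2}$-contraction together with the integration-by-parts-in-time ``dual'' argument of \cite{EGKM-13} is precisely what converts these time-derivative terms into $t=0,T$ boundary data (hence the appearance of $\|f\|_{L^1_kL^\infty_TL^2_v}$ and $\|f_0\|_{L^1_kL^2_v}$ on the right) plus an absorbable self-improving contribution. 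A secondary but essential point in the soft-potential range of {\bf (H)} is to verify, through the structure of the $D$-norm rather than a bare $L^2_v$ bound, that every weighted velocity moment of $\{\FI-\FP\}\hat f$ occurring in the identities is dominated by $|\{\FI-\FP\}\hat f|_D$; this is exactly why the $D$-dissipation norm $\|\{\FI-\FP\}f\|_{L^1_kL^2_TL^2_{v,D}}$, and not merely its $L^2_v$ analogue, appears in the estimate.
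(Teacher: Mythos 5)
Your proposal is correct and follows essentially the same route as the paper: the dual argument of \cite{EGKM-13}, carried out mode by mode with the exact $|k|^{-2}$ inversion of the Laplacian, moment identities from the fluid-type system, and the time-derivative terms converted into $t=0,T$ data plus absorbable macroscopic contributions. Your ``pair with $\overline{(\hat a,\hat b,\hat c)}$ and integrate by parts in $t$'' formulation is algebraically identical to the paper's choice of test functions $\hat\Phi_a,\hat\Phi_b,\hat\Phi_c$ and the resulting $S_1,\dots,S_4$ decomposition, including the order in which the $a$-, $b$-, $c$-estimates are combined to close.
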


\begin{proof}
By taking the following velocity moments
\begin{equation*}
    \mu^{\frac{1}{2}}, v_j\mu^{\frac{1}{2}}, \frac{1}{6}(|v|^2-3)\mu^{\frac{1}{2}},
    (v_j{v_m}-1)\mu^{\frac{1}{2}}, \frac{1}{10}(|v|^2-5)v_j \mu^{\frac{1}{2}}
\end{equation*}
with {$1\leq j,m\leq 3$} for the equation \eqref{LLeq} with $\Ga(f,f)$ replaced by $H$,
%and then taking the Fourier transform with respect to $x$,
one sees that  
the coefficient functions $[a,b,c]=[a,b,c](t,x)$ satisfy
the fluid-type system
\begin{equation}\label{mac.law.torus}
    \left\{\begin{array}{l}
      \dis \pa_t a +\nabla_x \cdot b=0,\\
      \dis \pa_t b +\na_x (a+2c)+\na_x\cdot \highG (\{\FI-\FP\} f)=0,\\[3mm]
      \dis \pa_t c +\frac{1}{3}\na_x\cdot b +\frac{1}{6}\na_x\cdot
      \Lambda (\{\FI-\FP\} f)=0,\\[3mm]
      \dis \pa_t[\highG_{{ jm}}(\{\FI-\FP\} f)+2c\de_{{ jm}}]+\pa_jb_m+\pa_m
      b_j=\highG_{jm}(\mathbbm{r}+\mathbbm{h}),\\[3mm]
      \dis \pa_t \Lambda_j(\{\FI-\FP\} f)+\pa_j c = \Lambda_j(\mathbbm{r}+\mathbbm{h}),
    \end{array}\right.
\end{equation}
where the
high-order moment functions $\highG=(\highG_{jm}(\cdot))_{3\times 3}$ and
$\highB=(\highB_j(\cdot))_{1\leq j\leq 3}$ are respectively defined by
\begin{eqnarray}
% \nonumber to remove numbering (before each equation)
  \highG_{jm}(f) = \left ((v_jv_m-1)\mu^{\frac{1}{2}}, f\right),\ \ \
  \highB_j(f)=\frac{1}{10}\left ((|v|^2-5)v_j\mu^{\frac{1}{2}},
  f\right),\notag
\end{eqnarray}
with the inner product taken with respect to velocity variable $v$ only, and the terms $\mathbbm{r}$ and $\mathbbm{h}$ on the right are given by
\begin{eqnarray}\notag%\label{rh.form}
% \nonumber to remove numbering (before each equation)
\mathbbm{r}= -{v}\cdot \na_{{x}} \{\FI-\FP\}f,\ \ \mathbbm{h}=-L \{\FI-\FP\}f+H.
\end{eqnarray}
Note that the conservation laws \eqref{pt.id.cl.1}, \eqref{pt.id.cl.2} and \eqref{pt.id.cl.3} imply that
\begin{equation*}
%\label{ }
[\widehat{a,b,c}](t,0)=0
\end{equation*}
for any $t\geq 0$.

In order to carry out the estimate in a unified way, we take a general function as
$$
\hat{\Phi}(t,k,v)\in C^1 ( (0,\infty)\times \mathbb{Z}^3 \times \mathbb{R}^3),
$$
which will be fixed later according to the different cases that we study. Applying the Fourier transform to \eqref{leqh}, taking the inner product of it and $\hat{\Phi}$ in $L^2_v$, then integrating the resultant over $[0,T]$, we have
\begin{multline*}
(\hat{f},\hat{\Phi})|_{t=T}-(\hat{f},\hat{\Phi})|_{t=0} -\int^T_0 (\hat{f},\partial_t \hat{\Phi})dt -\int^T_0 (\hat{f},v\cdot  ik\hat{\Phi})dt +\int^T_0 (L\hat{f},\hat{\Phi}) dt
\\
= \int^T_0 (\hat{H},\hat{\Phi})dt.
\end{multline*}
Note that above and in the rest of this proof $( \cdot, \cdot) = ( \cdot, \cdot)_{L^2_v}$.
Here we have used the shorthand notions $(\hat{f},\hat{\Phi})|_{t=T}=(\hat{f},\hat{\Phi})(T)$ and $(\hat{f},\hat{\Phi})|_{t=0}=(\hat{f},\hat{\Phi})(0)$.
Plugging in the macro-micro decomposition yields
\begin{multline*}
-\int^T_0 (\mathbf{P}\hat{f}, v\cdot ik\hat{\Phi})dt
=
(\hat{f},\hat{\Phi})(0)-(\hat{f},\hat{\Phi})(T)+\int^T_0 (\hat{f},\partial_t \hat{\Phi})dt
\\
+\int^T_0 ((\mathbf{I}-\mathbf{P})\hat{f},v\cdot ik\hat{\Phi})dt
-\int^T_0 (L\hat{f},\hat{\Phi}) dt +\int^T_0 (\hat{H},\hat{\Phi})dt.
\end{multline*}
Now we will define the following notation:  
\begin{equation*}
\left\{\begin{aligned}
%\left\{\begin{array}{l}
    &S_1=(\hat{f},\hat{\Phi})(0)-(\hat{f},\hat{\Phi})(T),\\
    &S_2=\int^T_0 (\hat{f},\partial_t\hat{\Phi})dt,\\
    &S_3=\int^T_0 ((\mathbf{I}-\mathbf{P})\hat{f}, v\cdot ik\hat{\Phi} )dt,\\
   & S_4=-\int^T_0 (L\hat{f},\hat{\Phi}) dt +\int^T_0 (\hat{H},\hat{\Phi})dt.
%\end{array}\right.
\end{aligned}\right.
\end{equation*}

First, we consider the estimate of $c$. We choose
\begin{align*}
\hat{\Phi}= \hat{\Phi}_c=(\vert v \vert^2-5) \{ v\cdot ik\hat{\phi_c}(t,k)\}\mu^{\frac{1}{2}},
\end{align*}
where $\phi_c$ satisfies
\begin{align}\label{eqn.psi.c}
\vert k\vert^2 \hat{\phi}_c(t,k)=\hat{c}(t,k).
\end{align}
Note that since $\hat{c}(t,0)\equiv 0$, we formally write $\hat{\phi}_c(t,k)=\hat{c}(t,k)/|k|^2$ for any $k\in \Z^3$ with the understanding that we define $\hat{\phi}_c(t,0)=0$. 
By this choice, we can calculate that
\begin{multline*}
-\int^T_0 (\mathbf{P}\hat{f}, v\cdot ik\hat{\Phi})dt
\\
=-\sum_{j} \int^T_0 (\{\hat{a}+\hat{b}\cdot v +\frac{1}{2}(\vert v\vert^2-3) \hat{c}\}\mu^{\frac{1}{2}}, v_j ik_j \hat{\Phi}_c)dt
\\
=-\sum_{j,n} \int^T_0 (\{\hat{a}+\hat{b}\cdot v + \frac{1}{2}(\vert v \vert^2 -3)\hat{c}\} \mu^{\frac{1}{2}}, v_j v_n (\vert v\vert^2 -5) \mu^{\frac{1}{2}} (-k_jk_n)\hat{\phi_c})dt
\\
= \int^T_0 (\hat{c},\vert k\vert^2\hat{\phi_c})dt=\int^T_0 |\hat{c}(t,k)|^2 dt,
\end{multline*}
where we have used the orthogonality of the different integrands for the third equality. Now we estimate the $S_j$'s.
Due to $k\in \mathbb{Z}^3$ it holds that
\begin{align*}
\vert \hat{\Phi}_c (t,k)\vert \le C\mu^{\frac{1}{4}}\vert k\vert \vert \hat{\phi}_c (t,k)\vert \le C\mu^{\frac{1}{4}}\vert k\vert^2 \vert \hat{\phi}_c (t,k) \vert=C\mu^{\frac{1}{4}} \vert \hat{c}(t,k)\vert,
\end{align*}
and then we have $\Vert \hat{\Phi}_c(t,k)\Vert_{L^2_v} \lesssim |\hat{c}(t,k)|\lesssim \Vert \hat{f}(t,k)\Vert_{L^2_v}$. Thus
\begin{align*}
\vert S_1\vert \lesssim \Vert \hat{f}(k,T)\Vert_{L^2_v}^2+\Vert \hat{f}_0(k)\Vert_{L^2_v}^2.
\end{align*}
To obtain the estimate of $S_2$, we first notice
\begin{align*}
| \partial_t \hat{c}(t,k)| \lesssim \vert k\vert (|\hat{b}(t,k)|+|\{\mathbf{I}-\mathbf{P}\}\hat{f}(t,k)|_D)
\end{align*}
by the third equation of \eqref{mac.law.torus}. Therefore, we have
\begin{align*}
\vert S_2\vert &\le \int^T_0 \vert (\hat{f},\partial_t \hat{\Phi}_c)\vert dt = \int^T_0 \vert (\{\mathbf{I}-\mathbf{P}\})\hat{f},\partial_t \hat{\Phi}_c)\vert dt\\
&\lesssim \eta \int^T_0 \Vert \partial_t \hat{\Phi}_c(t,k) \Vert_{L^2_v}^2 dt +C_\eta\int^T_0 | \{\mathbf{I}-\mathbf{P}\}\hat{f}(t,k) |_D^2 dt\\
&\lesssim \eta \int^T_0\frac{| \partial_t\hat{c}(t,k) |^2}{|k|^2} dt +C_\eta\int^T_0 | \{\mathbf{I}-\mathbf{P}\}\hat{f}(t,k) |_D^2 dt\\
&\lesssim \eta \int^T_0 |\hat{b}(t,k) |^2 dt +C_\eta\int^T_0 | \{\mathbf{I}-\mathbf{P}\}\hat{f}(t,k) |_D^2 dt.
\end{align*}
Note that the equality above is due to the choice of the velocity moments in definition of $\hat{\Phi}_c$.
By H\"{o}lder's inequality and \eqref{eqn.psi.c}, we have
\begin{align*}
\vert S_3 \vert \lesssim \eta \int^T_0 |\hat{c}(t,k)|^2 dt +C_\eta \int^T_0 | \{\mathbf{I}-\mathbf{P}\}\hat{f}(t,k)|_D^2 dt.
\end{align*}
Regarding the estimate of $S_4$, we can also show that
\begin{align*}
\vert S_4\vert\lesssim \eta \int^T_0 |\hat{c}(t,k)|^2 dt + C_\eta \int^T_0 | \{\mathbf{I}-\mathbf{P}\}\hat{f}(t,k)|_D^2dt +C_\eta \int^T_0 \vert (\vert \hat{H}(t,k) \vert , \mu^{\frac{1}{4}})\vert^2 dt.
\end{align*}
In particular in $S_4$ we use that $L\hat{f} = L\{\mathbf{I}-\mathbf{P}\}\hat{f}$, and then we can use the estimate \cite[Equation (6.12) on page 819]{GS}, recalling also Remark \ref{norm.remark}, then similar to the estimate for $S_1$ the above estimate follows.
%\Red{SS: In the next subsection, Prof. Liu uses $\Vert \cdot \Vert$ for the last term, but I started to think it's strange, because the  $v$-inner product is inside the $L^2_{x_1,v}$ norm. So I changed $\Vert$ to $\vert$, do you agree it's better? Or maybe we can write $\Vert \mu^{\frac{1}{4}}\hat{H}(t,k)\Vert^2_{L^2_v}$?}
Summing up the estimates of $S_j$'s with $\eta>0$ suitably small, we obtain
\begin{align}\label{torus.c}
\int^T_0 |\hat{c}(t,k)|^2 dt
&\lesssim
\Vert \hat{f}(k,T)\Vert_{L^2_v}^2 + \Vert \hat{f}_0(k)\Vert_{L^2_v}^2+ \eta \int^T_0 |\hat{b}(t,k)|^2 dt
\notag
\\
&\qquad+C_\eta \int^T_0 | \{\mathbf{I}-\mathbf{P}\}\hat{f}(t,k)|_D^2dt
\notag
\\
&\qquad\qquad
+C_\eta \int^T_0 \vert (\vert \hat{H}(t,k) \vert, \mu^{\frac{1}{4}})\vert^2 dt.
\end{align}

Next, we consider the estimate of $b$. For this purpose we choose
\begin{align*}
\hat{\Phi}=\hat{\Phi}_b =\sum_{m=1}^3 \hat{\Phi}^{J,m}_b,\ J=1,2,3,
\end{align*}
where
\begin{eqnarray*}
\hat{\Phi}^{J,m}_b
=\left\{\begin{array}{rll}
\begin{split}
&
\left\{ \vert v\vert^2 v_mv_J ik_m\hat{\phi}_J(t,k)-\frac{7}{2}(v_m^2-1) ik_J \hat{\phi}_J(t,k)
\right\}{\mu}^{\frac{1}{2}},\ \ J\neq m,
\\
&\frac{7}{2}(v_J^2-1)ik_J\hat{\phi}_J(t,k){\mu}^{\frac{1}{2}},\ \ J=m,
\end{split}
\end{array}\right.
\end{eqnarray*}
and
\begin{align}\label{eqn.psi.b}
\vert k\vert^2 \hat{\phi}_J(t,k)=\hat{b}_J(t,k).
\end{align}
Note again that since $\hat{b}_J(t,0)\equiv 0$, it is valid to formally write $\hat{\phi}_J(t,k)=\hat{b}_J(t,k)/|k|^2$ for any $k\in \Z^3$ with $\hat{\phi}_J(t,0)=0$. Under this choice we have
\begin{align*}
&-\sum_{m=1}^3 \int^T_0 (\mathbf{P}\hat{f}, v \cdot ik\hat{\Phi}^{J,m}_b)dt
\\
&=-\sum_{m=1}^3 \int^T_0 \sum_{j=1}^3 (\{\hat{a}+ \hat{b}\cdot v +\frac{1}{2}(\vert v\vert^2-3)\hat{c}\}\mu^{\frac{1}{2}}, v \cdot ik\hat{\Phi}^{J,m}_b)dt\\
&=-\sum_{m=1,m\neq J}^3 \int^T_0 (v_mv_J\mu^{\frac{1}{2}}\hat{b}_J, \vert v\vert^2 v_m v_J \mu^{\frac{1}{2}} (-k_m^2){\hat{\phi}_J})dt\\
&\qquad-\sum_{m=1,m\neq J}^3 \int^T_0 (v_mv_J\mu^{\frac{1}{2}}\hat{b}_m, \vert v\vert^2 v_m v_J \mu^{\frac{1}{2}} (-k_Jk_m){\hat{\phi}_J})dt\\
&\qquad+7\sum_{m=1,m\neq J}^3 \int^T_0 (\hat{b}_m, (-k_Jk_m){\hat{\phi}_J})dt-7\int^T_0 (\hat{b}_J, (-k_J^2){\hat{\phi}_J})dt\\
&=-7\sum_{m=1}^3 \int^T_0 (\hat{b}_J, (-k_m^2){\hat{\phi}_J})dt=7 \int^T_0 | \hat{b}_J(t,k)|^2dt.
\end{align*}
The cancellation of the terms  above is due to the choice of $\hat{\Phi}_b$. Here we will only establish the estimate of $S_2$ since the other terms can be controlled similarly using the methods in the previous case.
Since the second equation of \eqref{mac.law.torus} gives
\begin{align*}
| \partial_t \hat{b}(t,k)| \lesssim \vert k\vert \left(|(a+c)(t,k)| + | \{\mathbf{I}-\mathbf{P}\}\hat{f}(t,k)|_D\right),
\end{align*}
it holds by \eqref{eqn.psi.b} that
\begin{multline*}
\vert S_2\vert
\le \sum_m \int^T_0 \vert (\{\mathbf{I}-\mathbf{P}\}\hat{f}, \partial_t \hat{\Phi}^{J,m}_b)\vert dt +\sum_m \int^T_0 \vert (\mathbf{P}\hat{f}, \partial_t \hat{\Phi}^{J,m}_b)\vert dt
\\
\lesssim \eta \sum_m \int^T_0 \Vert \partial_t \hat{\Phi}^{J,m}_b(t,k)\Vert_{L^2_v}^2 dt +C_\eta \int^T_0 | \{\mathbf{I}-\mathbf{P}\} \hat{f}(t,k) |^2_D dt
\\
+C_\eta \int^T_0 |\hat{c}(t,k)|^2dt
\\
\lesssim \eta \sum_m \int^T_0 \vert \partial_t k \hat{\phi}_J (t,k)\vert^2 dt +C_\eta \int^T_0 | \{\mathbf{I}-\mathbf{P}\}\hat{f}(t,k)|^2_D dt
\\
+C_\eta \int^T_0 |\hat{c}(t,k)|^2dt
\\
\lesssim \eta \int^T_0 |\hat{a}(t,k)|^2dt +C_\eta \int^T_0 |\hat{c}(t,k)|^2dt + C_\eta \int^T_0 | \{\mathbf{I}-\mathbf{P}\}\hat{f}(t,k)|^2_D dt.
\end{multline*}
Here in the second line we again have used the orthogonality in the $v$-integration for the estimate of
$$
\int^T_0 \vert (\mathbf{P}\hat{f}, \partial_t \hat{\Phi}^{J,m}_b)\vert dt.
$$
One can then deduce that
\begin{align}
\int^T_0 |\hat{b}(t,k)|^2 dt &\lesssim \Vert \hat{f}(k,T)\Vert_{L^2_v}^2 + \Vert \hat{f}_0(k)\Vert_{L^2_v}^2+ \eta \int^T_0 |\hat{a}(t,k)|^2 dt \notag \\
&\qquad +C_\eta \int^T_0 |\hat{c}(t,k)|^2dt+C_\eta \int^T_0 | \{\mathbf{I}-\mathbf{P}\}\hat{f}(t,k)|_D^2dt \notag\\
&\qquad+C_\eta \int^T_0 \vert (\vert \hat{H}(t,k) \vert, \mu^{\frac{1}{4}})\vert^2 dt,\label{torus.b}
\end{align}
where $\eta>0$ can be arbitrarily small.

Lastly, we consider $a$. We set
\begin{align*}
\hat{\Phi}=\hat{\Phi}_a = (\vert v\vert^2 -10) \{ v\cdot ik\hat{\phi}_a(t,k)\}\mu^{\frac{1}{2}},
\end{align*}
where $\hat{\phi}_a$ is a solution to
\begin{align*}
\vert k \vert^2 \hat{\phi}_a =\hat{a}.
\end{align*}
Note again that since $\hat{a}(t,0)\equiv 0$, it is valid to formally write $\hat{\phi}_a(t,k)=\hat{a}(t,k)/|k|^2$ for any $k\in \Z^3$ with the understanding that we define $\hat{\phi}_a(t,0) =0$.
For this choice the inner product of the macroscopic part is recast as
\begin{align*}
&-\int^T_0 (\mathbf{P}\hat{f}, v \cdot ik\hat{\Phi}_a)dt
\\
&=-\sum_{j,n}\int^T_0 (\{ \hat{a}+\hat{b}\cdot v +\frac{1}{2}(\vert v\vert^2-3)\hat{c}\} \mu^{\frac{1}{2}}, v_j v_n (\vert v\vert^2-10)\mu^{\frac{1}{2}}(-k_jk_n\hat{\phi_a})dt\\
&=5 \int^T_0 (\hat{a}, \vert k\vert^2 \hat{\phi_a}) dt=5 \int^T_0 |\hat{a}(t,k)|^2 dt.
\end{align*}
By the first equation of \eqref{mac.law.torus}, one has
\begin{align*}
\vert \partial_t \hat{a}(t,k)\vert \lesssim \vert k\vert |\hat{b}(t,k)|,
\end{align*}
and then we have
\begin{align*}
\vert S_2\vert&\le \int^T_0 \vert (\{\mathbf{I}-\mathbf{P}\}\hat{f}, \partial_t \hat{\Phi}_a)\vert dt +\int^T_0 \vert (\mathbf{P}\hat{f}, \partial_t \hat{\Phi}_a)\vert dt\\
&\lesssim \int^T_0 \Vert \partial_t \hat{\Phi}_a(t,k)\Vert_{L^2_v}^2 dt +\int^T_0 | \{\mathbf{I}-\mathbf{P}\} \hat{f}(t,k) |^2_D dt +\int^T_0 |\hat{b}(t,k)|^2dt\\
&\lesssim \int^T_0 \vert \partial_t k \hat{\phi}_a (t,k)\vert^2 dt + \int^T_0 | \{\mathbf{I}-\mathbf{P}\}\hat{f}(t,k)|^2_D dt + \int^T_0 |\hat{b}(t,k)|^2dt\\
&\lesssim \int^T_0 |\hat{b}(t,k)|^2dt + \int^T_0 | \{\mathbf{I}-\mathbf{P}\}\hat{f}(t,k)|^2_D dt.
\end{align*}
Therefore similar to the previous cases we obtain
\begin{align}\label{torus.a}
\int^T_0 |\hat{a}(t,k)|^2 dt &\lesssim \Vert \hat{f}(k,T)\Vert_{L^2_v}^2 + \Vert \hat{f}_0(k)\Vert_{L^2_v}^2+ \int^T_0 |\hat{b}(t,k)|^2 dt \notag\\
&\quad+\int^T_0 | \{\mathbf{I}-\mathbf{P}\}\hat{f}(t,k)|_D^2dt +\int^T_0 \vert (\hat{H}(t,k), \mu^{\frac{1}{4}})\vert^2 dt.
\end{align}
Combining those estimates \eqref{torus.c}, \eqref{torus.b}, and \eqref{torus.a} yields the desired estimate \eqref{abcest.torus1}. This completes the proof of Lemma \ref{abcest.torus}.
\end{proof}

\subsection{Anisotropic case with boundary}
In this section, we derive the important estimates for the macroscopic part of the solutions to the problem \eqref{LLeq},
\eqref{idf}, \eqref{ifb} and \eqref{srb}. Recall the macro-micro decomposition \eqref{abcdef}.
%Let us start from the decomposition of the solution $f$, i.e. we split $f$
%as $f=\FP f+\{\FI-\FP\}f$, where
%\begin{equation}\label{abcdef}
%\FP f=\{a+b\cdot v+\frac{1}{2}(|v|^2-3)c\}\mu^{\frac{1}{2}}.
%\end{equation}
Given a function $H=H(h(x,v))\in \CN^\perp(L)$,
%=H(h(x,v))\in \CN^\perp(L)$
%with $\CN(L)$ being the null space of $L$,
we then consider the following linear problem
\begin{equation}\label{Leqh}
\pa_tf+v_1\pa_{x_1}f+\bar{v}\cdot\na_{\bar{x}}f+Lf=H,\ t>0,\ (x_1,\bar{x})\in\Om, \ v=(v_1,\bar{v})\in\R^3,
\end{equation}
with the same initial data \eqref{idf} and the boundary condition \eqref{ifb} or \eqref{srb}.
As in the torus case, the $H^1$-estimates of the coefficients
$[a,b,c](t,x_1,\bar{x})$ will be obtained by using the dual argument as in \cite{EGKM-13}. We would like to emphasize that the computations in our situation are  more complicated than those in \cite{EGKM-13},
since the function space that we are using here is anisotropic. We shall give the full details of the proof for completeness.
Before doing this, we first define the boundary integral functionals:
\begin{multline}
\label{def.ga+}
\vert \Upsilon_{T,w}^+(h)\vert^2
=
\int_0^T\int_{v_1> 0}|v_1|w^2_{q,\vth}|h(t,1,v)|^2dvdt
\\
+\int_0^T\int_{v_1<0}|v_1|w^2_{q,\vth}|h(t,-1,v)|^2dv
dt,
\end{multline}
for particles with outgoing velocities on boundaries $x_1=\pm1$, and
\begin{multline}
\label{def.ga-}
\vert \Upsilon_{T,w}^-(h) \vert^2 =\int_0^T\int_{v_1< 0}|v_1|w^2_{q,\vth}|h(t,1,v)|^2dvdt
\\
+\int_0^T\int_{v_1>0}|v_1|w^2_{q,\vth}|h(t,-1,v)|^2dv
dt,
\end{multline}
for particles with incoming velocities on boundaries $x_1=\pm1$,  where the argument $h=h(t,x_1,v)$ is a distribution function that is well-defined on the boundaries.
For the shorthand notation, we set $\Upsilon_T^\pm(\cdot):=\Upsilon_{T,w}^\pm(\cdot)$ in the case when $w\equiv 1$, i.e.~when there is no velocity weight.

\begin{theorem}\label{abc.th}
Assume all the conditions listed in Theorems \ref{mthif} and \ref{mthsr} are valid and let $H(h(-v_1))=H(h)(-v_1)$. For $|\al|=0$ or $1$,
it holds that
\begin{align}
%\begin{split}
&\|\pa^{\al}\left[a,b,c\right]\|_{L^1_{\bar{k}}L^2_TL^2_{x_1,v}}\notag\\
&\lesssim\sum\limits_{|\al|\leq1}\left\|\{\FI-\FP\}\pa^{\al} f\right\|_{L^1_{\bar{k}}L^2_TL^2_{x_1}L^2_{v,D}}
+\sum\limits_{|\al|\leq1}\|\pa^{\al}f\|_{L^1_{\bar{k}}L^\infty_TL^2_{x_1,v}}
\notag \\&\quad+\sum\limits_{|\al|\leq1}\|\pa^{\al}f_0\|_{L^1_{\bar{k}}L^2_{x_1,v}}+E(\widehat{g_\pm})
+\sum\limits_{|\al|\leq1}\int_{\Z^2}|\Upsilon_{T}^+(\widehat{\pa^\al f})|d\Si(\bar{k})
\notag\\
&\quad+\sum\limits_{|\al|\leq1}\int_{\Z^2}\left(\int_0^T\left\|(\widehat{\pa^\al H},\mu^{\frac{1}{4}})_{L_v^2}\right\|^2dt\right)^{1/2}d\Si(\bar{k})
\notag \\
&\quad+\int_{\Z^2}|\Upsilon_{T}^-(\frac{\widehat{H}}{|v_1|})|d\Si(\bar{k}),\label{abcif.es}
%\end{split}
\end{align}
for the inflow boundary condition \eqref{ifb},
and
\begin{align}
%\begin{split}
\|\pa^{\al}\left[a,b,c\right]\|_{L^1_{\bar{k}}L^2_TL^2_{x_1,v}}
&\lesssim \sum\limits_{|\al|\leq1}\left\|\{\FI-\FP\}\pa^\al f\right\|_{L^1_{\bar{k}}L^2_TL^2_{x_1}L^2_{v,D}}
\notag
\\
& \qquad
+\sum\limits_{|\al|\leq1}\|\pa^\al f\|_{L^1_{\bar{k}}L^\infty_TL^2_{x_1,v}}+\sum\limits_{|\al|\leq1}\|\pa^\al f_0\|_{L^1_{\bar{k}}L^2_{x_1,v}}
\notag \\
&\qquad+\sum\limits_{|\al|\leq1}\int_{\Z^2}\left(\int_0^T\left\|(\widehat{\pa^\al H},\mu^{\frac{1}{4}})_{L_v^2}\right\|^2dt\right)^{1/2}d\Si(\bar{k}),\label{abcsr.es}
%\notag
%\end{split}
\end{align}
for the specular reflection boundary condition \eqref{srb}.
%Here, $T>0$ is arbitrary and $\al=0,1,$
\end{theorem}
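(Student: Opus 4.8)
The plan is to run the same dual (weak-formulation) argument that produced Theorem \ref{abcest.torus}, following \cite{EGKM-13}, but now retaining the normal variable $x_1\in I=(-1,1)$ and carefully tracking the boundary integrals at $x_1=\pm1$. First I would Fourier transform \eqref{Leqh} in the periodic variable $\bar x=(x_2,x_3)$ only, so that $\hat f(t,x_1,\bar k,v)$ solves an equation with transport term $v_1\pa_{x_1}\hat f+i(\bar k\cdot\bar v)\hat f$. Taking the velocity moments $\mu^{1/2}$, $v_j\mu^{1/2}$, $\tfrac16(|v|^2-3)\mu^{1/2}$, $(v_jv_m-1)\mu^{1/2}$, $\tfrac1{10}(|v|^2-5)v_j\mu^{1/2}$ gives a fluid-type system for $[a,b,c](t,x_1,\bar k)$ of the same shape as \eqref{mac.law.torus} with $\na_x$ replaced by $(\pa_{x_1},i\bar k)$; in the specular case the conservation laws \eqref{PC.cl1}--\eqref{PC.cl3} and the symmetry \eqref{ass.sym} give $[\widehat{a,b,c}](t,0)=0$ and fix the parity of $a,\bar b,c$ (even) and $b_1$ (odd) in $x_1$. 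Testing the transformed equation against a suitable $\hat\Phi$, integrating over $(0,T)\times I$ and integrating by parts in $x_1$ produces the interior terms $S_1,\dots,S_4$ exactly as in the torus proof, plus additional boundary contributions at $x_1=\pm1$ of the form $\int_0^T\!\!\int_{\R^3}v_1\,\FP\hat f\,\overline{\hat\Phi}\,dv\,dt$ and boundary terms carried by the test potentials.

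For each of $c$, $b$, $a$ I would use test functions $\hat\Phi_c,\hat\Phi_b,\hat\Phi_a$ of the same polynomial-in-$v$ type as on the torus, but now with scalar potentials $\hat\phi_c,\hat\phi_J,\hat\phi_a$ solving the one-dimensional elliptic problems $(-\pa_{x_1}^2+|\bar k|^2)\hat\phi=\hat c$ (resp.\ $\hat b_J$, $\hat a$) on $I$, equipped with the boundary conditions dictated by the problem --- homogeneous conditions of Neumann/Dirichlet type matched to the parity of the corresponding coefficient in the specular case, and the natural conditions inherited from $g_\pm$ in the inflow case. Elliptic regularity on the interval, $\|\hat\phi\|_{H^2(I)}+|\bar k|\,\|\hat\phi\|_{H^1(I)}\lesssim\|\cdot\|_{L^2(I)}$, plays the role of the division by $|k|^2$ used on the torus. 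The interior terms are then estimated as before: $S_1$ by the endpoint energy, $S_2$ from the macroscopic equations, $S_3$ by Cauchy--Schwarz, and $S_4$ by the coercivity of $L$ (Lemma \ref{esBL}), the linear estimate of \cite{GS} (recall Remark \ref{norm.remark}), and the source contribution $\big(\widehat{\pa^\al H},\mu^{1/4}\big)_{L^2_v}$; one then takes the $L^2_{x_1}$, $L^2_T$ and finally $L^1_{\bar k}$ norms in that order, using Minkowski's inequality to keep the $L^1_{\bar k}$ structure, and absorbs small terms with an $\eta>0$ as in \eqref{torus.c}--\eqref{torus.a}. For $|\al|=1$ and $\al$ tangential one differentiates the macroscopic system in $x_2,x_3$ and repeats; the normal derivative $\pa_{x_1}f$ is not differentiated directly but recovered from the equation, $v_1\pa_{x_1}f=H-\pa_tf-\bar v\cdot\na_{\bar x}f-Lf$, so that $\pa_{x_1}[a,b,c]$ is read off from the zeroth-order moment equations.

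The genuinely new ingredient, and the step I expect to be the main obstacle, is the control of the boundary integrals. In the specular reflection case, using $H(h(-v_1))=H(h)(-v_1)$, the symmetry $f(t,-x_1,\bar x,-v_1,\bar v)=f(t,x_1,\bar x,v_1,\bar v)$ inherited by the solution (Theorem \ref{mthsr}) and the reflection condition \eqref{srb}, the moments appearing in the boundary flux are odd in $v_1$ at the wall and hence vanish; with the parity-adapted boundary conditions for $\hat\phi$ the potential boundary terms also vanish, giving \eqref{abcsr.es} with no boundary functional. In the inflow case one splits the boundary integral over $\pm v_1>0$: the outgoing part is retained on the right-hand side as $|\Upsilon_T^+(\widehat{\pa^\al f})|$, while on the incoming part $\hat f$ is replaced by $\widehat{g_\pm}$ via \eqref{ifb}; the normal derivative that then appears is expressed, using the trace of the equation at $x_1=\pm1$, as $\pa_{x_1}\widehat{g_\pm}=\tfrac1{v_1}\big(\widehat H-\widehat{\pa_tg_\pm}-i(\bar k\cdot\bar v)\widehat{g_\pm}-L\widehat{g_\pm}\big)$, which is exactly why $E(\widehat{g_\pm})$ in \eqref{Enormk}--\eqref{def.Enorm} is built from the $|v_1|^{-1}$-weighted norms of $\pa_tg_\pm$, $(\bar k\cdot\bar v)g_\pm$, $Lg_\pm$, $\Gamma(g_\pm,g_\pm)$ together with the $|v_1|(1+|\bar k|^2)$-weighted trace; the source contribution produces the term $\int_{\Z^2}|\Upsilon_T^-(\widehat H/|v_1|)|\,d\Si(\bar k)$. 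Collecting these bounds yields \eqref{abcif.es}, and the nonlinear piece hidden in $H=\Gamma(f,f)$ is absorbed by Lemma \ref{tril2} (and Lemma \ref{tril21}), so that under the smallness hypotheses of Theorems \ref{mthif} and \ref{mthsr} the right-hand side closes. The delicate points are precisely the bookkeeping of these boundary terms in the anisotropic $L^1_{\bar k}L^2_TL^2_{x_1,v}$ scale and, for the inflow problem, matching the regularity of $g_\pm$ to the trace of the equation so that every boundary term is dominated by $E(\widehat{g_\pm})$.
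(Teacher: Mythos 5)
Your proposal is correct and follows essentially the same route as the paper: the dual argument of \cite{EGKM-13} with the Fourier transform taken only in $\bar x$, the same polynomial-in-$v$ test functions built from one-dimensional elliptic potentials on $I$ (Dirichlet for $\hat\phi_c,\hat\phi_J$, and Dirichlet/Neumann for $\hat\phi_a$ in the inflow/specular cases), cancellation of the boundary term $S_5$ via the parity of $[a,b,c]$ and of the potentials in the specular case, and in the inflow case the substitution of $\pa_{x_1}\hat f|_{x_1=\pm1}$ by the trace of the equation, which is exactly what generates $E(\widehat{g_\pm})$, $\Upsilon_T^+(\widehat{\pa^\al f})$ and $\Upsilon_T^-(\widehat H/|v_1|)$. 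The only cosmetic difference is that the absorption of $H=\Gamma(f,f)$ via Lemma \ref{tril2} belongs to the later application of the theorem rather than to its proof.
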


\begin{proof}
In what follows  we only show that \eqref{abcif.es} and \eqref{abcsr.es} are valid for $\pa^{\al}=\pa_{x_1}$ with $\al=(1,0,0)$, since
the proof in other cases is quite similar.

Acting $\pa:=\pa_{x_1}$ to \eqref{Leqh}
and taking the Fourier transform of the resulting equations with respect to $\bar{x}$, we have
\begin{equation}\label{FLeqh}
\pa_t\widehat{\pa f}+v_1\pa_{x_1}\widehat{\pa f}+i\bar{k}\cdot\bar{v}\widehat{\pa f}+L\widehat{\pa f}=\widehat{\pa H},
\end{equation}
with the corresponding initial data
\begin{equation}\label{Fidf}
\hat{f}(0,x_1,\bar{k},v)=\hat{f}_0(x_1,\bar{k},v),
\end{equation}
and the inflow boundary condition
\begin{equation}\label{Fifb}
\hat{f}(t,-1,\bar{k},v)|_{v_1>0}=\widehat{g_-}(t,\bar{k},v),\ \ \hat{f}(t,1,\bar{k},v)|_{v_1<0}=\widehat{g_+}(t,\bar{k},v),
\end{equation}
or the specular reflection condition
\begin{eqnarray}\label{Fsrb}
\hat{f}(t,-1,\bar{k},v_1,\bar{v})|_{v_1>0}&=\hat{f}(-1,\bar{k},-v_1,\bar{v}),
\\
\hat{f}(t,1,v_1,\bar{v})|_{v_1<0}&=\hat{f}(1,x_1,\bar{k},-v_1,\bar{v}).
\notag
\end{eqnarray}
We note for clarity that in the proof of Theorem \ref{abc.th} the Fourier transform is always taken with respect to only $\bar{x}$.

\begin{remark}\label{property.remark}
Since $H(h(-v_1))=H(h)(-v_1)$, it is easy to see that if $f(t,x_1,\bar{x},v_1,\bar{v})$ is a solution of \eqref{FLeqh}, \eqref{Fidf}
and \eqref{Fsrb} with $f_0(x_1,v_1)=f(-x_1,-v_1)$, then
$$
f(t,x_1,\bar{x},v_1,\bar{v})=f(t,-x_1,\bar{x},-v_1,\bar{v}).
$$
We also note that if $H=\Ga(h,h)$, then the property proved in \cite[Lemma 3.1, page 637]{Guo-06-DL} guarantees that $H(h(-v_1))=H(h)(-v_1)$.

\end{remark}

%By taking the following velocity moments
%\begin{equation*}
%    \mu^{\frac{1}{2}}, v_j\mu^{\frac{1}{2}}, \frac{1}{6}(|v|^2-3)\mu^{\frac{1}{2}},
%    (v_j{v_m}-1)\mu^{\frac{1}{2}}, \frac{1}{10}(|v|^2-5)v_j \mu^{\frac{1}{2}}
%\end{equation*}
%with {$1\leq j,m\leq 3$} for the equation \eqref{LLeq} with $\Ga(f,f)$ replaced by $H$
%and then taking Fourier transform with respect to $y$, one sees that
%the coefficient functions $(a,b,c)$ satisfy
%the fluid-type system
Similar to obtaining \eqref{mac.law.torus}, one has the following fluid-type system for the coefficient functions $[a,b,c]$:
\begin{equation}\label{mac.law}
    \left\{\begin{array}{l}
      \dis \pa_t a +\pa_{x_1}b_1+\pa_{\bar{x}}\cdot \bar{b}=0,\ \bar{b}=(b_2,b_3),\\[3mm]
      \dis \pa_t b +\na_x (a+2c)+\na_x\cdot \highG (\{\FI-\FP\} f)=0,\\[3mm]
      \dis \pa_t c +\frac{1}{3}\na_x\cdot b +\frac{1}{6}\na_x\cdot
      \Lambda (\{\FI-\FP\} f)=0,\\[3mm]
      \dis \pa_t[\highG_{{ jm}}(\{\FI-\FP\} f)+2c\de_{{ jm}}]+\pa_jb_m+\pa_m
      b_j=\highG_{jm}(\mathbbm{r}+\mathbbm{h}),\\[3mm]
      \dis \pa_t \Lambda_j(\{\FI-\FP\} f)+\pa_j c = \Lambda_j(\mathbbm{r}+\mathbbm{h}),
    \end{array}\right.
\end{equation}
where the terms $\mathbbm{r}$ and $\mathbbm{h}$ are given by
\begin{eqnarray}\notag%\label{rh.form}
% \nonumber to remove numbering (before each equation)
\mathbbm{r}= -v_1 \pa_{x_1} \{\FI-\FP\}f-\bar{v}\cdot \na_{\bar{x}} \{\FI-\FP\}f,\ \ \mathbbm{h}=-L \{\FI-\FP\}f+H.
\end{eqnarray}
We also recall the definitions of $\Lambda_j$ and $\highG_{jm}$ below \eqref{mac.law.torus}.

Let $\hat{\Phi}(t,x_1,\bar{k},v)\in C^1((0,+\infty)\times(-1,1)\times\R^3)$ with $\bar{k}=(k_2,k_3)\in \Z^2$ be a test function.
Taking the inner product of $\hat{\Phi}(t,x_1,\bar{k},v)$ and \eqref{FLeqh} with respect to $(x_1,v)$
and integrating the resulting identity with respect to $t$ over $[0,T]$ for any $T>0$, we obtain
\begin{equation*}%\label{Leqh.ip}
\begin{split}
(\widehat{\pa f},&\hat{\Phi})(T)-(\widehat{\pa f},\hat{\Phi})(0)-\int_0^T(\widehat{\pa f},\pa_t\hat{\Phi})dt
-\int_0^T(\widehat{\pa f},v\cdot\widehat{\na_{x_1,\bar{x}}\Phi})dt
\\&
+\int_0^T\langle v_1\widehat{\pa f}(1),\hat{\Phi}(1)\rangle dt-\int_0^T\langle v_1\widehat{\pa f}(-1),\hat{\Phi}(-1)\rangle dt
+\int_0^T(L\widehat{\pa f},\hat{\Phi})dt
\\&
\qquad
=\int_0^T(\widehat{\pa H},\hat{\Phi})dt.
\end{split}
\end{equation*}
In the rest of this proof the inner product $(\cdot, \cdot)$ is always with respect to $L^2_{x_1,v}$, and the inner product $\langle \cdot, \cdot \rangle$ is always with respect to $L^2_{v}$.
The above identity together with $\hat{f}=\FP\hat{f}+\{\FI-\FP\}\hat{f}$  implies that
\begin{equation}\label{Leqh.abc}
\begin{split}
-\int_0^T(\widehat{\pa \FP f},v\cdot\widehat{\na_{x_1,\bar{x}}\Phi})dt
=\sum\limits^5_{j=1}S_j,
\end{split}
\end{equation}
where $S_j$ $(1\leq j\leq 5)$ are defined by
\begin{eqnarray*}%\label{S.def}
\left\{\begin{array}{rll}
\begin{split}
S_1=&(\widehat{\pa f},\hat{\Phi})(0)-(\widehat{\pa f},\hat{\Phi})(T),\
S_2=\int_0^T(\widehat{\pa f},\pa_t\hat{\Phi})dt,\\
S_3=&\int_0^T( \{\FI-\FP\} \widehat{\pa f},v\cdot\widehat{\na_{x_1,\bar{x}}\Phi})dt,\\
S_4=&-\int_0^T(L\widehat{\pa f},\hat{\Phi})dt+\int_0^T(\widehat{\pa H},\hat{\Phi})dt,\\
S_5=&-\int_0^T\langle v_1\widehat{\pa f}(1),\hat{\Phi}(1)\rangle dt+\int_0^T\langle v_1\widehat{\pa f}(-1),\hat{\Phi}(-1)\rangle dt.
\end{split}
\end{array}\right.
\end{eqnarray*}

\noindent\underline{\it{Estimates on} $\hat{c}(t,x_1,\bar{k})$:}
We choose the following test function
$$
\hat{\Phi}=\hat{\Phi}_c=(\vert v \vert^2-5)\left\{v\cdot\widehat{\na_{x_1,\bar{x}}\phi}_c(t,x_1,\bar{k})\right\}\mu^{\frac{1}{2}},
$$
where
\begin{equation}\label{ep.c}
-\pa_{x_1}^2\hat{\phi}_c+\vert \bar{k} \vert^2\hat{\phi}_c(\bar{k})=\widehat{\pa c}(\bar{k}),\quad \hat{\phi}_c(\pm1,\bar{k})=0.
\end{equation}
It is straightforward to see that
\begin{equation}\label{ep.es}
\|\hat{\phi}_c\|_{H_{x_1}^2}+|\bar{k}|\|\hat{\phi}_c\|\lesssim  \|\widehat{\pa c}\|.
\end{equation}
Here and in the rest of the proof of Theorem \ref{abc.th}, for brevity we shall use $\|\cdot\|=\|\cdot\|_{L^2_{x_1}}$ to denote the $L^2$ norm in $x_1$ only if the function inside the norm only depends on $x_1$.  If the function inside the norm depends on both $x_1$ and $v$ then in the rest of this proof we shall also use $\|\cdot\|=\|\cdot\|_{L^2_{x_1,v}}$ without any risk of ambiguity.   Using the definition \eqref{abcdef}, we have
$$
\FP\hat{f}=\{\hat{a}+\hat{b}\cdot v+\frac{1}{2}(|v|^2-3)\hat{c}\}\mu^{\frac{1}{2}}.
$$
Then one has
\begin{equation*}%\label{cc}
\begin{split}
&-\int_0^T(\widehat{\pa \FP f},v\cdot\widehat{\na_{x_1,\bar{x}}\Phi})dt\\
&=-\sum\limits_{j}\int_0^T\left(\left\{\widehat{\pa a}+\widehat{\pa b}\cdot v
+\frac{1}{2}(|v|^2-3)\widehat{\pa c}\right\}\mu^{\frac{1}{2}},v_j\widehat{\pa_j\Phi}_c\right)dt\\
&=-\sum\limits_{j,n}\int_0^T\left(\left\{\widehat{\pa a}+\widehat{\pa b}\cdot v
+\frac{1}{2}(|v|^2-3)\widehat{\pa c}\right\}\mu^{\frac{1}{2}},v_jv_n(|v|^2-5)\mu^{\frac{1}{2}}\widehat{\pa_j\pa_n\phi}_c%(t,x_1,\bar{k})
\right)dt\\
&=\sum\limits_{j}\int_0^T(\widehat{\pa c},-\widehat{\pa_j^2\phi}_c)dt=\int_0^T\|\widehat{\pa c}(\bar{k})\|^2dt.
\end{split}
\end{equation*}
We now turn to estimate the $S_j$ $(1\leq j\leq5)$ term by term.
By H\"{o}lder's  inequality and the elliptic estimate \eqref{ep.es}, it follows that
$$
|S_1|\lesssim \| \widehat{\pa f}(T)\|^2+\|\widehat{\pa f}_0\|^2.
$$
Above we also used that $ \| \widehat{\pa c}\|  \lesssim\| \widehat{\pa f}\|$.
For the delicate term $S_2$, we first get from the third equation of \eqref{mac.law} that
\begin{multline}\label{patPhi1}
\|\pa_t\pa_{x_1}\hat{c}(\bar{k})\|_{H_{x_1}^{-1}}\\\lesssim \left\|\pa_{x_1}\hat{b}_1(\bar{k})\right\|
+|\bar{k}|\left\|\widehat{(b_2,b_3)}(\bar{k})\right\|+\left\|\pa_{x_1}\{\FI-\FP\}\hat{f}\right\|_{D}
+|\bar{k}|\left\|\{\FI-\FP\}\hat{f}\right\|_{D},
\end{multline}
and for $k\neq0$,
\begin{multline}\label{patPhi2}
|\bar{k}|^{-1}\|\pa_t\widehat{\pa_{\bar{x}} c}(\bar{k})\|\\\lesssim
\left\|\pa_{x_1}\hat{b}_1(\bar{k})\right\|+|\bar{k}|\left\|\widehat{(b_2,b_3)}(\bar{k})\right\|+\left\|\pa_{x_1}\{\FI-\FP\}\hat{f}\right\|_{D}
+|\bar{k}|\left\|\{\FI-\FP\}\hat{f}\right\|_{D}.
\end{multline}
On the other hand, it follows from \eqref{ep.c} that 
\begin{equation*}%\label{pch-1}
\|\pa_t\hat{\phi}_c\|_{H_{x_1}^{1}}\lesssim \|\pa_t \widehat{\pa c}\|_{H_{x_1}^{-1}},
\ \ |\bar{k}|\|\pa_t\hat{\phi}_c\|^2\lesssim |\bar{k}|^{-1}\|\pa_t \widehat{\pa c}\|^2, \ k\neq0.
\end{equation*}
Then we compute
\begin{equation*}
\begin{split}
|S_2|&\leq \int_0^T\left|(\widehat{\pa f},\pa_t\hat{\Phi})\right|dt
=\int_0^T\left|(\{\FI-\FP\}\widehat{\pa f},\pa_t\hat{\Phi})\right|dt\\
&\lesssim \eta\int_0^T\|\pa_t\hat{\Phi}\|^2dt+C_\eta\int_0^T\left\|\{\FI-\FP\}\widehat{\pa f}\right\|_{D}^2dt\\
&\lesssim \eta\int_0^T\|\pa_t\pa_{x_1}\hat{\phi}_c(t,\bar{k})\|^2dt
+
\eta|\bar{k}|^2\int_0^T\|\pa_t\hat{\phi}_c(t,\bar{k})\|^2dt\\
%\\
%& \qquad
&\qquad+C_\eta\int_0^T\left\|\{\FI-\FP\}\widehat{\na_x f}\right\|_D^2dt
\\
&\lesssim \eta\int_0^T\left\|\widehat{\na_x b}(\bar{k})\right\|^2dt
+C_\eta\int_0^T\left\|\{\FI-\FP\}\widehat{\na_x f}\right\|_{D}^2dt.
\end{split}
\end{equation*}
Thanks to \eqref{ep.es} and Cauchy-Schwarz's inequality, 
$S_3$ is bounded as follows:
\begin{equation*}
\begin{split}
|S_3|
\lesssim\eta\int_0^T\left\|\widehat{\pa c}(\bar{k})\right\|^2dt
+C_\eta\int_0^T\left\|\{\FI-\FP\}\widehat{\na_x f}\right\|_{D}^2dt.
\end{split}
\end{equation*}
For the  term $S_4$, applying   the elliptic estimate \eqref{ep.es} yields
\begin{equation*}
\begin{split}
|S_4|\leq \eta\int_0^T\left\|\widehat{\pa c}(\bar{k})\right\|^2dt
+C_\eta\int_0^T\left\|\{\FI-\FP\}\widehat{\pa f}\right\|_{D}^2dt+C_\eta\int_0^T\left\|(\widehat{\pa H},\mu^{\frac{1}{4}})_{L^2_v}\right\|^2dt.
\end{split}
\end{equation*}
We further recall the explanation of the torus case of the estimate $S_4$ above \eqref{torus.c}.
In particular we also use the estimate \cite[Equation (6.12) on page 819]{GS}, then the above estimate follows after Cauchy-Schwarz's inequality.

For the boundary term $S_5$, we first consider the inflow boundary condition \eqref{Fifb},
noticing that
$$
\hat{\Phi}_c(\pm1)=(|v|^2-5)v_1\pa_{x_1}\hat{\phi}_c(t,\pm1,\bar{k})\mu^{\frac{1}{2}},
$$
and by the trace theory
$$
|\pa_{x_1}\hat{\phi}_c(t,\pm1,\bar{k})|\lesssim \|\hat{\phi}_c(t,\bar{k})\|_{H_{x_1}^2}\lesssim  \|\widehat{\pa c}\|.
$$
Then one has by using Cauchy-Schwarz's inequality that
\begin{align}
%\begin{split}
|S_5|
&\lesssim\eta\int_0^T\left\|\widehat{\pa c}(\bar{k})\right\|^2dt
+C_\eta\int_0^T\int_{\R^3}|v_1||\widehat{\pa f}(1)|^2dvdt
 \notag \\
&\qquad
+C_\eta\int_0^T\int_{\R^3}|v_1||\widehat{\pa f}(-1)|^2dvdt
\notag\\
&\lesssim\eta\int_0^T\left\|\widehat{\pa c}(\bar{k})\right\|^2dt
+C_\eta\int_0^T\int_{v_1>0}|v_1||\widehat{\pa f}(1)|^2dvdt
\notag \\
&\qquad +C_\eta\int_0^T\int_{v_1<0}|v_1||\widehat{\pa f}(1)|^2dvdt
+C_\eta\int_0^T\int_{v_1>0}|v_1||\widehat{\pa f}(-1)|^2dvdt\notag\\
&\qquad+C_\eta\int_0^T\int_{v_1<0}|v_1||\widehat{\pa f}(-1)|^2dvdt. \label{ifbc}
%\notag
%\end{split}
\end{align}
Next, in view of \eqref{Fifb},
we get from the equation \eqref{FLeqh} that
%\begin{equation}
\begin{align}
&\int_0^T\int_{v_1<0}|v_1||\widehat{\pa_{x_1} f}(1)|^2dvdt\notag\\
&\lesssim \int_0^T\int_{v_1<0}|v_1|^{-1}|\widehat{\pa_t g_+}|^2dvdt
+\int_0^T\int_{v_1<0}|v_1|^{-1}|\bar{k}\cdot\bar{v}|^2|\widehat{g_+}|^2dvdt\notag\\
&\qquad+\int_0^T\int_{v_1<0}|v_1|^{-1}|L\widehat{g_+}|^2dvdt+\int_0^T\int_{v_1<0}|v_1|^{-1}|\widehat{H(1)}|^2dvdt\notag\\
&\lesssim E_{\bar{k}}(\widehat{g_+})+\int_0^T\int_{v_1<0}|v_1|^{-1}|\widehat{H(1)}|^2dvdt.
\label{ifbc2}
\end{align}
%\end{equation}
Similarly, it holds that
\begin{equation}\label{ifbc3}
\begin{split}
\int_0^T\int_{v_1>0}|v_1||\widehat{\pa_{x_1} f}(-1)|^2dvdt\lesssim E_{\bar{k}}(\widehat{g_-})+\int_0^T\int_{v_1>0}|v_1|^{-1}|\widehat{H}(-1)|^2dvdt.
\end{split}
\end{equation}
It is straightforward to see that if $\pa_{x_1}$ is replaced by $\pa_{\bar{x}}$, then it follows that
\begin{equation*}
\begin{split}
\int_0^T\int_{v_1<0}|v_1||\widehat{\pa_{\bar{x}} f}(1)|^2dvdt=& \int_0^T\int_{v_1<0}|v_1||\widehat{\pa_{\bar{x}} g_+}|^2dvdt,\\
\int_0^T\int_{v_1>0}|v_1||\widehat{\pa_{\bar{x}} f}(-1)|^2dvdt=& \int_0^T\int_{v_1>0}|v_1||\widehat{\pa_{\bar{x}} g_-}|^2dvdt.
\end{split}
\end{equation*}
Therefore we only consider the case $\pa=\pa_{x_1}$ in the following.

As to the specular reflection boundary condition \eqref{Fsrb}, the  symmetric property plays a key role in dealing with the intractable boundary term $S_5$.
In fact, from $\hat{f}(-x_1,-v_1)=\hat{f}(x_1,v_1)$ for all $x_1\in[-1,1]$ and from \eqref{Fsrb} we have that
$\hat{f}(\pm1,v_1)|_{v_1\neq0}=\hat{f}(\pm1,-v_1)$ at the boundary.  Then we can show that $S_5$ vanishes at this stage.
Since
$$
\hat{c}(t,x_1,\bar{k})=\frac{1}{6}\int_{\R^3}\hat{f}(t,x_1,\bar{k},v)(|v|^2-3)\mu^{\frac{1}{2}}(v)dv,
$$
We then see that $\hat{c}(t,x_1,\bar{k})$ is  even with respect to $x_1$,
which further implies that $\hat{\phi}_c$ is odd w.r.t.\ $x_1$  according to \eqref{ep.c}.
Therefore one has
$$
\hat{\Phi}_c(t,-1,\bar{k},-v_1)=-\hat{\Phi}_c(t,1,\bar{k},v_1)
$$
by the definition of $\hat{\Phi}_c$, note that the fact that $\widehat{\pa_{\bar{x}}\phi}_c(\pm1)=0$ was also used here. Moreover,
by $\hat{f}(-1,v_1)=\hat{f}(1,-v_1)=\hat{f}(1,v_1)$ for $v_1\neq0$, and using the equation \eqref{FLeqh} to define the derivative $\pa_{x_1}$, one also has
\begin{equation*}%\label{pfsy}
\widehat{\pa_{x_1}f}(-1,-v_1)=\widehat{\pa_{x_1}f}(-1,v_1)=\widehat{\pa_{x_1}f}(1,v_1),\ \textrm{for}\ v_1\neq0.
\end{equation*}
Consequently, by a change of variable $v_1\rightarrow  -v_1$, we have
\begin{equation*}
\begin{split}
S_5=&
\int_0^T\langle v_1\widehat{\pa_{x_1} f}(1),\hat{\Phi}_c(1)\rangle dt-\int_0^T\langle v_1\widehat{\pa_{x_1} f}(-1),\hat{\Phi}_c(-1)\rangle dt\\
=&\int_0^T\langle v_1\widehat{\pa_{x_1} f}(1,v_1),\hat{\Phi}_c(1)\rangle dt
+\int_0^T\langle v_1\widehat{\pa_{x_1} f}(-1,-v_1),\hat{\Phi}_c(-1,-v_1)\rangle dt\\
=&\int_0^T\langle v_1\widehat{\pa_{x_1} f}(1,v_1),\hat{\Phi}_c(1)\rangle dt
-\int_0^T\langle v_1\widehat{\pa_{x_1} f}(1,v_1),\hat{\Phi}_c(1,v_1)\rangle dt=0.
\end{split}
\end{equation*}
Combining the above estimates for the $S_j$ $(1\leq j\leq5)$ together, we now arrive at
\begin{align}
%\begin{split}
\int_0^T\|\widehat{\pa c}(\bar{k})\|^2dt&\lesssim \| \widehat{\pa f}(T)\|^2+\|\widehat{\pa f}(0)\|^2+E_{\bar{k}}(g_\pm)
+C_\eta\int_0^T\int_{v_1>0}|v_1||\widehat{\pa f}(1)|^2dvdt\notag \\
&\qquad+C_\eta\int_0^T\int_{v_1<0}|v_1||\widehat{\pa f}(-1)|^2dvdt
+\eta\int_0^T\left\|\widehat{\na_x b}(\bar{k})\right\|^2dt
\notag \\
&\qquad+C_\eta\int_0^T\left\|\{\FI-\FP\}\widehat{\na_x f}\right\|_{D}^2dt
+C_\eta\int_0^T\left\|(\widehat{\pa^\al H},\mu^{\frac{1}{4}})_{L^2_v}\right\|^2dt
\notag \\
&\qquad+\int_0^T\int_{v_1<0}|v_1|^{-1}|\widehat{H(1)}|^2dvdt
\notag \\
&\qquad
+\int_0^T\int_{v_1>0}|v_1|^{-1}|\widehat{H(-1)}|^2dvdt,\label{es.pacif}
%\end{split}
\end{align}
for the inflow boundary condition \eqref{Fifb}, and
\begin{multline}
%\begin{split}
\int_0^T\|\widehat{\pa c}(\bar{k})\|^2dt\lesssim \| \widehat{\pa f}(T)\|^2+\|\widehat{\pa f}_0\|^2
+\eta\int_0^T\left\|\widehat{\na_x b}(\bar{k})\right\|^2dt
\\
%\notag
%&\qquad
+C_\eta\int_0^T\left\|\{\FI-\FP\}\widehat{\na_x f}\right\|_{D}^2dt
+C_\eta\int_0^T\left\|(\widehat{\pa H},\mu^{\frac{1}{4}})_{L^2_v}\right\|^2dt,\label{es.pacsr}
%\end{split}
\end{multline}
for the specular reflection boundary condition \eqref{Fsrb}. We thus conclude the estimates on $c$.

\medskip
\noindent\underline{\it{Estimates on} $\hat{b}(t,x_1,\bar{k})$:}
In this case, we choose the following test function
\begin{eqnarray*}
\hat{\Phi}=\sum\limits_{m=1}^3\hat{\Phi}^{J,m}_b%=\sum\limits_{m=1}^3\hat{\Phi}^{j,m}_b
,\ J=1,2,3,
\end{eqnarray*}
with
\begin{eqnarray*}
\hat{\Phi}^{J,m}_b
=\left\{\begin{array}{rll}
\begin{split}
&|v|^2v_mv_J\widehat{\pa_m\phi}_J(t,x_1,\bar{k})-\frac{7}{2}(v_m^2-1)\widehat{\pa_J\phi}_J(t,x_1,\bar{k})\mu^{\frac{1}{2}},\ \ J\neq m,\\
&\frac{7}{2}(v_J^2-1)\widehat{\pa_J\phi}_J(t,x_1,\bar{k})\mu^{\frac{1}{2}},\ \ J=m,
\end{split}
\end{array}\right.
\end{eqnarray*}
where
\begin{equation}\label{ep.b}
-\pa_{x_1}^2\hat{\phi}_J+|\bar{k}|^2\hat{\phi}_J(\bar{k})=\widehat{\pa b}_J(\bar{k}),\ {\rm and}\ \hat{\phi}_J(\pm1,\bar{k})=0.
\end{equation}
In this paper we use the notation $(\pa_1,\pa_2,\pa_3)\eqdef (\pa_{x_1},\pa_{\bar{x}})$.

Standard elliptic estimates yield the following estimate
\begin{equation*}%\label{ep.esb}
\|\hat{\phi}_J\|_{H_{x_1}^2}+|\bar{k}|\|\hat{\phi}_J\|\lesssim C \|\widehat{\pa b}_J\|.
\end{equation*}
With the above choices in hand, we now turn to estimate both sides of \eqref{Leqh.abc} term by term. Notice
\begin{equation*}%\label{}
\begin{split}
-\sum\limits_{m=1}^3\int_0^T&\left(\widehat{\pa\FP f},v\cdot\widehat{\na \Phi}^{J,m}_b\right)dt\\
=&-\sum\limits_{m=1}^3\int_0^T\sum\limits_{j=1}^3\left(\left\{\widehat{\pa a}+\sum\limits_{n=1}^3\widehat{\pa b}_n v_n
+\frac{1}{2}(|v|^2-3)\widehat{\pa c}\right\}\mu^{\frac{1}{2}},v_j\widehat{\pa_j\Phi}^{J,m}_b\right)dt.
%\\
%=&-\sum\limits_{m=1,m\neq J}^3\int_0^T\left(v_mv_J\mu^{\frac{1}{2}}\widehat{\pa b}_J
%,v^2v_mv_J\mu^{\frac{1}{2}}\widehat{\pa^2_m\phi}_J(t,x_1,\bar{k})\right)dt\\
%&-\sum\limits_{m=1,m\neq J}^3\int_0^T\left(v_mv_J\mu^{\frac{1}{2}}\widehat{\pa b}_m
%,v^2v_mv_J\mu^{\frac{1}{2}}\widehat{\pa_J\pa_m\phi}_J(t,x_1,\bar{k})\right)dt\\
%&+7\sum\limits_{m=1,m\neq J}^3\int_0^T\left(\widehat{\pa b}_m
%,\widehat{\pa_m\pa_J\phi}_J(t,x_1,\bar{k})\right)dt-7\int_0^T\left(\widehat{\pa b}_J
%,\widehat{\pa^2_J\phi}_J(t,x_1,\bar{k})\right)dt
%\\
%=&-7\sum\limits_{m=1}^3\int_0^T\left(\widehat{\pa b}_J
%,\widehat{\pa^2_m\phi}_J(t,x_1,\bar{k})\right)dt=-7\int_0^T\left\|\widehat{\pa b}_J
%\right\|^2dt.
\end{split}
\end{equation*}
Then it follows that
\begin{equation*}%\label{cc}
\begin{split}
&-\sum\limits_{m=1}^3\int_0^T\left(\widehat{\pa\FP f},v\cdot\widehat{\na \Phi}^{J,m}_b\right)dt\\
%=&-\sum\limits_{m=1}^3\int_0^T\sum\limits_{j=1}^3\left(\left\{\widehat{\pa a}+\sum\limits_{n=1}^3\widehat{\pa b}_n v_n
%+\frac{1}{2}(|v|^2-3)\widehat{\pa c}\right\}\mu^{\frac{1}{2}},v_j\widehat{\pa_j\Psi}^{J,m}_b\right)dt
%\\
&=-\sum\limits_{m=1,m\neq J}^3\int_0^T\left(v_mv_J\mu^{\frac{1}{2}}\widehat{\pa b}_J
,|v|^2v_mv_J\mu^{\frac{1}{2}}\widehat{\pa^2_m\phi}_J(t,x_1,\bar{k})\right)dt\\
&\qquad-\sum\limits_{m=1,m\neq J}^3\int_0^T
\left(v_mv_J\mu^{\frac{1}{2}}\widehat{\pa b}_m,|v|^2v_mv_J\mu^{\frac{1}{2}}\widehat{\pa_J\pa_m\phi}_J(t,x_1,\bar{k})\right)dt\\
&\qquad+7\sum\limits_{m=1,m\neq J}^3\int_0^T\left(\widehat{\pa b}_m
,\widehat{\pa_m\pa_J\phi}_J(t,x_1,\bar{k})\right)dt-7\int_0^T\left(\widehat{\pa b}_J
,\widehat{\pa^2_J\phi}_J(t,x_1,\bar{k})\right)dt.
%\\
%=&-7\sum\limits_{m=1}^3\int_0^T\left(\widehat{\pa b}_J
%,\widehat{\pa^2_m\phi}_J(t,x_1,\bar{k})\right)dt=-7\int_0^T\left\|\widehat{\pa b}_J
%\right\|^2dt.
\end{split}
\end{equation*}
Therefore one has
\begin{equation*}%\label{}
\begin{split}
&-\sum\limits_{m=1}^3\int_0^T\left(\widehat{\pa\FP f},v\cdot\widehat{\na \Psi}^{J,m}_b\right)dt\\
&=-7\sum\limits_{m=1}^3\int_0^T\left(\widehat{\pa b}_J
,\widehat{\pa^2_m\phi}_J(t,x_1,\bar{k})\right)dt=7\int_0^T\left\|\widehat{\pa b}_J
\right\|^2dt.
\end{split}
\end{equation*}
In what follows, we only estimate $S_2$ and $S_5$, since the other terms are similar to obtaining \eqref{es.pacif}.
By using the second equation of \eqref{mac.law}, as in \eqref{patPhi1} and \eqref{patPhi2}, we obtain
\begin{multline*}%\label{patPhib}
\|\pa_t\pa_{x_1}\hat{b}(\bar{k})\|_{H_{x_1}^{-1}}\lesssim \left\|\pa_{x_1}(\hat{a}+\hat{c})(\bar{k})\right\|
+|\bar{k}|\left\|(\hat{a}+\hat{c})(\bar{k})\right\|
\\
+\left\|\pa_{x_1}\{\FI-\FP\}\hat{f}\right\|_{D}
+|\bar{k}|\left\|\{\FI-\FP\}\hat{f}\right\|_{D},
\end{multline*}
and for $k\neq0$, we have
\begin{multline*}%\label{patPhiby}
|\bar{k}|^{-1}\|\pa_t\widehat{\pa_{\bar{x}} b}(\bar{k})\|\lesssim
\left\|\pa_{x_1}(\hat{a}+\hat{c})(\bar{k})\right\|+|\bar{k}|\left\|(\hat{a}+\hat{c})(\bar{k})\right\|
\\
+\left\|\pa_{x_1}\{\FI-\FP\}\hat{f}\right\|_{D}
+|\bar{k}|\left\|\{\FI-\FP\}\hat{f}\right\|_{D}.
\end{multline*}
As a consequence, it follows
\begin{equation*}
\begin{split}
|S_2|&\leq \sum\limits_{m}\int_0^T\left|(\widehat{\pa f},\pa_t\hat{\Phi}_b^{J,m})\right|dt\\
&\leq\sum\limits_{m}\int_0^T\left|(\{\FI-\FP\}\widehat{\pa f},\pa_t\hat{\Phi}_b^{J,m})\right|dt
+\sum\limits_{m}\int_0^T\left|(\FP\widehat{\pa f},\pa_t\hat{\Phi}_b^{J,m})\right|dt\\
&\lesssim \eta\sum\limits_{m}\int_0^T\|\pa_t\hat{\Phi}_b^{J,m}\|^2dt+C_\eta\int_0^T\left\|\{\FI-\FP\}\widehat{\na_x f}\right\|_{D}^2dt
+C_\eta\int_0^T\left\|\widehat{\na_x c}(\bar{k})\right\|^2dt\\
&\lesssim
\eta\int_0^T\|\pa_t\pa_{x_1}\widehat{\phi}_J(t,\bar{k})\|^2dt
+
\eta\int_0^T\|\pa_tik\widehat{\phi}_J(t,\bar{k})\|^2dt
\\
&\qquad+C_\eta\int_0^T\left\|\{\FI-\FP\}\widehat{\na_x f}\right\|_D^2dt+C_\eta\int_0^T\left\|\widehat{\na_x c}(\bar{k})\right\|^2dt\\
&\lesssim \eta\int_0^T\left\|\widehat{\na_x a}(\bar{k})\right\|^2dt+C_\eta\int_0^T\left\|\widehat{\na_x c}(\bar{k})\right\|^2dt
+C_\eta\int_0^T\left\|\{\FI-\FP\}\widehat{\na_x f}\right\|_{D}^2dt,
\end{split}
\end{equation*}
where from  \eqref{ep.b} we used the following elliptic estimates. 
\begin{equation*}%\label{pbh-1}
\|\pa_t\hat{\phi}_J\|_{H_{x_1}^{1}}\lesssim \|\pa_t \widehat{\pa b}\|_{H_{x_1}^{-1}},
\ \ |\bar{k}|\|\pa_t\hat{\phi}_J\|^2\lesssim |\bar{k}|^{-1}\|\pa_t \widehat{\pa b}\|^2, \ k\neq0.
\end{equation*}
We also then used the estimates just above in the upper bounds of these estimates.
For the inflow boundary condition \eqref{Fifb}, $S_5$ enjoys the similar estimates
to \eqref{ifbc}, \eqref{ifbc2} and \eqref{ifbc3}. Notice that that only the case $\pa=\pa_{x_1}$ should be considered.  We now {\it claim} that $S_5$ also vanishes for the specular reflection boundary condition \eqref{Fsrb}.
From
$$
\hat{b}(t,x_1,\bar{k})=\int_{\R^3}\hat{f}(t,x_1,\bar{k},v)v\mu^{\frac{1}{2}}(v)dv
$$
and $\hat{f}(-x_1,-v_1)=\hat{f}(x_1,v_1)$,
we see that $\hat{b}_1(t,x_1,\bar{k})$ is odd w.r.t.\ $x_1$,
which further implies that $\hat{\phi}_1$ is even according to \eqref{ep.b}.
Hence it follows that
$$
\hat{\Phi}_b^{1,m}(t,-1,\bar{k},-v_1)=-\hat{\Phi}_b^{1,m}(t,1,\bar{k},v_1)
$$
by the definition of $\hat{\Phi}^{J,m}$.
Eventually, from
\begin{equation*}
\widehat{\pa_{x_1}f}(-1,-v_1)=\widehat{\pa_{x_1}f}(1,v_1),\ \textrm{for}\ v_1\neq0.
\end{equation*}
and further with the change of variable $v_1\rightarrow -v_1$, we obtain
\begin{equation*}
\begin{split}
S_5=&
\int_0^T\langle v_1\widehat{\pa_{x_1} f}(1),\hat{\Phi}_b^{1,m}(1)\rangle dt
-\int_0^T\langle v_1\widehat{\pa_{x_1} f}(-1),\hat{\Phi}_b^{1,m}(-1)\rangle dt\\
=&\int_0^T\langle v_1\widehat{\pa_{x_1} f}(1,v_1),\hat{\Phi}_b^{1,m}(1)\rangle dt
+\int_0^T\langle v_1\widehat{\pa_{x_1} f}(-1,-v_1),\hat{\Phi}_b^{1,m}(-1,-v_1)\rangle dt=0.
\end{split}
\end{equation*}
For $J=2$ and $3$, both $\hat{b}_2$ and $\hat{b}_3$ are even w.r.t.\ $x_1$,
which further yields that  $\hat{\phi}_2$ and $\hat{\phi}_3$ are odd w.r.t.\ $x_1$. Then we still have
$$
\hat{\Phi}_b^{J,m}(t,-1,\bar{k},-v_1)=-\hat{\Phi}_b^{J,m}(t,1,\bar{k},v_1),\quad J=2,3.
$$
Thus $S_5$ also
vanishes in the case of $\hat{\Phi}_b^{J,m}(t,1,\bar{k},v_1)$ with $J=2$ and $3$.
We now have the following conclusion as in  \eqref{es.pacif} and \eqref{es.pacsr}:
\begin{equation*}%\label{es.pabif}
\begin{split}
\int_0^T\|\widehat{\pa b}(\bar{k})\|^2dt&\lesssim \| \widehat{\pa f}(T)\|^2+\|\widehat{\pa f}_0\|^2+E_{\bar{k}}(g_\pm)(T)
+C_\eta\int_0^T\int_{v_1>0}|v_1||\widehat{\pa f}(1)|^2dvdt\\&\qquad+C_\eta\int_0^T\int_{v_1<0}|v_1||\widehat{\pa f}(-1)|^2dvdt
\\
&\qquad
+\eta\int_0^T\left\|\widehat{\na_x a}(\bar{k})\right\|^2dt+C_\eta\int_0^T\left\|\widehat{\na_x c}(\bar{k})\right\|^2dt
\\
&\qquad+C_\eta\int_0^T\left\|\{\FI-\FP\}\widehat{\na_x f}\right\|_{D}^2dt
+C_\eta\int_0^T\left\|(\widehat{\pa H},\mu^{\frac{1}{4}})_{L^2_v}\right\|^2dt
\\
&\qquad+\int_0^T\int_{v_1<0}|v_1|^{-1}|\widehat{H(1)}|^2dvdt+\int_0^T\int_{v_1>0}|v_1|^{-1}|\widehat{H(-1)}|^2dvdt,
\end{split}
\end{equation*}
for the inflow boundary condition \eqref{Fifb}, and
\begin{equation*}%\label{es.pabsr}
\begin{split}
\int_0^T\|\widehat{\pa b}(\bar{k})\|^2dt&\lesssim \| \widehat{\pa f}(T)\|^2+\|\widehat{\pa f}(0)\|^2
+\eta\int_0^T\left\|\widehat{\na_x a}(\bar{k})\right\|^2dt
+
C_\eta\int_0^T\left\|\widehat{\na_x c}(\bar{k})\right\|^2dt
\\&\qquad+C_\eta\int_0^T\left\|\{\FI-\FP\}\widehat{\na_x f}\right\|_{D}^2dt+C_\eta\int_0^T\left\|(\widehat{\pa H},\mu^{\frac{1}{4}})_{L^2_v}\right\|^2dt,
\end{split}
\end{equation*}
for the specular reflection boundary condition \eqref{Fsrb}. This then concludes the estimates on $b$.

\medskip
\noindent\underline{\it{Estimates on} $\hat{a}(t,x_1,\bar{k})$:}
In this case, we choose the test function
$$
\hat{\Phi}=\hat{\Phi}_a=(|v|^2-10)\left\{v\cdot\widehat{\na_{x_1,\bar{x}}\phi}_a(t,x_1,\bar{k})\right\}\mu^{\frac{1}{2}},
$$
where
\begin{equation}\label{ep.a1}
-\pa_{x_1}^2\hat{\phi}_a+|\bar{k}|^2\hat{\phi}_a(\bar{k})=\widehat{\pa a}(\bar{k}),\ {\rm and}\ \hat{\phi}_a(\pm1,\bar{k})=0,
\end{equation}
for the inflow boundary condition \eqref{Fifb},
and
\begin{equation}\label{ep.a2}
-\pa_{x_1}^2\hat{\phi}_a+|\bar{k}|^2\hat{\phi}_a(\bar{k})=\widehat{\pa a}(\bar{k}),\ {\rm and}\ \pa_{x_1}\hat{\phi}_a(\pm1,\bar{k})=0,
\end{equation}
%\Blue{(two duplicates?)}
for the specular reflection boundary condition \eqref{Fsrb}.

We compute both sides of \eqref{Leqh.abc} with $\hat{\Phi}$ replaced by $\hat{\Phi}_a$.
For the left hand side, we have
\begin{equation*}%\label{cc}
\begin{split}
&-\int_0^T(\widehat{\pa \FP f},v\cdot\widehat{\na_{x_1,\bar{x}}\Phi})dt\\
&=-\sum\limits_{j}\int_0^T\left(\left\{\widehat{\pa a}+\widehat{\pa b}\cdot v
+\frac{1}{2}(|v|^2-3)\widehat{\pa c}\right\}\mu^{\frac{1}{2}},v_j\widehat{\pa_j\Phi}_a\right)dt\\
&=-\sum\limits_{j,n}\int_0^T\left(\left\{\widehat{\pa a}+\widehat{\pa b}\cdot v
+\frac{1}{2}(|v|^2-3)\widehat{\pa c}\right\}\mu^{\frac{1}{2}},v_jv_n(|v|^2-10)\mu^{\frac{1}{2}}\widehat{\pa_j\pa_n\phi}_a
%(t,x_1,\bar{k})
\right)dt\\
&=-5\sum\limits_{j}\int_0^T(\widehat{\pa a},-\widehat{\pa_j^2\phi}_c)dt=-5\int_0^T\|\widehat{\pa a}(\bar{k})\|^2dt.
\end{split}
\end{equation*}
As to the right hand side,
similar to the estimates for $\hat{b}(t,x_1,\bar{k})$, we only show the estimates for $S_2$ and $S_5$ as the others follow in the same way as done previously.
For this, we first have from \eqref{ep.a1} or
\eqref{ep.a2} that
\begin{equation*}%\label{pah-1}
\|\pa_t\hat{\phi}_a\|_{H_{x_1}^{1}}\lesssim \|\pa_t \widehat{\pa a}\|_{H_{x_1}^{-1}},
\ \ |\bar{k}|\|\pa_t\hat{\phi}_c\|^2
\lesssim |\bar{k}|^{-1}\|\pa_t \widehat{\pa a}\|^2, \ k\neq0,
\end{equation*}
and 
\begin{equation*}%\label{pha.es}
\|\hat{\phi}_a\|_{H_{x_1}^2}+|\bar{k}|\|\hat{\phi}_a\|
\lesssim  \|\widehat{\pa a}\|.
\end{equation*}
Moreover,
one gets from the first equation of \eqref{mac.law} that
\begin{equation*}%\label{patPhia}
\|\pa_t\pa_{x_1}\hat{a}(\bar{k})\|_{H_{x_1}^{-1}}
\lesssim
\left\|\pa_{x_1}\hat{b}_1(\bar{k})\right\|
+
|\bar{k}|
\left\| \widehat{(b_2,b_3)}(\bar{k})\right\|,
\end{equation*}
and
\begin{equation*}%\label{patPhiay}
|\bar{k}|^{-1}\|\pa_t\widehat{\pa_{\bar{x}} a}(\bar{k})\|\lesssim
\left\|\pa_{x_1}\hat{b}_1(\bar{k})\right\|+|\bar{k}|\left\|\widehat{(b_2,b_3)}(\bar{k})\right\|,
\end{equation*}
provided $k\neq0$.

With the above estimates in hand,
we now compute
\begin{equation*}
\begin{split}
|S_2|&\leq \int_0^T\left|(\widehat{\pa f},\pa_t\hat{\Phi}_a)\right|dt
\leq\int_0^T\left|(\{\FI-\FP\}\widehat{\pa f},\pa_t\hat{\Phi}_a)\right|dt
+\int_0^T\left|(\FP\widehat{\pa f},\pa_t\hat{\Phi}_a)\right|dt\\
&\lesssim \int_0^T\|\pa_t\hat{\Phi}_a\|^2dt+\int_0^T\left\|\{\FI-\FP\}\widehat{\pa f}\right\|_{D}^2dt
+\int_0^T\left\|\widehat{\pa b}(\bar{k})\right\|^2dt\\
&\lesssim \int_0^T\|\pa_t\pa_{x_1}\hat{\phi}_a(t,\bar{k})\|^2dt+|\bar{k}|^2\int_0^T\|\pa_t\hat{\phi}_a(t,\bar{k})\|^2dt
%\\
%&\qquad
+\int_0^T\left\|\{\FI-\FP\}\widehat{\na_x f}\right\|_D^2dt
\\
&\lesssim\int_0^T\left\|\widehat{\na_x b}(\bar{k})\right\|^2dt
+\int_0^T\left\|\{\FI-\FP\}\widehat{\na_x f}\right\|_{D}^2dt.
\end{split}
\end{equation*}
We now turn to estimate $S_5$, for the inflow boundary condition \eqref{Fifb}, $S_5$ shares the similar estimates as
\eqref{ifbc}, \eqref{ifbc2} and \eqref{ifbc3}. Noticing that $\hat{\phi}_a$ possesses the same symmetry as $\hat{\phi}_c$ w.r.t.\ $x_1$,
one can show that $S_5$ also vanishes for the specular reflection boundary condition \eqref{Fsrb} by the same arguments
as were given for the estimates on $\hat{c}(t,x_1,\bar{k})$ and $\hat{b}(t,x_1,\bar{k})$.
In a conclusion, the following estimates for  $\hat{a}(t,x_1,\bar{k})$ hold:
\begin{equation*}%\label{es.paaif}
\begin{split}
\int_0^T\|\widehat{\pa a}(\bar{k})\|^2dt&\lesssim \| \widehat{\pa f}(T)\|^2+\|\widehat{\pa f}_0\|^2+E_{\bar{k}}(g_\pm)
+\int_0^T\int_{v_1>0}|v_1||\widehat{\pa f}(1)|^2dvdt\\&\qquad+\int_0^T\int_{v_1<0}|v_1||\widehat{\pa f}(-1)|^2dvdt
+\int_0^T\left\|\widehat{\na_x b}(\bar{k})\right\|^2dt
\\&\qquad+\int_0^T\left\|\{\FI-\FP\}\widehat{\na_x f}\right\|_{D}^2dt+\int_0^T\left\|(\widehat{\pa H},\mu^{\frac{1}{4}})_{L^2_v}\right\|^2dt
\\&\qquad
+\int_0^T\int_{v_1<0}|v_1|^{-1}|\widehat{H(1)}|^2dvdt
\\&\qquad
+\int_0^T\int_{v_1>0}|v_1|^{-1}|\widehat{H(-1)}|^2dvdt,
\end{split}
\end{equation*}
for the inflow boundary condition \eqref{Fifb}, and
\begin{equation*}%\label{es.paasr}
\begin{split}
\int_0^T\|\widehat{\pa a}(\bar{k})\|^2dt&\lesssim \| \widehat{\pa f}(T)\|^2+\|\widehat{\pa f}_0\|^2
+\int_0^T\left\|\widehat{\na_x b}(\bar{k})\right\|^2dt
\\
&\qquad+\int_0^T\left\|\{\FI-\FP\}\widehat{\na_x f}\right\|_{D}^2dt+\int_0^T\left\|(\widehat{\pa H},\mu^{\frac{1}{4}})_{L^2_v}\right\|^2dt,
\end{split}
\end{equation*}
for the specular reflection boundary condition \eqref{Fsrb}. Finally, combining all the above estimates on $[a,b,c]$ together, we see that \eqref{abcif.es}
and \eqref{abcsr.es} hold true for  the case when $\pa=\pa_{x_1}$. The same estimate with the other derivatives also holds, and the proof is analogous but easier.
This completes the proof of Theorem \ref{abc.th}.
\end{proof}

\section{Proof of the main results in the torus}\label{sec6}

In this section, we shall obtain the global existence and large time behavior (stated in Theorem \ref{Torus Existence}).  We further obtain the propagation of regularity in the $x$ variable (stated in Theorem \ref{Torus Propagation}) for solutions to the initial
 value problem {\bf (PT)} \eqref{LLeq} and  \eqref{idf}
in the torus \eqref{d.pb}.  We shall prove  Theorem \ref{Torus Existence} and Theorem \ref{Torus Propagation}.

\begin{proof}[Proof of Theorem \ref{Torus Existence}]
We first consider the uniform a priori estimates without any velocity weight. As shown in Section \ref{sec3}, by applying the Fourier transform to \eqref{LLeq}, as in \eqref{torus fourier transformed} with \eqref{def.GaF} or \eqref{landau.NL.FT}, and then taking the complex inner product of the result with $\hat{f}$ in $L^2_v$, we have
\begin{align*}
(\partial_t \hat{f}, \hat{f})+ i(k\cdot v \hat{f},\hat{f})+(L\hat{f},\hat{f})=(\widehat{\Gamma(f,f)},\hat{f}).
\end{align*}
Taking the real part of this identity and integrating over $[0,T]$ for fixed $T>0$ gives
\begin{align*}
&\int_{\mathbb{Z}^3} \sup_{0\le t\le T} \Vert \hat{f}(t,k)\Vert_{L^2_v} d\Sigma(k) +\int_{\mathbb{Z}^3} \Big( \int^T_0 | \{\mathbf{I}-\mathbf{P}\} \hat{f}(t,k)|^2_D dt \Big)^{1/2} d\Sigma(k)\\
&\lesssim \int_{\mathbb{Z}^3} \Vert \hat{f}_0 (k) \Vert_{L^2_v} d\Sigma(k) +\int_{\mathbb{Z}^3} \Big( \int^T_0 \Big\vert \mathscr{R} (\widehat{\Gamma(f,f)}, \{\mathbf{I}-\mathbf{P}\} \hat{f}) \Big\vert dt\Big)^{1/2} d\Sigma(k)\\
&\lesssim \int_{\mathbb{Z}^3} \Vert \hat{f}_0(k) \Vert_{L^2_v} d\Sigma(k)+C_\eta \Vert f\Vert_{L^1_k L^\infty_T L^2_v}\Vert f\Vert_{L^1_k L^2_T L^2_{v,D}}+\eta \Vert \{\mathbf{I}-\mathbf{P}\}f\Vert_{L^1_k L^2_T L^2_{v,D}},
\end{align*}
for an arbitrary constant $\eta>0$.  Together with Theorem \ref{abcest.torus} and Lemma \ref{tril21}, the above estimate yields
\begin{align}\label{a priori torus}
\|f\|_{L^1_kL^\infty_T L^2_v}+\Vert f\Vert_{L^1_k L^2_T L^2_{v,D}}\lesssim \Vert f_0\Vert_{L^1_k L^2_v}+\|f\|_{L^1_kL^\infty_T L^2_v} \Vert f\Vert_{L^1_k L^2_T L^2_{v,D}}.
\end{align}
This is the main estimate that is needed for the {\it hard} potentials.

In case of {\it soft} potentials, in order to treat the time-decay of solutions, we need to make additional velocity weighted estimates with the weight $w_{q,\vth}$ given as in \eqref{def.w} under the assumption {\bf (H)} \eqref{q}. Indeed, with the help of Lemma \ref{wgesL}, one can also show that
\begin{multline}\label{a priori torus weighted}
\|w_{q,\vth}f\|_{L^1_kL^\infty_T L^2_v}+\Vert w_{q,\vth}f\Vert_{L^1_k L^2_T L^2_{v,D}}
\\
\lesssim \Vert w_{q,\vth}f_0\Vert_{L^1_k L^2_v}+\|w_{q,\vth}f\|_{L^1_kL^\infty_T L^2_v} \Vert w_{q,\vth}f\Vert_{L^1_k L^2_T L^2_{v,D}}.
\end{multline}
Note that the negative term $-C\Vert f\Vert_{L^1_k L^2_T L^2_{v,D}}$ appearing in the use of Lemma \ref{wgesL} can be handled in terms of \eqref{a priori torus}, since one has $w_{q,\vartheta}\ge 1$. For brevity we omit these details in verifying \eqref{a priori torus weighted}. Hence, under the smallness assumption on $\Vert w_{q,\vth}f_0\Vert_{L^1_k L^2_v}$, one can then obtain the closed estimate:
\begin{equation*}
%\label{ }
\|w_{q,\vth}f\|_{L^1_kL^\infty_T L^2_v}+\Vert w_{q,\vth}f\Vert_{L^1_k L^2_T L^2_{v,D}}\lesssim \Vert w_{q,\vth}f_0\Vert_{L^1_k L^2_v}.
\end{equation*}
This completes the proof of the uniform a priori estimate \eqref{Torus Existence.uet} under the smallness assumption on $\Vert w_{q,\vth}f_0\Vert_{L^1_k L^2_v}$. Combining this with the local existence to be discussed in Section \ref{sec8}, and applying the standard continuity argument, we obtain the  global existence and uniqueness of global mild solutions. The positivity of solutions is also guaranteed by the local existence result as in Theorem \ref{le.th}.

Next, we consider the rate of convergence of the obtained solutions. We only treat the case of the non-cutoff Boltzmann equation since the same method can be applied to the Landau case. Moreover, we focus on the soft potentials $\ga+2s<0$, since it is similar to carry out the time-weighted estimates with the exponential weight $e^{\la t}$ for a suitably small constant $\la>0$ in case of hard potentials $\ga+2s\geq 0$.

Therefore, for the {\it soft} potentials, let
$$
\hat{h}=e^{\lambda t^p}\hat{f}
$$ with $\lambda>0$ and $0<p<1$ chosen later. As $f$ solves \eqref{LLeq}, then $\hat{h}$ satisfies
\begin{align*}
\partial_t \hat{h}+ik\cdot v\hat{h}+L\hat{h}=e^{-\lambda t^p}\widehat{\Gamma(h,h)}+\lambda p t^{p-1}\hat{h},
\end{align*}
with initial data
\begin{align*}
\hat{h}(0,k,v)=\hat{h}_0(k,v).
\end{align*}
With the aid of the arguments used to derive \eqref{a priori torus}, we have
\begin{multline}
\int_{\mathbb{Z}^3} \sup_{0\le t\le T} \Vert \hat{h}(t,k)\Vert_{L^2_v} d\Sigma(k) +
 \int_{\mathbb{Z}^3} \Big(\int^T_0 | \hat{h}(t,k)|_D^2 dt \Big)^{1/2} d\Sigma(k)\\
\lesssim \int_{\mathbb{Z}^3} \Vert \hat{f}_0(k) \Vert_{L^2_v} d\Sigma(k) + \sqrt{\lambda p}\int_{\mathbb{Z}^3} \Big( \int^T_0 t^{p-1} \Vert \hat{h}(t,k)\Vert_{L^2_v}^2 dt \Big)^{1/2} d\Sigma(k).\label{thmt.p1}
\end{multline}
For $\rho>0$ to be small enough later on and $p'>0$ to be chosen later depending upon $p$, we define a set
$$
\mathbf{E}=\{ \langle v\rangle \le  \rho t^{p'}\}
$$
and make the decomposition $1={\bf 1}_\mathbf{E}+{\bf 1}_{\mathbf{E}^c}$, cf.~\cite{SG-08-ARMA}. Then the second term on the right-hand side of \eqref{thmt.p1} can be bounded by
\begin{multline*}
%&\sqrt{\lambda p} \int_{\Z^3} \Big( \int^T_0 \Vert \hat{h}\Vert_{L^2_v}^2dt \Big)^{1/2}d\Sigma(k)\\
 \sqrt{\lambda p} \int_{\Z^3} \Big( \int^T_0 t^{p-1} {\bf 1}_{\mathbf{E}} \vert \hat{h}\vert^2 dvdt \Big)^{1/2}d\Sigma(k)
  \\
  +\sqrt{\lambda p} \int_{\Z^3} \Big( \int^T_0 t^{p-1}{\bf 1}_{\mathbf{E}^c}  \vert \hat{h}\vert^2 dvdt \Big)^{1/2}d\Sigma(k)
 =:I_1+I_2.
\end{multline*}
We will define $p=(\gamma+2s) p'+1$ in the Boltzmann case (or $p=(\gamma+2) p'+1$ in the Landau case).
Here, then over $\mathbf{E}$, $I_1$ is controlled by
\begin{align*}
I_1
\le
\sqrt{\lambda p} \rho^{-(p-1)/p'} \int_{\Z^3} \Big( \int^T_0 \langle v\rangle^{(p-1)/p'} e^{2\lambda t^p} \vert \hat{f}\vert^2 dv dt \Big)^{1/2}d\Si (k).
\end{align*}
We choose $p$ as in \eqref{def.kap2} in the Boltzmann case (or \eqref{def.kap1} in the Landau case), such that $(p-1)/p'=\gamma+2s<0$ (or $(p-1)/p'=\gamma+2<0$ for the Landau case), we then further
%apply the inequality
%$$
%\Vert \cdot \Vert_{L^2_{\gamma/2+s}}\le C\Vert \cdot\Vert_D
%$$
%in soft potential case, to
obtain
\begin{align*}
I_1\le \sqrt{\lambda p} \rho^{-(p-1)/p'} \int_{\Z^3} \Big( \int^T_0 | \hat{h}(t,k) |_D^2 dt \Big)^{1/2} d\Sigma(k).
\end{align*}
As $\lambda>0$  and $\rho>0$ can be chosen arbitrarily small, the above term is absorbed into the left-hand side of \eqref{thmt.p1}.
For $I_2$, since it holds that $w_{q,\vartheta}^{-2} \le e^{-q\rho^\vartheta t^{p'\vartheta}/2}$ on $\mathbf{E}^c$, and we notice that we can take $p=p'\vartheta$ and $2\lambda < q\rho^\vartheta/2$ we have
\begin{align*}
I_2 &\le \sqrt{\lambda p} \int_{\Z^3} \Big(\int^T_0 \int_{\mathbf{E}^c} t^{p-1} e^{2\lambda t^p} e^{-\frac{q}{2}\rho^\vartheta t^{p'\vartheta}} w_{q,\vartheta}^2 \vert \hat{f}\vert^2 dvdt\Big)^{1/2} d\Sigma(k)\\
&\le \sqrt{\lambda p} \int_{\Z^3} \sup_{0\le t\le T}\Vert w_{q,\vartheta} \hat{f}(t,k)\Vert_{L^2_v} \Big(\int^T_0 t^{p-1} e^{2\lambda t^p} e^{-\frac{q}{2}\rho^\vartheta t^{p}} dt\Big)^{1/2} d\Sigma(k)\\
&\le C\sqrt{\lambda p} \int_{\Z^3} \sup_{0\le t\le T}\Vert w_{q,\vartheta} \hat{f}(t,k)\Vert_{L^2_v} d\Sigma(k),
\end{align*}
due to the finiteness of the $t$-integral, cf.~\cite{SG-08-ARMA}. By the existence result, it further holds that
\begin{equation*}
%\label{ }
I_2\lesssim \int_{\mathbb{Z}^3} \Vert w_{q,\vartheta}\hat{f}_0(k) \Vert_{L^2_v} d\Sigma(k).
\end{equation*}
 Plugging the above estimates back to \eqref{thmt.p1} gives
\begin{multline*}
\int_{\mathbb{Z}^3} \sup_{0\le t\le T} \Vert \hat{h}(t,k)\Vert_{L^2_v} d\Sigma(k) + \int_{\mathbb{Z}^3} \Big(\int^T_0 | \hat{h}(t,k)|_D^2 dt \Big)^{1/2} d\Sigma(k)
\\
\lesssim \int_{\mathbb{Z}^3} \Vert w_{q,\vartheta}\hat{f}_0(k) \Vert_{L^2_v} d\Sigma(k).
\end{multline*}
We use Minkowski's inequality $\|  \| \cdot\|_{L^1_k}  \|_{L^\infty_T} \le \|  \| \cdot\|_{L^\infty_T}  \|_{L^1_k}$
and the expression $h=e^{\la t^p} f$ to obtain the time-decay estimate \eqref{Torus.Time.Decay.Est} with $\kappa=p$ in the soft potential case.
This then completes the proof of Theorem \ref{Torus Existence}.
\end{proof}

We will now give the proof of Theorem \ref{Torus Propagation}.

\begin{proof}[Proof of Theorem \ref{Torus Propagation}]
Following closely the proofs of Lemma \ref{lem.tei} and Theorem \ref{abcest.torus}, one can show that
\begin{align*}
&\int_{\mathbb{Z}^3}\Big( \int^T_0 \vert (\widehat{\Gamma(f,g)},\langle k\rangle^{2m} w^2_{q,\vartheta}\hat{h})\vert dt \Big)^{1/2} d\Sigma(k)\\
&\le
C_\eta \Vert w_{q,\vartheta} f\Vert_{L^1_{k,m}L^\infty_T L^2_v}\Vert w_{q,\vartheta} g\Vert_{L^1_{k,m}L^2_T L^2_{v,D}}
\\
&\qquad
+C_\eta
\Vert w_{q,\vartheta} f\Vert_{L^1_{k,m}L^2_T L^2_{v,D}}\Vert w_{q,\vartheta} g\Vert_{L^1_{k,m}L^\infty_T L^2_v}
+\eta \Vert w_{q,\vartheta} h\Vert_{L^1_{k,m}L^2_T L^2_{v,D}},
\end{align*}
and
\begin{align}\label{propagation macro}
\Vert [a,b,c]\Vert_{L^1_{k,m}L^2_T} &\lesssim \Vert \{\mathbf{I}-\mathbf{P}\}f\Vert_{L^1_{k,m} L^2_T L^2_{v,D}}+\Vert f\Vert_{L^1_{k,m}L^\infty_T L^2_v}+\Vert f_0\Vert_{L^1_{k,m}L^\infty_T L^2_v}\notag\\
&\qquad+\int_{\mathbb{Z}^3}\Big( \int^T_0 \vert (\langle k\rangle^m \vert \hat{H}(t,k)\vert,\mu^{\frac{1}{4}})_{L^2_v}\vert^2dt\Big)^{1/2} d\Sigma(k).
\end{align}
Taking the $L^2_v$ inner product of the Fourier transform of \eqref{LLeq} and $\langle k\rangle^{2m} w_{q,\vartheta}^2 \hat{f}$, we have
\begin{multline*}
(\partial_t \hat{f}, \langle k\rangle^{2m} w_{q,\vartheta}^2 \hat{f})+ i(k\cdot v \hat{f}, \langle k\rangle^{2m} w_{q,\vartheta}^2\hat{f})+(L\hat{f}, \langle  k\rangle^{2m} w_{q,\vartheta}^2\hat{f})
\\
=(\widehat{\Gamma(f,f)},\langle k\rangle^{2m} w_{q,\vartheta}^2\hat{f}).
\end{multline*}
Then by using the same argument that was used to derive \eqref{a priori torus} and \eqref{a priori torus weighted}, we have
%the estimate without any velocity weight:
\begin{align}
&\int_{\mathbb{Z}^3}\sup_{0\le t\le T} \Vert \langle k\rangle^m \hat{f}(t,k)\Vert_{L^2_v} d\Sigma(k) + %\delta_3
\int_{\mathbb{Z}^3}\Big( \int^T_0 \Vert \langle k\rangle^m \{\mathbf{I}-\mathbf{P}\} \hat{f}(t,k)\Vert_D^2 dt\Big)^{1/2} \notag\\
& \lesssim \int_{\mathbb{Z}^3} \Vert \langle k\rangle^m \hat{f}_0 \Vert d\Sigma(k) + \int_{\mathbb{Z}^3} \Big(\int^T_0 \vert (\widehat{\Gamma(f,f)},\langle k\rangle^{2m} \{\mathbf{I}-\mathbf{P}\}\hat{f})\vert dt\Big)^{1/2} d\Sigma(k)
\notag \\
& \lesssim \int_{\mathbb{Z}^3} \Vert \langle k\rangle^m \hat{f}_0 \Vert_{L^2_v} d\Sigma(k) +C_\eta \Vert f \Vert_{L^1_{k,m}L^\infty_T L^2_v}\Vert f \Vert_{L^1_{k,m}L^2_T L^2_{v,D}}
\notag \\
& \qquad+\eta \Vert \{\mathbf{I}-\mathbf{P}\} f\Vert_{L^1_{k,m}L^2_T L^2_{v,D}},
\label{propagation nonweighted}
\end{align}
and we further deduce the velocity-weighted estimate
\begin{align}\label{propagation weighted}
&\int_{\mathbb{Z}^3} \sup_{0\le t\le T} \Vert \langle k\rangle^{m} w_{q,\vartheta} \hat{f}(t,k)\Vert_{L^2_v} d\Sigma(k) +%\delta_2
\int_{\mathbb{Z}^3} \Big( \int^T_0 \Vert \langle k\rangle^{m} w_{q,\vartheta} \hat{f}\Vert_D^2 dt\Big)^{1/2} d\Sigma(k)\notag \\
&\qquad-C \int_{\mathbb{Z}^3} \Big( \int^T_0 \Vert \langle  k\rangle^m \hat{f}\Vert_D^2 dt\Big)^{1/2} d\Sigma(k)\notag \\
&\lesssim \int_{\mathbb{Z}^3} \Vert \langle k\rangle^{m}  \hat{f}_0 \Vert_{L^2_v} d\Sigma(k) +\int_{\mathbb{Z}^3} \Big(\int^T_0 \vert \widehat{(\Gamma(f,f)}, \langle k\rangle^{2m} w_{q,\vartheta}^2 \hat{f})\vert dt \Big)^{1/2} d\Sigma(k)\notag \\
&\lesssim \int_{\mathbb{Z}^3} \Vert \langle k\rangle^{m} w_{q,\vartheta} \hat{f}_0 \Vert_{L^2_v} d\Sigma(k) +C_\eta \Vert w_{q,\vartheta}f\Vert_{L^1_{k,m} L^\infty_T L^2_v}\Vert w_{q,\vartheta}f\Vert_{L^1_{k,m} L^2_T L^2_{v,D}}\notag\\
&\qquad
+\eta \Vert w_{q,\vartheta}f\Vert_{L^1_{k,m} L^2_T L^2_{v,D}}.
\end{align}
A combination of the estimates \eqref{propagation macro},  \eqref{propagation nonweighted}, and \eqref{propagation weighted} gives
\begin{multline*}
\int_{\mathbb{Z}^3} \sup_{0\le t\le T} \Vert \langle k\rangle^m w_{q,\vartheta} \hat{f}(t,k)\Vert_{L^2_v} d\Sigma(k) +%\lambda
\int_{\mathbb{Z}^3} \Big( \int^T_0 \Vert \langle k\rangle^m w_{q,\vartheta} \hat{f}\Vert^2_D dt\Big)^{1/2} d\Sigma(k)\\
 \lesssim \int_{\mathbb{Z}^3} \Vert \langle k\rangle^m w_{q,\vartheta} \hat{f}_0 \Vert_{L^2_v} d\Sigma(k),
\end{multline*}
which implies \eqref{Torus.Propagation.Est}. This completes the proof of Theorem \ref{Torus Propagation}.
\end{proof}

\section{Proof of the main results in the finite channel}\label{sec7}

In this section, we shall obtain the global existence, large time behavior and propagation of regularity in $\bar{x}$ variable for solutions to the initial
boundary value problem {\bf (PC)} \eqref{LLeq}, \eqref{idf},
\eqref{ifb} and \eqref{srb} in the case when the spatial domain is the finite channel.

\begin{proof}[Proof of Theorem \ref{mthif} and  Theorem \ref{mthsr}]
We divide the proof into three parts as follows. First of all, we start from the proof of global existence.  Then we explain the time decay rates.  After that we explain the positivity of a solution and uniqueness.

\medskip
%\noindent\underline{\it Energy estimates.}
\noindent\underline{\it Global existence.}
%We first start from the proof of the global existence.
In this section we give a sequence of uniform {\it a priori} energy estimates assuming the smallness assumption and using the local existence as stated in Theorem \ref{le.th}. For the sake of brevity, in what follows we only present the proof of the {\it a priori}  energy estimates.
The computation is divided into two cases in terms of  the two prescribed boundary conditions.

\medskip
\noindent {\it Case 1: Inflow boundary condition.}
Let $|\al|\leq 1$, then we apply $\pa^{\al}$ to \eqref{LLeq} and then we take the Fourier transform $\hat{\cdot}=\CF_{\bar{x}}$ with respect to $\bar{x}=(x_2,x_3)$ to obtain
\begin{equation}\label{paLLeq}
\pa_t\widehat{\pa^{\al}f}+v_1\pa_{x_1}\widehat{\pa^{\al}f}+i\bar{k}\cdot\bar{v}\widehat{\pa^{\al}f}
+L\widehat{\pa^{\al}f}=\CF_{\bar{x}}\{\pa^{\al}\Ga(f,f)\},
\end{equation}
with initial data
\begin{equation*}%\label{paidf}
\widehat{\pa^{\al}f}(0,x_1,\bar{k},v)=\widehat{\pa^{\al}f}_0(x_1,\bar{k},v),
\end{equation*}
and the inflow boundary condition 
\begin{equation}\label{0ifb}
\hat{f}(t,-1,\bar{k},v)|_{v_1>0}=\widehat{g_-}(t,\bar{k},v),\ \ \hat{f}(t,1,\bar{k},v)|_{v_1<0}=\widehat{g_+}(t,\bar{k},v).
\end{equation}
Particularly, it follows from \eqref{0ifb} that if $|\al|=1$, then for $\pa^\al=\pa_{\bar{x}}$, one has
\begin{equation}\label{payifb}
\widehat{\pa_{\bar{x}}f}(t,-1,\bar{k},v)|_{v_1>0}=\widehat{\pa_{\bar{x}}g}_-(t,\bar{k},v),\ \
\widehat{\pa_{\bar{x}}f}(t,1,\bar{k},v)|_{v_1<0}=\widehat{\pa_{\bar{x}}g}_+(t,\bar{k},v),
\end{equation}
while for $\pa^\al=\pa_{x_1}$, one has by using  \eqref{paLLeq} with $\alpha =0$ and Remark \ref{property.remark} that
\begin{equation}\label{paxifb}
\begin{split}
\widehat{\pa_{x_1}f}(t,-1,\bar{k},v)\bigg|_{v_1>0}=&-\frac{1}{v_1}\left\{\widehat{\pa_tg}_-(t,\bar{k},v)
+\bar{v}\cdot\widehat{\pa_{\bar{x}}g}_-+L\widehat{g_-}-\CF_{\bar{x}}\{\Ga(g_-,g_-)\}\right\},\\
\widehat{\pa_{x_1}f}(t,1,\bar{k},v)\bigg|_{v_1<0}=&-\frac{1}{v_1}\left\{\widehat{\pa_tg}_+(t,\bar{k},v)
+\bar{v}\cdot\widehat{\pa_{\bar{x}}g}_++L\widehat{g_+}-\CF_{\bar{x}}\{\Ga(g_+,g_+)\}\right\}.
\end{split}
\end{equation}
In the rest of this section the complex inner product $( \cdot, \cdot ) = ( \cdot, \cdot )_{L^2_{x_1, v}}$ is in the space ${L^2_{x_1, v}}$.
Now taking the  inner product of \eqref{paLLeq} and $\widehat{\pa^{\al}f}$
with respect to $(x_1,v)$:
\begin{align}
%\begin{split}
(\pa_t\widehat{\pa^{\al}f},\widehat{\pa^{\al}f})+(v_1\pa_{x_1}\widehat{\pa^{\al}f},\widehat{\pa^{\al}f})
&+(i\bar{k}\cdot\bar{v}\widehat{\pa^{\al}f},\widehat{\pa^{\al}f})\notag \\
&+(L\widehat{\pa^{\al}f},\widehat{\pa^{\al}f})=\left(\CF_{\bar{x}}\{\pa^{\al}\Ga(f,f)\},\widehat{\pa^{\al}f}\right).
\label{paLLeqip}
%\end{split}
\end{align}
In the rest of this proof we use the following brief notation  $\|\cdot\|=\|\cdot\|_{L^2_{x_1,v}}$ for the norm with $x_1\in I$ and $v\in \R^3$.
Then, by taking the real part of \eqref{paLLeqip} and integrating the resultant identity with respect to $t$ over $[0,T]$, we obtain that
\begin{multline}
%\begin{split}
\sup\limits_{0\le t \le T}\|\widehat{\pa^{\al}f}\|^2
+2\de_0\int_0^T\|\{\FI-\FP\}\widehat{\pa^{\al}f}\|^2_{D}dt+|\Upsilon_T^+ (\widehat{\pa^{\al}f})|^2\\
%&\qquad+\int_0^T\int_{v_1>0}v_1|\widehat{\pa^{\al}f}(1)|^2dvdt-\int_0^T\int_{v_1<0}v_1|\widehat{\pa^{\al}f}(-1)|^2dvdt
%\notag\\
\leq 2\|\widehat{\pa^{\al}f}_0\|^2+|\Upsilon_T^- (\widehat{\pa^{\al}f})|^2
%-\int_0^T\int_{v_1<0}v_1|\widehat{\pa^{\al}f}(1)|^2dv+\int_0^T\int_{v_1>0}v_1|\widehat{\pa^{\al}f}(-1)|^2dv
%\notag\\
%&\qquad
+2\int_0^T\left|\mathscr{R}\left(\CF_{\bar{x}}\{\pa^{\al}\Ga(f,f)\},\{\FI-\FP\}\widehat{\pa^{\al}f}\right)\right|dt,
\label{eng1}
%\end{split}
\end{multline}
for any $\bar{k}\in \Z^2$, where $\Upsilon_T^+$ and $\Upsilon_T^-$ are defined in \eqref{def.ga+} and \eqref{def.ga-}, respectively,
and the dissipation norm $\|\cdot\|_D$ is defined in \eqref{def.fc.disDL} or \eqref{def.fc.disDB}  for the Landau or non-cutoff Boltzmann equation, respectively.  Furthermore, taking the square root on both sides of \eqref{eng1} and then  integrating in $\bar{k}\in \Z^2$ we obtain
\begin{align}
&\int_{\Z^2}\sup\limits_{0\le t \le T}\|\widehat{\pa^{\al}f}\|d\Si (\bar{k})
+\int_{\Z^2}\left(\int_0^T\|\{\FI-\FP\}\widehat{\pa^{\al}f}\|^2_{D}dt\right)^{1/2}d\Si (\bar{k})
\notag\\
&\qquad
+\int_{\Z^2}|\Upsilon_T^+ (\widehat{\pa^{\al}f})| d\Si (\bar{k})
\notag\\
&\lesssim \int_{\Z^2}\|\widehat{\pa^{\al}f}_0\|d\Si (\bar{k})+\int_{\Z^2}|\Upsilon_T^- (\widehat{\pa^{\al}f})|d\Si (\bar{k})\notag\\
&\qquad+\int_{\Z^2}\left(\int_0^T\left|\mathscr{R}\left(\CF_{\bar{x}}\{\pa^{\al}\Ga(f,f)\},\{\FI-\FP\}\widehat{\pa^{\al}f}\right)\right|dt\right)^{1/2}d\Si (\bar{k}).
\label{eng1ad}
\end{align}
By using \eqref{0ifb},
\eqref{payifb} and \eqref{paxifb} together with Lemma \ref{keynp.es}, it  follows from \eqref{eng1ad} that
\begin{multline}\label{eng2}
\int_{\Z^2}\sup\limits_{0\le t\le T}\|\widehat{\pa^{\al}f}\|d\Si (\bar{k})
+\int_{\Z^2}\left(\int_0^T\|\{\FI-\FP\}\widehat{\pa^{\al}f}\|^2_{D}dt\right)^{1/2}d\Si (\bar{k})
\\
+\int_{\Z^2}|\Upsilon_T^+ (\widehat{\pa^{\al}f})| d\Si (\bar{k})
\lesssim\|\pa^\al f_0\|_{L^1_{\bar{k}} L^2_{x_1,v}}
+E(\widehat{g_{\pm}})\\
+C_\eta\left\|\pa^{\al} f\right\|_{L^1_{\bar{k}}L^\infty_TL^2_{x_1,v}}
\left\|f\right\|_{L^1_{\bar{k}}L^2_TH_{x_1}^1L^2_{v,D}} \\
+C_\eta\left\|\pa^{\al} f\right\|_{L^1_{\bar{k}}L^2_TL_{x_1}^2L^2_{v,D}}
\left\|f\right\|_{L^1_{\bar{k}}L^\infty_TH^1_{x_1}L^2_{v}}
+\eta\left\|\{\FI-\FP\}\pa^{\al}f\right\|_{L^1_{\bar{k}}L^2_TL_{x_1}^2L^2_{v,D}},
\end{multline}
where we have used \eqref{paxifb} and the norm $E(\cdot)$ defined in \eqref{def.Enorm} to control the boundary term on the right hand-side. Consequently, a suitable linear combination of  the above estimate \eqref{eng2} and the macroscopic dissipation estimate \eqref{abcif.es} gives rise to
\begin{equation}\label{eng3}
\CE_T(f)+\CD_T(f)\lesssim
%\int_{\Z^2}\|\widehat{\pa^{\al}f}_0\|
\sum_{|\al|\leq 1}\|\pa^\al f_0\|_{L^1_{\bar{k}} L^2_{x_1,v}}
+\CE_T(f)\CD_T(f)+E(\widehat{g_{\pm}}),
\end{equation}
where $\CE_T(f)$ and $\CD_T(f)$ are defined as in \eqref{def.et} and \eqref{def.dt}, respectively.
Performing  similar calculations to those used in obtaining \eqref{eng3}, recalling also the methods used to obtain \eqref{a priori torus weighted},  then one can also show the following velocity weighted estimate:
\begin{multline}\label{eng4}
\CE_{T,w}(f)+\CD_{T,w}(f)\lesssim
%\int_{\Z^2}\|w_{q,\vth}\widehat{\pa^{\al}f}_0\|
\sum_{|\al|\leq 1}\|w_{q,\vth}\pa^\al f_0\|_{L^1_{\bar{k}} L^2_{x_1,v}}
\\
+\CE_{T,w}(f)\CD_{T,w}(f)+E(w_{q,\vth}\widehat{g_{\pm}}).
\end{multline}
Here we recall that the velocity weight is useful to obtain the sub-exponential time decay only in case of {\it soft} potentials. In fact, according to Lemma \ref{wgesL}, one has
\begin{equation*}
%\label{ }
\mathscr{R}(L\widehat{\pa^{\al}f},w^2_{q,\vth}\widehat{\pa^{\al}f})_{L^2_{x_1,v}}\geq \de_0\|w_{q,\vth}\widehat{\pa^{\al}f}\|^2_D-C\|\widehat{\pa^{\al}f}\|^2_D.
\end{equation*}
Applying this inequality, we are able to obtain an estimate similar to \eqref{eng1ad} with its right-hand term containing an extra term
$$
\int_{\Z^2}\left(\int_0^T\|\widehat{\pa^{\al}f}\|^2_Ddt\right)^{1/2}d\Si(\bar{k}),
$$
where the above term can be controlled by taking a linear combination with the estimate \eqref{eng3}. Hence, similar to how we derived \eqref{eng2} from \eqref{eng3}, we then obtain \eqref{eng4} analogously.  This concludes the proof in the case of the inflow boundary condition.

%We also notice that the only difference between \eqref{eng4} and \eqref{eng3} is the weighted coercivity estimate
%\begin{equation*}\label{coes.if}
%\begin{split}
%\int_{\Z^2}&\left(\int_0^T(L\widehat{\pa^{\al}f},w^2_{q,\vth}\widehat{\pa^{\al}f})dt\right)^{1/2}d\Si(\bar{k})\\
%\geq& \de_0\int_{\Z^2}\left(\int_0^T\|w_{q,\vth}\widehat{\pa^{\al}f}\|^2_Ddt\right)^{1/2}d\Si(\bar{k})
%-C\int_{\Z^2}\left(\int_0^T\|\widehat{\pa^{\al}f}\|^2_Ddt\right)^{1/2}d\Si(\bar{k}),
%\end{split}
%\end{equation*}
%according to Lemma \ref{wgesL}.
%However, the error term $\int_{\Z^2}\left(\int_0^T\|\widehat{\pa^{\al}f}\|^2_Ddt\right)^{1/2}$ is also controlled by
%the right-hand side of \eqref{eng4} due to \eqref{eng3}.

\medskip
\noindent{\it Case 2. Specular reflection boundary condition.} Compared with Case 1 on the inflow boundary condition \eqref{0ifb},
the difference in this case stems from the boundary term, i.e.,~we have to control
\begin{multline}
\int_{\Z^2}\Upsilon^-_{T,w}(\widehat{\pa^{\al}f})d\Si(\bar{k})=
\int_{\Z^2}\left( -\int_0^T\int_{v_1<0}v_1w^2|\widehat{\pa^{\al}f}(1)|^2dvdt\right.\\
\left.+\int_0^T\int_{v_1>0}v_1w^2|\widehat{\pa^{\al}f}(-1)|^2dvdt\right)^{1/2}d\Si(\bar{k}),
\label{srb2}
\end{multline}
with
\begin{equation}\label{0srb}
\hat{f}(-1,\bar{k},v_1,\bar{v})|_{v_1>0}=\hat{f}(-1,\bar{k},-v_1,\bar{v}),\ \
\hat{f}(1,\bar{k},v_1,\bar{v})|_{v_1<0}=\hat{f}(1,\bar{k},-v_1,\bar{v}).
\end{equation}
%not \eqref{0ifb} in a different way.
Note that above and below we use the notation $w^2 = w^2_{q,\vth}$ for brevity.
It is straightforward to see that if $\al=0$ or $\pa^{\al}=\pa_{\bar{x}}$, then by the change of variable $v_1 \to -v_1$ and \eqref{0srb},
\eqref{srb2} can be rewritten as
\begin{multline*}
\int_{\Z^2}\left(\int_0^T\int_{v_1>0}v_1w^2|\widehat{\pa^{\al}f}(1)|^2dvdt
-\int_0^T\int_{v_1<0}v_1w^2|\widehat{\pa^{\al}f}(-1)|^2dvdt\right)^{1/2}d\Si(\bar{k})\\
=\int_{\Z^2}\Upsilon^+_{T,w}(\pa^{\al}f)d\Si(\bar{k}),
\end{multline*}
which is just the corresponding boundary term on the left-hand side of \eqref{eng1} and hence they can be cancelled.
If $\pa^{\al}=\pa_{x_1}$, we first have from the equation \eqref{paLLeq} with $\al=0$ and the boundary condition \eqref{0srb} and Remark \ref{property.remark} that
\begin{align}
%\begin{split}
&v_1\pa_{x_1}\hat{f}(1,\bar{k},v_1) \notag\\
&=-\pa_t\hat{f}(1,\bar{k},v_1)-i\bar{k}\cdot\bar{v}\hat{f}(1,\bar{k},v_1)
-L\hat{f}(1,\bar{k},v_1)+\widehat{\Ga(f,f)}(1,\bar{k},v_1) \notag\\
&=-\pa_t\hat{f}(-1,\bar{k},-v_1)-i\bar{k}\cdot\bar{v}\hat{f}(-1,\bar{k},-v_1)
-L\hat{f}(-1,\bar{k},-v_1)+\widehat{\Ga(f,f)}(-1,\bar{k},-v_1) \notag\\
&=-\pa_t\hat{f}(-1,\bar{k},v_1)-i\bar{k}\cdot\bar{v}\hat{f}(-1,\bar{k},v_1)
-L\hat{f}(-1,\bar{k},v_1)+\widehat{\Ga(f,f)}(-1,\bar{k},v_1) \notag\\
&=v_1\pa_{x_1}\hat{f}(-1,\bar{k},v_1),\label{srb.sym}
%\end{split}
\end{align}
where the second identity is valid due to the fact that we look for solutions to \eqref{paLLeq} satisfying the symmetric property $\hat{f}(t,-x_1,\bar{k},-v_1)=\hat{f}(t,x_1,\bar{k},v_1)$.  After that we used the boundary condition \eqref{0srb}. Note that in \eqref{srb.sym} we have suppressed the time variable for brevity.
Then \eqref{srb.sym} implies that the boundary terms on both sides of \eqref{eng1} are equal and thus they can also cancel each other.
Therefore, similar to how we obtained \eqref{eng4}, it follows that we have
\begin{equation}\label{engsrb}
\CE_{T,w}(f)+\CD_{T,w}(f)\lesssim
%\int_{\Z^2}\|w_{q,\vth}\widehat{\pa^{\al}f}_0\|
\sum_{|\al|\leq 1}\|w_{q,\vth}\pa^\al f_0\|_{L^1_{\bar{k}} L^2_{x_1,v}}
+\CE_{T,w}(f)\CD_{T,w}(f).
\end{equation}
%for $q\geq0$.
Once \eqref{eng4} and \eqref{engsrb} are obtained, then \eqref{ifenges} and \eqref{srenges}
follow from the standard continuity argument, cf.~\cite{DLX-2016}, \cite{Guo-L}. This concludes the proof of the global existence of mild solutions.

\medskip
\noindent \underline{\it Time decay rates.} The proof is quite similar to the torus case. We shall only show \eqref{ifdec} for the inflow boundary value problem, since \eqref{srdec} for the specular reflection boundary value problem can be obtained in the same way. For the same reason as in the torus case, we only focus on the soft potentials.
Let $\hat{h}=e^{\la t^p}\hat{f}$ with $\la>0$ and  $0<p<1$ to be determined later. Then $\hat{h}$ satisfies
\begin{equation*}%\label{heq}
\pa_t\widehat{\pa^{\al}h}+v_1\pa_{x_1}\widehat{\pa^{\al}h}+i\bar{k}\cdot\bar{v}\widehat{\pa^{\al}h}
+L\widehat{\pa^{\al}h}=e^{-\la t^p}\CF_{\bar{x}}\{\pa^{\al}\Ga(h,h)\}+\la pt^{p-1}\hat{h},
\end{equation*}
with initial data
\begin{equation*}%\label{idh}
\widehat{\pa^{\al}h}(0,x_1,\bar{k},v)=\widehat{\pa^{\al}f}_0(x_1,\bar{k},v),
\end{equation*}
and for the inflow boundary condition we have
\begin{equation*}%\label{ifh}
\hat{h}(t,-1,\bar{k},v)|_{v_1>0}=e^{\la t^p}\widehat{g_-}(t,\bar{k},v),\ \ \hat{h}(t,1,\bar{k},v)|_{v_1<0}=e^{\la t^p}\widehat{g_+}(t,\bar{k},v).
\end{equation*}
%or the specular reflection boundary condition
%\begin{equation*}\label{srbh}
%\hat{h}(-1,\bar{k},v_1,\bar{v})|_{v_1>0}=\hat{h}(-1,\bar{k},-v_1,\bar{v}),\ \
%\hat{h}(1,\bar{k},v_1,\bar{v})|_{v_1<0}=\hat{h}(1,\bar{k},-v_1,\bar{v}).
%\end{equation*}
Next, performing similar calculations as to how we obtained \eqref{eng3}, \eqref{eng4} and \eqref{engsrb}, one has
\begin{align}
%\begin{split}
&\sum_{|\al|\leq 1}\int_{\Z^2}\sup\limits_{0\leq t\leq T}\left\|\widehat{\pa^{\al}h}(t,\bar{k})\right\|d\Si(\bar{k})
+\sum_{|\al|\leq 1}\int_{\Z^2}\left(\int_0^T\left\|\widehat{\pa^{\al}h}\right\|^2_{D}dt\right)^{1/2}d\Si(\bar{k})
\notag\\
&\lesssim \sum_{|\al|\leq 1}\int_{\Z^2}\left\|\widehat{\pa^{\al}f}_0\right\|d\Si(\bar{k})+\sup_{0<t<T}E(e^{\la t^p}g_\pm)\notag\\
&\qquad+\sqrt{\la p}\sum_{|\al|\leq 1}\int_{\Z^2}\left(\int_0^Tt^{p-1}\left\|\widehat{\pa^{\al}h}\right\|^2dt\right)^{1/2}d\Si(\bar{k}).\label{engh}
%\end{split}
\end{align}
We recall that above equation and in the below equation we use the notation $\| \cdot \| = \| \cdot \|_{L^2_{x_1, v}}$ and $\| \cdot \|_D = \| \cdot \|_{L^2_{x_1, v,D}}$
Further the last term on the right-hand can be estimated by the time-velocity splitting technique in the completely same way as for we treated \eqref{thmt.p1}  in the torus case. Thus, from \eqref{engh} we have
\begin{multline*}%\label{engh2}
%\begin{split}
\sum_{|\al|\leq 1}\int_{\Z^2}\sup\limits_{0\leq t\leq T}\left\|\widehat{\pa^{\al}h}(t,\bar{k})\right\|d\Si(\bar{k})
+\sum_{|\al|\leq 1}\int_{\Z^2}\left(\int_0^T\left\|\widehat{\pa^{\al}h}\right\|^2_{D}dt\right)^{1/2}d\Si(\bar{k})\\
\lesssim \sum_{|\al|\leq 1}\int_{\Z^2}\left\|w_{q,\vartheta}\widehat{\pa^{\al}f_0}\right\|d\Si(\bar{k})+E(w_{q,\vartheta}\widehat{g_\pm})+\sup_{0\leq t\leq T}E(e^{\la t^p}\widehat{g_\pm}),
%\end{split}
\end{multline*}
which proves \eqref{ifdec}. This also concludes the proof of the time-decay rates of solutions.

\medskip
\noindent \underline{\it Positivity and uniqueness.} The uniqueness of the initial
boundary value problem \eqref{LLeq}, \eqref{idf}
\eqref{ifb} or \eqref{srb} can be proved by applying the similar method as the previous ``energy estimates" part
and which is now quite standard.  Also the local solution that we extend here from Section \ref{sec8} is unique.  Therefore we omit these analogous details.
Noticing that on the boundary
$$
F(t,\pm1,\bar{x},v)=\mu+\mu^{\frac{1}{2}}g_\pm(t,\bar{x},v)\geq0,
$$
the positivity of the solution to the Landau equation
is guaranteed by the maximum principle, cf. \cite{Guo-L}.  For the non-cutoff Boltzmann case, the positivity of the solution can be
also proved by using the same argument as in \cite[page 833]{GS}.
This completes the proofs of Theorems \ref{mthif} and \ref{mthsr}.
\end{proof}

\begin{proof}[Proof of Theorem \ref{regp.th}]
%\noindent \underline{\it Propagation of regularity.}
We shall show that under the assumptions \eqref{thm.cp.as1} or \eqref{thm.cp.as2}, the regularity of the initial data and the boundary data can propagate from the boundary
into the interior of the channel along the tangential direction. In fact, let $|\al|\leq 1$, then we may derive the following trilinear estimates with an extra Fourier multiplier $\langle \bar{k}\rangle^{2m}$:
\begin{align}
%\begin{split}
&\int_{\Z^2}\left(\int_0^T|(
\CF_{\bar{x}}\{\Gamma(\pa^{\al} f,g)\}, \langle\bar{k}\rangle^{2m}w^2_{q,\vartheta}\widehat{h})|dt\right)^{1/2}~d\Si(\bar{k})\notag\\
&\leq C_\eta\bigg(\left\|w_{q,\vth}\pa^{\al} f\right\|_{L^1_{\bar{k},m}L^\infty_TL^2_{x_1,v}}
\left\|w_{q,\vth} g\right\|_{L^1_{\bar{k},m}L^2_TH^1_{x_1}L^2_{v,D}}
\notag\\
&\qquad\qquad+\left\|w_{q,\vth}\pa^{\al} f\right\|_{L^1_{\bar{k},m}L^2_TL^2_{x_1}L^2_{v,D}}
\left\|w_{q,\vth} g\right\|_{L^1_{\bar{k},m}L^\infty_TH^1_{x_1}L^2_{v}}
\bigg)
\notag\\
&\qquad\qquad\qquad\qquad
+\eta\left\|w_{q,\vth}h\right\|_{L^1_{\bar{k},m}L^2_TL^2_{x_1}L^2_{v,D}},\label{trim.es1}
%\end{split}
\end{align}
and
\begin{align}
%\begin{split}
&\int_{\Z^2}\left(\int_0^T|(
\CF_{\bar{x}}\{\Gamma(f,\pa^{\al}g)\}, \langle\bar{k}\rangle^{2m}w^2_{q,\vartheta}\widehat{h})|dt\right)^{1/2}~d\Si(\bar{k})\notag \\
&\leq C_\eta\bigg(\left\|w_{q,\vth}\pa^{\al} g\right\|_{L^1_{\bar{k},m}L^\infty_TL^2_{x_1,v}}
\left\|w_{q,\vth} f\right\|_{L^1_{\bar{k},m}L^2_TH^1_{x_1}L^2_{v,D}}
\notag \\
&\qquad\qquad+\left\|w_{q,\vth}\pa^{\al} g\right\|_{L^1_{\bar{k},m}L^2_TL^2_{x_1}L^2_{v,D}}
\left\|w_{q,\vth} f\right\|_{L^1_{\bar{k},m}L^\infty_TH^1_{x_1}L^2_{v}}
\bigg)
\notag\\
&\qquad\qquad\qquad\qquad
+\eta\left\|w_{q,\vth}h\right\|_{L^1_{\bar{k},m}L^2_TL^2_{x_1}L^2_{v,D}}.\label{trim.es2}
%\end{split}
\end{align}
Moreover, regarding the macroscopic dissipation,
for the inflow boundary condition \eqref{ifb}, analogous to  it holds for $|\al|\leq 1$ that
\begin{align}
%\begin{split}
\|\pa^{\al}&\left[a,b,c\right]\|_{L^1_{\bar{k},m}L^2_TL^2_{x_1,v}}\notag \\
\lesssim& \sum\limits_{|\al|\leq1}\left(\left\|\{\FI-\FP\}\pa^{\al} f\right\|_{L^1_{\bar{k},m}L^2_TL^2_{x_1}L^2_{v,D}}
+%\sum\limits_{|\al|\leq1}
\|\pa^{\al}f\|_{L^1_{\bar{k}}L^\infty_TL^2_{x_1,v}}
+%\sum\limits_{|\al|\leq1}
\|\pa^{\al}f_0\|_{L^1_{\bar{k},m}L^2_{x_1,v}}\right)
\notag \\
&
+E(\langle\bar{k}\rangle^m\widehat{g}_\pm)
+\sum\limits_{|\al|\leq1}\int_{\Z^2}\left(\int_0^T\left\|(\langle\bar{k}\rangle^{m}\widehat{\pa^\al H},\mu^{1/4})_{L_v^2}\right\|_{L_{x_1}^2}^2dt\right)^{1/2}d\Si(\bar{k})\notag \\
&+
\sum\limits_{|\al|\leq1}\int_{\Z^2} \langle\bar{k}\rangle^{m}\Upsilon^+_T (\widehat{\pa^\al f})d\Si(\bar{k})
%&\int_{\Z^2}\Bigg(\int_0^T\int_{v_1>0}\langle\bar{k}\rangle^{2m}|v_1||\widehat{\pa^\al f}(1)|^2dvdt
%+\int_0^T\int_{v_1<0}\langle\bar{k}\rangle^{2m}|v_1||\widehat{\pa^\al f}(-1)|^2dvdt\Bigg)^{1/2}d\Si(\bar{k})
%\\
+\int_{\Z^2} \langle\bar{k}\rangle^{m}\Upsilon^-_T (\frac{\hat{H}}{|v_1|})d\Si(\bar{k}).\label{abcifm.es}
%\\
%&+\int_{\Z^2}\Bigg(\int_0^T\int_{v_1<0}\langle\bar{k}\rangle^{2m}|v_1|^{-1}|\widehat{H(1)}|^2dvdt+\int_0^T\int_{v_1>0}\langle\bar{k}\rangle^{2m}|v_1|^{-1}|\widehat{H(-1)}|^2dvdt\Bigg)^{1/2}d\Si(\bar{k}),
%\end{split}
\end{align}
%where the boundary functionals $\Upsilon^\pm_T(\cdot)$ are defined in \eqref{def.ga+} and \eqref{def.ga-}, respectively.
For the specular reflection boundary condition \eqref{srb}, for $|\al|\leq 1$, it similarly holds that
\begin{align}
\|\pa^{\al}&\left[a,b,c\right]\|_{L^1_{\bar{k},m}L^2_TL^2_{x_1,v}}\notag
\\
&\lesssim
\sum\limits_{|\al|\leq1}\left\|\{\FI-\FP\}\pa^{\al} f\right\|_{L^1_{\bar{k},m}L^2_TL^2_{x_1}L^2_{v,D}}
+\sum\limits_{|\al|\leq1}\|\pa^{\al}f\|_{L^1_{\bar{k},m}L^\infty_TL^2_{x_1,v}}
\notag\\
&\quad+\sum\limits_{|\al|\leq1}\|\pa^{\al}f_0\|_{L^1_{\bar{k},m}L^2_{x_1,v}}
\notag\\
&\quad+\sum\limits_{|\al|\leq1}\int_{\Z^2}\left(\int_0^T\left\|(\langle\bar{k}\rangle^{m}\widehat{\pa^\al H},\mu^{1/4})_{L_v^2}\right\|_{L_{x_1}^2}^2dt\right)^{1/2}d\Si(\bar{k}).
\label{abcsrm.es}
\end{align}
We point out that the proofs of these last two macroscopic estimates follow directly as in the proofs of Theorem \ref{abc.th}.

In what follows, we will only explain the regularity propagation properties for the inflow boundary condition. The corresponding results for the specular reflection boundary condition can be obtained similarly to how we obtained \eqref{engsrb} since the mode multiplier $\langle\bar{k}\rangle^{2m}$ doesn't influence the symmetry of $f(x_1,\bar{x},v_1,\bar{v})$.
Indeed, let $|\al|\leq 1$, then by taking the complex inner product of \eqref{paLLeq} and $\langle\bar{k}\rangle^{2m}w_{q,\vth}\widehat{\pa^{\al}f}$
with respect to $(x_1,v)$, we obtain
\begin{multline*}%\label{paLLeqmip}
%\begin{split}
(\pa_t\widehat{\pa^{\al}f},w_{q,\vth}\langle\bar{k}\rangle^{2m}\widehat{\pa^{\al}f})
+(v_1\pa_{x_1}\widehat{\pa^{\al}f},w_{q,\vth}\langle\bar{k}\rangle^{2m}\widehat{\pa^{\al}f})
+(i\bar{k}\cdot\bar{v}\widehat{\pa^{\al}f},w_{q,\vth}\langle\bar{k}\rangle^{2m}\widehat{\pa^{\al}f})
\\
+(L\widehat{\pa^{\al}f},w_{q,\vth}\langle\bar{k}\rangle^{2m}\widehat{\pa^{\al}f})
=(\CF_{\bar{x}}\{\pa^{\al}\Ga(f,f)\},w_{q,\vth}\langle\bar{k}\rangle^{2m}\widehat{\pa^{\al}f}),
%\end{split}
\end{multline*}
for any $0\leq t\leq T$ and $\bar{k}\in \Z^2$.
We use Lemma \ref{wgesL} to observe that  the equation above further implies  that
%by \eqref{trim.es1}, \eqref{trim.es2} and \eqref{abcifm.es}
that
\begin{align}
%\begin{split}
&\int_{\Z^2}\sup\limits_{0 \le t \le T}\|w_{q,\vth}\langle\bar{k}\rangle^{m}\widehat{\pa^{\al}f}(t,\bar{k})\|d\Si(\bar{k})
\notag
\\
&\qquad+\sqrt{2\de_0}\int_{\Z^2}\left(\int_0^T\|w_{q,\vth}\langle\bar{k}\rangle^{m}%\{\FI-\FP\}
\widehat{\pa^{\al}f}\|^2_{D}dt\right)^{1/2}d\Si(\bar{k})
\notag \\
&\qquad
-C\int_{\Z^2}\left(\int_0^T\|\langle\bar{k}\rangle^{m}\widehat{\pa^{\al}f}\|^2_Ddt\right)^{1/2}d\Si(\bar{k})\notag\\
&
\qquad+\int_{\Z^2}\langle\bar{k}\rangle^{m}\Upsilon^+_{T,w}(\widehat{\pa^{\al}f})d\Si(\bar{k})-\int_{\Z^2}\langle\bar{k}\rangle^{m}\Upsilon^-_{T,w}(\widehat{\pa^{\al}f})d\Si(\bar{k})\notag\\
&
\lesssim
\int_{\Z^2}\|w_{q,\vth}\widehat{\pa^{\al}f}_0\|d\Si(\bar{k})
\notag
\\
& \qquad +\int_{\Z^2}
\left(\int_0^T\left|(\CF_{\bar{x}}\{\pa^{\al}\Ga(f,f)\},w^2_{q,\vth}\langle\bar{k}\rangle^{2m}\widehat{\pa^{\al}f})\right|
dt\right)^{1/2}d\Si(\bar{k}).\label{eng2m}
\end{align}
Similarly, using Lemmas \ref{esLL} and \ref{esBL}, it also holds without any velocity weight that
\begin{align}
%\begin{split}
&\int_{\Z^2}\sup\limits_{0 \le t \le T}\|\langle\bar{k}\rangle^{m}\widehat{\pa^{\al}f}(t,\bar{k})\|d\Si(\bar{k})
\notag
\\
& \qquad
+
\sqrt{2\de_0}\int_{\Z^2}\left(\int_0^T\|\langle\bar{k}\rangle^{m}\{\FI-\FP\}
\widehat{\pa^{\al}f}\|^2_{D}dt\right)^{1/2}d\Si(\bar{k})
\notag
\\
&
\qquad+\int_{\Z^2}\langle\bar{k}\rangle^{m}\Upsilon^+_{T}(\widehat{\pa^{\al}f})d\Si(\bar{k})-\int_{\Z^2}\langle\bar{k}\rangle^{m}\Upsilon^-_{T}(\widehat{\pa^{\al}f})d\Si(\bar{k})\notag\\
&\lesssim \int_{\Z^2}\|\widehat{\pa^{\al}f}_0\|d\Si(\bar{k})
\notag\\
& \qquad
+\int_{\Z^2}
\left(\int_0^T\left|(\CF_{\bar{x}}\{\pa^{\al}\Ga(f,f)\},w^2_{q,\vth}\langle\bar{k}\rangle^{2m}\widehat{\pa^{\al}f})\right|
dt\right)^{1/2}d\Si(\bar{k}).\label{weng2m}
\end{align}
%\begin{align}
%%\begin{split}
%&\int_{\Z^2}\sup\limits_{0 \le t \le T}\|\langle\bar{k}\rangle^{m}\widehat{\pa^{\al}f}(t,\bar{k})\|d\Si(\bar{k})
%+\sqrt{2\de_0}\int_{\Z^2}\left(\int_0^T\|\langle\bar{k}\rangle^{m}\{\FI-\FP\}\widehat{\pa^{\al}f}\|^2_{D}dt\right)^{1/2}
%\notag \\
%&\qquad+\int_{\Z^2}\left(\int_0^T\int_{v_1>0}v_1|\langle\bar{k}\rangle^{m}\widehat{\pa^{\al}f}(1)|^2dvdt
%-\int_0^T\int_{v_1<0}v_1|\langle\bar{k}\rangle^{m}\widehat{\pa^{\al}f}(-1)|^2dvdt\right)^{1/2}d\Si(\bar{k})
%\notag \\
%&\lesssim \int_{\Z^2}\|\langle\bar{k}\rangle^{m}\widehat{\pa^{\al}f}_0\|d\Si(\bar{k})
%+\int_{\Z^2}\left(\int_0^T\left|(\widehat{\pa^{\al}\Ga(f,f)},
%\langle\bar{k}\rangle^{2m}\{\FI-\FP\}\widehat{\pa^{\al}f})\right|dt\right)^{1/2}d\Si(\bar{k})\notag \\
%&\qquad+\int_{\Z^2}\left(-\int_0^T\int_{v_1<0}v_1|\langle\bar{k}\rangle^{m}\widehat{\pa^{\al}f}(1)|^2dv
%+\int_0^T\int_{v_1>0}v_1|\langle\bar{k}\rangle^{m}\widehat{\pa^{\al}f}(-1)|^2dv\right)^{1/2}d\Si(\bar{k})
%\notag \\
%&\lesssim \int_{\Z^2}\|\langle\bar{k}\rangle^{m}\widehat{\pa^{\al}f}_0\|d\Si(\bar{k})+E(\langle\bar{k}\rangle^{m}\widehat{g}_{\pm})
%+C_\eta\left\|\pa^{\al} f\right\|_{L^1_{\bar{k},m}L^\infty_TL^2_{x_1,v}}
%\left\|f\right\|_{L^1_{\bar{k},m}L^2_TH_{x_1}^1L^2_{v,D}}
%\notag \\
%&\qquad +C_\eta\left\|\pa^{\al} f\right\|_{L^1_{\bar{k},m}L^2_TL_{x_1}^2L^2_{v,D}}
%\left\|f\right\|_{L^1_{\bar{k},m}L^\infty_TH^1_{x_1}L^2_{v}}
%+\eta\left\|\{\FI-\FP\}\pa^{\al}f\right\|_{L^1_{\bar{k},m}L^2_TL_{x_1}^2L^2_{v,D}},\label{weng2m}
%%\end{split}
%\end{align}
In \eqref{eng2m} and \eqref{weng2m} we notice that, as proved previously, the boundary terms vanish for the specular reflection boundary condition.   Further the boundary term with negative signs for incoming boundary velocities can be bounded by $E(w_{q,\vth}\langle\bar{k}\rangle^{m}\widehat{g_{\pm}})$.  The trilinear terms can be further bounded as in \eqref{trim.es1} and \eqref{trim.es2}. Therefore, applying all of those estimates, a linear combination of \eqref{eng2m} and
\eqref{weng2m} together with \eqref{abcifm.es} or \eqref{abcsrm.es}  implies that
\begin{multline*}%\label{regp}
%\begin{split}
\sum_{|\al|\leq 1}\int_{\Z^2}\sup\limits_{0 \le t \le T}\|w_{q,\vth}\langle\bar{k}\rangle^{m}\widehat{\pa^{\al}f}(t,\bar{k})\|d\Si(\bar{k})
\\
+\sum_{|\al|\leq 1}\int_{\Z^2}\left(\int_0^T\|w_{q,\vth}\langle\bar{k}\rangle^{m}\widehat{\pa^{\al}f}\|^2_{D}dt\right)^{1/2}d\Si(\bar{k})
\\
%&\quad +\int_{\Z^2}\Bigg(\int_0^T\int_{v_1>0}v_1|w_{q,\vth}\langle\bar{k}\rangle^{m}\widehat{\pa^{\al}f}(1)|^2dvdt
%\\&\qquad\qquad\qquad-\int_0^T\int_{v_1<0}v_1|w_{q,\vth}\langle\bar{k}\rangle^{m}\widehat{\pa^{\al}f}(-1)|^2dvdt\Bigg)^{1/2}d\Si(\bar{k})
%\\
\lesssim \sum_{|\al|\leq 1}\int_{\Z^2}\|w_{q,\vth}\langle\bar{k}\rangle^{m}\widehat{\pa^{\al}f}_0\|d\Si(\bar{k})+E(w_{q,\vth}\langle\bar{k}\rangle^{m}\widehat{g}_{\pm}),
%\end{split}
\end{multline*}
provided that $\eps_0>0$ in \eqref{thm.cp.as1}  is suitably small.  Note that in the above estimate for the specular reflection case there is no boundary term $E(w_{q,\vth}\langle\bar{k}\rangle^{m}\widehat{g}_{\pm})$ on the right-hand side, so in that case the smallness assumption \eqref{thm.cp.as1} is actually just \eqref{thm.cp.as2}. Then the above estimate implies both of the estimates \eqref{reg.ib} and \eqref{reg.sr} for solutions corresponding to the inflow and specular reflection boundary conditions, respectively. This then completes the proof of Theorem \ref{regp.th}.
\end{proof}

\section{Local-in-time existence}\label{sec8}

In this section, we are concerned with the local-in-time existence of solutions with mild regularity to the problem {\bf (PT)} in the torus case and to the problem {\bf (PC)} in the finite channel case. For brevity of presentation, we give the full details of the proof only for the non-cutoff Boltzmann equation with the specular reflection boundary condition in the finite channel. The main idea is motivated by \cite[Theorem 4.2, page 541]{AMUXY-2011-CMP} and \cite[Lemma 5.1, page 4098]{MS-2016-JDE}.  Our approach used here can  also  be adopted to treat the local in time existence for the other cases mentioned in this paper.  Then the analogous local existence theorem to Theorem \ref{le.th} is true in the other cases that arise in this paper as in Theorems \ref{Torus Existence} - \ref{regp.th}.  For brevity of the presentation we omit these details.  However we remark that in the case of  the torus domain, it is possible to carry out a straightforward proof of local in time existence and uniqueness based on the standard approximation argument in terms of the known existence results for regular initial data.

\begin{theorem}[Local existence]\label{le.th}
Let all the conditions of Theorem \ref{mthsr} be satisfied, then there are $\eps_0>0$, $T_0>0$ and $C_0>0$ such that if $F_0(x_1,\bar{x},v)=\mu+\mu^{\frac{1}{2}}f_0(x_1,\bar{x},v)\geq0$
%with $f_0(x_1,\bar{x},v_1,\bar{v})=f_0(-x_1,\bar{x},-v_1,\bar{v})$
and
\begin{equation*}%\label{}
\sum\limits_{|\al|\leq1}\|\pa^\al f_{0}\|_{L_{\bar{k}}^1L^2_{x_1,v}}\leq\eps_0,
\end{equation*}
then the specular reflection boundary problem for the non-cutoff Boltzmann equation in the finite channel  \eqref{LLeq},
\eqref{idf} with \eqref{srb}  admits a unique solution
$$
f(t,x,v), \ 0\leq t\leq T_0,\ x\in \Omega=I\times \T^2,\ v\in \R^3,
$$
with
\begin{equation}
\label{le.th.space}
f,\ \na_xf\in L^1_{\bar{k}}L^\infty_{T_0}L^2_{x_1,v}\cap L^1_{\bar{k}}L^2_{T_0}L^2_{x_1}L^2_{v,D},
\end{equation}
%$f(t,x,v)\in \FX_{T_0}$
satisfying
$$
F(t,x_1,\bar{x},v)=\mu+{\mu}^{1/2}f(t,x_1,\bar{x},v)\geq0,\quad
f(t,-x_1,\bar{x},-v_1,\bar{v})=f(t,x_1,\bar{x},v_1,\bar{v}),
$$
and the uniform estimate
\begin{equation}\label{loces}
%\CE_{T_0}(f)+\CD_{T_0}(f)
\sum_{|\al|\leq 1} \left(\|\pa^\al f\|_{L^1_{\bar{k}}L^\infty_{T_0}L^2_{x_1,v}}+\|\pa^\al f\|_{L^1_{\bar{k}}L^2_{T_0}L^2_{x_1}L^2_{v,D}}\right)
\leq C_{0}
\sum\limits_{|\al|\leq1}\|\pa^{\al}f_0\|_{L_{\bar{k}}^1L^2_{x_1,v}}.
\end{equation}
%where $C_{T_0}>0$ is a universal constant which may depend on $T_0.$
\end{theorem}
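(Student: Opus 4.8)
The plan is to establish local existence by an iteration scheme, solving at each step a linear problem in which the nonlinear collision term is frozen at the previous iterate, and then showing the iterates form a Cauchy sequence in the relevant mild-regularity function space. We set $f^0 \equiv 0$ and define $f^{n+1}$ as the solution of the linear specular-reflection problem
\begin{equation*}
\partial_t f^{n+1} + v\cdot\na_x f^{n+1} + L f^{n+1} = \Gamma(f^n, f^n),
\end{equation*}
with $f^{n+1}(0) = f_0$ and the boundary condition \eqref{srb}. Since we are prescribing that the data and solution satisfy the symmetry $f_0(x_1,v_1) = f_0(-x_1,-v_1)$, and since $\Gamma(h(-v_1)) = \Gamma(h)(-v_1)$ by \cite[Lemma 3.1, page 637]{Guo-06-DL} (see Remark \ref{property.remark}), the symmetry is propagated at each step, which as in the proof of Theorem \ref{mthsr} kills the boundary contributions $S_5$ and the outgoing/incoming $\Upsilon^{\pm}_T$ terms in the energy and macroscopic estimates. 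This is the crucial structural point that lets us avoid any boundary energy functional of the type \eqref{Enormk} here.

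The second step is the \emph{a priori} bound for the linear problem. Taking the Fourier transform in $\bar x$, applying $\pa^\al$ for $|\al|\le 1$, pairing with $\widehat{\pa^\al f^{n+1}}$ in $L^2_{x_1,v}$, using the coercivity of $L$ (Lemmas \ref{esLL}, \ref{esBL}), integrating on $[0,t]$ for $t\le T_0$, taking square roots and integrating over $\bar k\in\Z^2$, exactly as in the derivation of \eqref{eng1}--\eqref{eng2}, we obtain
\begin{equation*}
\CE_{T_0}(f^{n+1}) + \CD_{T_0}(f^{n+1}) \lesssim \sum_{|\al|\le 1}\|\pa^\al f_0\|_{L^1_{\bar k}L^2_{x_1,v}} + \Big(\text{nonlinear term in } f^n\Big),
\end{equation*}
where the macroscopic part of $\CD_{T_0}$ is controlled through Theorem \ref{abc.th} (with $H = \Gamma(f^n,f^n)$ and all boundary and $\Upsilon^{\pm}$ terms absent by symmetry). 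The key difference from the global argument is that, instead of absorbing the nonlinear term via a smallness bootstrap, we exploit a factor of $T_0^{1/2}$: using Lemma \ref{tril2} and Lemma \ref{keynp.es}, the trilinear term is bounded by $C\,\CE_{T_0}(f^n)\,\CD_{T_0}(f^n)$, and one of the $L^2_T$ factors can be crudely estimated by $T_0^{1/2}$ times an $L^\infty_T$ norm. Choosing the closed-ball radius $R = 2C_0\eps_0$ and then $T_0$ small depending only on $R$, one shows inductively that $\CE_{T_0}(f^n) + \CD_{T_0}(f^n) \le R$ for all $n$, giving the uniform bound that will become \eqref{loces}.

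The third step is contraction: set $d^{n} = f^{n+1} - f^n$, which satisfies the same linear equation with zero initial/boundary data and source $\Gamma(f^n,f^n) - \Gamma(f^{n-1},f^{n-1}) = \Gamma(d^{n-1}, f^n) + \Gamma(f^{n-1}, d^{n-1})$. Running the same energy/macroscopic estimate on $d^n$ and again pulling out a $T_0^{1/2}$ factor from the trilinear terms via Lemmas \ref{tril2} and \ref{keynp.es}, we get $\CE_{T_0}(d^n)+\CD_{T_0}(d^n)\le \tfrac12(\CE_{T_0}(d^{n-1})+\CD_{T_0}(d^{n-1}))$ after shrinking $T_0$ once more. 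Hence $\{f^n\}$ converges in $L^1_{\bar k}L^\infty_{T_0}L^2_{x_1,v}\cap L^1_{\bar k}L^2_{T_0}L^2_{x_1}L^2_{v,D}$ together with its first $x$-derivatives to a limit $f$ solving the problem, with the regularity \eqref{le.th.space} and the bound \eqref{loces}; uniqueness follows from the same contraction estimate applied to the difference of two solutions. Positivity of $F = \mu + \mu^{1/2}f$ is obtained as in \cite[page 833]{GS} for the non-cutoff Boltzmann case (and via the maximum principle, cf.~\cite{Guo-L}, in the Landau case), noting $F_0\ge 0$ on the boundary is automatic for specular reflection. The main obstacle I anticipate is technical bookkeeping rather than conceptual: one must verify that the linear specular-reflection problem with the $\bar x$-frequency localization and the $x_1$-anisotropic norms is well-posed with the claimed regularity (in particular that $\pa_{x_1}f$ lies in the stated space), and that the trilinear estimates of Lemmas \ref{keynp.es} and \ref{tril2} genuinely furnish the $T_0^{1/2}$ smallness after the Cauchy–Schwarz split in time — this is where one has to be careful that the $L^\infty_T$ norm being multiplied by $T_0^{1/2}$ is itself controlled by $\CE_{T_0}$ and not by a dissipation norm.
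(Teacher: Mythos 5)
Your high-level architecture (iterate a linearized problem, use the symmetry to kill the boundary terms, prove uniform bounds and a contraction, pass to the limit) matches the paper's, but there are two substantive gaps. The first and more serious one is that you dismiss the solvability of the linearized specular-reflection problem in the space $L^1_{\bar k}L^\infty_{T_0}L^2_{x_1,v}\cap L^1_{\bar k}L^2_{T_0}L^2_{x_1}L^2_{v,D}$ as ``technical bookkeeping.'' In fact this is the main content of the paper's Section \ref{sec8}: because $\CL_1$, $\CL_2$ and $\Ga(\bh,\cdot)$ involve singular kernels and nonlocal velocity derivatives, one cannot invoke standard ODE theory, and the paper devotes Lemma \ref{lb.loc} to this point. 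The construction there mollifies the data in $x$, writes the problem in weak (dual) form via the operator $\CG=-\pa_t+(v_1\pa_{x_1}+\bar v\cdot\na_{\bar x}+\CL_1-\Ga(\bh_\vps,\cdot))^{*}$ acting on symmetric test functions, proves injectivity of $\CG$ by energy estimates with a careful derivative-by-derivative induction (so that the smallness of $\bh$ is only used where the constants are uniform in $\vps$), applies Hahn--Banach to produce a weak solution, bootstraps regularity up to $\pa^\al$ with $\al_1\le1$, $|\bar\al|\le6$, converts this Sobolev regularity into the $L^1_{\bar k}$ framework via Fatou, H\"older and Plancherel, and only then passes to the limit $\vps\to0^+$. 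Note also that the paper's iteration is semi-implicit, with $\Ga(f^n,f^{n+1})$ kept on the left as a coefficient operator, precisely so that this linear theory applies; your fully explicit scheme with source $\Ga(f^n,f^n)$ would still require an analogous linear solvability argument that you have not supplied.

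The second gap is the mechanism you propose for closing the estimates. You want to extract a factor $T_0^{1/2}$ from the trilinear term by bounding one $L^2_T$ factor by $T_0^{1/2}$ times an $L^\infty_T$ norm. But in Lemmas \ref{keynp.es} and \ref{tril2} the trilinear term is bounded by $\CE_{T_0}\cdot\CD_{T_0}$, and the $L^2_T$ factor there is the dissipation norm $\|\cdot\|_{L^1_{\bar k}L^2_{T_0}L^2_{x_1}L^2_{v,D}}$; since $|\cdot|_D$ is not controlled by $\|\cdot\|_{L^2_v}$ (it carries extra velocity regularity and weights in both the Landau and non-cutoff Boltzmann cases), this factor cannot be traded for $T_0^{1/2}$ times anything dominated by $\CE_{T_0}$. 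You flag this concern yourself but do not resolve it. The paper closes the nonlinear term exactly as in the global argument, through the smallness $\eps_0$ of the data (see \eqref{lb.loc.a2} and the uniform-in-$\vps$ estimate \eqref{gvps.eng2}); the short time $T_0$ is used only to absorb the lower-order term $T_0\sup_{t}\|g_\vps(t)\|^2$ produced by the coercivity estimate \eqref{L1L2.es} and to generate the $\sqrt{T_1}$ factor in front of the $\CL_2\bh$ source in \eqref{lg.es}.
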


To prove Theorem \ref{le.th}, we start from the following linear inhomogeneous problem
\begin{eqnarray}\label{lc.lb}
\left\{\begin{array}{rll}
&\pa_t g+v_1\pa_{x_1}g+\bar{v}\cdot\na_{\bar{x}}g+\mathscr{L}_1g-\Ga(\bh,g)=-\mathscr{L}_2\bh,\\[2mm]
&g(0,x,v)=g_0(x,v),\\[2mm]
&g(-1,\bar{x},v_1,\bar{v})|_{v_1>0}=g(-1,\bar{x},-v_1,\bar{v}),\\[2mm]
& g(1,\bar{x},v_1,\bar{v})|_{v_1<0}=g(1,\bar{x},-v_1,\bar{v}),
\end{array}\right.
\end{eqnarray}
for a given function $\bh=\bh(t,x,v)$, 
where we have denoted the linear operators
\begin{equation*}
%\label{ }
\mathscr{L}_1f=-\mu^{-\frac{1}{2}}Q(\mu,\mu^{\frac{1}{2}}f),\quad \mathscr{L}_2f=-\mu^{-\frac{1}{2}}Q(\mu^{\frac{1}{2}}f,\mu).
\end{equation*}
For the later use, we recall that thanks to \cite[Lemma 2.15 and Proposition 2.16, page 937--939]{AMUXY-2012-JFA}, it holds that
\begin{equation}\label{L1L2.es}
\left\langle \mathscr{L}_1g, g\right\rangle
%\geq 
\gtrsim
%\frac{1}{C_1}
|g|^2_{D}-C_1\|\langle v\rangle^{\ga/2}g\|_{L^2_v}^2,\ \
\left|\left\langle \mathscr{L}_2g, h\right\rangle\right|\lesssim
\left\|\mu^{1/10^3}g\right\|_{L^2_v}\left\|\mu^{1/10^3}h\right\|_{L^2_v},
\end{equation}
where $C_1>0$ 
%and $C_2>0$ are 
is a universal constant. The solvability of \eqref{lc.lb} is guaranteed by the following lemma.

\begin{lemma}\label{lb.loc}
There are $\eps_0>0$, $T_1>0$ and $C_1>0$ such that if
$$
g_0,\na_x g_0\in L^1_{\bar{k}}L^2_{x_1,v},\quad
\bh,\na_x\bh \in L^1_{\bar{k}}L^\infty_{T_0}L^2_{x_1,v}\cap L^1_{\bar{k}}L^2_{T_0}L^2_{x_1}L^2_{v,D},
$$
for $T_0\in (0,T_1]$,  and it holds that
\begin{equation}
\label{lb.loc.a1}
g_0(x_1,\bar{x},v_1,\bar{v})=g_0(-x_1,\bar{x},-v_1,\bar{v}),\quad
\bh(t,x_1,\bar{x},v_1,\bar{v})=\bh(t,-x_1,\bar{x},-v_1,\bar{v}),
\end{equation}
and
\begin{equation}
\label{lb.loc.a2}
\sum\limits_{|\al|\leq 1}\left\{\|\pa^{\al}\bh\|_{ L^1_{\bar{k}}L^\infty_{T_0}L^2_{x_1,v}}+\left\|\pa^{\al}\bh\right\|_{L^1_{\bar{k}}L^2_{T_0}L^2_{x_1}L^2_{v,D}}\right\}\leq \eps_0,
\end{equation}
then the initial boundary value problem \eqref{lc.lb} admits a unique weak solution
$$
g(t,x,v), \ 0\leq t\leq T_0,\ x\in \Omega=I\times \T^2,\ v\in \R^3,
$$
satisfying
\begin{equation}
\label{lg.symm}
g(t,x_1,\bar{x},v_1,\bar{v})=g(t,-x_1,\bar{x},-v_1,\bar{v})
\end{equation}
and
\begin{multline}\label{lg.es}
%\begin{split}
\sum\limits_{|\al|\leq 1}\|\pa^{\al}g\|_{L^1_{\bar{k}}L^\infty_{T_0}L^2_{x_1,v}}+\sum\limits_{|\al|\leq 1}\|\pa^{\al}g\|_{ L^1_{\bar{k}}L^2_{T_0}L^2_{x_1}L^2_{v,D}}\\
\leq C_0\left(\sum\limits_{|\al|\leq 1}\|\pa^{\al}g_0\|_{L^1_{\bar{k}}L^2_{x_1,v}}+\sqrt{T_1}\sum\limits_{|\al|\leq 1}\|\pa^{\al}\bh\|_{ L^1_{\bar{k}}L^2_{T_0}L^2_{x_1}L^2_{v,D}}\right).
%\end{split}
\end{multline}
\end{lemma}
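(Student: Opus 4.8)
The plan is to solve \eqref{lc.lb} by a linear iteration scheme, to obtain uniform bounds in the space of \eqref{lg.es}, and then to pass to the limit. Since \eqref{lc.lb} is already linear in $g$, the only term coupling unknowns is $\Ga(\bh,g)$, so I would set $g^{0}\equiv 0$ and, for $n\geq 0$, let $g^{n+1}$ solve
\begin{equation*}
\pa_t g^{n+1}+v_1\pa_{x_1}g^{n+1}+\bar v\cdot\na_{\bar x}g^{n+1}+\mathscr{L}_1 g^{n+1}=\Ga(\bh,g^{n})-\mathscr{L}_2\bh,
\end{equation*}
with initial data $g_0$ and the same specular reflection boundary condition as in \eqref{lc.lb}. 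For each fixed $n$ the right-hand side is a known function lying in $L^1_{\bar k}L^2_{T_0}L^2_{x_1,v}$ by \eqref{lb.loc.a2}, Lemma \ref{keynp.es} and the second estimate in \eqref{L1L2.es}, so the first task is to solve the basic linear problem consisting of free transport, the coercive operator $\mathscr{L}_1$, a given source, and the specular reflection boundary condition. I would do this by an auxiliary regularization, for instance by adding $-\vps\De_v$ (together with a mollification in $x$ if needed) to gain compactness, solving the regularized problem in $L^2_{x,v}$ by a Galerkin or semigroup argument, and then letting $\vps\to 0$ using the $\vps$-uniform energy estimate below. Throughout, the symmetry $g(t,-x_1,\bar x,-v_1,\bar v)=g(t,x_1,\bar x,v_1,\bar v)$ is propagated from that of $g_0$ and $\bh$ in \eqref{lb.loc.a1}, using that $\mathscr{L}_1$, $\mathscr{L}_2$, $\Ga(\bh,\cdot)$ and the transport operator all respect it (cf.\ Remark \ref{property.remark} and \cite[Lemma 3.1, page 637]{Guo-06-DL}); this compatibility is precisely what makes differentiation in $x_1$ admissible against the specular boundary condition.

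The heart of the argument is the uniform a priori estimate, carried out exactly as for the a priori estimates in the proof of Theorem \ref{mthsr}. For $|\al|\leq 1$, apply $\pa^\al$ and $\CF_{\bar x}$ to the equation for $g^{n+1}$, take the real part of the $L^2_{x_1,v}$ inner product with $\widehat{\pa^\al g^{n+1}}$, and integrate over $[0,T_0]$. The transport boundary terms at $x_1=\pm1$ cancel by the symmetry, exactly as in \eqref{srb.sym} with $\Ga(\bh,g^{n})$ in place of $\Ga(f,f)$, and the first bound in \eqref{L1L2.es} produces the dissipation $|\cdot|_D^2$ modulo the lower-order term $\|\lag v\rag^{\ga/2}\cdot\|_{L^2_v}^2$, whose time integral over $[0,T_0]$ is bounded by $T_1$ times the $L^\infty_{T_0}L^2$ norm and is therefore absorbed for $T_1$ small. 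Taking square roots, summing over $\bar k\in\Z^2$ and $|\al|\leq 1$, the $\Ga(\bh,g^{n})$ contribution is controlled by Lemma \ref{keynp.es} by a factor of order $\eps_0$ (the size of $\bh$ in \eqref{lb.loc.a2}) times the dissipation norm of $g^{n}$, plus an arbitrarily small multiple of $\|\{\FI-\FP\}\pa^\al g^{n+1}\|_{L^1_{\bar k}L^2_{T_0}L^2_{x_1}L^2_{v,D}}$, while the $\mathscr{L}_2\bh$ contribution is estimated via the second bound in \eqref{L1L2.es} together with Cauchy--Schwarz in $t$ over $[0,T_0]$, producing the factor $\sqrt{T_1}$ that appears in \eqref{lg.es}. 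Combining with the macroscopic estimate \eqref{abcsr.es} applied to the equation rewritten as $\pa_t g^{n+1}+v\cdot\na_x g^{n+1}+Lg^{n+1}=H$ with $H=\mathscr{L}_2 g^{n+1}+\Ga(\bh,g^{n})-\mathscr{L}_2\bh$ (where $\mathscr{L}_2 g^{n+1}$ is lower order and absorbed), and choosing the small constants appropriately, I obtain
\begin{equation*}
\mathcal{A}(g^{n+1})\leq C_0\Big(\sum_{|\al|\leq1}\|\pa^\al g_0\|_{L^1_{\bar k}L^2_{x_1,v}}+\sqrt{T_1}\sum_{|\al|\leq1}\|\pa^\al\bh\|_{L^1_{\bar k}L^2_{T_0}L^2_{x_1}L^2_{v,D}}\Big)+\tfrac12\mathcal{A}(g^{n}),
\end{equation*}
where $\mathcal{A}(g)$ denotes the quantity on the left-hand side of \eqref{lg.es}. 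Iterating in $n$ then gives a uniform bound of the form \eqref{lg.es}.

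Finally, $g^{n+1}-g^{n}$ solves the same type of problem with zero initial data and right-hand side $\Ga(\bh,g^{n}-g^{n-1})$, so the same estimate --- now without the $g_0$ and $\mathscr{L}_2\bh$ terms --- gives $\mathcal{A}(g^{n+1}-g^{n})\leq\tfrac12\mathcal{A}(g^{n}-g^{n-1})$; hence $\{g^{n}\}$ is Cauchy and its limit $g$ is a weak solution of \eqref{lc.lb} satisfying \eqref{lg.symm} and \eqref{lg.es}. Uniqueness follows by applying the same energy estimate to the difference of two solutions. The step I expect to be the main obstacle is the construction of solutions to the basic linear transport/$\mathscr{L}_1$ problem with the specular reflection boundary condition in this low-regularity, Wiener-algebra-in-$\bar x$ framework: one must choose a regularization compatible simultaneously with the specular boundary condition and with the $(x_1,v_1)\mapsto(-x_1,-v_1)$ symmetry, extract enough compactness to pass to the limit, and still recover the $x_1$-derivative bound --- the last point being delicate precisely because $\pa_{x_1}$ does not commute with the specular boundary condition except through that symmetry, which must therefore be maintained at every stage of the construction.
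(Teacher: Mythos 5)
Your a priori estimate is essentially the paper's: the same energy identity with cancellation of the boundary terms via the $(x_1,v_1)\mapsto(-x_1,-v_1)$ symmetry as in \eqref{srb.sym}, the coercivity \eqref{L1L2.es} with the lower-order term absorbed for $T_1$ small, Lemma \ref{keynp.es} for the $\Ga(\bh,\cdot)$ contribution, and Cauchy--Schwarz in $t$ for $\mathscr{L}_2\bh$ producing the $\sqrt{T_1}$ factor. Where you genuinely diverge is the existence mechanism: you iterate on the source $\Ga(\bh,g^{n})$ and reduce everything to solving a ``basic'' linear problem by a $-\vps\Delta_v$ regularization plus Galerkin/compactness, whereas the paper keeps $\Ga(\bh_\vps,\cdot)$ inside the operator, mollifies only in $x$, and obtains existence by a duality argument: it proves the adjoint operator $\CG$ is injective on a class of symmetric test functions, defines a bounded linear functional on the range of $\CG$, and invokes Hahn--Banach to produce the weak solution, then recovers derivatives by an induction that again goes through adjoint functionals. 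The paper explicitly adopts this route because, in its words, the singular kernel and non-local derivatives in $\mathscr{L}_1$, $\mathscr{L}_2$ and $\Ga(\bh,\cdot)$ make a direct ODE/semigroup construction problematic; the duality method never requires the collision terms to be functions, only bilinear-form estimates.

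There are two concrete gaps in your version. First, the step you yourself flag as the main obstacle --- solvability of the transport $+\,\mathscr{L}_1$ problem with specular reflection in this low-regularity framework --- is exactly the content of the lemma, and ``Galerkin or semigroup argument'' after adding $-\vps\Delta_v$ is not a proof: one must verify that the regularized problem with a kinetic boundary condition is solvable, that the symmetry survives the regularization, and that all estimates are uniform in $\vps$ for an operator whose dissipation is only the anisotropic $D$-norm; the paper's Hahn--Banach construction is precisely the device that replaces this. Second, your assertion that $\Ga(\bh,g^{n})$ is ``a known function lying in $L^1_{\bar k}L^2_{T_0}L^2_{x_1,v}$'' is false: in the Landau case $\Ga(\bh,g)$ contains two velocity derivatives of $g$, and in the non-cutoff Boltzmann case a fractional derivative of order $2s$, which exceed the regularity furnished by the dissipation norm; Lemma \ref{keynp.es} only controls the pairing $(\Ga(\bh,g),h)$ against the $D$-norm of $h$, not $\Ga(\bh,g)$ itself in $L^2$. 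Consequently your iteration must be formulated weakly with the source understood in the dual of the dissipation space (or the term must be kept on the left of the operator, as the paper does); as written, the reduction to a problem ``with a given $L^2$ source'' does not go through.
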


\begin{proof}
We will divide the proof into four steps as in the following.

\medskip
\noindent{\it Step 1.} First of all, we remark that since the operators $\CL_1$, $\CL_2$ and $\Ga(\bh,\cdot)$ involve a singular kernel and non-local derivatives, then it may be too hard  to prove the local in time existence of solutions to the linear problem \eqref{lc.lb} directly by applying the standard ODE theory.
However, due to
$$
L^1_{\bar{k}}L^\infty_{T_0}H^1_{x_1}L^2_v\subset L^\infty((0,T_0)\times\T^2;H^1_{x_1}L^2_v)\subset L^\infty_{T_0}L^2_{\bar{x}}H^1_{x_1}L^2_v,
$$
we may expect to achieve the proof through an approximation procedure by first constructing approximate solutions in the function space
$$
L^\infty(0,T_0;L^2_{\bar{x}}H^1_{x_1}L^2_v).
$$
The advantage of using the above function space in this proof is that, compared to $L^1_{\bar{k}}L^\infty_{T_0}H^1_{x_1}L^2_v$, it is more convenient to work with the weak formulation of \eqref{lc.lb} in the $L^2$ framework because $L^2_{\bar{x}}H^1_{x_1}L^2_v$ is a Hilbert space.
For this purpose, we will smooth out the data $\bh$ and $g_0$ in \eqref{lc.lb} with respect to spatial variable as
\begin{equation}
\label{def.hg0}
\bh_\vps=\bh\ast_x\chi_{\vps},\quad g_{0,\vps}=g_{0}\ast_x\chi_{\vps}.
\end{equation}
Here $*_x$ is the standard convolution with respect to $x$ and $\chi_{\vps}$ with $\vps>0$ suitably small is a smooth mollifier for $x\in \Omega=I\times \T^2$ such that $0\leq\chi_\vps\leq1$, and
\begin{eqnarray*}
\chi_{\vps}(x_1,\bar{x})=\left\{\begin{array}{rll}&1, \ \ \text{for }|x_1|<1-\vps,\\[2mm]
&0, \ \ \text{for }1-\frac{\vps}{2}<|x_1|<1,
\end{array}\right.
\end{eqnarray*}
and $\chi_\vps$ is even in $x_1\in (-1,1)$. As $\bh$ and $g_0$ are spatially periodic in $\bar{x}$, so are $\bh_\vps$ and $g_{0,\vps}$. Moreover, it is obvious to see that the symmetric property \eqref{lb.loc.a1} also holds true for $\bh_\vps$ and $g_{0,\vps}$.
%here, $\Om_\vps=\{x\in\Om: \textrm{dist}(x,\pa\Om)\geq\vps\}.$ Since $\chi_\vps$ is radially symmetrical, we see that $g_{\vps,0}(x_1,v_1)=g_{\vps,0}(-x_1,-v_1)$ still holds provided $g_{0}(x_1,v_1)=g_{0}(-x_1,-v_1)$.
Now, in order to solve \eqref{lc.lb}, let us first consider the corresponding linear inhomogeneous problem with $\bh$ and $g_0$ replaced by  data $\bh_\vps$ and $g_{0,\vps}$ defined in \eqref{def.hg0}, and set $g_\vps=g_\vps(t,x,v)$ to be the corresponding solution, namely, $g_\vps$ satisfies
\begin{eqnarray}\label{lc.lb.vps}
\left\{\begin{array}{rll}
&\pa_t g_{\vps}+v\cdot\na_{x}g_{\vps}+\mathscr{L}_1g_{\vps}-\Ga(\bh_\vps,g_{\vps})=-\mathscr{L}_2\bh_\vps,\\[2mm]
&g_{\vps}(0,x,v)=g_{0,\vps}(x,v),\\[2mm]
&g_{\vps}(-1,\bar{x},v_1,\bar{v})|_{v_1>0}=g_{\vps}(-1,\bar{x},-v_1,\bar{v}),\\[2mm]
& g_{\vps}(1,\bar{x},v_1,\bar{v})|_{v_1<0}=g_{\vps}(1,\bar{x},-v_1,\bar{v}).
\end{array}\right.
\end{eqnarray}

\medskip
\noindent{\it Step 2.}
Next, our goal is to solve \eqref{lc.lb.vps}. We define a linear operator $\CG$ as
\begin{equation}
\label{def.gselfa}
\CG=-\pa_t+(v_1\pa_{x_1}+\bar{v}\cdot\na_{\bar{x}}+\CL_1-\Ga(\bh_\vps,\cdot))^{*},
\end{equation}
where the adjoint operator $(\cdot)^*$ is taken with respect to the complex inner product in $L^2_{x,v}$. Note that $\CL_1$ is self-adjoint and the adjoint of $\Ga(\bh,\cdot)$ can be indeed defined as $\Ga^\ast(\bh,\cdot)$ which is given by \cite[page 843]{GS}.  We then {\it claim} that  $\CG$ is injective over the following function space
\begin{equation}\label{def.W1}
\begin{split}
\W_1=&\big\{g:\,g\in H^1(0,T_0; \CS(\Om\times\R^3_v))\text{ such that } g(T_0,x,v)\equiv 0,\\
&\qquad g(t,x_1,\bar{x},v_1,\bar{v})=g(t,-x_1,\bar{x},-v_1,\bar{v}), \text{ and} \\
&\qquad g(t,\pm1,\bar{x},v_1,\bar{v})|_{v_1\neq0}=g(t,\pm1,\bar{x},-v_1,\bar{v})\big\}.
 %\ \CS\ \textrm{denotes the Schwartz space}\big\}.
\end{split}
\end{equation}
Here $\CS(\Om\times\R^3_v))$ is the standard Schwartz class by which we mean that $g\in \CS(\Om\times\R^3_v))$ is $C^\infty$ in all variables on the interior and decays faster than any polynomial in the $\R^3_v$ variables and is periodic in $\mathbb{T}^2$.
To show this {\it claim}, we take $g_{\vps}\in\W_1$ and  compute for $\al=(\al_1,\overline{\al})$ with $\al_1\leq1$ and $|\bar{\al}|\leq6$:
\begin{equation}\label{gip}
\begin{split}
\CR(\pa^{\al}\CG g_{\vps},\pa^\al g_{\vps})
=&
-\frac{1}{2}\frac{d}{dt}\|\pa^\al g_{\vps}\|^2
+(\CL_1\pa^\al g_{\vps},\pa^\al g_{\vps})+\CR(v_1\pa_{x_1}\pa^\al g_{\vps},\pa^\al g_{\vps})
\\&
-\sum\limits_{\al'\leq\al}C_{\alpha'}^{\alpha} \CR(\Ga^*(\pa^{\al'}\bh_\vps,\pa^{\al-\al'} g_{\vps}),\pa^\al g_{\vps}).
\end{split}
\end{equation}
The constant $C_{\alpha'}^{\alpha}$ is the multinomial coefficient.  
Here and below we have used the notation $\|\cdot\|=\|\cdot\|_{L^2_{x,v}}$ to be the $L^2$ norm in $x$ and $v$, and similarly the inner product is $(\cdot,\cdot)= (\cdot,\cdot)_{L^2_{x,v}}$.  We note that the analogous property to \eqref{srb.sym}  for $g_\vps$ can be used to show that the boundary term satisfies
$$
\CR(v_1\pa_{x_1}\pa^\al g_{\vps},\pa^\al g_{\vps})=0.  
$$
 We will perform our estimates by  iterating upon $|\alpha|$.

From this identity with $|\alpha|=0$, we apply \eqref{L1L2.es} to obtain for $0<t< T_0$ that 
\begin{multline}
\| g_{\vps}(t)\|^2
+
\la
\int_t^{T_0}\|g_{\vps}\|^2_{D}d\tau
\lesssim 
\int_{t}^{T_0} \left| \CR(\CG g_{\vps}, g_{\vps}) \right| ~ d\tau
% \\
+
\int_{t}^{T_0}  \left| \CR( g_{\vps},\Ga(\bh_\vps, g_{\vps}))  \right| ~ d\tau
\\
+
T_0\sup\limits_{t\leq \tau\leq T_0}
\| g_{\vps}(\tau)\|^2.
\notag
\end{multline}
We also split into $\langle v \rangle \ge R$ and  $\langle v \rangle \le R$ for some large $R\ge 1$ to handle the lower bound in \eqref{L1L2.es} which results in the last term above.  It further follows from Lemma \ref{bnp.es} with $w_{q,\vartheta}=1$   that 
\begin{multline}
\| g_{\vps}(t)\|^2+\la \int_t^{T_0}\|g_{\vps}\|^2_{D}d\tau
\lesssim 
\left(\sup\limits_{t\leq \tau\leq T_0}
\left\| g_{\vps}(\tau)\right\| \right)
\int_{t}^{T_0}
\left\|\CG g_{\vps}(\tau)\right\|d\tau
\\
+C_\eta
\int_{t}^{T_0}\left\| g_{\vps}\right\|^2_{D}
\left\|\bh_\vps\right\|^2_{L^\infty_xL^2_{v}}
d\tau
+
C_\eta
\int_{t}^{T_0}\left\| g_{\vps}\right\|^2 \left\|\bh_\vps\right\|^2_{L^\infty_xL^2_{v,D}}d\tau
\\
+
\eta
\int_t^{T_0}
\left\| g_{\vps}\right\|_D^2d\tau
+T_0\sup\limits_{t\leq \tau\leq T_0}
\left\| g_{\vps}(\tau)\right\|^2,
\notag
\end{multline}
for any $0<t<T_0$.  Above and in the rest of this section we use the additional norm notation $\left\|\cdot\right\|_D^2=\left\|\cdot\right\|_{L^2_{x,v,D}}^2$.
At the same time, with the Sobolev embedding $H^1_{x_1} \supset L^\infty(I)$, we note that using \eqref{lb.loc.a2} we have 
\begin{equation}\label{est1.epsilon0}
\sup\limits_{t\leq \tau\leq T_0}\left\|\bh_\vps\right\|^2_{L^\infty_xL^2_{v}}
\lesssim
 \left\|\bh_\vps\right\|^2_{ L^1_{\bar{k}}L^\infty_{T_0}H^1_{x_1}L^2_v}
 \lesssim
 \left\|\bh \right\|^2_{ L^1_{\bar{k}}L^\infty_{T_0}H^1_{x_1}L^2_v}
 \le C\epsilon_0
\end{equation}
and 
\begin{equation}\label{est2.epsilon0}
\int_{t}^{T_0}\left\|\bh_\vps\right\|^2_{L^\infty_xL^2_{v,D}}d\tau
\lesssim
\left\|\bh_\vps\right\|^2_{L^1_{\bar{k}}L^2_{T_0}H^1_{x_1}L^2_{v,D}}
\lesssim
\left\|\bh\right\|^2_{L^1_{\bar{k}}L^2_{T_0}H^1_{x_1}L^2_{v,D}}
 \le C\epsilon_0,
\end{equation}
where the constant $C >0$ above is uniform and it does not depend on $\vps>0$.

Therefore, for $\epsilon_0>0$ small enough in \eqref{lb.loc.a2} and $T_0>0$ sufficiently small we have
\begin{equation}
\| g_{\vps}(t)\|^2+
\la \int_t^{T_0}\|g_{\vps}\|^2_{D}d\tau 
\lesssim
\int_{t}^{T_0}\left\|\CG g_{\vps}\right\|d\tau,
\label{cg.ij.noD}
%\end{split}
\end{equation}
for any $0<t<T_0$.   Then
 it is direct that the estimate \eqref{cg.ij.noD} implies that $\CG$ defined in \eqref{def.gselfa} is injective over $\W_1$. Hence,  the {\it claim} is proved.

For the time integral term on the right-hand side of \eqref{cg.ij.noD}, we should note that for $g_\vps\in \W_1$, and for any $\alpha$, it holds that
\begin{multline} \label{CG}
\int_{t}^{T_0}\left\|\pa^{\al}\CG g_{\vps}\right\|_{L^2_{x,v}}d\tau
\\
\lesssim
\int_{t}^{T_0}
\left(
\|\pa_\tau\pa^{\al}g_\vps\|+\|v\cdot \pa^{\al}\na_xg_\vps\|
+
\|\langle v \rangle^{(\gamma/2+s)^+}\pa^{\al} g_\vps\|_{L^2_xH^2_{v}}
\right)d\tau
\\
+
\int_{t}^{T_0}
\left(
\sum_{\al' \le \al}
\|\langle v \rangle^{(\gamma/2+s)^+}\pa^{\al-\al'} g_\vps\|_{L^2_xH^2_{v}}
\|\pa^{\al'}\bh_\vps\|_{L_x^\infty L^2_v}
\right)
d\tau
\le C_\vps
<\infty.
\end{multline}
Here we have used \cite[Proposition 6.10, page 4107]{MS-2016-JDE} to estimate $\CL_1\pa^{\al-\al'} g_\vps$ and $\Ga(\pa^{\al'}\bh_\vps,\pa^{\al-\al'}g_\vps)$.

In particular to obtain \eqref{CG} we used that for any $\alpha'$ we have
\begin{equation}\label{est1.epsilon}
\sup\limits_{t\leq \tau\leq T_0}\left\|\pa^{\al'}\bh_\vps\right\|^2_{L^\infty_xL^2_{v}}\leq C_\vps \left\|\bh\right\|^2_{ L^1_{\bar{k}}L^\infty_{T_0}H^1_{x_1}L^2_v},
\end{equation}
and for later use we additionally have that
\begin{equation}\label{est2.epsilon}
\int_{t}^{T_0}\left\|\pa^{\al'}\bh_\vps\right\|^2_{L^\infty_xL^2_{v,D}}d\tau\leq C_\vps
\left\|\bh\right\|^2_{L^1_{\bar{k}}L^2_{T_0}H^1_{x_1}L^2_{v,D}},
\end{equation}
where $C_\vps >0$ above may depend on $\vps$.  The terms in the upper bound above are finite due to \eqref{lb.loc.a2}.

Now we will perform an estimate similar to \eqref{cg.ij.noD} but including derivatives: $\partial^\alpha$.  A key point is that the estimates \eqref{est1.epsilon} and \eqref{est2.epsilon} depend crucially upon $\vps>0$.  However we need a fixed uniform smallness of $\epsilon_0>0$ in \eqref{lb.loc.a2} and we can not allow $\epsilon_0\to 0$ as $\vps\to0$.   Therefore we will carefully iterate the derivatives such that we only use  \eqref{lb.loc.a2} when we use the uniform in $\vps>0$ estimates \eqref{est1.epsilon0} and \eqref{est2.epsilon0} with no derivatives on $\bh_\vps$.  When we have derivatives on $\bh_\vps$ such as in \eqref{est1.epsilon} and \eqref{est2.epsilon} then we instead use the previous steps in the iteration to close our estimates.

In the case $|\alpha| =1$, using \eqref{gip} and taking one derivative we can again apply \eqref{L1L2.es} to obtain for $0<t< T_0$ that 
\begin{multline}\notag 
\|\pa^\al g_{\vps}(t)\|^2+\la \int_t^{T_0}\|\pa^{\al}g_{\vps}\|^2_{D}d\tau
\lesssim 
\int_{t}^{T_0} \left| \CR(\pa^{\al}\CG g_{\vps},\pa^\al g_{\vps}) \right| ~ d\tau
\\
+
\int_{t}^{T_0}  \left| \CR(\pa^{\al} g_{\vps},\Ga(\bh_\vps,\pa^\al g_{\vps}))  \right| ~ d\tau
+
\int_{t}^{T_0}  \left| \CR(g_{\vps},\Ga(\pa^{\al} \bh_\vps,\pa^\al g_{\vps}))  \right| ~ d\tau
\\
+
T_0\sup\limits_{t\leq \tau\leq T_0}\|\pa^\al g_{\vps}(\tau)\|^2.
\end{multline}
It further follows from Lemma \ref{bnp.es} with $w_{q,\vartheta}=1$ and Sobolev embedding  that 
\begin{multline}\notag 
\|\pa^\al g_{\vps}(t)\|^2+\la \int_t^{T_0}\|\pa^{\al}g_{\vps}\|^2_{D}d\tau
\lesssim 
\sup\limits_{t\leq \tau\leq T_0}
\left\|\pa^{\al} g_{\vps}(\tau)\right\|
\int_{t}^{T_0}
\left\|\pa^{\al}\CG g_{\vps}(\tau)\right\|d\tau
\\
+
C_\eta
\int_{t}^{T_0}\left\|\pa^{\al} g_{\vps}\right\|^2_{D}
\left\|\bh_\vps\right\|^2_{L^\infty_xL^2_{v}}
d\tau
+
C_\eta
\int_{t}^{T_0}\left\|\pa^{\al} g_{\vps}\right\|^2 \left\|\bh_\vps\right\|^2_{L^\infty_xL^2_{v,D}}d\tau
\\
+
C_\eta
\int_{t}^{T_0}\left\| g_{\vps}\right\|^2_{D}
\left\|\pa^{\al}\bh_\vps\right\|^2_{L^\infty_xL^2_{v}}
d\tau
+
C_\eta
\int_{t}^{T_0}\left\| g_{\vps}\right\|^2 \left\|\pa^{\al}\bh_\vps\right\|^2_{L^\infty_xL^2_{v,D}}d\tau
\\
+\eta\int_t^{T_0}
\left\|\pa^{\al} g_{\vps}\right\|_D^2d\tau
+
T_0\sup\limits_{t\leq \tau\leq T_0}\|\pa^\al g_{\vps}(\tau)\|^2,
\end{multline}
which holds for any $0<t<T_0$.    We conclude from \eqref{lb.loc.a2}, \eqref{est1.epsilon0} and \eqref{est2.epsilon0}  and the uniform smallness of $\epsilon_0>0$ that
\begin{multline}\notag 
\|\pa^\al g_{\vps}(t)\|^2+\la \int_t^{T_0}\|\pa^{\al}g_{\vps}\|^2_{D}d\tau
\lesssim 
\int_{t}^{T_0}
\left\|\pa^{\al}\CG g_{\vps}(\tau)\right\|d\tau
\\
+
\int_{t}^{T_0}\left\| g_{\vps}\right\|^2_{D}
\left\|\pa^{\al}\bh_\vps\right\|^2_{L^\infty_xL^2_{v}}
d\tau
+
\int_{t}^{T_0}\left\| g_{\vps}\right\|^2 \left\|\pa^{\al}\bh_\vps\right\|^2_{L^\infty_xL^2_{v,D}}d\tau.
\end{multline}
Next using \eqref{est1.epsilon} and \eqref{est2.epsilon} (in this next step we are not using the smallness of $\epsilon_0>0$) we take a suitable linear combination of the last estimate with \eqref{cg.ij.noD}  to obtain that
\begin{equation}\label{cg.ij.1D}
\sum_{|\alpha| \le 1}
\|\pa^\al g_{\vps}(t)\|^2
+
\la \sum_{|\alpha| \le 1}\int_t^{T_0}\|\pa^{\al}g_{\vps}\|^2_{D}d\tau
\lesssim 
\sum_{|\alpha| \le 1}
\int_{t}^{T_0}
\left\|\pa^{\al}\CG g_{\vps}(\tau)\right\|d\tau.
\end{equation}
In particular we multiply \eqref{cg.ij.noD}  by a suitable large constant and then add it to the previous line to obtain \eqref{cg.ij.1D}.

In general, for $\al = (\al_1, \bar{\al})$ and $\bar{\al}= (\bar{\al}_2, \bar{\al}_3)$ we will prove that the following
\begin{multline}\label{cg.ij}
\sum\limits_{\al_1\leq1,|\bar{\al}|\leq6\atop{|\al|  \leq m}}
\|\pa^\al g_{\vps}(t)\|^2
+
\la \sum\limits_{\al_1\leq1,|\bar{\al}|\leq6\atop{|\al|  \leq m}} \int_t^{T_0}\|\pa^{\al}g_{\vps}\|^2_{D}d\tau
\\
\lesssim 
\sum\limits_{\al_1\leq1,|\bar{\al}|\leq6\atop{|\al|  \leq m}}
\int_{t}^{T_0}
\left\|\pa^{\al}\CG g_{\vps}(\tau)\right\|d\tau.
\end{multline}
holds for $m\in \{2,\ldots,7\}$.  The $m=0$ case was shown in \eqref{cg.ij.noD} and the $m=1$ case was shown in \eqref{cg.ij.1D}.

We assume that \eqref{cg.ij} holds for $|\alpha|=m$ and then we prove that it further holds for $|\alpha|=m+1$.   
Consider any fixed $\alpha$ with $|\alpha| =m+1$, again using \eqref{gip} and  \eqref{L1L2.es} we obtain for $0<t< T_0$ that 
\begin{multline}\notag 
\|\pa^\al g_{\vps}(t)\|^2+\la \int_t^{T_0}\|\pa^{\al}g_{\vps}\|^2_{D}d\tau
\lesssim 
\int_{t}^{T_0} \left| \CR(\pa^{\al}\CG g_{\vps},\pa^\al g_{\vps}) \right| ~ d\tau
\\
+
\int_{t}^{T_0}  \left| \CR(\pa^{\al} g_{\vps},\Ga(\bh_\vps,\pa^\al g_{\vps}))  \right| ~ d\tau
+
\int_{t}^{T_0}  
\sum\limits_{\al' < \al}
\left| \CR(\pa^{\al'} g_{\vps},\Ga(\pa^{\al-\al'} \bh_\vps,\pa^\al g_{\vps}))  \right| ~ d\tau
\\
+
T_0\sup\limits_{t\leq \tau\leq T_0}\|\pa^\al g_{\vps}(\tau)\|^2.
\end{multline}
It similarly follows from Lemma \ref{bnp.es}  and the Sobolev embedding  that 
\begin{multline}\notag 
\|\pa^\al g_{\vps}(t)\|^2+\la \int_t^{T_0}\|\pa^{\al}g_{\vps}\|^2_{D}d\tau
\lesssim 
\sup\limits_{t\leq \tau\leq T_0}
\left\|\pa^{\al} g_{\vps}(\tau)\right\|
\int_{t}^{T_0}
\left\|\pa^{\al}\CG g_{\vps}(\tau)\right\|d\tau
\\
+
C_\eta
\int_{t}^{T_0}\left\|\pa^{\al} g_{\vps}\right\|^2_{D}
\left\|\bh_\vps\right\|^2_{L^\infty_xL^2_{v}}
d\tau
+
C_\eta
\int_{t}^{T_0}\left\|\pa^{\al} g_{\vps}\right\|^2 \left\|\bh_\vps\right\|^2_{L^\infty_xL^2_{v,D}}d\tau
\\
%+
%C_\eta
%\sum\limits_{\al' < \al}
%\int_{t}^{T_0}
%\left\| \pa^{\al'}g_{\vps}\right\|^2_{D}
%\left\|\pa^{\al-\al'}\bh_\vps\right\|^2_{L^\infty_xL^2_{v}}
%d\tau
+
C_\eta
\int_{t}^{T_0}
\sum\limits_{\al' < \al}
\left( \left\| \pa^{\al'}g_{\vps}\right\|^2 \left\|\pa^{\al-\al'}\bh_\vps\right\|^2_{L^\infty_xL^2_{v,D}}
+
\left\| \pa^{\al'}g_{\vps}\right\|^2_{D}
\left\|\pa^{\al-\al'}\bh_\vps\right\|^2_{L^\infty_xL^2_{v}}
\right) 
d\tau
\\
+\eta\int_t^{T_0}
\left\|\pa^{\al} g_{\vps}\right\|_D^2d\tau
+
T_0\sup\limits_{t\leq \tau\leq T_0}\|\pa^\al g_{\vps}(\tau)\|^2,
\end{multline}
which holds for any $0<t<T_0$.   Again using \eqref{lb.loc.a2}, \eqref{est1.epsilon0} and \eqref{est2.epsilon0}  and the uniform smallness of $\epsilon_0>0$ we obtain
\begin{multline}\notag 
\|\pa^\al g_{\vps}(t)\|^2+\la \int_t^{T_0}\|\pa^{\al}g_{\vps}\|^2_{D}d\tau
\lesssim 
\int_{t}^{T_0}
\left\|\pa^{\al}\CG g_{\vps}(\tau)\right\|d\tau
\\
+
\int_{t}^{T_0}
\sum\limits_{\al' < \al}
\left( \left\| \pa^{\al'}g_{\vps}\right\|^2 \left\|\pa^{\al-\al'}\bh_\vps\right\|^2_{L^\infty_xL^2_{v,D}}
+
\left\| \pa^{\al'}g_{\vps}\right\|^2_{D}
\left\|\pa^{\al-\al'}\bh_\vps\right\|^2_{L^\infty_xL^2_{v}}
\right) 
d\tau.
\end{multline}
Next using \eqref{est1.epsilon} and \eqref{est2.epsilon} (and not relying on the smallness of $\epsilon_0>0$), since $\al' < \al$ we take a suitable linear combination of above last estimate with \eqref{cg.ij} for $|\alpha|=m$  to obtain that \eqref{cg.ij} also holds for $|\alpha|=m+1$.  In particular again we multiply \eqref{cg.ij} for $|\alpha|=m$   by a suitable large constant and then add it to the previous line to obtain \eqref{cg.ij} for $|\alpha|=m+1$, which proves \eqref{cg.ij}.

Furthermore, we define the codomain of the mapping $\CG$ on $\W_1$ as
\begin{equation*}
\begin{split}
\W_2=&\big\{w:w=\CG g, \ g\in \W_1\big\}\subset L^1(0,T_0; L^2(\Om\times\R^3)),
\end{split}
\end{equation*}
and we define the  functional
\begin{equation*}
\begin{split}
&\CM:\ \ \W_2\rightarrow \C,\\
& w_\vps=\CG h_{\vps}\mapsto (h_{\vps}(0),g_{0,\vps})
-\int_{0}^{T_0}(\CL_2\bh_{\vps},h_{\vps})dt
=\CM(w_\vps),
\end{split}
\end{equation*}
where $h_{\vps}\in\W_1$ is uniquely determined by $w_\vps\in\W_2$ as $\CG:\W_1\to\W_2$ is bijective.
%H^1(0,T_0; \CS(\Om\times\R^3))$ with $h_{\vps}(T_0)=0.$
We note using \eqref{cg.ij.noD}  we have that
\begin{align}
%\begin{split}
|\CM(w_{\vps})|&\leq \|h_{\vps}(0)\|\|g_{0,\vps}\|
+C\|\bh_{\vps}\|_{L^\infty(0,T_0;L^2(\Om\times\R^3))}\|h_{\vps}\|_{L^1(0,T_0;L^2(\Om\times\R^3))}\notag\\
&\leq C_{T_0}(\vps)\left(\|g_{0,\vps}\|
+\|\bh_{\vps}\|_{L^\infty(0,T_0;L^2(\Om\times\R^3))}\right)\|\CG h_{\vps}\|_{L^1(0,T_0;L^2(\Om\times\R^3))}.\label{CMbd0}
%\end{split}
\end{align}
This implies that $\CM:\W_2\to\C$ can be extended to be  a bounded linear functional on  $L^1(0,T_0;L^2(\Om\times\R^3))$.
%$(\W_2,\|\cdot\|_{L^1(0,T_0;L^2(\Om\times\R^3)})$ (after extension).
So,  using the Hahn-Banach Theorem, there is $g_{\vps}\in L^\infty(0,T_0;L^2(\Om\times\R^3))$ such that
\begin{equation}
\label{lite.p1}
\CM(w_{\vps})=\int_0^{T_0}( g_{\vps}(t), w_{\vps}(t))dt,\quad \forall\,w_{\vps}\in L^1(0,T_0;L^2(\Om\times\R^3),
\end{equation}
and it holds that
$$
\|g_{\vps}\|_{L^\infty(0,T_0;L^2(\Om\times\R^3))}\leq C_{T_0}(\vps)\left(\|g_{0,\vps}\|
+\|\bh_{\vps}\|_{L^\infty(0,T_0;L^2(\Om\times\R^3))}\right).
$$
Recall \eqref{def.W1}. In terms of \eqref{lite.p1}, for any $h_\vps\in \W_1$, it  follows that
%all $h_{\vps}\in H^1(0,T_0; \CS(\Om\times\R^3))$ with $h_{\vps}(T_0)=0$, it follows
\begin{equation}\label{wf.g}
\begin{split}
\CM(\CG  h_{\vps})=&\int_0^{T_0}(g_{\vps}(t), \CG  h_{\vps})dt=(h_{\vps}(0),g_{0,\vps})
-\int_{0}^{T_0}(\CL_2\bh_{\vps},h_{\vps})dt,
\end{split}
\end{equation}
which implies that $g_\vps\in L^\infty(0,T_0;L^2(\Om\times\R^3))$ is a weak solution to \eqref{lc.lb.vps}. Moreover, from the energy estimate \eqref{cg.ij.noD},  it is straightforward to verify $g_\vps\in L^2_{T_0}L_x^2L^2_{v,D}$.  
%We will use a similar procedure in {\it Step 3.}

\medskip
\noindent{\it Step 3.}
In this step, we are going to show that the obtained weak solution $g_\vps$ possesses higher regularity such as 
\begin{equation}
\label{wdal}
\pa^{\al}g_\vps\in L^\infty_{T_0}L_x^2 L^2_v\cap L^2_{T_0}L_x^2L^2_{v,D}
\end{equation}
for $\al\in\CA$ with 
$$
\CA=\{\al:\al=(\al_1,\bar{\al}), \al_1\leq1, |\bar{\al}|\leq6\}.
$$
%$g_\vps\in L^\infty_{T_0}H^1_{x_1}H^6_{\bar{x}} L^2_v(\R^3)\cap L^2_{T_0}H^1_{x_1}H^6_{\bar{x}}L^2_{v,D}(\R^3))$
The proof uses an induction on $n=|\al|$ with the help of the weak formulation in \eqref{lc.lb.vps}.  The main issue to be dealt with arises from the fact that $\pa^\al$ does not commute with $\CG$ since the linear operator $\Ga(\bh_\vps,\cdot)$ depends on the function $\bh_\vps(t,x,v)$. 

Indeed, for $|\al|=1$, thanks to \eqref{CG}, we can still obtain a
bounded linear functional on $L^1(0,T_0;L^2(\Om\times\R^3))$,
%$(\W_2,\|\cdot\|_{L^1(0,T_0;L^2(\Om\times\R^3)})$
given by
%, as follows
\begin{equation*}
\begin{split}
&\CM_\al:\ \ \W_2\rightarrow \C,\\
& w_\vps=\CG h_{\vps}\mapsto -(\pa^{\al}h_{\vps}(0),g_{0,\vps})-
\int_{0}^{T_0}(\CL_2\pa^{\al}\bh_{\vps},h_{\vps})dt+\int_{0}^{T_0}(\Ga(\pa^{\al}\bh_\vps,g_{\vps}),h_{\vps})dt.
%\ |\al|=1.
\end{split}
\end{equation*}
The above uses $g_{\vps}$ from \eqref{lite.p1}.   Note further that for $|\alpha| =1$ we have
\begin{equation}\label{CM.comparison}
\CM(\pa^{\al} \CG h_{\vps}) = \CM( \CG \pa^{\al} h_{\vps}) - 
\int_{0}^{T_0}(g_{\vps}, \Ga^*(\pa^{\al}\bh_\vps, h_{\vps})) dt
=
- \CM_\al (\CG h_{\vps}).
\end{equation}
Moreover, similar to obtaining \eqref{CMbd0}, after integrating by parts on the initial data term and using \eqref{cg.ij.noD}, one has
\begin{align*}
%\begin{split}
|\CM_\al(w_{\vps})|&\leq \|h_{\vps}(0)\|\|\pa^{\al} g_{0,\vps}\|
+C\|\pa^{\al}\bh_{\vps}\|_{L^\infty(0,T_0;L^2(\Om\times\R^3))}\|h_{\vps}\|_{L^1(0,T_0;L^2(\Om\times\R^3))}\notag\\
&\qquad+C\|\pa^{\al}\bh_{\vps}\|_{L^\infty(0,T_0;H^2_xL^2_v)}\|g_{\vps}\|_{L^2_{T_0}L_x^2L^2_{v,D}(\R^3)}
\|h_{\vps}\|_{L^2_{T_0}L_x^2L^2_{v,D}(\R^3)}\notag\\
&\qquad+C\|g_{\vps}\|_{L^\infty(0,T_0;L^2(\Om\times\R^3))}\|\pa^{\al}\bh_{\vps}\|_{L^2_{T_0}H_x^2L^2_{v,D}(\R^3)}
\|h_{\vps}\|_{L^2_{T_0}L_x^2L^2_{v,D}(\R^3)}\notag\\
&\leq C_{T_0}
\| \CG h_{\vps}\|_{L^1_{T_0}L^2_{x,v}} \Big(\|\pa^{\al} g_{0,\vps}\|
+\|\pa^{\al}\bh_{\vps}\|_{L^\infty(0,T_0;L^2(\Om\times\R^3))}
\notag\\
&\qquad\qquad+\|\pa^{\al}\bh_{\vps}\|_{L^\infty(0,T_0;H^2_xL^2_v)}\|g_{\vps}\|_{L^2_{T_0}L_x^2L^2_{v,D}(\R^3)}\notag\\
&\qquad\qquad
+\|g_{\vps}\|_{L^\infty(0,T_0;L^2(\Om\times\R^3))}\|\pa^{\al}\bh_{\vps}\|_{L^2_{T_0}H_x^2L^2_{v,D}(\R^3)}\Big)
.
%\label{CMbd1}
%\end{split}
\end{align*}
Therefore, as before, there is $g^{\al}_\vps\in L^\infty(0,T_0;L^2(\Om\times\R^3))$ such that
\begin{equation}\label{lite.p2}
\begin{split}
\CM_\al (w_{\vps})
=
\int_0^{T_0}( g^{\al}_{\vps}(t), w_{\vps}(t))dt,\quad 
\forall\,w_{\vps}\in L^1(0,T_0;L^2(\Om\times\R^3)),
\end{split}
\end{equation}
and similarly to before
\begin{multline*}
%\begin{split}
\|g^{\al}_{\vps}\|_{L^\infty(0,T_0;L^2(\Om\times\R^3))}
\leq C_{T_0}\Big( \|\pa^{\al} g_{0,\vps}\|
+\|\pa^{\al}\bh_{\vps}\|_{L^\infty(0,T_0;L^2(\Om\times\R^3))}\\
+\|\pa^{\al}\bh_{\vps}\|_{L^\infty(0,T_0;H^2_xL^2_v)}\|g_{\vps}\|_{L^2_{T_0}L_x^2L^2_{v,D}(\R^3)}
\\
+\|g_{\vps}\|_{L^\infty(0,T_0;L^2(\Om\times\R^3))}\|\pa^{\al}\bh_{\vps}\|_{L^2_{T_0}H_x^2L^2_{v,D}(\R^3)}\Big).
%\end{split}
\end{multline*}
Note that \eqref{lite.p2} then reads as 
\begin{align}
%\begin{split}
-\int_0^{T_0}&(\pa_th_{\vps}, g^{\al}_{\vps})+\int_0^{T_0}(v\cdot\na_{x}h_{\vps},g^{\al}_{\vps})
+\int_0^{T_0}(\mathscr{L}_1h_{\vps},g^{\al}_{\vps})-\int_0^{T_0}(\Ga(\bh_\vps,g^{\al}_{\vps}),h_{\vps})\notag\\
&=-(\pa^{\al}h_{\vps}(0),g_{0,\vps})-
\int_0^{T_0}(\mathscr{L}_2\pa^{\al}\bh_\vps,g^{\al}_{\vps})+\int_0^{T_0}(\Ga(\pa^{\al}\bh_\vps,g_{\vps}),g^{\al}_{\vps}),\label{lite.p3}
%\end{split}
\end{align}
meaning that $g^{\al}_{\vps}$ is a weak solution to the following problem:
\begin{eqnarray*}%\label{1vps}
\left\{\begin{array}{rll}
&\pa_t g^{\al}_{\vps}+v\cdot\na_{x}g^{\al}_{\vps}+\mathscr{L}_1g^{\al}_{\vps}-\Ga(\bh_\vps,g^{\al}_{\vps})=
-\mathscr{L}_2\pa^{\al}\bh_\vps+\Ga(\pa^{\al}\bh_\vps,g_{\vps}),\\
&g^{\al}_{\vps}(0,x,v)=\pa^{\al}g_{0,\vps}(x,v),\\
&g_{\vps}(t,-1,\bar{x},v_1,\bar{v})|_{v_1>0}=g_{\vps}(t,-1,\bar{x},-v_1,\bar{v}),\\
&g_{\vps}(t,1,\bar{x},v_1,\overline{v})|_{v_1<0}=g_{\vps}(t,1,\bar{x},-v_1,\bar{v}).
\end{array}\right.
\end{eqnarray*}
Then using \eqref{lite.p1} with \eqref{CM.comparison} and \eqref{lite.p2}, 
we observe that
\begin{equation*}
\begin{split}
\int_0^{T_0}(g^{\al}_{\vps},\CG h_\vps)dt=-\int_0^{T_0}(g_{\vps},\pa^{\al}\CG h_\vps)dt,
\end{split}
\end{equation*}
for any $h_\vps$, so that $g^{\al}_{\vps}=\pa^{\al}g_\vps$ with $|\al|=1$ in the weak sense. This proves \eqref{wdal} with $|\al|=1$.

Now we assume that \eqref{wdal} is true
for $\al\in\CA$ with $|\al|=m\geq1$.
Letting  $\al\in\CA$ with $|\al|=m+1$, as shown before,  one sees that
\begin{equation*}
\begin{split}
&\CM_{\al}:\ \ \W_2\rightarrow \C,\\
& w_\vps=\CG h_{\vps}\mapsto (-1)^{|\al|}(\pa^{\al}h_{\vps}(0),g_{0,\vps})-
\int_{0}^{T_0}(\CL_2\pa^{\al}\bh_{\vps},h_{\vps})dt\\&\qquad\qquad\qquad\qquad+
\sum\limits_{\al>\al'\geq0}\tilde{C}^{\alpha}_{\alpha'}
\int_{0}^{T_0}(\Ga(\pa^{\al-\al'}\bh_\vps,\pa^{\al'}g_{\vps}),h_{\vps})dt,
%\ |\al|=m+1.
\end{split}
\end{equation*}
can be extended to be a bounded linear functional on $L^\infty(0,T_0;L^2(\Om\times\R^3))$. Here, since $|\al'|\leq m$, the induction assumption has been used.  Then we also have
\begin{multline}\label{CM.comparison.H}
\CM(\pa^{\al} \CG h_{\vps}) = \CM( \CG \pa^{\al} h_{\vps}) - 
\sum\limits_{\al>\al'\geq0} C^{\alpha}_{\alpha'}
\int_{0}^{T_0}(g_{\vps}, \Ga^*(\pa^{\al-\al'}\bh_\vps, \pa^{\al'}h_{\vps})) dt
\\
=
(-1)^{|\al|} \CM_\al (\CG h_{\vps}).
\end{multline}
In \eqref{CM.comparison.H} and above $C^{\alpha}_{\alpha'}$ and $\tilde{C}^{\alpha}_{\alpha'}$ are the constants that arise from the multinomial formula and they depend only on $\alpha$ and $\alpha'$.

Then, similarly, there is $g^{\al}_\vps\in L^\infty(0,T_0;L^2(\Om\times\R^3))$ such that
\begin{equation*}
\begin{split}
\CM_\al (w_{\vps})=\int_0^{T_0}( g^{\al}_{\vps}(t), w_{\vps}(t))dt,\quad \forall\,w_{\vps}\in L^1(0,T_0;L^2(\Om\times\R^3)),
\end{split}
\end{equation*}
and again  we have
\begin{equation*}
\begin{split}
&\|g^{\al}_{\vps}\|_{L^\infty(0,T_0;L^2(\Om\times\R^3)}\\
&\leq C_{T_0}\Big(\|\partial^{\al} g_{0,\vps}\|
+\|\pa^{\al}\bh_{\vps}\|_{L^\infty(0,T_0;L^2(\Om\times\R^3))}
\notag\\
&\qquad\qquad+
\sum\limits_{\al>\al'\geq0}\|\pa^{\al-\al'}\bh_{\vps}\|_{L^\infty(0,T_0;H^2_xL^2_v)}
\|\pa^{\al'}g_{\vps}\|_{L^2_{T_0}L_x^2L^2_{v,D}(\R^3)}
\\&
\qquad\qquad
+\sum\limits_{\al>\al'\geq0}\|\pa^{\al'}g_{\vps}\|_{L^\infty(0,T_0;L^2(\Om\times\R^3))}\|\pa^{\al-\al'}\bh_{\vps}\|_{L^2_{T_0}H_x^2L^2_{v,D}(\R^3)}\Big).
\end{split}
\end{equation*}
Consequently, similar to the previous arguments, one sees that $g^{\al}_{\vps}$ with $\al\in \CA$ and $|\al|=m+1$ is a weak solution to the following problem
\begin{eqnarray*}%\label{1vps}
\left\{\begin{array}{rll}
&\pa_t g^{\al}_{\vps}+v\cdot\na_{x}g^{\al}_{\vps}+\mathscr{L}_1g^{\al}_{\vps}-\Ga(\bh_\vps,g^{\al}_{\vps})=
-\mathscr{L}_2\pa^{\al}\bh_\vps+\sum\limits_{\al>\al'\geq0}
\Ga(\pa^{\al-\al'}\bh_\vps,\pa^{\al'}g_{\vps}),\\
&g^{\al}_{\vps}(0,x,v)=\pa^{\al}g_{0,\vps}(x,v),\\
&g_{\vps}(-1,\bar{x},v_1,\bar{v})|_{v_1>0}=g_{\vps}(-1,\bar{x},-v_1,\bar{v}),\\
&g_{\vps}(1,\bar{x},v_1,\bar{v})|_{v_1<0}=g_{\vps}(1,\bar{x},-v_1,\bar{v}),
\end{array}\right.
\end{eqnarray*}
and furthermore as in \eqref{CM.comparison.H},  $g^{\al}_{\vps}=\pa^{\al}g_\vps$ holds true in the weak sense as
\begin{equation*}
\begin{split}
\int_0^{T_0}(g^{\al}_{\vps},\CG h_\vps)dt=(-1)^{m+1}\int_0^{T_0}(g_{\vps},\pa^{\al}\CG h_\vps)dt,
\end{split}
\end{equation*}
for any $h_\vps$.
Hence, one has that $\pa^{\al}g_\vps\in L^\infty(0,T_0;L^2(\Om\times\R^3))$, and it is also straightforward to  further verify that $\pa^{\al}g_\vps\in L^2_{T_0}L_x^2L^2_{v,D}$. This justifies  \eqref{wdal}
for $\al\in\CA$ with $|\al|=m+1$. Thus, by induction,  \eqref{wdal} is proved.  Note that in the above steps many of the estimates depended crucially on fixed $\vps>0$.

\medskip
\noindent{\it Step 4.}
We are going to show that there is $g=g(t,x,v)$, $0<t<T_0$, $x\in\Omega$, $v\in \R^3$ with $g(t,x_1,\bar{x},v_1,\bar{v})=g(t,-x_1,\bar{x},-v_1,\bar{v})$ such that
\begin{equation*}
%\label{ }
g,\ \na_x g\in L^1_{\bar{k}}L^\infty_{T_0}L^2_{x_1}L^2_v\cap L^1_{\bar{k}}L^2_{T_0}L^2_{x_1}L^2_{v,D},
\end{equation*}
and
for any $|\al|\leq 1$, $\pa^{\al}g_\vps$ strongly converges to $\pa^\al g$ in 
%$L^1_{\bar{k}}L^\infty_{T_0}L^2_{x_1}L^2_v\cap L^1_{\bar{k}}L^2_{T_0}L^2_{x_1}L^2_{v,D}$ 
the above space
as $\vps\rightarrow0^+$.

Firstly, starting from the approximate solution $g_\vps$ with the estimate \eqref{wdal} in the regular Sobolev space, we need to prove that for any $|\al|\leq 1$, we have that
\begin{equation*}
%\label{ }
\pa^{\al}g_\vps \in L^1_{\bar{k}}L^\infty_{T_0}L^2_{x_1}L^2_v\cap L^1_{\bar{k}}L^2_{T_0}L^2_{x_1}L^2_{v,D}.
\end{equation*}
Indeed, letting $|\al|\leq 1$, to prove the estimates on $\pa^\al g_\vps$ in $L^1_{\bar{k}}L^\infty_{T_0}L^2_{x_1}L^2_v$,  we split the whole Fourier space $\Z^2_{\bar{k}}$ as $\{\bar{k}\neq 0\}\cup\{\bar{k}=0\}$, and write
\begin{align}
%\begin{split}
%\sum\limits_{|\al|\leq1}
\|\pa^{\al}g_\vps\|_{L^1_{\bar{k}}L^\infty_{T_0}L^2_{x_1}L^2_v}&=\int_{\Z^2}\sup\limits_{0\leq t\leq T_0}\|\widehat{\pa^{\al}g}_\vps\|d\Si(\bar{k})\notag\\
&=
%\int_{\bar{k}\neq0}\frac{1}{|\bar{k}|^{5/2}}\sup\limits_{0\leq t\leq T_0}\|\widehat{\Delta^{5/2}_{\bar{x}}\pa^{\al}g_\vps}\|d\Si(\bar{k})
\int_{\bar{k}\neq0}\sup\limits_{0\leq t\leq T_0}\|\widehat{\pa^{\al}g_\vps}\|d\Si(\bar{k})
+\sup\limits_{0\leq t\leq T_0}\left\|\int_{\T^2}\pa^{\al}g_\vps d\bar{x}\right\|.
\label{lite.p4}
%\end{split}
\end{align}
Above, and in the rest of  {\it Step 4}, we use the notation $\| \cdot \| = \| \cdot \|_{L^2_{x_1}L^2_{v}}$.

\medskip
\noindent{\it Estimate on the first term on the right-hand side of \eqref{lite.p4}}: To treat the sup norm in time we may rewrite it as
\begin{equation}
\label{lite.p40}
\sup\limits_{0\leq t\leq T_0}|\cdot|=\lim\limits_{p\to\infty} \|\cdot\|_{L^p_{T_0}}=\liminf\limits_{p\to\infty} \|\cdot\|_{L^p_{T_0}}.
\end{equation}
Let $\widehat{\Delta^{b}_{\bar{x}}g} := |\bar{k} |^b \hat{g}$ for $b\in \R$.  Then it holds that
\begin{align}
\int_{\bar{k}\neq0}\sup\limits_{0\leq t\leq T_0}\|\widehat{\pa^{\al}g_\vps}\|d\Si(\bar{k}) &=\int_{\bar{k}\neq0}\frac{1}{|\bar{k}|^{5/2}}\sup\limits_{0\leq t\leq T_0}\|\widehat{\Delta^{5/2}_{\bar{x}}\pa^{\al}g_\vps}\|d\Si(\bar{k})\notag\\
&=\int_{\bar{k}\neq0}\frac{1}{|\bar{k}|^{5/2}}\liminf\limits_{p\rightarrow\infty}
\left\|\|\widehat{\Delta^{5/2}_{\bar{x}}\pa^{\al}g_\vps}\|\right\|_{L^p_{T_0}}d\Si(\bar{k})\notag\\
&\leq \liminf\limits_{p\rightarrow\infty}\int_{\bar{k}\neq0}\frac{1}{|\bar{k}|^{5/2}}
\left\|\|\widehat{\Delta^{5/2}_{\bar{x}}\pa^{\al}g_\vps}\|\right\|_{L^p_{T_0}}d\Si(\bar{k}),\label{lite.p5}
\end{align}
where Fatou's lemma has been used in the last inequality. Let $p\geq 2$ with the conjugate $p'$ such that $1/p+1/p'=1$, then we can derive from H\"older's inequality that
\begin{align*}
&\int_{\bar{k}\neq0}\frac{1}{|\bar{k}|^{5/2}}
\left\|\|\widehat{\Delta^{5/2}_{\bar{x}}\pa^{\al}g_\vps}\|\right\|_{L^p_{T_0}}d\Si(\bar{k})\\
&\leq \left(\int_{\bar{k}\neq0}\frac{1}{|\bar{k}|^{5p'/2}}d\Si(\bar{k})\right)^{1/p'}
\left(\int_{\bar{k}\neq0}
\left\|\|\widehat{\Delta^{5/2}_{\bar{x}}\pa^{\al}g_\vps}\|\right\|^p_{L^p_{T_0}}d\Si(\bar{k})\right)^{1/p}\\
%&=C_{p'}\left(\int_0^{T_0}\int_{\bar{k}\neq0}\|\widehat{\Delta^{5/2}_{\bar{x}}\pa^{\al}g_\vps}\|^pd\Si(\bar{k})dt\right)^{1/p}\\
&\leq C_{p'}\cdot T_0^{1/p} \sup_{0\leq t\leq T_0}\left(\int_{\Z^2}
\|\widehat{\Delta^{5/2}_{\bar{x}}\pa^{\al}g_\vps}\|^pd\Si(\bar{k})\right)^{1/p}.
\end{align*}
Applying the inequality $\|\cdot\|_{\ell^p(\Z^2)}\leq \|\cdot\|_{\ell^2(\Z^2)}$ for any $2\leq p\leq \infty$, we see that
\begin{equation*}
%\label{ }
\left(\int_{\Z^2}
\|\widehat{\Delta^{5/2}_{\bar{x}}\pa^{\al}g_\vps}\|^pd\Si(\bar{k})\right)^{1/p}\leq \left(\int_{\Z^2}
\|\widehat{\Delta^{5/2}_{\bar{x}}\pa^{\al}g_\vps}\|^2d\Si(\bar{k})\right)^{1/2},
\end{equation*}
which by Plancherel's identity the last upper bound is further bounded as
\begin{equation*}
%\label{ }
\|\Delta^{5/2}_{\bar{x}}\pa^{\al}g_\vps\|_{L^2_{x,v}}\leq \sum\limits_{\al_1\leq1,|\bar{\al}|\leq6}\left\|\pa^{\al} g_{\vps}\right\|_{L^2_{x,v}}.
\end{equation*}
Plugging those estimates above into \eqref{lite.p5} and taking the $\liminf$ as $p\to\infty$ gives 
\begin{equation}
\label{lite.p50}
\int_{\bar{k}\neq0}\sup\limits_{0\leq t\leq T_0}\|\widehat{\pa^{\al}g_\vps}\|d\Si(\bar{k})\leq C \sum\limits_{\al_1\leq1,|\bar{\al}|\leq6} \sup_{0\leq t\leq T_0}\left\|\pa^{\al} g_{\vps}\right\|_{L^2_{x,v}}\leq C_\vps<\infty.
\end{equation}

\medskip
\noindent
{\it Estimate on the second term on the right-hand side of \eqref{lite.p4}}: We first apply Minkowski's inequality to obtain
\begin{equation*}
%\label{ }
\sup\limits_{0\leq t\leq T_0}\left\|\int_{\T^2}\pa^{\al}g_\vps d\bar{x}\right\|\leq\sup\limits_{0\leq t\leq T_0}\int_{\T^2}\|\pa^{\al}g_\vps\| d\bar{x} \leq \int_{\T^2}\sup\limits_{0\leq t\leq T_0}\|\pa^{\al}g_\vps\| d\bar{x}.
\end{equation*}
Further by \eqref{lite.p40} and Fatou's lemma, one has
\begin{align}
\int_{\T^2}\sup\limits_{0\leq t\leq T_0}\|\pa^{\al}g_\vps\| d\bar{x}= \int_{\T^2}\liminf\limits_{p\rightarrow\infty}
\Big\|\|\pa^{\al}g_\vps\|\Big\|_{L^p_{T_0}}d\bar{x}
\leq \liminf\limits_{p\rightarrow\infty} \int_{\T^2}
\Big\|\|\pa^{\al}g_\vps\|\Big\|_{L^p_{T_0}}d\bar{x}.\label{lite.p6}
\end{align}
Similarly to before, for any $p\geq 2$ with the conjugate $p'$,  by H\"older's inequality it holds that
\begin{align*}
\int_{\T^2}
\Big\|\|\pa^{\al}g_\vps\|\Big\|_{L^p_{T_0}}d\bar{x} &\leq \left(\int_{\T^2}1^{p'}d\bar{x}\right)^{1/p'}
\left(\int_{\T^2}\Big\|\|\pa^{\al}g_\vps\|\Big\|^p_{L^p_{T_0}}d\bar{x}\right)^{1/p}\\&=C_{p'}\left(\int_{\T^2}\int_0^{T_0}\left\|\pa^{\al}g_\vps\right\|^pdtd\bar{x}\right)^{1/p},
\end{align*}
which implies that
\begin{align*}
\liminf\limits_{p\rightarrow\infty} \int_{\T^2}
\Big\|\|\pa^{\al}g_\vps\|\Big\|_{L^p_{T_0}}d\bar{x} &\leq \liminf\limits_{p\rightarrow\infty}  C_{p'}\left(\int_{\T^2}\int_0^{T_0}\left\|\pa^{\al}g_\vps\right\|^pdtd\bar{x}\right)^{1/p}
\\&=C\sup\limits_{0\leq t\leq T_0}\sup_{\bar{x}\in \T^2}\left\|\pa^{\al}g_\vps\right\|.
\end{align*}
By the embedding $H^2(\T_{\bar{x}}^2)\subset L^\infty(\T_{\bar{x}}^2)$, plugging the above estimate into \eqref{lite.p6}, we have
\begin{equation}
\label{lite.p7}
\int_{\T^2}\sup\limits_{0\leq t\leq T_0}\|\pa^{\al}g_\vps\| d\bar{x} \leq C \sum\limits_{\al_1\leq1,|\bar{\al}|\leq6} \sup\limits_{0\leq t\leq T_0}\left\|\pa^{\al} g_{\vps}\right\|_{L^2_{x,v}}\leq C_\vps<\infty.
\end{equation}
Therefore, by \eqref{lite.p7} and \eqref{lite.p50}, it follows from \eqref{lite.p4} that  $\pa^{\al}g_\vps \in L^1_{\bar{k}}L^\infty_{T_0}L^2_{x_1}L^2_v$ for $|\al|\leq 1$.  We have the following estimate:
\begin{align*}
%\begin{split}
&\sum\limits_{|\al|\leq1}\|\pa^{\al}g_\vps\|_{L^1_{\bar{k}}L^2_{T_0}L^2_{x_1}L^2_v}=\sum\limits_{|\al|\leq1}\int_{\Z^2}
\left(\int_0^{T_0}\|\widehat{\pa^{\al}g}_\vps\|^2dt\right)^{1/2}d\Si(\bar{k})\notag \\
&=
\sum\limits_{|\al|\leq1}\int_{\bar{k}\neq0}\frac{1}{|\bar{k}|^{3/2}}
\left(\int_0^{T_0}\|\widehat{\Delta^{3/2}_{\bar{x}}\pa^{\al}g_\vps}\|^2dt\right)^{1/2}
d\Si(\bar{k})
\\&
\quad
%+\sum\limits_{|\al|\leq1}\int_{\T^2}\left(\int_0^{T_0}\|\pa^{\al}g_\vps\|^2dt\right)^{1/2}d\bar{x}
+\sum\limits_{|\al|\leq1}\left(\int_0^{T_0}\left\|\int_{\T^2} \pa^{\al}g_\vps d\bar{x}\right\|^2dt\right)^{1/2}
\notag \\
& \lesssim \left(\int_0^{T_0}\sum\limits_{\al_1\leq1,|\bar{\al}|\leq6}\left\|\pa^{\al} g_{\vps}(t)\right\|_{L^2_{x,v}}^2dt\right)^{1/2}\leq C_\vps<\infty.%\label{n-n2}
%\end{split}
\end{align*}
We have shown that $\pa^{\al}g_\vps \in L^1_{\bar{k}}L^2_{T_0}L^2_{x_1}L^2_{v,D}$ for $|\al|\leq 1$ where the bounds above depend upon $\vps>0$.

Secondly, we will now show that $g_\vps$ and $\na_x g_\vps$ are bounded   in $L^1_{\bar{k}}L^\infty_{T_0}L^2_{x_1}L^2_v\cap L^1_{\bar{k}}L^2_{T_0}L^2_{x_1}L^2_{v,D}$ uniformly for any $\vps>0$. In fact, turning to \eqref{lc.lb.vps}, we may perform the similar energy estimates as for obtaining \eqref{eng2} using \eqref{L1L2.es} to deduce that 
\begin{align*}
&\sum\limits_{|\al|\leq1}\int_{\Z^2}\sup\limits_{0\leq t\leq T_0}\|\widehat{\pa^{\al}g_\vps}(t,\bar{k})\|d\Si(\bar{k})
+\sqrt{2\de_0}\sum\limits_{|\al|\leq1}\int_{\Z^2}\left(\int_0^{T_0}\|\widehat{\pa^{\al}g_\vps}\|^2_{D}dt\right)^{1/2}d\Si(\bar{k})\notag 
\\
&
\lesssim \sum\limits_{|\al|\leq1}\int_{\Z^2}\|\widehat{\pa^{\al}g}_{0,\vps}\|d\Si(\bar{k})
+
\sum\limits_{|\al|\leq1}\int_{\Z^2}\left(\int_0^{T_0}\left|(\widehat{\pa^{\al}\Ga(\bh_\vps,g_\vps)},\widehat{\pa^{\al}g_\vps})\right|dt\right)^{1/2}d\Si(\bar{k})
\notag \\
&\qquad+\sum\limits_{|\al|\leq1}\int_{\Z^2}\left(\int_{0}^{T_0}|(\CL_2\widehat{\pa^{\al}\bh}_{\vps},\widehat{\pa^{\al}g_\vps})|dt\right)^{1/2}d\Si(\bar{k})
\notag \\
&\qquad
+
T_0\sum\limits_{|\al|\leq1}\int_{\Z^2}\sup\limits_{0\leq t\leq T_0}\|\widehat{\pa^{\al}g_\vps}(t,\bar{k})\|d\Si(\bar{k}).
\notag 
\end{align*}
The above upper bound is then further bounded by
\begin{align*}
&\lesssim \sum\limits_{|\al|\leq1}\int_{\Z^2}\|\pa^{\al}g_{0,\vps}\|d\Si(\bar{k})
+C_\eta
\sum\limits_{|\al|\leq1}\left\|\pa^{\al}g_\vps\right\|_{L^1_{\bar{k}}L^\infty_{T_0}L_{x_1}^2L^2_{v}}
\sum\limits_{|\al|\leq1}\left\|\pa^{\al}\bh_{\vps}\right\|_{L^1_{\bar{k}}L^2_{T_0}L_{x_1}^2L^2_{v,D}}
\notag \\
&\qquad+
C_\eta\sum\limits_{|\al|\leq1}\left\|\pa^{\al}g_\vps\right\|_{L^1_{\bar{k}}L^2_{T_0}L_{x_1}^2L^2_{v,D}}
\sum\limits_{|\al|\leq1}\left\|\pa^{\al}\bh_{\vps}\right\|_{L^1_{\bar{k}}L^\infty_{T_0}L_{x_1}^2L^2_{v}}
\notag \\
&\qquad+
C_\eta\sum\limits_{|\al|\leq1}\left\|\pa^{\al}\bh_{\vps}\right\|_{L^1_{\bar{k}}L^2_{T_0}L_{x_1}^2L^2_{v,D}}
+\eta\sum\limits_{|\al|\leq1}\left\|\pa^{\al}g_\vps\right\|_{L^1_{\bar{k}}L^2_{T_0}L_{x_1}^2L^2_{v,D}}
\notag \\
&\qquad
+
T_0\sum\limits_{|\al|\leq1}\int_{\Z^2}\sup\limits_{0\leq t\leq T_0}\|\widehat{\pa^{\al}g_\vps}(t,\bar{k})\|d\Si(\bar{k}).
\end{align*}
On the other hand, 
it is straightforward to check that
\begin{equation*}
\begin{split}
\sum\limits_{|\al|\leq1}&\left\|\pa^{\al}\bh_{\vps}\right\|_{L^1_{\bar{k}}L^\infty_{T_0}L^2_{x_1,v}}
+\sum\limits_{|\al|\leq1}\left\|\pa^{\al}\bh_{\vps}\right\|_{L^1_{\bar{k}}L^2_{T_0}L_{x_1}^2L^2_{v,D}}\\
\lesssim
& \sum\limits_{|\al|\leq1}\left\|\pa^{\al}\bh\right\|_{L^1_{\bar{k}}L^\infty_{T_0}L^2_{x_1,v}}
+\sum\limits_{|\al|\leq1}\left\|\pa^{\al}\bh\right\|_{L^1_{\bar{k}}L^2_{T_0}L_{x_1}^2L^2_{v,D}}
\leq C\eps_0,
\end{split}
\end{equation*}
and similarly
\begin{equation*}
\begin{split}
\sum\limits_{|\al|\leq1}\int_{\Z^2}\|\widehat{\pa^{\al}g}_{0,\vps}\|d\Si(\bar{k})\leq C\sum\limits_{|\al|\leq1}\int_{\Z^2}\|\widehat{\pa^{\al}g}_{0}\|d\Si(\bar{k}).
\end{split}
\end{equation*}
These two bounds above are independent of $\vps>0$.  
Consequently, we obtain the following uniform-in-$\vps$ estimate
\begin{multline}\label{gvps.eng2}
%\begin{split}
\sum\limits_{|\al|\leq1}\int_{\Z^2}\sup\limits_{0\leq t\leq T}\|\widehat{\pa^{\al}g_\vps}(t,\bar{k})\|d\Si(\bar{k})
+\sum\limits_{|\al|\leq1}\int_{\Z^2}\left(\int_0^T\|\widehat{\pa^{\al}g_\vps}\|^2_{D}dt\right)^{1/2}d\Si(\bar{k})\\
\lesssim \sum\limits_{|\al|\leq1}\int_{\Z^2}\|\widehat{\pa^{\al}g}_{0}\|d\Si(\bar{k})
+\sum\limits_{|\al|\leq1}\left\|\pa^{\al}\bh\right\|_{L^1_{\bar{k}}L^2_{T_0}L_{x_1}^2L^2_{v,D}}.
%\end{split}
\end{multline}
This holds on a short time interval $T_0>0$.

Lastly, we prove that for $|\al|\leq 1$, the sequence $(\pa^{\al}g_{\vps})_{\vps>0}$ is Cauchy in the space $L^1_{\bar{k}}L^\infty_{T_0}L^2_{x_1}L^2_v\cap L^1_{\bar{k}}L^2_{T_0}L^2_{x_1}L^2_{v,D}$ as $\vps \to 0^+$.
%for any sequence $\{\vps_n\}_{n=1}^{\infty}$.
To do so, it suffices to show that
%\begin{equation}
%\begin{align}
\begin{multline}
\sum\limits_{|\al|\leq1}\int_{\Z^2}\sup\limits_{0\leq t\leq T_0}\|\widehat{\pa^{\al}g_{\vps_1}}-\widehat{\pa^{\al}g_{\vps_2}}\|d\Si(\bar{k})
\\
+\sum\limits_{|\al|\leq1}\int_{\Z^2}\left(\int_0^{T_0}\|\widehat{\pa^{\al}g_{\vps_1}}-\widehat{\pa^{\al}g_{\vps_2}}\|^2_{D}dt\right)^{1/2}d\Si(\bar{k})
%\notag
\\
\lesssim \sum\limits_{|\al|\leq1}\int_{\Z^2}\|\widehat{\pa^{\al}g}_{0,\vps_1}-\widehat{\pa^{\al}g}_{0,\vps_2}\|d\Si(\bar{k})
+\sum\limits_{|\al|\leq1}\left\|\pa^{\al}\bh_{\vps_1}-\pa^{\al}\bh_{\vps_2}\right\|_{L^1_{\bar{k}}L^2_{T_0}L_{x_1}^2L^2_{v,D}}.\label{gvps.eng3}
\end{multline}
%\end{align}
%\end{equation}
Above the implicit constant is uniform in $\vps>0$.  
Indeed, in terms of
\begin{eqnarray*}
\left\{\begin{array}{rll}
&\pa_t g_{\vps}+v\cdot\na_{x}g_{\vps}+\mathscr{L}_1g_{\vps}-\Ga(\bh_{\vps},g_{\vps})=-\mathscr{L}_2\bh_{\vps},\\
&g_{\vps}(0,x,v)=g_{0,\vps}(x,v),\\
&g_{\vps}(t,-1,\bar{x},v_1,\bar{v})|_{v_1>0}=g_{\vps}(t,-1,\bar{x},-v_1,\bar{v}),\\
&g_{\vps}(t,1,\bar{x},v_1,\bar{v})|_{v_1<0}=g_{\vps}(t,1,\bar{x},-v_1,\bar{v}),
\end{array}\right.
\end{eqnarray*}
the estimate \eqref{gvps.eng3} can be derived by the energy estimates  similar to obtaining \eqref{gvps.eng2}, and we omit the details for brevity. Therefore, there is a function $g$ with $g,\na_x g\in L^1_{\bar{k}}L^\infty_{T_0}L^2_{x_1}L^2_v\cap L^1_{\bar{k}}L^2_{T_0}L^2_{x_1}L^2_{v,D}$ such that $\pa^{\al}g_\vps\rightarrow \pa^{\al}g$ $(|\al|\leq1)$ in $L^1_{\bar{k}}L^\infty_{T_0}L^2_{x_1}L^2_v\cap L^1_{\bar{k}}L^2_{T_0}L^2_{x_1}L^2_{v,D}$ as $\vps\rightarrow0^+$. Moreover, it is direct using similar estimates to verify that the limit function $g$  is a unique solution to
\eqref{lc.lb} satisfying the symmetric property \eqref{lg.symm} and the estimate \eqref{lg.es}. This completes the proof of Lemma \ref{lb.loc}.
\end{proof}

%We now turn to complete the

\begin{proof}[Proof of Theorem \ref{le.th}.] We now construct the approximate solution sequence which is denoted by $(f^n(t,x,v))_{n=0}^\infty$ for the problem
\eqref{LLeq}, \eqref{idf} and \eqref{srb} using the following iterative scheme:
\begin{equation*}%\label{ap.eq}
\left\{\begin{aligned}%{rll}
&\pa_t f^{n+1}+v_1\pa_{x_1}f^{n+1}+\bar{v}\cdot\na_{\bar{x}}f^{n+1}+\mathscr{L}_1f^{n+1}-\Ga(f^{n},f^{n+1})=-\mathscr{L}_2f^{n},\\%[2mm]
&f^{n+1}(0,x,v)=f_0(x,v),\\ %[2mm]
&f^{n+1}(-1,\bar{x},v_1,\bar{v})|_{v_1>0}=f^{n+1}(-1,\bar{x},-v_1,\bar{v}),\\
&f^{n+1}(1,\bar{x},v_1,\bar{v})|_{v_1<0}=f^{n+1}(1,\bar{x},-v_1,\bar{v}),
\end{aligned}\right.
\end{equation*}
for $n=0,1,2,\cdots$,
where we have set $f^0(t,x,v)\equiv f_0(x,v)$.
% and assume that
%\begin{equation}
%\label{ }
%\|f_0\|_{L^1_{\bar{k}} L^2_{x_1,v}}+\|\na_x f_0\|_{L^1_{\bar{k}} L^2_{x_1,v}}
%\end{equation}
%is small enough.
With Lemma \ref{lb.loc} in hand, it is a standard  procedure to apply the induction argument to show that there are $\eps_0>0$ and $T_0>0$ such that if
\begin{equation*}
%\label{ }
\|f_0\|_{L^1_{\bar{k}} L^2_{x_1,v}}+\|\na_x f_0\|_{L^1_{\bar{k}} L^2_{x_1,v}}\leq \eps_0,
\end{equation*}
then the approximate solution sequence $(f^n(t,x,v))_{n=0}^\infty$ is well-defined in the space $L^1_{\bar{k}}L^\infty_{T_0}L^2_{x_1}L^2_v\cap L^1_{\bar{k}}L^2_{T_0}L^2_{x_1}L^2_{v,D}$, it is also a Cauchy sequence in this function space.   Then the limit function $f(t,x,v)$ such that \eqref{le.th.space} holds true by similar estimates is indeed a unique solution of  \eqref{LLeq},
\eqref{idf} and \eqref{srb} satisfying the desired symmetric property and the estimate \eqref{loces}. For the proof of positivity, we can use the argument from \cite[page 833]{GS}; the details are omitted for brevity. The proof of Theorem \ref{le.th} is complete.
\end{proof}

\appendix
\section{Basic estimates}\label{sec.app}
\setcounter{equation}{0}

%\subsection{Basic lemmas}
In this appendix, we collect some known basic estimates for the linearized Landau operator or Boltzmann operator $L$ as in \eqref{Ldef}.
We will use the following estimates which were proven in \cite[Corollary 1 and Lemma 5, page 400]{Guo-L}:

\begin{lemma}[Landau case]\label{esLL}
Let $L$ be defined as \eqref{Ldef}.
Suppose $\gamma\geq-3$, then there exist two generic constants
$\de_0,\ C>0$, such that
\begin{equation*}
\dis \de_0|\{{\bf I-P}\}g|_{D}^2\leq ( Lg,g)_{L^2_v},
\end{equation*}
\begin{equation*}
|g|_{D}^2\geq C\left\{\left|\langle v\rangle^{\frac{\gamma}{2}}\{{\bf
P}_v\partial_jg\}\right|_2^2
+\left|\langle v\rangle^{\frac{\gamma+2}{2}}\{({\bf I}-{\bf
P}_v)\partial_jg\}\right|_2^2
+\left|\langle v\rangle^{\frac{\gamma+2}{2}}g\right|_2^2\right\},
\end{equation*}
where ${\bf P}_v$ is the projection defined as
$$
{\bf P}_vh_j=\sum_{m=1}^{3} {h_mv_m}\frac{v_j}{|v|^2}, \ \   j\in \{1,2,3\},
$$
for any vector-valued function $h(v)=[h_1(v),h_2(v),h_3(v)]$.
\end{lemma}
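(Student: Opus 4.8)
The statement to be proven is Lemma \ref{esLL}, the coercivity and lower-bound estimates for the linearized Landau operator $L$. These are classical facts established by Guo \cite{Guo-L}, so the plan is primarily to recall and assemble the pieces rather than to invent new arguments. First I would recall the explicit diffusive structure of $L$: writing $L = -\frac{1}{\sqrt\mu}\{Q(\mu,\sqrt\mu\,\cdot)+Q(\sqrt\mu\,\cdot,\mu)\}$ and using the formulas \eqref{bLop}--\eqref{bLop.e}, one expands $L g$ in terms of the coefficients $\sigma^{jm}$ and their derivatives. The key algebraic identity is that the quadratic form $(Lg,g)_{L^2_v}$, after integration by parts, is comparable to the $D$-norm defined just after \eqref{def.Lker}, modulo the macroscopic (null-space) projection; this is the content of the coercivity inequality $\de_0|\{\mathbf I-\mathbf P\}g|_D^2 \le (Lg,g)_{L^2_v}$. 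The proof of this comparison proceeds by the self-adjointness of $L$, the fact that the null space of $L$ is exactly $\mathbf P L^2_v = \mathrm{span}\{\sqrt\mu, v_i\sqrt\mu, |v|^2\sqrt\mu\}$, and a spectral-gap / compactness argument showing that on $\mathcal N^\perp(L)$ the form $(Lg,g)$ controls $|g|_D^2$ from below.

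Second, for the lower bound on the $D$-norm itself, I would split a vector-valued function $h(v) = [h_1,h_2,h_3]$ relative to the projection $\mathbf P_v$ onto the radial direction $v/|v|$. The point is that the matrix $\psi^{jm}(v-u)$ integrated against $\mu$ produces $\sigma^{jm}(v)$ whose eigenvalues behave like $\langle v\rangle^{\gamma+2}$ in the directions orthogonal to $v$ and like $\langle v\rangle^{\gamma}$ in the radial direction $v/|v|$ — this anisotropy is exactly why the two weights $\langle v\rangle^{\gamma/2}$ and $\langle v\rangle^{(\gamma+2)/2}$ appear in the statement. Applying this spectral decomposition of $\sigma^{jm}$ termwise to $\partial_j g$ (as a vector in $j$) and to $g$ directly in the zeroth-order term $\frac14 \sigma^{jm} v_j v_m g\bar g$, one reads off
\begin{equation*}
|g|_D^2 \gtrsim \left|\langle v\rangle^{\gamma/2}\{\mathbf P_v \partial_j g\}\right|_2^2 + \left|\langle v\rangle^{(\gamma+2)/2}\{(\mathbf I - \mathbf P_v)\partial_j g\}\right|_2^2 + \left|\langle v\rangle^{(\gamma+2)/2} g\right|_2^2,
\end{equation*}
which is the desired inequality. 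The constants $\de_0$ and $C$ are extracted from the uniform bounds on the eigenvalues of $\sigma^{jm}$ (bounded above and below on any fixed ball in $v$, with the stated decay rates for large $|v|$), together with the Poincaré-type / Hardy-type inequalities needed to absorb lower-order contributions.

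Rather than reproducing these computations, I would simply cite \cite[Corollary 1 and Lemma 5, page 400]{Guo-L}, since the two estimates are stated there verbatim for the full range $\gamma \ge -3$; the role of this appendix is to record them for use in Sections \ref{sec3}--\ref{sec7} (in particular in the coercivity step just below \eqref{pro.tpm.p1} and in the macroscopic estimates of Theorems \ref{abcest.torus} and \ref{abc.th}). The main subtlety to flag, if one were to give a self-contained proof, is the anisotropic nature of $\sigma^{jm}$: the degeneracy of the radial eigenvalue (only $\langle v\rangle^\gamma$ decay, which is weaker when $\gamma<0$) is precisely what forces the weight split by $\mathbf P_v$, and getting the sharp weights requires a careful asymptotic analysis of $\sigma^{jm}(v)$ as $|v|\to\infty$ rather than crude matrix bounds. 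Since this is done in full in \cite{Guo-L}, I regard the lemma as known and proceed.
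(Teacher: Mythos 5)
Your proposal is correct and matches the paper exactly: the appendix gives no proof of Lemma \ref{esLL} beyond citing \cite[Corollary 1 and Lemma 5, page 400]{Guo-L}, which is precisely the reference you invoke. Your sketch of the underlying mechanisms (compactness/spectral-gap argument for the coercivity on $\mathcal N^\perp(L)$, and the anisotropic eigenvalue asymptotics of $\sigma^{jm}$ — radial eigenvalue of order $\langle v\rangle^{\gamma}$, transverse eigenvalues of order $\langle v\rangle^{\gamma+2}$ — forcing the split by ${\bf P}_v$) is an accurate account of Guo's argument and adds useful context without introducing any errors.
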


For the Boltzmann case \eqref{bBop} with \eqref{Ldef}, recalling also Remark \ref{norm.remark}, we have the following linearized estimate from \cite[Equation (2.13), page 784]{GS}.

\begin{lemma}[Boltzmann case]\label{esBL}
Let $L$ be defined as \eqref{Ldef}.  For $0<s<1$ and $\ga>-3$.  Then there exists a uniform quantitative constant $C_0>0$ such that
\begin{equation*}
\dis \frac{1}{C_0}|\{{\bf I-P}\}g|_{D}^2\leq ( Lg,g)_{L^2_v}\leq C_0 |\{{\bf I-P}\}g|_{D}^2,
\end{equation*}
\end{lemma}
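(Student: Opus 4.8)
The plan is to exploit the self-adjointness and null-space structure of $L$ to reduce to $g\in\CN^\perp(L)$, to obtain the upper bound from a bilinear estimate of the type behind \eqref{ad.vwte}, and to obtain the lower (coercivity) bound from an exact Dirichlet-form identity combined with a G\aa rding-type inequality and the spectral gap. First I would recall that $L$ is self-adjoint and nonnegative on $L^2_v$ with $\CN(L)=\mathrm{span}\{\mu^{1/2},v_i\mu^{1/2},|v|^2\mu^{1/2}\}$, on which $L$ vanishes; hence $(Lg,g)_{L^2_v}=(L\{\FI-\FP\}g,\{\FI-\FP\}g)_{L^2_v}$ and it suffices to prove both inequalities with $g=\{\FI-\FP\}g$. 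Write $L=\mathscr{L}_1+\mathscr{L}_2$ with $\mathscr{L}_1g=-\mu^{-1/2}Q(\mu,\mu^{1/2}g)=-\Gamma(\mu^{1/2},g)$ and $\mathscr{L}_2g=-\mu^{-1/2}Q(\mu^{1/2}g,\mu)=-\Gamma(g,\mu^{1/2})$.

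For the upper bound I would bound $|(\mathscr{L}_1g,g)_{L^2_v}|=|(\Gamma(\mu^{1/2},g),g)_{L^2_v}|\lesssim\|\mu^{1/2}\|_{L^2_v}\,|g|_D\,|g|_D\lesssim|g|_D^2$ using \eqref{ad.vwte} with the Gaussian in the first slot, and bound $|(\mathscr{L}_2g,g)_{L^2_v}|\lesssim\|\mu^{1/10^3}g\|_{L^2_v}^2$ using the strong Gaussian weight in $\mathscr{L}_2$ as in the second estimate of \eqref{L1L2.es}; since the $D$-norm in \eqref{def.fc.nospace.disDB} dominates a Gaussian-weighted $L^2_v$ norm (split into $\{|v|\le R\}$ and its complement, using Remark \ref{norm.remark}), this is again $\lesssim|g|_D^2$. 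Hence $(Lg,g)_{L^2_v}\le C_0\,|\{\FI-\FP\}g|_D^2$.

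For the lower bound, the first step is the exact Dirichlet-form identity: applying the pre-post collisional change of variables $(v,u,\si)\mapsto(v',u',\si')$ to $(\mathscr{L}_1g,g)_{L^2_v}$ gives
\begin{equation*}
(\mathscr{L}_1g,g)_{L^2_v}=\frac{1}{2}\int_{\R^3_v}\int_{\R^3_u}\int_{\S^2}B(v-u,\si)\,\mu(u)\big(g(v')-g(v)\big)^2\,d\si\,du\,dv,
\end{equation*}
while a symmetrization of $(\mathscr{L}_2g,g)_{L^2_v}$ produces the second integral appearing in \eqref{def.fc.nospace.disDB} together with a lower-order remainder controlled by $C\|\langle v\rangle^{\ga/2}g\|_{L^2_v}^2$. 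Thus $(Lg,g)_{L^2_v}=|g|_D^2+(\text{remainder})$, with $|g|_D^2$ comparable to the anisotropic norm $N^{s,\gamma}$ by Remark \ref{norm.remark}. The heart of the matter is then the G\aa rding-type bound $(Lg,g)_{L^2_v}+C\|g\|_{L^2(\{|v|\le R\})}^2\gtrsim|g|_D^2$ for suitable $R>0$: this requires controlling the remainder and, more essentially, bounding the singular quadratic form $\iint B\,\mu(u)\big(g(v')-g(v)\big)^2$ from below by the anisotropic fractional Sobolev seminorm, which is carried out via the Carleman representation and a dyadic decomposition adapted to the anisotropic distance $d(v,v')$, following \cite{GS} and in the spirit of \cite{ADVW}. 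Finally, the truncated $L^2$ term is removed by the spectral gap: since $L\ge 0$ is self-adjoint with kernel exactly $\CN(L)$ and the space $\{g:|g|_D<\infty\}$ embeds compactly into $L^2_{\mathrm{loc}}$, a standard compactness/contradiction argument yields $(Lg,g)_{L^2_v}\ge\lambda\|g\|_{L^2_v}^2$ for $g\in\CN^\perp(L)$; combining this with the G\aa rding inequality gives $(Lg,g)_{L^2_v}\ge\frac{1}{C_0}\,|\{\FI-\FP\}g|_D^2$.

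The main obstacle is the anisotropic coercivity estimate inside the G\aa rding inequality, i.e.\ the harmonic-analysis argument showing that the Dirichlet form of the singular collision kernel controls the full anisotropic $D$-norm from below, uniformly for $0<s<1$ and $\gamma>-3$; the reduction to $\CN^\perp(L)$, the Dirichlet-form identity, and the upper bound are comparatively routine. All constants produced depend only on $C_B$, $C_b$, $s$ and $\gamma$, so the two-sided bound holds with a single uniform $C_0>0$. This completes the proof sketch.
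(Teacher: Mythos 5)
The paper does not actually prove Lemma \ref{esBL}: it is quoted directly from \cite[Equation (2.13), page 784]{GS} (see also \cite{AMUXY-2012-JFA}), with Remark \ref{norm.remark} recording that the norm \eqref{def.fc.nospace.disDB} coincides with the ``norm part'' $\langle Ng,g\rangle$ of \cite{GS} and is equivalent to $N^{s,\gamma}$. So there is no in-paper argument to compare against; what your sketch does is reconstruct, correctly in outline, the proof that lives in \cite{GS} (and, for the coercivity, in \cite{ADVW,MS}): reduction to $\{\FI-\FP\}g$ by self-adjointness, the upper bound from the trilinear estimate \eqref{ad.vwte} together with the Gaussian smallness of the $\mathscr{L}_2$-part as in \eqref{L1L2.es}, the decomposition $(Lg,g)_{L^2_v}=|g|_D^2+\langle Kg,g\rangle$ with $K$ lower order, the anisotropic lower bound for the Dirichlet form (which you rightly identify as the genuinely hard step, handled in \cite{GS} via the Carleman representation and a dyadic decomposition adapted to the anisotropic metric), and the removal of the truncated $L^2$ term by weak lower semicontinuity of the nonnegative form $(L\cdot,\cdot)_{L^2_v}$ combined with local compactness on $\CN^\perp(L)$. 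The architecture and the attribution of difficulty are both right.

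One step is stated too strongly. The claimed identity $(\mathscr{L}_1g,g)_{L^2_v}=\tfrac12\iiint B\,\mu(u)\,(g(v')-g(v))^2$ is not exact. By \eqref{def.GaF}, $\mathscr{L}_1g=-\Gamma(\mu^{1/2},g)$ carries the factor $\mu^{1/2}(u)\mu^{1/2}(u')$ in the gain term rather than $\mu(u)$, and moreover $\mu(u)$ is not invariant under the pre--post exchange $(v,u)\leftrightarrow(v',u')$; symmetrizing therefore produces, besides the first integral in \eqref{def.fc.nospace.disDB}, cross terms involving $\mu^{1/2}(u')-\mu^{1/2}(u)$, which in \cite{GS} are precisely part of the compact piece $K$ and must be absorbed into the same G\aa rding-type remainder $\eta\,|g|_D^2+C_\eta\|g\|_{L^2(B_R)}^2$ that you already allow for the $\mathscr{L}_2$ contribution. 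Since your argument only ever uses the weaker statement $(Lg,g)_{L^2_v}=|g|_D^2+(\text{remainder controlled in this way})$, this imprecision does not derail the proof, but the word ``exact'' should be dropped and those extra cross terms must be estimated alongside the $\mathscr{L}_2$ remainder.
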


This above Lemma above is similarly  also independently contained in \cite{AMUXY-2012-JFA}.  Note that from \cite{AMUXY-2012-JFA} and also similarly in \cite[Equation (2.15), page 784]{GS}, recalling again Remark \ref{norm.remark}, one has that
\begin{equation}\notag
\dis \de_0\left\{|g|^2_{H^s_{\ga/2}}+\left|\langle v\rangle^{\frac{\gamma+2s}{2}}g\right|^2_2\right\}\leq |g|_{D}^2\leq C |g|^2_{H^s_{s+\ga/2}},
\end{equation}
where $\delta_0, C>0$.  Here the weighted fractional Sobolev norm $|g(v)|_{H^s_\ell}^2=\left|\langle v\rangle^{\ell} g(v)\right|_{H^s}^2$
is given by
\begin{equation*}
\left|g\right|^2_{H^{s}_{\ell}}
=
\left|
\langle v\rangle^{\ell} g\right|_{L^2_{v}}^2+\int_{{\R}^3}dv\int_{{\R}^3}du\,
\frac{\left[\langle v\rangle^{\ell}g(v)-\langle
u\rangle^{\ell}g(u)\right]^2}{|v-u|^{3+2s}}{\chi}_{|v-u|\leq1}.
\end{equation*}
This norm above turns out to be equivalent with
\begin{equation*}
\left|g\right|^2_{H^{s}_{\ell}}=\int_{{\R}^3}dv\, \left|\left(1-\Delta_v\right)^{\frac{s}{2}}\left(w^\ell(v)
g(v)\right)\right|^2.
\end{equation*}
We refer to \cite{AMUXY-2012-JFA} for additional details.

Lastly, we give the following uniform weighted estimates for both the linear Landau and Boltzmann operator as follows.

\begin{lemma}[Weighted estimates]\label{wgesL}
Let $L$ be given by \eqref{Ldef}, we have the estimate
\begin{equation*}
\left( Lg,w^{2}_{q,\vth}g\right)_{L^2_v}\geq
\delta_q\left|w_{q,\vth}g\right|^2_{D}
-C\left|g\right|_{L^2({B_R})}^2,
\end{equation*}
where $\de_q, C>0$, and  $B_R$ denotes the closed ball in $\R^3_v$ with center zero and radius a constant $R>0$.  Here $(q,\vth)$ are given as in {\bf (H)} in \eqref{q} for the Boltzmann case and the Landau case respectively.
\end{lemma}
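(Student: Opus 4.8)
\textbf{Proof plan for Lemma \ref{wgesL} (weighted coercivity estimate).}

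The plan is to treat the Landau case and the Boltzmann case separately, but in each case to reduce the weighted inner product $(Lg, w^2_{q,\vth}g)_{L^2_v}$ to the unweighted coercivity estimate of Lemma \ref{esLL} or Lemma \ref{esBL} by commuting the weight $w_{q,\vth}$ through the collision operator and carefully tracking the error terms. First I would recall the splitting $g = \FP g + \{\FI - \FP\}g$, note that $L\FP g = 0$, so that $(Lg, w^2_{q,\vth}g)_{L^2_v} = (L\{\FI-\FP\}g, w^2_{q,\vth}\{\FI-\FP\}g)_{L^2_v} + (L\{\FI-\FP\}g, w^2_{q,\vth}\FP g)_{L^2_v}$; the second cross term is controlled because $\FP g$ is a Gaussian-weighted polynomial, so it is absorbed into $\eta |w_{q,\vth}\{\FI-\FP\}g|_D^2 + C|g|_{L^2(B_R)}^2$ by Cauchy-Schwarz once we also use that $w_{q,\vth}\FP g$ is still rapidly decaying (because the Gaussian beats the sub-exponential weight under the hypothesis {\bf (H)}, in particular $q<1$ when $\vth = 2$). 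Thus it suffices to estimate the genuinely microscopic piece $(L\{\FI-\FP\}g, w^2_{q,\vth}\{\FI-\FP\}g)_{L^2_v}$.

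For the Landau case, I would write $L\{\FI-\FP\}g$ explicitly using the bilinear form \eqref{Ldef}-\eqref{bLop.e} and integrate by parts to move the $\pa_{v_j}$ onto the test function $w^2_{q,\vth}\{\FI-\FP\}g$. Writing $\pa_{v_j}(w^2_{q,\vth} h) = w^2_{q,\vth}\pa_{v_j} h + h\, \pa_{v_j}(w^2_{q,\vth})$ and noting that $\pa_{v_j}(w^2_{q,\vth})/w^2_{q,\vth} = \frac{q}{2}\vth \langle v\rangle^{\vth-2}v_j$, the leading term reconstructs (up to replacing $h$ by $w_{q,\vth}h$ and absorbing $\pa_{v_j}w_{q,\vth}$ terms into the $D$-norm with a small loss) a quantity comparable to $(\widetilde{L}(w_{q,\vth}h), w_{q,\vth}h)_{L^2_v}$ where $\widetilde L$ is a conjugated operator, and by the coercivity of Lemma \ref{esLL} this dominates $\delta_0 |w_{q,\vth}h|_D^2$ after handling the conjugation errors. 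The commutator terms produce factors of $\sigma^{jm}(v)v_m w^2_{q,\vth}$ and $\sigma^{jm}(v)(\pa_{v_j}w_{q,\vth})w_{q,\vth}$; since $\sigma^{jm}(v)$ grows at most like $\langle v\rangle^{\gamma+2}$ and $\pa_{v_j}w_{q,\vth}$ carries $\langle v\rangle^{\vth-1}w_{q,\vth}$, the dangerous growth is $\langle v\rangle^{\gamma + 2 + 2(\vth-1)} = \langle v\rangle^{\gamma + 2\vth}$ against $|w_{q,\vth}g|^2$; under {\bf (H)} (either $\vth \le 2$ with the restriction $q<1$ at $\vth = 2$, or $q = 0$ when $\gamma \ge -2$) one checks these stay below the $D$-norm's effective weight, so they can be split as $\eta |w_{q,\vth}g|_D^2 + C_\eta |g|_{L^2(B_R)}^2$ on a large ball, choosing $\eta$ small and $R$ large. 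Taking $\eta$ small enough and relabeling $\delta_q = \delta_0/2$ (or similar) yields the claim.

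For the Boltzmann case the argument is the standard one from the non-cutoff literature: write the weighted inner product as the symmetrized double integral, insert $w^2_{q,\vth}(v) = w^2_{q,\vth}(v') + (w^2_{q,\vth}(v) - w^2_{q,\vth}(v'))$ inside the gain-minus-loss structure, so that the $w^2_{q,\vth}(v')$ part reproduces the $D$-norm of $w_{q,\vth}g$ via Lemma \ref{esBL} (after a pre-post collisional change of variables), while the difference $w^2_{q,\vth}(v) - w^2_{q,\vth}(v')$ is controlled using the mean value theorem along the collision geometry, the cancellation lemma, and the fact that $\vth = 1$ in the Boltzmann soft-potential case (so the weight gradient grows only like $w_{q,\vth}$ itself); these error contributions are again absorbed into $\eta |w_{q,\vth}g|_D^2 + C|g|^2_{L^2(B_R)}$. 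I expect the main obstacle to be bookkeeping the commutator/difference terms uniformly in the parameter range of {\bf (H)} --- precisely the borderline $\vth = 2$, $q < 1$ Landau case --- where one must exploit that the Gaussian $\mu$ inside $\sigma^{jm}$ (through $\psi * \mu$) provides just enough extra decay, together with the gain of two velocity derivatives in the $D$-norm, to beat the growth of $\pa_v w_{q,\vth}$; everything else is routine. This type of estimate is essentially contained in \cite[Lemma 10]{SG-08-ARMA} and \cite[Lemma 2.3, Lemma 2.4]{DLYZ-VMB,FLLZ-2018} cited for Lemma \ref{bnp.es}, and I would invoke those computations rather than reproduce them.
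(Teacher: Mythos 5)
Your plan is correct and coincides in substance with the paper's treatment: the paper does not prove Lemma \ref{wgesL} from scratch either, but refers to \cite[Lemma 9]{SG-08-ARMA} for the Landau case and to \cite[Lemma 2.6]{GS} (or \cite{AMUXY-2012-JFA}, \cite{DLYZ-VMB}) combined with the exponential-weight techniques of \cite{SG-08-ARMA} for the Boltzmann case, and the weight-commutation/absorption mechanism you describe under hypothesis {\bf (H)} is exactly what those cited proofs carry out. Two small remarks: the relevant reference for the linear weighted estimate is Lemma 9 of \cite{SG-08-ARMA} rather than Lemma 10 (which is the nonlinear estimate used for Lemma \ref{bnp.es}), and your initial macro--micro splitting is an unnecessary detour, since the cited arguments bound $(Lg,w^2_{q,\vth}g)_{L^2_v}$ directly via the splitting of $L$ into a weighted-coercive part plus a compact remainder, which is what produces the $-C|g|^2_{L^2(B_R)}$ term without routing through Lemma \ref{esLL} or \ref{esBL}.
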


The lemma above is proven in \cite[Lemma 9, page 323]{SG-08-ARMA} for the Landau case.  For the Boltzmann case, such an estimate is proven in \cite[Lemma 2.6, page 783, Equation (2.10)]{GS} with polynomial weights.  To prove the estimate with exponential weights as in \eqref{def.w} with \eqref{q}
it will follow directly from using the estimate \cite[Lemma 2.6, page 783, Equation (2.10)]{GS} or \cite{AMUXY-2012-JFA}, combined with the techniques for handling the exponential weight in the proof of \cite[Lemma 9, page 323]{SG-08-ARMA} following the restrictions in  \eqref{q}.    See also \cite{DLYZ-VMB}.

%\end{proof}

%%\newpage
\medskip
\noindent {\bf Acknowledgements:}
RJD was supported by the General Research Fund (Project No.~14302716) from RGC of Hong Kong and a Direct Grant from CUHK. SQL was supported by grants from the National Natural Science Foundation of China (contracts: 11471142, 11571063 and 11731008). SS was supported by JSPS Kakenhi Grant (No.~18J00285). RJD and SS would like to thank Professor Yoshinori Morimoto for stimulating discussions on the topic when RJD visited the Kyoto University in March 2018. SQL and SS would also like to thank Department of Mathematics, CUHK for the host of their visit.  RMS was partially supported by the NSF grants DMS-1500916 and DMS-1764177 of the USA.  RMS would further like to thank RJD and the Chinese University of Hong Kong for their hospitality during his visit in July 2018.

%%%%%%%%%%%%%%%%%%%%%%%%%%%%%%%%%%%%%%%%%%%%%%%%%%%%%%%%%%%%%%%%%%%%%%%%%%%%%%%%%%

% these commends below are for a bibtex bibliography THEY CHANGE THE STYLE OF THE BIBLIOGRAPHY
%\bibliographystyle{plain}
%\bibliographystyle{abbrv}
%\bibliographystyle{acm}
%\bibliographystyle{alpha}
%\bibliographystyle{apalike}
%\bibliographystyle{ieeetr}
%\bibliographystyle{siam}
%\bibliographystyle{unsrt}
%\bibliographystyle{plainnat}
%\bibliographystyle{plainurl}
%\bibliographystyle{abbrvnat}
%\bibliographystyle{unsrtnat}
%\bibliographystyle{amsalpha.bst}  % this one is cool with amsrefs
%\bibliographystyle{amsplain.bst}
%% the ones below need to be called with AMSREFS package
%\bibliographystyle{amsrn.bst}  %this one is the default if no style is called
%\bibliographystyle{amsru.bst}
%\bibliographystyle{amsra.bst}
%\bibliographystyle{amsry.bst}
%\bibliographystyle{amsrs.bst}
%\bibliographystyle{amsxport.bst}
%%% the above STYLE FILES ARE IN MOST LATEX INSTALLATIONS

%%%%% the bibliography styles below support arXiv eprint links
%\bibliographystyle{hplain}. % THIS STYLE FILE INCLUDES EPRINT INFORMATION IN THE BIB
% hplain was downloaded from https://arxiv.org/help/hypertex/bibstyles
\bibliographystyle{habbrv}  % INCLUDES DOI AND ARXIV LINKS AND USES NUMBER REFERENCES
%\bibliographystyle{halpha}  %INCLUDES DOI AND ARXIV LINKS AND USES NAME REFERCES
% the last two were downloaded from http://www.math.cmu.edu/~gautam/sj/blog/20171114-bibtex-doi.html
%\bibliography{Boltzmann.bib}{}

%%%%%%%%%%%%%%%%%%%%%%%%%%%%%%%%%%%%%%%%%%%%%%%%%%%%%%%%%%%%%%%%%%%%%%%%%%%%%%%%%%

%%% copy and paste the BBL file below here when you are ready to upload, and comment the above bibtex library

%%\newpage

\end{document}